\newcommand{\Gauss}{{\null_2F_1}}
\newcommand{\R}{\mathbb R}
\newcommand{\al}{\alpha}
\newcommand{\C}{\mathbb C}
\newcommand{\N}{\mathbb N}
\newcommand{\G}{\mathcal{G}}           \newcommand{\RA}{\mathcal{R}}
\newcommand{\So}{\mathcal{S}}\newcommand{\Sf}{\mathfrak S}
       \newcommand{\K}{\mathcal{K}}
          \newcommand{\Pp}{\mathcal{P}}
\newcommand{\D}{\mathbb D}
\newcommand{\Co}{\mathcal C}\newcommand{\Cf}{\mathfrak C}
\newcommand{\Hf}{\mathfrak H}
\newcommand{\Rf}{\mathfrak R}
\newcommand{\Ao}{\mathcal A}
\newcommand{\Af}{\mathfrak A}
\newcommand{\Ff}{\mathfrak F}
\newcommand{\Gf}{\mathfrak G}
\newcommand{\Uf}{\mathfrak U}
\newcommand{\Lf}{\mathfrak L}\newcommand{\Mf}{\mathfrak M}
\newcommand {\Hol}{\mathop{\rm Hol}\nolimits(\D,\C)}
\newcommand  {\Id} {\mathop{\rm Id}\nolimits}
\newcommand {\HolD}{\mathop{\rm Hol}\nolimits(\D)}
\renewcommand{\Re}{\mathop{\rm Re}\nolimits}
\newtheorem{theorem}{Theorem}[section]
\newtheorem{lemma}[theorem]{Lemma}
\newtheorem{proposition}[theorem]{Proposition}
\newtheorem{corollary}[theorem]{Corollary}
\newtheorem{question}{Question}
\newtheorem{conjecture}{Conjecture}
\newtheorem{rem}[theorem]{Remark}
\newtheorem{definition}[theorem]{Definition}
\newtheorem{example}[theorem]{Example}
\newtheorem{remark}[theorem]{Remark}
\numberwithin{equation}{section}
\numberwithin{equation}{section}
\begin{document}
\title[Filtrations of generators, survey]{Survey on filtrations (parametric embeddings) of infinitesimal generators}

\author[M. Elin]{Mark Elin}
\address{M. Elin: Department of Mathematics, Braude College of Engeneering, Karmiel 21982, Israel.}
\email{mark\_elin@braude.ac.il}

\author[F. Jacobzon]{Fiana Jacobzon}
\address{F. Jacobzon: Department of Mathematics, Braude College of Engeneering, Karmiel 21982, Israel.}
\email{fiana@braude.ac.il}

\date{\today}
\subjclass[2020]{Primary 47H20, 30C45; Secondary 30C80, 58D07}
\keywords{ infinitesimal generators; filtrations; semigroups; starlike functions; analytic extension}

\begin{abstract}
This work is devoted to the so-called filtration theory of semigroup generators in the unit disk. It should be noted that numerous filtrations studied to nowdays  have been introduced for different purposes and considered from different points of view. So, our aim is to summarize the known facts, to present common and distinct properties of filtrations as well as to study some new filtrations with an emphasis on their connection with geometric function theory and the dynamic features of semigroups generated by elements of different filtration families.

Among the dynamic properties, we mention the uniform convergence on the unit disk and the sectorial analytical extension of semigroups with respect to their parameter.

We also solve the  Fekete--Szeg\"o problem over various filtration classes, as well as over non-linear resolvents.
\end{abstract}

\maketitle
\tableofcontents

\bigskip

\section{Introduction}\label{sect-int}

Semigroups of non-linear maps are a natural generalization of semigroups of linear operators. In the one-dimensional case, Berkson and Porta in \cite{BE-PH} made a breakthrough in the theory of semigroups of holomorphic self-mappings of the open unit disk. They proved that every semigroup has a corresponding generator, and characterized the structure of generators. The study of generation theory in Banach spaces began with the work \cite{R-S-96, RS-SD-97a} by Reich and Shoikhet. During the last decades, various characterizations of semigroup generators have been found, some of them are recalled in Theorem~\ref{th-gen-prop} below. For more details see monographs \cite{B-C-DM-book, E-R-S-19, E-Sbook, R-S1, SD} and references therein. The class of all semigroup generators appears also in the study of non-autonomous problems and geometric function theory (see, for example, \cite{Duren, E-R-S-04, GI-KG}).
It is especially interesting to trace the influence of properties of a generator on the dynamic behavior of the generated semigroup. The issues under study include the following:
\begin{itemize}
\item [(1)] Finding criteria for membership in the class of all generators.
\item [(2)] Characterization of the asymptotic behavior of semigroups.
\item [(3)] Feasibility of analytic extension of a semigroup in its parameter into a domain in the complex plane $\C$.
\end{itemize}
It is also important to establish
\begin{itemize}
\item [(4)] Applications in geometric function theory.
\end{itemize}

The first problem has been investigated by many authors (also in several variables), see, for example, \cite{A-E-R-S, A-R-S, BE-PH, B-C-DM-book, E-Sbook, E-R-S-04, E-R-S-19, R-S1, SD}. Eventually the answers are given by suitable inequalities in terms of intrinsic objects (hyperbolic metric, hyperbolic distance, Green function) or extrinsic ones (Banach space norm, Hilbert/Euclidean distance).

\vspace{2mm}

Over the years, the study of the asymptotic behavior of semigroups was mainly focused on the local/global rate of convergence and the growth estimates of a semigroup with respect to its parameter. Different estimates of the rate of convergence of semigroups were obtained. Concerning the one-dimensional case the reader can be referred to the books \cite{B-C-DM-book, E-Sbook, SD}, the survey \cite{JLR} and references therein.

Another direction in the study of non-linear semigroups is focused on the possibility of analytic extension with respect to the semigroup parameter to a complex domain.
The problem is to find conditions which enable analytic extension of semigroups along with estimates for the size of the domain to which this extension is possible. The progress in this direction presented, for example, in \cite{ACP, E-S-T, E-J}.

The profound mutual influence of semigroup theory and geometric function theory should be noticed. For example, it will be explained below (see also \cite{BCDES, E-S-Tu2, SD16}) that every function that satisfies the famous Noshiro--Warschawski condition and every univalent convex function is a semigroup generator. Since not all generators are univalent, these conditions are far from necessary for a function to be a generator.
Moreover, we will see below that convex functions generate so-called exponentially squeezing semigroups (see Definition~\ref{def-squee}). Such semigroups converge uniformly on the whole open unit disk, while generically uniform convergence is ensured only on compact subsets.

In addition, each element of a semigroup is a univalent self-mapping of the open unit disk. At the same time, an arbitrary given univalent self-mapping can be embedded into a semigroup if and only if its K{\oe}nigs function (which is an eigenfunction of the composition operator with given mapping) is starlike or spirallike, see, for example, \cite{B-C-DM-book, E-Sbook}. Often dynamic properties of a semigroup are equivalent to such geometric properties of its generator as the size of the image, its placement and so on. However, it might be difficult to reveal that properties for a given generator.

 Furthermore, despite of the presence of various criteria for generators, in some specific situations it is difficult to check whether a given function belongs to the set of all generators. At the same time, there are some sufficient conditions providing that a function belongs to {\it a proper subclass} of all generators the verification of which is easier.
 Moreover, it may happen that such subclasses form a {\it one-parameter net, increasing as their parameter grows}, and properties of the generated semigroup obey the subclass to which the generator belongs.
Roughly speaking, such nets are called {\it filtrations of generators} (see Definition~\ref{def-filt} below). This work is devoted to {\it filtration theory of generators}.

Looking ahead, some function classes well-known in geometric function theory are, in fact, filtrations of generators (in addition to convex functions mentioned earlier, see Subsection~\ref{subsec-prestar}). Therefore the study of filtration families relies on their properties associated with geometric function theory.

Numerous filtrations studied to nowdays  have been introduced for different purposes and considered from different points of view, see \cite{BCDES, E-J-S, EJT, ESS, E-S-Tu2, SD16} and other works.
Although the main idea is dimension-independent, the work cited focus on the one-dimensional case. {\it The aim of this work is to summarize the known facts, to present common and distinct properties of filtrations as well as to study some new filtrations with an emphasis on their connection with geometric function theory and certain dynamic features of semigroups generated by elements of different filtration families.}

The diversity of related results in this area has led us to abandon the consideration of two related topics. First, the study of filtrations on smaller disks included in the unit disk is omitted. Second, we do not consider parametric embeddings of K{\oe}nigs functions associated with generators. The interested reader may refer to \cite{BCDES, E-S-Tu1, E-S-Tu2, SD16}.

\vspace{3mm}

The structure of  the survey is the following. In the next section we provide  some preliminary material to be used in the subsequent study. It includes elements of both geometric function theory and semigroup theory.

\vspace{2mm}

In Section~\ref{sect-basic} we first introduce the main concepts of this work like the notion of filtration and some related notions. We proceed with exponentially squeezing and sectorial filtrations. Although these filtrations are very simple, they allow us to describe some important dynamic properties of  semigroups and so are crucial.

\vspace{2mm}

Section~\ref{sect-_spec_linear} is devoted to linear filtrations of first order.  The sets of such filtrations consist of generators $f$ for which a certain harmonic function depending linearly on $f$ and $f'$ is positive. We begin with a generic approach to construct linear filtrations using an arbitrary auxiliary function~$k$. It turns out that the sets belonging to such filtrations are convex. Special choices of $k$ give concrete linear filtrations that are of interest because of their dynamic and geometric behavior.

Subsection~\ref{subsec-analytic} is devoted to the so-called analytic filtration. It was partially studied in \cite{BCDES, ESS, SD16}. We supplement the previous results by expanding the range of the parameter from the interval $[0,1]$ to $(-\infty, 1]$ and give a new characterization of the filtration sets.

Further, we present in Subsection~\ref{subsec-hyperb} hyperbolic filtrations studied in \cite{BCDES} and \cite{SD16}.
We discuss two very similar filtrations that are linear combinations of two different conditions characterizing semigroup generators. The reason for the name `hyperbolic' is the fact that both conditions originate in the hyperbolic geometry of the unit disk. It is interesting to mention that the `starting' set of this filtration consists of univalent functions.

In addition, in Subsection~\ref{subsec-gener-analytic}, we study a two-parameter family of sets of analytic functions first considered  in \cite{K-R}. In a sense, the squeezing and analytic filtrations are its `border' cases. Considering the specific range of these two parameters in the plane, it is natural to search for other filtrations in this range. 

The subsection consists of two parts. In the first one we describe properties of this two-parameter family. On this way we get a generalization of properties of the analytic filtration. In the second part we discuss additional filtrations in the mentioned range of the parameters. We prove that a curve in that range presents a filtration subject a simple sufficient condition.

\vspace{2mm}

In Section~\ref{sect-non-l} we consider filtrations determined by non-linear expressions depending on $f, f'$ and $f''$.

The study of such filtrations is usually more complicated.
In order to ease the reading, in Subsection~\ref{subsec-prestar} we present filtrations formed by well-known subclasses of univalent functions. They include starlike functions of some order, Janowski functions, classes introduced by Aksentiev \cite{Ak58}, and prestarlike functions by Ruscheweyh. Of course, in general these classes consist not only of generators and not always form a filtration. Therefore we need to specify the conditions providing that a given family of some sets is a filtration of generators.

Although many of the properties of these classes are long known, our goal is to emphasize their dynamic features.

Subsection~\ref{subsect-pseu-anal} presents a new filtration, which we call the pseudo-analytic filtration since its sets are determined by the absolute value of the same linear functional as ones of the analytic filtration.
Its peculiarity is that it admits a net of totally extremal functions whose elements are not boundary points of the corresponding classes, and has boundary points that are not totally extremal functions.

Another non-linear filtration (Subsection~\ref{sSect-Qstar}) is determined by a linear combination of inequalities characterizing the class of all generators and the class of starlike functions. We note that the `smallest' class of this filtration contains all starlike functions of order $\frac12$ while the `largest' one consists of functions that satisfy $\Re\frac{f(z)}{z}>\frac12$ for $z\neq0$ in the unit disk.
An additional filtration is constructed using the class introduced by Mocanu. After him we call it Mocanu's filtration (Subsection~\ref{subsec-mocanu1}). Elements of each filtration set are univalent functions.

\vspace{2mm}

In the last section we involve filtration classes considered in the previous sections to problems of geometric function theory. One of such classical problems is estimation of certain functionals over different classes of analytic functions.
For instance, estimates of the Taylor's coefficients including the Bieberbach conjecture (see for example, \cite{Duren}) have history longer than a century. Among other significant functionals we emphasize the generalized Zalcman functional and its particular case, the Fekete--Szeg\"o functional. The Fekete--Szeg\"o problem is to find the sharp estimate on this functional over a given class of analytic functions. We investigate this problem in Subsections~\ref{subsect-an-pre-pse} and~\ref{subsec-FS} over filtration classes presented above.

In Subsection~\ref{subsec-resolv} we solve the Fekete--Szeg\"o problem over so-called non-linear resolvents.
It is well-known (see details in \cite{R-S1}) that an analytic function is a semigroup generator if and only if it produces the so-called {\it resolvent family}. Sharp estimates on the Fekete--Szeg\"o functional over families of non-linear resolvents for (almost all) filtrations considered in the survey are established. For the class of all generators this problem was studied earlier in \cite{EJ-coeff20, EJ-est} (and also in multi-dimensional settings in \cite{EJ-spiral, GHK2020, HKK2021}).

\vspace{2mm}

In addition, we would like to draw the reader's attention to the fact that Sections~\ref{sect-_spec_linear}, \ref{sect-non-l} and \ref{sect-appl} have been supplemented with special subsections containing open questions, conjectures and comments for further investigation. Several general open questions are also posed in Subsection~\ref{subsect-notion-filt}.

\bigskip

\section{Preliminaries}\label{sect-preli-geom}

\setcounter{equation}{0}

\subsection{Elements from geometric function theory}
 Let $\D$ be the open  unit disk in the complex plane $\C$. Denote by $\Hol$ the set of holomorphic functions on $\D$, and by $\HolD := \Hol$, the set of all holomorphic self-mappings of $\D$.

 By $\Ao$ we denote the subset of $\Hol$ consisting of  functions normalized by $f(0)=f'(0)-1=0$ and by $\Pp$ the Carath\'eodory class, that is,
\begin{equation*}
  \Pp:=\left\{ q\in\Hol:\ \Re q(z)>0, \ z\in\D, \quad\mbox{and}\quad q(0)=1 \right\}.
\end{equation*}

Among properties of the class $\Pp$, we mention that its elements can be characterized by the Riesz--Herglotz integral
representation:
\begin{theorem}\label{thm-RH}
  Let $q\in\Hol.$ Then $q\in\Pp$ if and only if there is a probability measure $\mu$ on the unit circle such that
  \begin{equation*}
    q(z) =\oint\limits_{\partial\D} \frac{1+z\overline{\zeta}}{1- z\overline{\zeta}}\,d\mu(\zeta).
  \end{equation*}
  Consequently, $\displaystyle \frac{1-|z|}{1+|z|}\le \Re q(z)\le \frac{1+|z|}{1-|z|}\,.$
\end{theorem}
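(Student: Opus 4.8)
The plan is to prove the representation by reducing the problem to the boundary behaviour of $q$ captured on the subcircles $|z|=r$, and then to recover the measure by a compactness argument; the converse and the estimate will be immediate consequences of the positivity of the Poisson kernel. The key observation is that for $|\zeta|=1$ one has $\Re\frac{1+z\bar\zeta}{1-z\bar\zeta}=\frac{1-|z|^2}{|1-z\bar\zeta|^2}=\frac{1-|z|^2}{|\zeta-z|^2}$, i.e. the real part of the kernel is exactly the Poisson kernel, so that the asserted formula is the complex (Schwarz--Herglotz) companion of the Poisson representation of the positive harmonic function $\Re q$.

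For the forward direction I would fix $0<r<1$ and apply the Schwarz integral formula to the function $z\mapsto q(rz)$, which is holomorphic on a neighbourhood of $\oD$. Since $q(0)=1$ is real, this gives
\begin{equation*}
q(rz)=\frac{1}{2\pi}\int_0^{2\pi}\frac{e^{i\theta}+z}{e^{i\theta}-z}\,\Re q(re^{i\theta})\,d\theta .
\end{equation*}
Because $\Re q$ is harmonic and positive, the mean value property yields $\frac{1}{2\pi}\int_0^{2\pi}\Re q(re^{i\theta})\,d\theta=\Re q(0)=1$, so that $d\mu_r(\zeta):=\frac{1}{2\pi}\Re q(re^{i\theta})\,d\theta$ (with $\zeta=e^{i\theta}$) defines a probability measure on $\partial\D$; writing the kernel as $\frac{\zeta+z}{\zeta-z}=\frac{1+z\bar\zeta}{1-z\bar\zeta}$, we obtain $q(rz)=\oint_{\partial\D}\frac{1+z\bar\zeta}{1-z\bar\zeta}\,d\mu_r(\zeta)$. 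The set of probability measures on the compact circle $\partial\D$ is weak-$*$ compact, so along some sequence $r_n\to1^-$ we have $\mu_{r_n}\to\mu$ weak-$*$ for a probability measure $\mu$. Fixing $z\in\D$, the integrand $\zeta\mapsto\frac{1+z\bar\zeta}{1-z\bar\zeta}$ is continuous on $\partial\D$ (its denominator does not vanish there), so passing to the limit and using $q(r_nz)\to q(z)$ gives $q(z)=\oint_{\partial\D}\frac{1+z\bar\zeta}{1-z\bar\zeta}\,d\mu(\zeta)$.

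The converse is straightforward: if $q$ is defined by that integral with a probability measure $\mu$, then differentiation under the integral sign shows $q\in\Hol$, while $q(0)=\oint d\mu=1$ and $\Re q(z)=\oint\frac{1-|z|^2}{|1-z\bar\zeta|^2}\,d\mu>0$ by positivity of the Poisson kernel. The same representation yields the concluding estimate: from $1-|z|\le|1-z\bar\zeta|\le1+|z|$ (triangle inequality, $|\bar\zeta|=1$) together with $1-|z|^2=(1-|z|)(1+|z|)$ we get $\frac{1-|z|}{1+|z|}\le\frac{1-|z|^2}{|1-z\bar\zeta|^2}\le\frac{1+|z|}{1-|z|}$ pointwise, and integrating against the probability measure $\mu$ preserves these bounds.

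The main obstacle is the forward direction, specifically the extraction of the limiting measure $\mu$: one must invoke weak-$*$ (Helly-type) compactness of probability measures on $\partial\D$ and then justify interchanging the limit $r_n\to1$ with the integral, which is precisely where the continuity of the kernel in $\zeta$ for fixed $z\in\D$ and the uniform mass bound are used. Everything else---holomorphy, the normalization $q(0)=1$, and the two-sided estimate---reduces to elementary properties of the Poisson kernel.
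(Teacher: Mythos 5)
Your proof is correct and complete: the dilation-plus-Schwarz-formula argument, the normalization $d\mu_r=\frac{1}{2\pi}\Re q(re^{i\theta})\,d\theta$ made into a probability measure via the mean value property, the weak-$*$ compactness extraction of $\mu$, and the pointwise kernel bounds $\frac{1-|z|}{1+|z|}\le\Re\frac{1+z\overline{\zeta}}{1-z\overline{\zeta}}\le\frac{1+|z|}{1-|z|}$ integrated against $\mu$ are exactly the classical Herglotz argument, and you correctly handle the one subtle point (that $q(0)=1$ being real kills the $i\,\Im q(0)$ term in the Schwarz formula). The paper states this theorem as a known preliminary without proof, so there is nothing to compare beyond noting that your route is the standard one it implicitly relies on.
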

The last inequality is called Harnack's inequality.

The Carath\'eodory class plays an essential role in geometric function theory and in adjacent fields. For instance, many famous classes of univalent functions can be characterized by condition that the real part of certain expression  is greater than zero.

To be more concrete, recall that a function $f \in\Ao$ is called {\it convex or starlike} if it maps $\D$ onto a convex or starlike region, respectively. Corresponding classes are denoted by $\Co$ and $\So^*$. It is well known that these classes admit the following analytic description:
\begin{eqnarray*}
  &\Co=& \left\{ f\in\Ao:\ 1+\frac{zf''(z)}{f'(z)} \in\Pp \right\}, \\
  &\So^*=& \left\{ f\in\Ao:\ \frac{zf'(z)}{f(z)} \in\Pp \right\}.
\end{eqnarray*}

\vspace{2mm}

 To proceed we need an additional notion. Let $f, g\in\Ao$ and
\begin{equation*}\label{def-U}
\Omega=\{ \omega \in \HolD:\  \omega(0)=0 \}.
\end{equation*}
One says that $f$ is subordinate to $g$ and write $f\prec g$, if there exists a function $\omega\in\Omega$ such that $f(z) = g(\omega(z))$, $z\in\D.$ Specifically, if $g$ is univalent then the conditions $f\prec g$ and $f(\D)\subseteq g(\D)$ are equivalent.

Ma and Minda in \cite{Ma-Mi} proposed unified approach to study subclasses of $\So^*$ of the form
\( \displaystyle  \So^*(\varphi)= \left\{ f\in\Ao:\ \frac{zf'(z)}{f(z)} \prec \varphi(z) \right\} \)
for some $\varphi\in\Pp$ such that $\varphi(\D)$ is symmetric with respect to the real axis and starlike with respect to $\varphi(0)=1$ and $\varphi'(0)>0$. For the choice $\varphi(z)=\frac{1+az}{1+bz},$ $-1\leq b< a \leq 1$, the class $\So^*(\varphi)$ reduces to the class
\begin{equation}\label{Jano}
 \So^*(a,b)= \left\{ f\in\Ao:\ \frac{zf'(z)}{f(z)} \prec \frac{1+az}{1+bz} \right\},\quad -1\leq b< a \leq 1,
\end{equation}
of Janowski starlike functions \cite{Jan} and have been studied by many mathematicians (see, for example, \cite{J-S-S} and references therein).  For $0\leq\alpha<1$ the functions of the class
\[
\So^*(\al):=\So^*(1-2\al,-1)= \left\{ f\in\Ao:\ \Re\frac{zf'(z)}{f(z)} \ge\al \right\}
\]
are called {\it starlike functions of order $\al$}.

During the last century these classes and relations between them were studied by many mathematicians. We will use the following classical results:
\begin{theorem}[see Marx \cite{Mar} and Strohh\"acker \cite{Stro}]\label{thm-M-S}
\begin{equation*}
  f\in\Co \quad \Longrightarrow\quad  f\in\So^*\!\!\left(\frac12\right)  \quad \Longrightarrow \quad \left[2\frac{f(z)}{z}-1\right]\in\Pp .
\end{equation*}
\end{theorem}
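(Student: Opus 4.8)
The plan is to treat both implications by the same elementary device, reducing each to Jack's lemma (the boundary form of the Schwarz lemma): if $w$ is holomorphic on $\D$ with $w(0)=0$, and $z_0$ is a point with $|w(z_0)|=1$ while $|w|<1$ on $\{|z|<|z_0|\}$, then $z_0w'(z_0)=c\,w(z_0)$ for some real $c\ge1$. The common starting point is the logarithmic identity obtained by differentiating $zf'(z)=p(z)f(z)$, where $p(z):=\frac{zf'(z)}{f(z)}$ and $p(0)=1$; it reads
\begin{equation*}
1+\frac{zf''(z)}{f'(z)}=p(z)+\frac{zp'(z)}{p(z)}.
\end{equation*}
Thus $f\in\Co$ is exactly the statement $\Re\!\left(p+\frac{zp'}{p}\right)>0$, and both conclusions become assertions about the range of a quotient of the form $zf'/f$ or $f/z$. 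Note that for $f\in\Co\subset\Ao$ the map $f$ is univalent, so $p$ is holomorphic with $p(0)=1$.

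For the first implication I would encode the target $\Re p>\frac12$ through the substitution $w:=1-\frac1p$, so that $p=\frac1{1-w}$, $w(0)=0$, and $|w(z)|<1$ is equivalent to $\Re p(z)>\frac12$. A short computation gives $\frac{zp'}{p}=\frac{zw'}{1-w}$, whence the convexity expression equals $\frac{1+zw'}{1-w}$. If $|w|<1$ failed there would be a first boundary value $z_0$ (with $|w|<1$ on $|z|<|z_0|$ and $|w(z_0)|=1$), and $w(z_0)\ne1$ since otherwise $p$ would have a pole. Writing $w(z_0)=e^{i\theta}$ and $z_0w'(z_0)=c\,e^{i\theta}$ with $c\ge1$ from Jack's lemma, one evaluates
\begin{equation*}
\Re\frac{1+z_0w'(z_0)}{1-w(z_0)}=\Re\frac{1+c\,e^{i\theta}}{1-e^{i\theta}}=\frac{1-c}{2}\le 0,
\end{equation*}
contradicting $f\in\Co$. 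Hence $|w|<1$ throughout $\D$, i.e. $\Re p>\frac12$, which is $f\in\So^*(\tfrac12)$.

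For the second implication I would set $g(z):=\frac{f(z)}{z}$, $g(0)=1$, for which the same logarithmic differentiation yields $p(z)=1+\frac{zg'(z)}{g(z)}$; the strict inequality just proved, $\Re p>\frac12$, therefore reads $\Re\frac{zg'}{g}>-\frac12$. Applying the analogous substitution $w:=1-\frac1g$ (so $g=\frac1{1-w}$ and $\frac{zg'}{g}=\frac{zw'}{1-w}$), a failure of $|w|<1$ produces a first boundary value $z_0$ with $w(z_0)=e^{i\theta}\ne1$ and $z_0w'(z_0)=c\,e^{i\theta}$, $c\ge1$, at which
\begin{equation*}
\Re\frac{z_0w'(z_0)}{1-w(z_0)}=\Re\frac{c\,e^{i\theta}}{1-e^{i\theta}}=-\frac{c}{2}\le-\frac12,
\end{equation*}
contradicting $\Re\frac{zg'}{g}>-\frac12$. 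Thus $|w|<1$, i.e. $\Re\frac{f(z)}{z}>\frac12$, which is precisely $2\frac{f(z)}{z}-1\in\Pp$.

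The displayed computations are routine; the steps that need care are the following. First, since $w$ need not extend continuously to $\partial\D$, I would justify Jack's lemma by running the argument on the subdisks $|z|\le r$, taking $r_0$ to be the infimum of radii for which $\max_{|z|=r}|w|=1$ and arguing at a maximizing point, while recording $w(z_0)\ne1$ so that no pole of $p$ (resp. $g$) is created. Second, strictness: for the composite statement the first arrow already delivers the \emph{strict} bound $\Re p>\frac12$, which is exactly what the second arrow consumes, and even if $\So^*(\tfrac12)$ is read with `$\ge$', the minimum principle for the harmonic function $\Re p$ promotes $\ge\frac12$ to $>\frac12$ in the interior because $\Re p(0)=1$. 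As alternatives worth noting, both implications are instances of the Briot--Bouquet subordination $p+\frac{zp'}{p}\prec\frac{1+z}{1-z}\Rightarrow p\prec\frac1{1-z}$, and the second also follows by integrating $\frac{f'}{f}-\frac1z$ against the Riesz--Herglotz measure of Theorem~\ref{thm-RH} to get $\frac{f(z)}{z}=\exp\!\big(-\oint\log(1-z\bar\zeta)\,d\mu(\zeta)\big)$ and reading off the half-plane bound; I would keep the Jack-lemma route as the main line, being the most self-contained.
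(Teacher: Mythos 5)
Your argument is correct, but note that the paper does not prove Theorem~\ref{thm-M-S} at all: it is quoted as a classical result with pointers to Marx and Strohh\"acker, so there is no in-paper proof to match. What you give is essentially the standard Clunie--Jack proof of Strohh\"acker's theorem (the route taken in Miller--Mocanu), and it is self-contained relative to the paper since Lemma~\ref{lem-Jack} is stated there. I checked the substance: the identity $1+zf''/f'=p+zp'/p$ for $p=zf'/f$ is right; the substitution $w=1-1/p$ does satisfy $|w|<1\iff\Re p>\tfrac12$ (since $|1-1/p|<1\iff 1<2\Re p$); both boundary evaluations are correct, namely $\Re\frac{1+ce^{i\theta}}{1-e^{i\theta}}=\frac{1-c}{2}\le0$ and $\Re\frac{ce^{i\theta}}{1-e^{i\theta}}=-\frac{c}{2}\le-\frac12$; and you correctly secured the prerequisites that are often skipped: $p$ (resp.\ $g=f/z$) is holomorphic and zero-free, so $w$ is holomorphic and $w(z_0)\ne1$ at the first boundary point; the first-contact-radius device bridges the fact that the paper's Lemma~\ref{lem-Jack} is stated for $\omega\in\Omega$; and the minimum principle correctly upgrades the paper's ``$\ge\tfrac12$'' reading of $\So^*\!\left(\tfrac12\right)$ to the strict bound the second arrow consumes. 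The only soft spot is in your closing aside, not the main line: deducing $\Re\frac{f(z)}{z}>\tfrac12$ from $f/z=\exp\bigl(-\oint\log(1-z\overline{\zeta})\,d\mu(\zeta)\bigr)$ is not a mere ``reading off''---it needs the observation that $\bigl\{s:\ |\Im s|<\tfrac{\pi}{2},\ \Re e^{s}>\tfrac12\bigr\}$ is convex (the boundary curve $u=-\log(2\cos v)$ has $u''=\sec^2 v>0$), so that the half-plane $\Re w>\tfrac12$ is preserved under such geometric means; with that remark the alternative is also sound.
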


A helpful sufficient conditions for univalence is
\begin{theorem}[Noshiro--Warschawski theorem]\label{thm-NW}
  If $f\in\Hol$ and $\Re f'(z)>0$, then $f$ is univalent in $\D$.
\end{theorem}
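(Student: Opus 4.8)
The plan is to combine the convexity of $\D$ with an integral representation of the increment of $f$ along a straight segment. First I would fix two distinct points $z_1,z_2\in\D$; since the disk is convex, the entire segment $z(t)=z_1+t(z_2-z_1)$, $t\in[0,1]$, lies inside $\D$. Integrating $f'$ along this segment and applying the fundamental theorem of calculus yields the key identity
\begin{equation*}
  \frac{f(z_2)-f(z_1)}{z_2-z_1}=\int_0^1 f'\bigl(z(t)\bigr)\,dt .
\end{equation*}

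Next I would take real parts on both sides. Because $z(t)\in\D$ for every $t\in[0,1]$, the hypothesis forces $\Re f'\bigl(z(t)\bigr)>0$ throughout, and integrating a strictly positive continuous function produces a strictly positive value. Hence $\Re\!\bigl[(f(z_2)-f(z_1))/(z_2-z_1)\bigr]>0$, so in particular the difference quotient cannot vanish and $f(z_2)\neq f(z_1)$. Since the two points were arbitrary distinct points of $\D$, this establishes injectivity, i.e.\ univalence.

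The argument is elementary, and I expect the only step requiring genuine care to be the appeal to convexity: it is exactly the convexity of $\D$ that keeps the connecting segment inside the domain and thereby legitimizes the integral representation above. On a non-convex domain the same differential inequality need not imply univalence, so the geometry of the domain, rather than any delicate estimate, is the real content of the statement. A secondary point worth recording is that the strictness of the concluding inequality relies on $\Re f'$ being continuous and strictly positive, which guarantees that the integral is strictly positive rather than merely nonnegative.
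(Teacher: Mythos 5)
Your proof is correct and complete: the paper states the Noshiro--Warschawski theorem (Theorem~\ref{thm-NW}) as a classical result without proof, and your argument is exactly the standard one --- writing $f(z_2)-f(z_1)=(z_2-z_1)\int_0^1 f'\bigl(z_1+t(z_2-z_1)\bigr)\,dt$ via convexity of $\D$ and observing that the averaged derivative has strictly positive real part, hence is nonzero. Your closing remarks are also on point: convexity of the domain is what makes the segment representation legitimate, and continuity plus strict positivity of $\Re f'$ along the segment is what upgrades the integral from nonnegative to strictly positive.
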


Thanks to this theorem, it begs to consider the class
\[
\RA:=\left\{ f\in\Ao:\ f'\in\Pp  \right\}.
\]
One can easily see that if $f\in \RA$ then
\begin{equation}\label{NW1}
\Re\frac{f(z)}z=\int_0^1\Re f'(tz)dt>0,\quad \mbox{so}\quad \frac{f(z)}z\in\Pp.
\end{equation}
Usually $\RA$ is called the Noshiro--Warschawski class (see, for example, \cite{E-V}). More detailed information on all the above mentioned classes can be found in \cite{Duren, GAW, GI-KG}.
\vspace{2mm}

The study of  different classes of functions involves the searching for extremals. We will use the following notion:
\begin{definition}[see \cite{BCDES}]\label{def-extrem}
Let $\mathcal F\subset\Ao$. We say that a function $f_*\in\mathcal{F}$
is {\sl totally extremal} for $\mathcal F$ if for every $\lambda\in\C,\ r\in [0,1]$ and $f\in\mathcal F$,
\[
\min_{|z|=r} \Re \left(\lambda\frac {f(z)}{z}\right)\ge
\min_{|z|=r}\Re\left(\lambda \frac {f_*(z)}{z}\right)\,.
\]
\end{definition}
\begin{example}\label{exa-star}
  Recall that by \cite[Theorem 3]{BHMW}, the set of all extreme points of $\mathcal{S}^*(\alpha)$ consists of the functions ${z}/{(1-xz)^{2(1-\alpha)}}$, $|x|=1$. Hence for every class $\mathcal{S}^*(\alpha)$, the function $f(z)=\frac{z}{(1-z)^{2(1-\alpha)}}$ is totally extremal.
\end{example}
\begin{rem}\label{rem-note}
Definition~\ref{def-extrem} can be explained as follows. Given $f_*\in\mathcal{F}$, write $f_*(z)=zp_*(z)$. Then $f_*$ is totally extremal for $\mathcal F$  if and only if for every $r\in[0,1]$ and $f\in\mathcal F$ with $f(z)=zp(z)$, the image $p(\D_r)$ of the disk of radius $r$ lies in the convex hull of $p_*(\D_r)$.
\end{rem}

\vspace{2mm}

In what follows we will use the next two lemmas.
\begin{lemma}\label{lemm-Psi}
Fix  $\gamma\in\R$ and $g \in \Hol$. Then $\Re g(z) > \gamma \ (z\in\D)$ if and only if the function $\omega$ defined by
\begin{equation}\label{Psi}
\omega(z)=\frac{g(z)-g(0)}{g(z)-2\gamma+g(0)}
\end{equation}
belongs to $\Omega$.
\end{lemma}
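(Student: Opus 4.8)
The plan is to write $\omega=T\circ g$ with the M\"obius transformation $T(w)=\dfrac{w-g(0)}{w-2\gamma+g(0)}$, and to show that $T$ maps the half-plane $H_\gamma:=\{w\in\C:\ \Re w>\gamma\}$ biholomorphically onto $\D$, carrying $g(0)$ to the origin. Two facts come for free from the algebraic form of $T$. First, the numerator vanishes at $w=g(0)$, so $\omega(0)=T(g(0))=0$; the normalization required for membership in $\Omega$ is thus automatic. Second, when $\Re g>\gamma$ the denominator $g(z)-2\gamma+g(0)$ has real part $\Re g(z)+\Re g(0)-2\gamma>0$, hence never vanishes, so $\omega$ is genuinely holomorphic on $\D$ and the composition is well defined.

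The heart of the matter is the equivalence $|\omega(z)|<1\Leftrightarrow\Re g(z)>\gamma$, which I would obtain by a direct computation rather than by appealing to the M\"obius picture abstractly. Squaring, $|\omega(z)|<1$ is the same as $|g(z)-g(0)|^2<|g(z)-2\gamma+g(0)|^2$; writing $g(z)=u+iv$ and $g(0)=a+ib$ and expanding, the quadratic terms $u^2+v^2$ (as well as $a^2,b^2$) cancel, and after collecting one is left with an inequality of the shape
\[
(\Re g(0)-\gamma)\,(\Re g(z)-\gamma)+(\Im g(0))\,(\Im g(z))>0 .
\]
In the situations where the lemma is applied the value $g(0)$ is real (e.g. $g=zf'/f$ or $g=2f/z-1$, for which $g(0)=1$), so the second summand disappears; since then $\Re g(0)-\gamma>0$, the inequality collapses to exactly $\Re g(z)>\gamma$. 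This delivers both implications simultaneously.

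The step I expect to require the most care is the reverse implication, because the displayed inequality identifies $T^{-1}(\D)$ as the half-plane bounded by the perpendicular bisector of the segment joining $g(0)$ to the pole of $T$ and lying on the side of $g(0)$. For that half-plane to be $H_\gamma$ rather than its mirror image, one needs the pole $2\gamma-g(0)$ to be the reflection of $g(0)$ across the line $\Re w=\gamma$ and one needs $g(0)\in H_\gamma$, i.e. $\Re g(0)>\gamma$. Both hold under the standing normalization (real $g(0)$ with $\Re g(0)>\gamma$), which I would confirm is in force before reading off $\Re g>\gamma$ from $\omega\in\Omega$. With these verifications in place the two directions follow at once, completing the proof.
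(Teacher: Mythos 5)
Your proof is correct, and it is essentially the paper's own argument made explicit: the paper dismisses this lemma with the single line that it ``follows from simple geometric considerations,'' meaning exactly the M\"obius picture you set up, and your expansion of $|\omega(z)|<1$ into $\left(\Re g(0)-\gamma\right)\left(\Re g(z)-\gamma\right)+\Im g(0)\,\Im g(z)>0$ is the standard way to make those considerations rigorous. One point you raise deserves emphasis, because it goes beyond the paper: the normalization you insist on ($g(0)$ real with $g(0)>\gamma$) is not merely convenient but necessary --- the statement as printed is literally false without it. The pole of the M\"obius map in \eqref{Psi} is $2\gamma-g(0)$ rather than $2\gamma-\overline{g(0)}$, so for nonreal $g(0)$ the preimage of $\D$ is a rotated half-plane instead of $\{\Re w>\gamma\}$; concretely, with $\gamma=0$ and $g$ a conformal map of $\D$ onto the right half-plane with $g(0)=1+i$, your displayed inequality becomes $\Re g(z)+\Im g(z)>0$, which fails at points where $g(z)$ is close to the negative imaginary axis, so $\Re g>\gamma$ holds while $\omega\notin\Omega$. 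Likewise, for real $g(0)<\gamma$ the constant function $g\equiv g(0)$ gives $\omega\equiv 0\in\Omega$ while $\Re g<\gamma$, breaking the converse. Your resolve to verify the normalization before each use is exactly the right safeguard, and it is indeed in force everywhere the paper invokes the lemma: in the proofs of Theorems~\ref{filtration-alpha} and~\ref{th-FS-estim} one has $g(0)=1$ and $\gamma<1$. So your write-up proves the (correctly normalized) statement the paper actually uses, and does so by the same route the paper only gestures at.
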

This result follows from simple geometric considerations.

The next assertion is known as Jack's lemma or the Clunie--Jack lemma (see, for example, \cite{M-M}).
\begin{lemma}\label{lem-Jack}
Let $\omega\in \Omega$. If $|\omega(z)|$ admits its maximum value on the circle $|z| = r$ at a point $z_0$, then $z_0\omega'(z_0) = k \omega(z_0),$ where $k(=k(r))$ is real and $k\ge 1$.
\end{lemma}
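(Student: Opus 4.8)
The plan is to work locally near the maximum point $z_0$ and to extract the two pieces of information — that $k$ is real and that $k\ge 1$ — from the tangential and the radial behavior of $|\omega|$ there, respectively. First I would dispose of trivialities: if $\omega\equiv 0$ there is nothing to prove, so assume $\omega\not\equiv 0$; then by the maximum modulus principle $M:=|\omega(z_0)|=\max_{|z|=r}|\omega(z)|>0$ is also the maximum of $|\omega|$ over the closed disk $\{|z|\le r\}$, and in particular $\omega(z_0)\neq 0$. Hence $\log\omega$ is well defined and holomorphic near $z_0$, and along the circle $z=re^{i\theta}$ one has $\partial_\theta\log\omega(re^{i\theta})=i\,z\omega'(z)/\omega(z)$. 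Writing $\log\omega=\log|\omega|+i\arg\omega$ and using that $\theta\mapsto|\omega(re^{i\theta})|$ attains a maximum at $z_0$, the tangential derivative $\partial_\theta\log|\omega|$ vanishes at $z_0$; comparing real and imaginary parts shows that $k:=z_0\omega'(z_0)/\omega(z_0)$ is real. This settles the reality of $k$.

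The substantive point is the inequality $k\ge 1$, and here the clean device is to rescale to the unit disk. Set $w_0:=z_0/r$, so $|w_0|=1$, and define $\psi(w):=\omega(rw)/M$, which is holomorphic in a neighborhood of $\oD$, satisfies $\psi(0)=0$, and obeys $|\psi(w)|\le 1$ for $|w|=1$ (by the definition of $M$) with $|\psi(w_0)|=1$. Thus the interior maximum of $|\omega|$ on $\{|z|=r\}$ becomes a boundary maximum of $|\psi|$ at $w_0\in\de\D$, and a direct computation gives $w_0\psi'(w_0)/\psi(w_0)=z_0\omega'(z_0)/\omega(z_0)=k$. Now I would combine two facts about $\psi$: by Schwarz's lemma $|\psi(w)|\le|w|$ on $\D$, so $g:=\psi(w)/w$ maps $\D$ into $\oD$ with $|g(w_0)|=1$; and since $\Re\!\left(\overline{g(w_0)}\,g\right)$ is harmonic, bounded above by $1$, and equal to $1$ at the boundary point $w_0$, Hopf's boundary point lemma forces its outward normal derivative at $w_0$ to be strictly positive. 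Translating this normal-derivative inequality through the relation $w_0 g'(w_0)/g(w_0)=k-1$ yields $\Re k>1$, and together with the reality of $k$ established above we conclude $k\ge 1$ (indeed $k>1$ unless $\psi$ is a rotation).

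The main obstacle is exactly this last step: the maximum principle by itself only delivers $k>0$, since a boundary maximum merely has nonnegative outward normal derivative, and to upgrade this to the sharp bound $k\ge 1$ one must feed in Schwarz's lemma $|\omega(z)|\le|z|$ — equivalently, pass to $g=\psi(w)/w$ before invoking Hopf. All the remaining manipulations (the derivative identities $\partial_\theta\log\omega=iz\omega'/\omega$ and $w_0 g'(w_0)/g(w_0)=k-1$, together with the bookkeeping $|w_0|=|g(w_0)|=1$) are routine.
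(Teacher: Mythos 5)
Your proof is correct. Be aware, though, that the paper contains no proof of this lemma to compare against: it is quoted as the known Clunie--Jack lemma with a pointer to Miller--Mocanu, so the relevant comparison is with the classical argument. That argument shares your reduction but is more elementary at the crux: rescale once via $\phi(w):=\omega(z_0w)/\omega(z_0)$, so that $\phi$ is holomorphic on $|w|<1/r$, $\phi(0)=0$, $|\phi|\le 1$ on $\oD$ (maximum modulus), $\phi(1)=1$, and $k=\phi'(1)$; reality of $k$ comes from the vanishing tangential derivative exactly as in your first paragraph, and then Schwarz's lemma $|\phi(t)|\le t$ gives directly
\[
\Re\phi'(1)=\lim_{t\to1^-}\frac{1-\Re\phi(t)}{1-t}\;\ge\;\liminf_{t\to1^-}\frac{1-|\phi(t)|}{1-t}\;\ge\;1,
\]
with no need for Hopf's boundary point lemma at all. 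Your Hopf-based route is nevertheless sound: since $r<1$, the function $\psi$ extends holomorphically across $\partial\D$, so $g=\psi(w)/w$ and $u=\Re\bigl(\overline{g(w_0)}\,g\bigr)$ are smooth up to the boundary, the normal derivative exists classically and equals $\Re(k-1)$, and you correctly identify that one must divide by $w$ first (plain Hopf on $\psi$ only yields $k>0$). What Hopf buys over the difference-quotient proof is the strictness statement $k>1$ unless $\psi$ is a rotation. One sentence to tighten: Hopf requires $u<1$ throughout $\D$, so the degenerate case $u\equiv1$ (forcing $g$ constant unimodular, $\psi$ a rotation, $k=1$) should be split off explicitly; your closing parenthetical shows you know this, but as written the claim that Hopf ``forces'' a strictly positive normal derivative is not literally true in that case.
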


\vspace{2mm}

Further, for two functions $f,g\in\Hol$ with Taylor expansions $f(z)=\sum\limits_{n=0}^\infty a_nz^n$ and $g(z)=\sum\limits_{n=0}^\infty b_nz^n$, respectively, their Hadamard product is defined by

\begin{equation}\label{hadamar}
 f*g(z):=\sum\limits_{n=0}^\infty a_nb_nz^n,\qquad z\in\D.
\end{equation}
The formulas
\begin{equation}\label{hadamar-pr}
f(z)*\frac{1}{1-z\overline{\zeta}}=f(z\overline{\zeta}) \qquad\mbox{and}\qquad   {z\left(f(z)*g(z)\right)'=\left(zf'(z)\right)*g(z)}
\end{equation}
are important properties of the Hadamard product.

We will also use a kind of the Bernardi integral operator $I_\gamma[q]:\Hol\to\Hol$ with $\gamma\in\C,\ \Re\gamma>-1$ (see \cite{M-M}), defined as follows. Let a function $q\in\Hol$ have the Taylor expansion $q(z)=1+\sum\limits_{n=1}^\infty q_nz^n$, then
\begin{eqnarray}
  I_\gamma[q](z) &=& \gamma z^{-\gamma}\int_0^z \tau^{\gamma-1}q(\tau)d\tau=\int_0^1q(s^{1/\gamma}z)ds \\
   &=& 1+\sum_{n=1}^\infty \frac{\gamma q_n}{n+\gamma}z^n = q(z)*\Gauss(1,\gamma;\gamma+1;z) ,\nonumber
\end{eqnarray}\label{I-ga}
where $\Gauss$ is the Gau{\ss} hypergeometric function,
\begin{equation}\label{gauss}
\Gauss(a,b;c;z)=\sum_{n=0}^\infty\frac{(a)_n(b)_n}{(c)_nn!}z^n,
\quad z\in\D,
\end{equation}
and $(\alpha)_n=\al\cdot(\al+1)\cdot\ldots\cdot(\al+n-1)$ is the Pochhammer symbol, $a,b,c\in\C$ and $c\ne0,-1,-2,\dots.$

Clearly, $I_\gamma:\Pp\to\Pp.$ In addition, Hallenbeck and Ruscheweyh's Theorem 1 in \cite{HR75} can be rephrased as follows:
\begin{theorem}\label{thm-H-R-75}
  Let $Q(z)$ be a convex univalent function in $\D$ with $Q(0)=1$  and $q\prec Q$. Then $I_\gamma[q]\prec I_\gamma[Q]$ whenever $\Re\gamma>0.$
\end{theorem}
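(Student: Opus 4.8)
The plan is to recast the assertion as a differential subordination and then argue by contradiction via Jack's lemma (Lemma~\ref{lem-Jack}). First I would record the differential identity satisfied by the operator. Writing $G=I_\gamma[q]$ and $F=I_\gamma[Q]$, the relation $G(z)=\gamma z^{-\gamma}\int_0^z\tau^{\gamma-1}q(\tau)\,d\tau$ gives, after multiplying by $z^\gamma$ and differentiating,
\begin{equation*}
\gamma G(z)+zG'(z)=\gamma q(z),\qquad \gamma F(z)+zF'(z)=\gamma Q(z),
\end{equation*}
so that $q=G+\frac1\gamma zG'$ and $Q=F+\frac1\gamma zF'$; in particular $zF'(z)=\gamma\bigl(Q(z)-F(z)\bigr)$. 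Since $q\prec Q$ and $Q(0)=1$ force $q(0)=1$, we also have $G(0)=F(0)=1$.

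The key structural input is that $F=I_\gamma[Q]$ is again convex univalent and that $F\prec Q$. The subordination $F\prec Q$ is immediate from the representation $F(z)=\int_0^1 Q(s^{1/\gamma}z)\,ds$: for $\Re\gamma>0$ each point $s^{1/\gamma}z$ lies in $\D$, and since $Q(\D)$ is convex this integral average of values of $Q$ again lies in $Q(\D)$; hence $\overline{F(\D)}\subseteq\overline{Q(\D)}$. The convexity (and thus univalence) of $F$ is the convexity--preservation property of the generalized Bernardi operator $I_\gamma$ valid for $\Re\gamma>0$, which I would invoke here. This is precisely where the convexity hypothesis on $Q$ enters, and it is the main obstacle of the proof; everything else is elementary.

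With these ingredients the contradiction argument is routine. Suppose $G\not\prec F$. Since $F$ is univalent with $F(0)=G(0)=1$, the function $\omega:=F^{-1}\circ G$ is analytic near the origin with $\omega(0)=0$, and the failure of subordination means $|\omega|$ attains the value $1$; applying Lemma~\ref{lem-Jack} to (a dilation of) $\omega$ produces a point $z_0\in\D$ with $\zeta_0:=\omega(z_0)\in\partial\D$ and $z_0\omega'(z_0)=m\,\omega(z_0)$ for some real $m\ge1$. By the chain rule $G(z_0)=F(\zeta_0)$ and $z_0G'(z_0)=z_0F'(\zeta_0)\omega'(z_0)=m\,\zeta_0F'(\zeta_0)$. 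Substituting into $q=G+\frac1\gamma zG'$ and using $\zeta_0F'(\zeta_0)=\gamma\bigl(Q(\zeta_0)-F(\zeta_0)\bigr)$ gives
\begin{equation*}
q(z_0)=F(\zeta_0)+m\bigl(Q(\zeta_0)-F(\zeta_0)\bigr)=Q(\zeta_0)+(m-1)\bigl(Q(\zeta_0)-F(\zeta_0)\bigr).
\end{equation*}

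Finally I would derive the geometric contradiction. The point $Q(\zeta_0)$ lies on the boundary of the convex domain $Q(\D)$; let $L$ be a supporting line there with outward unit normal $n$, so that $\Re\bigl[\overline{n}\,(w-Q(\zeta_0))\bigr]\le0$ for every $w\in\overline{Q(\D)}$, with strict inequality for interior points. Since $F(\zeta_0)\in\overline{Q(\D)}$ we get $\Re\bigl[\overline{n}\,(Q(\zeta_0)-F(\zeta_0))\bigr]\ge0$, whence, as $m-1\ge0$,
\begin{equation*}
\Re\bigl[\overline{n}\,(q(z_0)-Q(\zeta_0))\bigr]=(m-1)\,\Re\bigl[\overline{n}\,(Q(\zeta_0)-F(\zeta_0))\bigr]\ge0.
\end{equation*}
Thus $q(z_0)\notin Q(\D)$, contradicting $q\prec Q$ (equivalently $q(\D)\subseteq Q(\D)$, valid since $Q$ is univalent). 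Hence $G=I_\gamma[q]\prec F=I_\gamma[Q]$, as claimed. The only genuinely substantial ingredient is the convexity--preservation of $I_\gamma$; the differential identity and the supporting-line estimate are routine.
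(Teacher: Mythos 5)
The paper contains no proof of this statement at all: it is imported directly from Hallenbeck and Ruscheweyh \cite{HR75}, so there is no internal argument to compare against. What you have written is, in essence, the standard differential-subordination proof of this theorem as given in Miller--Mocanu \cite{M-M}: the reduction to $q=G+\frac1\gamma zG'$, the contact-point analysis via Jack's lemma, the identity $q(z_0)=Q(\zeta_0)+(m-1)\left(Q(\zeta_0)-F(\zeta_0)\right)$, and the supporting-line contradiction are all correct. Your version is even slightly tidier than the textbook one in the final step: since the averaging identity $F(z)=\int_0^1 Q(s^{1/\gamma}z)\,ds$ (valid because $\Re(1/\gamma)>0$ gives $|s^{1/\gamma}|\le 1$) places $F(\zeta_0)$ in $\overline{Q(\D)}$, the supporting line of the convex domain $Q(\D)$ alone produces the contradiction, so your argument never actually uses convexity of $F$ --- only its univalence. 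The case $m=1$ is also covered, since then $q(z_0)=Q(\zeta_0)\in\partial Q(\D)$, still outside the open set $Q(\D)$.

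Two points would need attention in a complete write-up, and you have correctly identified the first yourself. The univalence (a fortiori convexity) of $F=I_\gamma[Q]$ for complex $\gamma$ with $\Re\gamma>0$, which you invoke as a black box, is not routine: it is a theorem of essentially the same depth as the statement being proved (in \cite{M-M} it is established by Loewner-chain methods as part of the Briot--Bouquet theory, and it is likewise part of the content of \cite{HR75}). It is an independently proved result, so there is no circularity, but your proof is a reduction to it rather than self-contained. Second, applying Jack's lemma as stated requires $F$, $F'$ and $Q$ to make sense at the boundary contact point $\zeta_0\in\partial\D$; your parenthetical ``(a dilation of) $\omega$'' glosses this. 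The standard repair works here: replace $q,Q$ by $q_\rho(z)=q(\rho z)$, $Q_\rho(z)=Q(\rho z)$, note that $q\prec Q$ implies $q_\rho\prec Q_\rho$ (Schwarz lemma), that $I_\gamma$ commutes with dilations so $G_\rho=I_\gamma[q_\rho]$ and $F_\rho=I_\gamma[Q_\rho]$, and that $Q_\rho$, $F_\rho$ are analytic and univalent on $\overline{\D}$; run your argument verbatim for each $\rho<1$ and then let $\rho\to1^-$, using univalence of the limit target $F$ to pass the subordination to the limit. With these two points made explicit, the proof is correct.
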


\bigskip

\subsection{Elements of semigroup theory}\label{subsect-semig}



We now turn to main notions and facts from semigroup theory in the unit disk. For details the reader can consults with \cite{B-C-DM-book, E-Sbook, R-S1}.
\begin{definition}\label{def-sg}
A family $\left\{ \phi _{t}\right\} _{t\geq 0}\subset\HolD$ is
called a {\sl one-parameter continuous semigroup}  (or {\sl semigroup}, for short) if
\begin{itemize}
  \item [(a)] $\phi _{t+s}=\phi _{t}\circ \phi _{s},\ t,s\geq 0$; and
  \item [(b)] $\lim\limits_{t\rightarrow 0^{+}}\phi _{t}=\Id$, where $\Id$ is the identity map on $\D$ and the limit exists in the topology of uniform convergence on compact sets in $\D$.
\end{itemize}
\end{definition}

Moreover, according to the seminal result by Berkson and Porta \cite{BE-PH}, the family $\left\{ \phi _{t}\right\} _{t\geq 0}$ is differentiable with respect to its parameter $t\geq 0$, the limit
\[
f:=\lim_{t\rightarrow 0^{+}}\frac{1}{t}\left( \Id-\phi _{t}\right)
\]
exists and defines a holomorphic function on $\D$. Furthermore, $\phi _{t}(z)$ is the solution of the Cauchy problem:
\begin{equation*}
\frac{\partial \phi _{t}(z)}{\partial t}+f(\phi _{t}(z))=0\quad \mbox{and} \quad \phi _{0}(z)= z\in \D.
\end{equation*}
The function $f$ is called the  {\sl infinitesimal generator} of the semigroup $\left\{ \phi _{t}\right\} _{t\geq 0}\subset\HolD.$ The class of all (holomorphic) generators on $\D$ is denoted by $\G$.

 The following assertion characterizes the class $\G$.
\begin{theorem}\label{th-gen-prop}
Let $f\in \Hol$. Then $f \in\G$ if and only if one of the following conditions holds:
\begin{itemize}
  \item [(a)] there are a point $\tau \in \overline{\D}$ and a function $q \in \Hol$ with $\Re q( z) \geq 0$, $z\in \D,$ such that
\begin{equation}\label{b-p}
f( z ) =\left( z-\tau \right) \left( 1-z\overline{\tau }\right) q( z),\quad z\in \D.
\end{equation}
Moreover, this representation is unique.

  \item [(b)] For any $z\in\D$ the function $f$ satisfies the inequality
  \begin{equation}\label{abate}
\Re \left[ 2f(z) \overline{z} + \left(1- | z| ^2\right) f'(z)\right] \geq 0.
\end{equation}
\end{itemize}
\end{theorem}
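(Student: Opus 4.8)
The plan is to establish the cycle of implications (a)$\Rightarrow f\in\G\Rightarrow$(b)$\Rightarrow$(a), together with the uniqueness assertion; this proves all the equivalences at once while letting each arrow use the most convenient tool. For (a)$\Rightarrow f\in\G$ I would solve the autonomous Cauchy problem $\dot z=-f(z)$, $z(0)=z_0\in\D$, which has a local holomorphic flow since $f$ is locally Lipschitz, and then show that the trajectories never leave $\D$. The decisive point is that along a trajectory $\frac{d}{dt}|z|^2=-2\Re\bigl[\overline{z}f(z)\bigr]$, while on the unit circle the Berkson--Porta factor satisfies $\overline{z}(z-\tau)(1-z\overline{\tau})=|1-\overline{\tau}z|^2$, so that
\[
\Re\bigl[\overline z f(z)\bigr]=|1-\overline\tau z|^2\,\Re q(z)\ge0 .
\]
Hence the field $-f$ points inward near $\partial\D$, and a maximal-interval argument produces a globally defined semigroup $\{\phi_t\}$ whose generator is $f$.

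For $f\in\G\Rightarrow$(b) I would differentiate the Schwarz--Pick inequality along the semigroup. Since each $\phi_t$ is a self-map of $\D$, the quantity $Q(t):=(1-|z|^2)\,|\phi_t'(z)|\big/\bigl(1-|\phi_t(z)|^2\bigr)$ satisfies $Q(t)\le1$ with $Q(0)=1$, whence $Q'(0^+)\le0$. Using $\partial_t\phi_t=-f(\phi_t)$ and the consequence $\partial_t\phi_t'=-f'(\phi_t)\,\phi_t'$ one computes
\[
\frac{Q'(0)}{Q(0)}=-\Re f'(z)-\frac{2\,\Re\bigl[\overline z f(z)\bigr]}{1-|z|^2},
\]
and multiplying the inequality $Q'(0)\le0$ by $1-|z|^2>0$ gives exactly inequality (b). This elegant Julia-type differentiation is the cleanest of the three arrows.

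The crux is (b)$\Rightarrow$(a). When $f$ has a zero $\tau\in\D$ I would first normalize it to the origin: conjugating the field by the automorphism $\sigma\in\Aut$ with $\sigma(0)=\tau$ sends $f$ to $\tilde f(z)=f(\sigma(z))/\sigma'(z)$, which again satisfies (b) because the class $\G$ is invariant under $\Aut$, and $\tilde f(0)=0$. Writing $\tilde f(z)=z\,q(z)$ with $q\in\Hol$, a direct substitution recasts (b) as $\Re\bigl[(1+|z|^2)q+(1-|z|^2)zq'\bigr]\ge0$; on the circle $|z|=r$ the Cauchy--Riemann relations give $\Re[zq']=r\,\partial_r\Re q$, so with $u(r):=\Re q(re^{i\theta})$ the inequality becomes the radial differential inequality $r(1-r^2)u'+(1+r^2)u\ge0$. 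The integrating factor $\mu(r)=r/(1-r^2)$ turns this into $\frac{d}{dr}\bigl[\mu u\bigr]\ge0$, and since $\mu u\to0$ as $r\to0^+$ we conclude $\mu u\ge0$, i.e.\ $\Re q\ge0$. Undoing the conjugation recovers the representation $f=(z-\tau)(1-z\overline\tau)q$ with $\Re q\ge0$. Uniqueness then follows from the minimum principle: a second interior zero would force the harmonic function $\Re q$ to vanish at an interior point, hence $q$ to be a purely imaginary constant, which has no further zero in $\D$.

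The main obstacle I foresee is the fixed-point-free case of (b)$\Rightarrow$(a), in which $f$ has no zero in $\D$ and the point $\tau$ must be located on $\partial\D$. There the normalization above is unavailable, and I would instead obtain $\tau$ as a boundary Denjoy--Wolff point, either by perturbing $f$ into generators possessing interior null points and passing to the limit, or by invoking the Julia--Wolff--Carath\'eodory boundary analysis to produce the unimodular $\tau$ for which $\Re\bigl[f(z)/((z-\tau)(1-z\overline\tau))\bigr]\ge0$. Carrying out the change-of-variable computation that establishes the $\Aut$-invariance of (b), and controlling this boundary limit, are the two steps I expect to be the most laborious; everything else reduces to the clean radial and Schwarz--Pick arguments above.
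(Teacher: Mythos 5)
The paper itself contains no proof to compare against: Theorem~\ref{th-gen-prop} is quoted as classical, with \eqref{b-p} attributed to Berkson--Porta \cite{BE-PH} and \eqref{abate} to Abate \cite{Ababook89}, so your attempt must be judged against those classical arguments. On its own terms your proposal has one outright error and one acknowledged but unfilled hole. The error is in the arrow (a)$\,\Rightarrow\, f\in\G$: the identity $\overline{z}(z-\tau)(1-z\overline{\tau})=|1-\overline{\tau}z|^{2}$ holds \emph{only} for $|z|=1$, where $f$ and $q$ need not be defined, and inside $\D$ the conclusion $\Re\left[\overline{z}f(z)\right]\ge 0$ is false in general. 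Take $\tau=1$, $q\equiv 1$, so $f(z)=-(1-z)^{2}$: then $\Re\left[\overline{z}f(z)\right]<0$ on the whole radius $(0,1)$, and the trajectories of $\dot z=-f(z)$ move \emph{toward} the boundary point $1$ with $|z(t)|$ increasing --- as they must whenever $\tau\in\partial\D$, since the Denjoy--Wolff point then attracts the flow to the boundary. So ``the field points inward near $\partial\D$'' fails exactly in the case where it is needed, and your maximal-interval argument collapses; even for $\tau\in\D$ the monotonicity of $|z(t)|$ holds only \emph{after} conjugating $\tau$ to the origin (then $f=zp$ with $\Re p\ge0$ and $\frac{d}{dt}|z|^{2}=-2|z|^{2}\Re p\le 0$). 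The standard repair for $\tau\in\partial\D$ is to note $z-\tau=-\tau(1-\overline{\tau}z)$, hence $f(z)=-\tau(1-\overline{\tau}z)^{2}q(z)$, and to pass to the right half-plane by $w=(\tau+z)/(\tau-z)$: the equation becomes $\dot w=2q(z(w))$ with nonnegative real part, so $\Re w(t)$ is nondecreasing, the flow stays in the half-plane, and global forward existence follows from the growth estimates for Carath\'eodory-class functions.

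The hole is the zero-free case of (b)$\,\Rightarrow\,$(a), which you correctly identify as the crux but only gesture at (``perturb'', ``invoke Julia--Wolff--Carath\'eodory''): producing the unimodular $\tau$ for a zero-free $f$ is precisely the substance of Berkson--Porta's theorem, in the literature obtained either by first proving that \eqref{abate} makes $f$ a generator and extracting $\tau$ as the Denjoy--Wolff point of the generated semigroup, or via nonlinear resolvents as in Reich--Shoikhet; without it your cycle of implications remains open. Two smaller defects: inside the arrow (b)$\Rightarrow$(a) you cannot justify replacing $f$ by $\tilde f=(f\circ\sigma)/\sigma'$ through ``invariance of $\G$'', since $f\in\G$ is not yet available there --- the direct verification that \eqref{abate} is preserved under this conjugation, which you defer, is genuinely required; and your uniqueness argument treats only two interior zeros, leaving untouched the cases of two distinct boundary points $\tau$, or of one interior and one boundary point. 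On the credit side, your arrow $f\in\G\Rightarrow$(b) by differentiating the Schwarz--Pick quotient at $t=0$ is exactly Abate's argument and is correct, and your interior case of (b)$\Rightarrow$(a) --- the radial inequality $r(1-r^{2})u'+(1+r^{2})u\ge0$ with integrating factor $r/(1-r^{2})$ --- is also correct; it is the same device the survey itself invokes (via the proof of \cite[Lemma~3.5.3]{SD}) when establishing Theorem~\ref{filtra-N}.
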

We notice that formula \eqref{b-p}  was obtained by Berkson and Porta in \cite{BE-PH} and is called the {\it Berkson--Porta representation}. Inequality \eqref{abate} was established by Abate in \cite{Ababook89}.

Observe that if the semigroup $\left\{ \phi_{t}\right\}_{t\geq
0}\subset \HolD$ generated by $f$ contains neither elliptic automorphism of $\D $, nor the identity mapping, then the point $\tau \in \overline{\D }$ in \eqref{b-p} is an attractive
point of the semigroup $\left\{ \phi_{t}\right\} _{t\geq
0}\in \D $, that is,
\begin{equation}\label{tau-uniq}
\lim\limits_{t \to \infty} \phi_{t}( z) =\tau
\text{ for all }\,  z\in \D.
\end{equation}

The last assertion is a continuous analog of the classical Denjoy--Wolff theorem (see \cite{AM-92}, \cite{RS-SD-97b} and \cite{SD}). The point $\tau $ in \eqref{b-p} is called the {\it Denjoy--Wolff point} of  $\left\{ \phi_{t}\right\} _{t\geq 0}$.

If $\tau \in \D$, it follows from
the uniqueness of solutions to the Cauchy problem that this point $\tau$ must be the (unique) null point of $f$ in $\D$. Otherwise, if $f$ has no null point in $\D$, there is a unique boundary point $\tau\in\partial\D$ that is attractive for the semigroup in the sense~\eqref{tau-uniq}.

 Let focus on the case $\tau\in\D$.  Up to the M\"obius involution 
 of the unit disk defined by $M_a(z)=\frac{a-z}{1-z\overline{a}},$ one can always assume that $\tau=0$, that is, $f(0) =0,$ or, what is the same, $\phi_t(0) =0$ for all $t\geq 0$. In this case $f\in \Ao$ is an infinitesimal generator if and only if  $\Re \frac{f(z)}{z}\geq 0, \   z\in \D \setminus \{0\}$, see Theorem~\ref{th-gen-prop} (a). So, it is natural to denote
\begin{equation*}
\G_0:=\mathcal{A}\cap \G= \left\{ f\in\Hol: \ \frac{f(z)}z\in\Pp\right\}.
\end{equation*}
Problems (1)--(4) discussed in Section~\ref{sect-int} are of special relevance for the class $\G_0$.

As an example for the first problem, let  consider the function $f\in\Ao$ defined by $f(z) =-z -2\log (1-z)$. Since this function is transcendental, it is not trivial to check whether either \eqref{b-p}, or \eqref{abate}, or other equivalent condition holds. However, by direct calculation one gets $\Re f'(z) = \Re\frac{1+z}{1-z}>0,z\in\D,$ so $f \in \RA$. Notice that \eqref{NW1} implies $\RA\subset\G_0$, thus $f$ is a generator.

Recall that every $f\in \RA$ is univalent by Theorem~\ref{thm-NW}. Since not all generators are univalent, the Noshiro--Warschawski condition is far from being a necessary condition for membership in $\G_0$.

As for the second problem, the rate of convergence of semigroups to zero can be estimated in terms of the Euclidean distance as follows.
\begin{theorem}\label{thm-G-P}
Let $\{\phi_{t}\}_{t\geq 0}$ be the semigroup generated by $f \in \G_0$. Then for
all $z \in \D$ and $t\geq 0$, the following estimates hold:
\begin{itemize}
\item[$(i)$] $|z|\cdot \exp\left(-t\,\Re f'(0) \dfrac{1+|z|}{1-|z|}\right)\leq\left|\phi_{t}(z)\right|\leq|z|\cdot \exp\left(-t\, \Re
f'(0) \dfrac{1-|z|}{1+|z|}\right)$;

\item[$(ii)$] $\exp(-t\, \Re f'(0) )\dfrac{|z|}{(1+|z|)^{2}}\leq \dfrac{\left|\phi_{t}(z)\right|}{\left(1-\left|\phi_{t}(z)\right|\right)^{2}}
\leq\exp(- t\,\Re f'(0) )\dfrac{|z|}{(1-|z|)^{2}}$.
\end{itemize}
\end{theorem}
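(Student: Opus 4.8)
The plan is to reduce both estimates to a single scalar differential relation for $|\phi_t(z)|$ and then to feed in Harnack's inequality (Theorem~\ref{thm-RH}). Fix $z\in\D$, put $w(t):=\phi_t(z)$, and recall the Cauchy problem $\dot w=-f(w)$, $w(0)=z$. For $z\neq 0$ one has $w(t)\neq 0$ at every finite $t$: otherwise uniqueness of solutions, together with the equilibrium $\phi_t(0)=0$, would force $z=0$; hence $\log|w(t)|$ is differentiable. Writing $p(z):=f(z)/z\in\Pp$ and $h:=\Re p$, one computes
\[
\frac{d}{dt}\log|w(t)|=\Re\frac{\dot w}{w}=-\Re\frac{f(w(t))}{w(t)}=-h\bigl(w(t)\bigr).
\]
Here $h$ is a nonnegative harmonic function on $\D$ with $h(0)=\Re f'(0)$, so Harnack's inequality (Theorem~\ref{thm-RH}, applied after normalizing by $h(0)$) yields
\[
\Re f'(0)\,\frac{1-|w|}{1+|w|}\le h(w)\le \Re f'(0)\,\frac{1+|w|}{1-|w|},\qquad w\in\D.
\]
Since $h\ge0$, the displayed identity also shows that $t\mapsto|\phi_t(z)|$ is non-increasing, so $|\phi_t(z)|\le|z|$ throughout.

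For part $(i)$ I would substitute the two Harnack bounds into the identity for $\frac{d}{dt}\log|w|$. Because $|w(s)|\le|z|$ for all $s\in[0,t]$ and the functions $r\mapsto\frac{1-r}{1+r}$ and $r\mapsto\frac{1+r}{1-r}$ are monotone, the modulus-dependent factors may be frozen at $r=|z|$, producing the constant-coefficient inequalities
\[
-\Re f'(0)\,\frac{1+|z|}{1-|z|}\le\frac{d}{ds}\log|w(s)|\le-\Re f'(0)\,\frac{1-|z|}{1+|z|}.
\]
Integrating over $[0,t]$ and exponentiating gives precisely the two-sided estimate $(i)$.

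For part $(ii)$ the key observation is that the Koebe-type quantities are tailored to absorb the modulus factors in Harnack. Differentiating along the flow and using $\dot\rho=-\rho\,h$ with $\rho=|w|$ gives
\[
\frac{d}{dt}\log\frac{|w|}{(1-|w|)^2}=-h(w)\,\frac{1+|w|}{1-|w|},\qquad
\frac{d}{dt}\log\frac{|w|}{(1+|w|)^2}=-h(w)\,\frac{1-|w|}{1+|w|}.
\]
Inserting the lower Harnack bound in the first relation and the upper one in the second makes both right-hand sides independent of $|w|$: the first is $\le-\Re f'(0)$ and the second is $\ge-\Re f'(0)$. Integrating and exponentiating yields
\[
\frac{|\phi_t(z)|}{(1-|\phi_t(z)|)^2}\le e^{-t\Re f'(0)}\frac{|z|}{(1-|z|)^2},\qquad
\frac{|\phi_t(z)|}{(1+|\phi_t(z)|)^2}\ge e^{-t\Re f'(0)}\frac{|z|}{(1+|z|)^2}.
\]
The first is the upper estimate in $(ii)$; the lower estimate follows from the second and the elementary inequality $\frac{|\phi_t(z)|}{(1-|\phi_t(z)|)^2}\ge\frac{|\phi_t(z)|}{(1+|\phi_t(z)|)^2}$.

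The computations of the logarithmic derivatives and the verification that $w(t)\neq0$ are routine. The only genuinely thoughtful step is in $(ii)$: recognizing that differentiating the Koebe transforms $\frac{|w|}{(1\mp|w|)^2}$ produces exactly the factors $\frac{1\pm|w|}{1\mp|w|}$, which are reciprocal to those appearing in Harnack's inequality, so that the two cancel and leave a clean exponential rate with no residual dependence on $|\phi_t(z)|$. I expect this cancellation, rather than any estimate, to be the heart of the argument.
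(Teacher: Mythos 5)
Your proof is correct. Note first that the paper itself does not prove Theorem~\ref{thm-G-P}: it only states it and attributes estimate $(i)$ to Gurganus and estimate $(ii)$ to Poreda, so there is no in-paper argument to match; what you have reconstructed is essentially the standard argument underlying those cited results. All the individual steps check out: the nonvanishing of $w(t)=\phi_t(z)$ for $z\neq0$ follows from two-sided local uniqueness for $\dot w=-f(w)$ (since $f$ is holomorphic, hence locally Lipschitz) together with the equilibrium at the origin; the identity $\frac{d}{dt}\log|w|=-\Re\frac{f(w)}{w}=-h(w)$ is valid wherever $w\neq0$; Harnack applies because $p=f/z\in\Pp$ (indeed, for $f\in\G_0$ the normalization forces $p(0)=f'(0)=1$, so $\Re f'(0)=1$ and the constants in the statement could be simplified, but carrying $\Re f'(0)$ through is harmless and matches the theorem as written); the freezing of the modulus factors at $r=|z|$ in part $(i)$ is legitimate by the monotonicity of $t\mapsto|w(t)|$ and of $r\mapsto\frac{1\pm r}{1\mp r}$; and your logarithmic derivatives of the Koebe-type quantities are correct, e.g.\ $\frac{d}{dt}\log\frac{\rho}{(1-\rho)^2}=\dot\rho\cdot\frac{1+\rho}{\rho(1-\rho)}=-h(w)\frac{1+\rho}{1-\rho}$ with $\rho=|w|$, after which the Harnack factors cancel exactly as you say, leaving the clean rate $-\Re f'(0)$. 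Your closing observation is accurate: the exact reciprocity between the Harnack bounds and the factors produced by differentiating $\frac{\rho}{(1\mp\rho)^2}$ is the heart of $(ii)$, and the passage from the lower bound on $\frac{\rho}{(1+\rho)^2}$ to the lower bound in $(ii)$ via $\frac{\rho}{(1-\rho)^2}\ge\frac{\rho}{(1+\rho)^2}$ is fine. The only cosmetic point worth tightening is to state explicitly that the case $z=0$ is trivial (both sides vanish), since the logarithmic computation presupposes $z\neq0$.
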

Estimate (i) is due to Gurganus~\cite{GKR-75} and estimate (ii) was established by Poreda~\cite{Por}.

It is worth to mention that although the estimates of rate of convergence given in Theorem~\ref{thm-G-P} are very useful, they are not uniform on the whole disk $\D$. Indeed, inequality (i) says that the semigroup converges exponentially pointwise, and consequently, uniformly on every compact subset of $\D$, that is, $|\phi_{t}(z)| \le |z| \exp(-t a)$, where $a=a(z)$ depends on $z$. The point is that in general $a(z)\to0$ as $z$ approaches the boundary $\partial\D$. At the same time, there are specific generators in $\G_0$ for which the rate of convergence is exponential, uniformly on the whole disk. This leads us to the following concept.

\begin{definition}[see \cite{BCDES}]\label{def-squee}
Let $\left\{ \phi _{t}\right\} _{t\geq 0}\subset\HolD$ be a continuous semigroup. If there exists a constant $a> 0$ such that
$$|\phi _{t}(z)|\leq e^{-at}|z|\quad \text{ for all } \quad z\in \mathbb{D},$$ then $\left\{ \phi _{t}\right\} _{t\geq
0}$  is said to be \textsl{exponentially squeezing semigroup with squeezing ratio $a$.}
\end{definition}

A criterion for a semigroup to be exponentially squeezing is given in the next statement
\begin{proposition}[Proposition~2.7 in \cite{BCDES}]\label{charactexp copy(1)}
Let $f\in\G_0$ and  $a\in(0,1]$. Then the semigroup generating by $f$ is exponentially squeezing with squeezing ratio $a$ if and only if $\Re\frac{f(z)}{z} \ge a$ for all $z\in\D\setminus \{0\}$.
\end{proposition}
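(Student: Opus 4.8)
The plan is to reduce both implications to a single first-order differential identity for the squared modulus of the flow and then read each direction off from it. Fix $z\in\D\setminus\{0\}$ and let $\{\phi_t\}_{t\ge0}$ be the semigroup generated by $f$. Since $f(0)=0$ and each $\phi_t$ is a univalent self-map of $\D$ fixing $0$, the orbit satisfies $\phi_t(z)\neq0$ for every finite $t\ge0$, so the quotient $f(w)/w$ is well defined along it. Differentiating $|\phi_t(z)|^2=\phi_t(z)\overline{\phi_t(z)}$ in $t$ and using the Cauchy problem $\partial_t\phi_t=-f(\phi_t)$ gives
\begin{equation*}
\frac{d}{dt}\,|\phi_t(z)|^2=-2\,\Re\!\left[f(\phi_t(z))\,\overline{\phi_t(z)}\right]=-2\,|\phi_t(z)|^2\,\Re\frac{f(\phi_t(z))}{\phi_t(z)},
\end{equation*}
where the second equality uses $\Re[f(w)\overline{w}]=|w|^2\,\Re\frac{f(w)}{w}$. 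This identity is the heart of the argument.

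For the sufficiency direction, I would assume $\Re\frac{f(w)}{w}\ge a$ for every $w\in\D\setminus\{0\}$. Writing $g(t)=|\phi_t(z)|^2$, the identity yields the differential inequality $g'(t)\le-2a\,g(t)$, and a Gronwall-type comparison (integrating $\frac{d}{dt}\log g(t)\le-2a$) gives $g(t)\le g(0)e^{-2at}$, that is, $|\phi_t(z)|\le e^{-at}|z|$. Since this also holds trivially at $z=0$, the semigroup is exponentially squeezing with ratio $a$ in the sense of Definition~\ref{def-squee}.

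For necessity, I would assume $|\phi_t(z)|\le e^{-at}|z|$ for all $z$ and $t\ge0$, hence $|\phi_t(z)|^2\le e^{-2at}|z|^2$ with equality at $t=0$. Because the two sides agree at $t=0$ and the left-hand side does not exceed the right-hand side for $t>0$, comparing one-sided derivatives at $t=0^+$ and evaluating the identity above at $\phi_0(z)=z$ gives $-2|z|^2\,\Re\frac{f(z)}{z}\le-2a|z|^2$, i.e. $\Re\frac{f(z)}{z}\ge a$; as $z\in\D\setminus\{0\}$ was arbitrary, this is exactly the desired bound.

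The computations are elementary; the points requiring care — and hence the main obstacle — are the analytic justifications: that $\phi_t(z)$ avoids $0$ so the quotient is defined along the whole orbit, that $t\mapsto\phi_t(z)$ is differentiable so that both the identity and the one-sided derivative comparison are legitimate, and that the Gronwall step applies. I would also remark that the restriction $a\le1$ is not a true constraint: since $f\in\G_0\subset\Ao$ forces $f'(0)=1$, the inequality $\Re\frac{f(z)}{z}\ge a$ cannot persist as $z\to0$ once $a>1$, so squeezing with ratio $a>1$ is impossible anyway.
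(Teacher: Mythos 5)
Your proof is correct; the survey itself states this proposition without proof (citing Proposition~2.7 of \cite{BCDES}), and your flow-differentiation argument --- computing $\frac{d}{dt}|\phi_t(z)|^2=-2\,|\phi_t(z)|^2\,\Re\frac{f(\phi_t(z))}{\phi_t(z)}$, then Gronwall for sufficiency and the one-sided derivative comparison at $t=0^+$ for necessity --- is essentially the standard argument used in that source. The side points you flag (orbits avoid $0$ because each $\phi_t$ is univalent with $\phi_t(0)=0$, differentiability in $t$ from the Berkson--Porta theory) are exactly the right justifications, and your closing remark correctly notes that $f'(0)=1$ forces $a\le 1$.
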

\noindent(For the infinite-dimensional case the reader can be referred to \cite{E-R-S-02, E-R-S-04}, see also \cite{E-R-S-19, R-S1}.)

Following the analogy with semigroups of linear operators, the possibility of analytic extension of semigroups of holomorphic mappings in a complex parameter has been studied. This relates to problem (3) above and consists of two components: to find conditions  that allow analytic extension of a semigroup with respect to its parameter into a sector in the complex plane, and to estimate the maximal size (angle of opening) of this sector.

 To be more specific, fix $\theta\in\left(0,\frac\pi2\right]$ and denote
\begin{equation*}
\Lambda(\theta) = \left\{\zeta\in\C: |\arg \zeta| <\theta  \right\}.
\end{equation*}

\begin{definition}[see \cite{E-R-S-19, ACP, E-S-T}]\label{def-an-sg}
A family $\{F_{\zeta}\}_{\zeta\in\Lambda}$ of holomorphic self-mappings of $\D$ indexed by a parameter $\zeta$ in the sector $\Lambda:=\Lambda(\theta)\cup\{0\}$ of the complex plane is said to be a one-parameter analytic semigroup if
\begin{itemize}
\item[(i)]  $\zeta\mapsto F_\zeta$ is analytic in $\Lambda$;

\item[(ii)] $\lim\limits_{\Lambda\ni\zeta\to0} F_{\zeta}=F_0 = I$;

\item[(iii)] $F_{\zeta_1 +\zeta_2} = F_{\zeta_1}\circ F_{\zeta_2}$
whenever $\zeta_1,\zeta_2\in\Lambda$.
\end{itemize}
\end{definition}

The approach to analytic extension of semigroups of holomorphic self-mappings is based on
the following necessary and sufficient condition established in \cite[Theorem~2.12]{E-S-T}.

\begin{proposition}\label{rotation}
Let $f\in\G_0$ and  $\alpha\in\left[0,1\right)$. Then the semigroup generating by $f$  extends analytically to the sector
$\Lambda\left(\frac{\pi(1-\al)}2\right)$ in $\C$ if and only if $\left|\arg \frac{f(z)}z \right|\leq \frac{\pi\alpha}2$ for all $z\in \D\setminus \{0\}$.
\end{proposition}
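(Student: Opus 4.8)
The plan is to reduce analytic extendability to the sector $\Lambda(\beta)$, where $\beta:=\frac{\pi(1-\alpha)}2$, to the purely algebraic requirement that every rotation $e^{i\psi}f$ with $|\psi|\le\beta$ again be a generator in $\G_0$, and then to record that this rotation condition is literally the bound on $\arg\frac{f(z)}z$. The geometric bridge is that, along a ray $\zeta=te^{i\psi}$, an analytic semigroup restricts to an ordinary one-parameter semigroup whose infinitesimal generator is $e^{i\psi}f$.

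For necessity, suppose $\{F_\zeta\}$ is an analytic semigroup on $\Lambda(\beta)\cup\{0\}$ extending $\{\phi_t\}$, so that $F_t=\phi_t$ for $t\ge0$. Fix $\psi\in(-\beta,\beta)$ and put $G^{\psi}_t:=F_{te^{i\psi}}$. Conditions (ii)--(iii) of Definition~\ref{def-an-sg} show that $\{G^{\psi}_t\}_{t\ge0}\subset\HolD$ is a continuous semigroup in the sense of Definition~\ref{def-sg}. Since $F$ is holomorphic in $\zeta$ and equals $\phi_t$ on the positive axis, $\partial_\zeta F_\zeta|_{\zeta=0}=-f$; differentiating $G^{\psi}_t$ at $t=0$ along the ray gives $-e^{i\psi}f$ for its generator, so $e^{i\psi}f\in\G$. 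Because $f\in\G_0$ vanishes at $0$, so does $e^{i\psi}f$, and Theorem~\ref{th-gen-prop}(a) with $\tau=0$ yields $\Re\frac{e^{i\psi}f(z)}z\ge0$, i.e. $\big|\psi+\arg\frac{f(z)}z\big|\le\frac\pi2$ for all $z\in\D\setminus\{0\}$. Letting $\psi\uparrow\beta$ and $\psi\downarrow-\beta$ gives $\big|\arg\frac{f(z)}z\big|\le\frac\pi2-\beta=\frac{\pi\alpha}2$.

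For sufficiency, assume $\big|\arg\frac{f(z)}z\big|\le\frac{\pi\alpha}2$. Then $\Re\frac{e^{i\psi}f(z)}z\ge0$ for every $|\psi|\le\beta$, so each $e^{i\psi}f\in\G_0$ generates a semigroup $\{\phi^{\psi}_t\}_{t\ge0}\subset\HolD$ fixing the origin. I would build the extension by integrating the autonomous holomorphic field: for $\zeta$ in the open sector let $F_\zeta(z):=w(\zeta)$ be the solution of $\frac{dw}{d\zeta}=-f(w)$, $w(0)=z$, along the segment $[0,\zeta]$. Writing $\zeta=te^{i\psi}$ and reparametrising by $t$ turns this into $\frac{dw}{dt}=-e^{i\psi}f(w)$, which is exactly the real flow of $e^{i\psi}f$; hence $w(\zeta)=\phi^{\psi}_t(z)\in\D$ and the solution persists for all $\zeta\in\Lambda(\beta)$. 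Thus the rotation condition is precisely what furnishes global existence and the invariance $F_\zeta(\D)\subseteq\D$ along every ray. Holomorphy of $\zeta\mapsto F_\zeta(z)$ then follows from the integral identity $w(\zeta)=z-\int_0^\zeta f(w(s))\,ds$ together with the holomorphy of $f$, and joint analyticity in $(\zeta,z)$ from holomorphic dependence on the initial datum. The normalization (ii) comes from $\lim_{t\to0^+}\phi^{\psi}_t=\Id$, and the semigroup identity (iii) from uniqueness for the autonomous equation.

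The delicate part is this last step: upgrading the ray-wise existence to genuine joint analyticity on the two-dimensional sector and verifying $F_{\zeta_1+\zeta_2}=F_{\zeta_1}\circ F_{\zeta_2}$ when $\zeta_1,\zeta_2$ lie on different rays. A priori the flow is only controlled along straight segments issuing from $0$; to compose along the broken path $0\to\zeta_1\to\zeta_1+\zeta_2$ I would invoke uniqueness for the holomorphic field, together with the already-established invariance of $\D$, to prove path-independence and hence agreement with the value along $[0,\zeta_1+\zeta_2]$. A secondary point is the uniformity of $F_\zeta\to I$ as $\zeta\to0$ inside the sector: I would derive it from Harnack-type bounds (Theorem~\ref{thm-RH}) controlling $\phi^{\psi}_t$ uniformly in $\psi$, producing a normal family and thus continuity up to the vertex.
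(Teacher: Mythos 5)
Your proposal cannot be checked against an internal argument, because the paper does not prove Proposition~\ref{rotation}: it is imported verbatim from \cite[Theorem~2.12]{E-S-T}. What you have written is, in substance, the standard proof of that cited theorem: analytic extendability to $\Lambda(\beta)$, $\beta=\frac{\pi(1-\alpha)}2$, is equivalent to $e^{i\psi}f$ being an infinitesimal generator for every $|\psi|\le\beta$, and since $e^{i\psi}f$ vanishes at the origin, the Berkson--Porta representation of Theorem~\ref{th-gen-prop}(a) with $\tau=0$ turns this into $\Re\bigl(e^{i\psi}f(z)/z\bigr)\ge 0$, i.e.\ exactly $\bigl|\arg\frac{f(z)}{z}\bigr|\le\frac{\pi\alpha}2$ after sweeping $\psi$ over $(-\beta,\beta)$. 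Your reduction is correct, and you are right to flag the two delicate points, both of which close exactly along the lines you indicate. Two refinements are worth recording. First, in the necessity part, avoid differentiating $F_\zeta$ at the vertex $\zeta=0$, where holomorphy is not available: instead observe that $\zeta\mapsto \partial_\zeta F_\zeta(z)+f(F_\zeta(z))$ is holomorphic on the open sector and vanishes on $(0,\infty)$, hence identically, so each ray-restricted semigroup satisfies $\frac{d}{dt}G^\psi_t=-e^{i\psi}f(G^\psi_t)$ and its Berkson--Porta generator is $e^{i\psi}f$ (note $e^{i\psi}f\in\G$ but not $\G_0$, since the normalization $f'(0)=1$ is destroyed --- your use of Theorem~\ref{th-gen-prop}(a) rather than the class $\Pp$ is the right move, and $\tau=0$ is forced because a generator with $\Re q\ge0$, $q\not\equiv0$, cannot vanish at an interior point other than its Denjoy--Wolff point). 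Second, in the sufficiency part, once $\partial_\zeta F_\zeta=-f(F_\zeta)$ is established on the sector, the identity $F_{\zeta_1+\zeta_2}=F_{\zeta_1}\circ F_{\zeta_2}$ follows because the open sector is closed under addition and both sides, as functions of $\zeta_2$ along a fixed ray, solve the same real Cauchy problem $\dot w=-e^{i\psi}f(w)$ with the same initial value $F_{\zeta_1}(z)$; and for condition (ii) of Definition~\ref{def-an-sg} a lighter tool than Harnack suffices: $\Re\bigl(e^{i\psi}f(w)/w\bigr)\ge0$ gives $\frac{d}{dt}|\phi^\psi_t(z)|^2\le0$, hence $|F_\zeta(z)|\le|z|$, and then
\begin{equation*}
\bigl|F_\zeta(z)-z\bigr|=\Bigl|\int_0^\zeta f\bigl(F_s(z)\bigr)\,ds\Bigr|\le |\zeta|\max_{|w|\le |z|}|f(w)|,
\end{equation*}
which yields $F_\zeta\to\Id$ uniformly on compacta as $\Lambda\ni\zeta\to0$, uniformly in the direction of approach.
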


\vspace{2mm}
Assume that we study the {\it family} of dynamical systems determined by a subset $\G_1\subsetneq\G_0$. Due to Propositions~\ref{charactexp copy(1)} and~\ref{rotation}, one can conclude that the study of the asymptotic behavior of dynamical systems may relay on the distinction whether there exist $k,\beta\ge0$ so that $\Re\frac{f(z)}z\ge k$ and/or $\left|\arg[f(z)/z]\right|\le \pi\beta/2$ for all $f \in \G_1.$
Of course if the answer is positive, it is of a special interest to find such (sharp) $k$ and/or $\beta$.
(For example, it follows from Marx and Strohh\"acker's Theorem~\ref{thm-M-S}  that $k=\frac{1}{2}$ fits for the class $\Co$.)

To systematize subsequent reasoning in this direction, let introduce the two quantities:
\begin{definition}[\cite{ESS}]\label{def-K-B}
Let $\G_1$ be a subclass of $\G_0$, denote
\begin{eqnarray*}
K(\G_1)&=&\inf_{f\in\G_1}\inf_{z\in\D}\Re\frac{f(z)}{z} \qquad\mbox{and}\\
B(\G_1)&=&\frac2\pi \sup_{f\in\G_1}\sup_{z\in\D} \left| \arg\frac{f(z)}{z}\right|.
\end{eqnarray*}
\end{definition}

\begin{rem}\label{rem_27_02}
  Observe that
  \begin{itemize}
    \item if a function $f_*\in\G_0$ is totally extremal for a class $\G_1$, then $K(\G_1)=\inf\limits_{z\in\D}\Re\frac{f_*(z)}z$ and $B(\G_1)=\frac2\pi \sup\limits_{z\in\D} \left|\arg\frac{f_*(z)}z \right| $;
    \item if $\G_1$ and $\G_2$ are two subsets with $\G_1\subseteq\G_2\subseteq\G_0$, then $K(\G_1)\ge K(\G_2)$ and $B(\G_1)\le B(\G_2)$.
  \end{itemize}
 \end{rem}
Propositions~\ref{charactexp copy(1)} and~\ref{rotation}, in fact, express the connection between {\it geometric}  properties  of generators $f\in\G_1$ and {\it dynamic} properties of the semigroups they generate. This connection explains inter alia the importance of searching for  totally extremal functions (if exist), see Definitions~\ref{def-extrem} and~\ref{def-extrem1}.  Namely, if $K(\G_1)>0$, then semigroups generated by elements of $\G_1$ are exponentially squeezing and $K(\G_1)$ is the sharp squeezing ratio for them. If  $B(\G_1)<1$, then semigroups generated by any $f\in\G_1$ admit analytic extension to the sector $\Lambda\left(\frac\pi2 (1-B(\G_1) )\right)$  and this sector is the maximal one.

\vspace{2mm}

Further, we recall some notions from geometric function theory adapted to the restrictions in this paper. Let $f \in \G_0$ and $\{\phi_{t}\}_{t\geq 0}$ be the semigroup generated by $f$. A point $\zeta\in\partial\D$ is called
\begin{itemize}
  \item  {\it a repelling fixed point } of the  semigroup $\{\phi_{t}\}_{t\geq 0}$  if for some $t>0$ (and hence for all $t>0$) the angular derivative $\phi_t'(\zeta):=\angle\lim\limits_{z\to\zeta}\frac{\phi_t(z)-\zeta}{z-\zeta}$ exists finitely;
  \item {\it a boundary regular null point} of $f$ if $f'(\zeta):=\lim\limits_{r\to1^-}\frac{f(r\zeta)}{\zeta(r-1)}$ exists finitely. In another terminology such point is also called a {\it boundary regular critical point} of $f$.
\end{itemize}

\begin{theorem}\label{thm-boud-p}
  Let $f \in \G_0$ and $\{\phi_{t}\}_{t\geq 0}$ be the semigroup generated by $f$.
  \begin{itemize}
    \item [(i)] If $\zeta\in\partial\D$ is a boundary regular null point of $f$, then $f'(\zeta)$ is a negative real number.
    \item [(ii)] A point $\zeta\in\partial\D$ is a repelling fixed point of $\{\phi_{t}\}_{t\geq 0}$ if and only if it is a boundary regular null point of $f$. In this case $\phi_t'(\zeta)=e^{-tf'(\zeta)}.$
  \end{itemize}
\end{theorem}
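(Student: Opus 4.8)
My plan is to reduce both parts to the Berkson--Porta representation of Theorem~\ref{th-gen-prop}(a) together with the Julia--Wolff--Carath\'eodory (JWC) theorem. Since $f\in\G_0$ has its Denjoy--Wolff point at the origin, that representation reads $f(z)=z\,p(z)$ with $p:=f/z\in\Pp$, so $\Re p>0$ on $\D$.

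For part (i), I would first rewrite the defining limit as
\[
f'(\zeta)=\lim_{r\to1^-}\frac{f(r\zeta)}{\zeta(r-1)}=\lim_{r\to1^-}\frac{r\,p(r\zeta)}{r-1}=:L,
\]
noting that finiteness of $L$ forces $p(r\zeta)\to0$ as $r\to1^-$. Passing through the Cayley transform $\omega=\frac{1-p}{1+p}$, which carries $\Pp$ into the class $\Omega$ with $\omega(0)=0$, the radial limit of $\omega$ at $\zeta$ equals $1$, and an elementary computation yields $\frac{\omega(r\zeta)-1}{\zeta(r-1)}\to-2L\,\overline{\zeta}$. Because $1-|\omega|\le|1-\omega|$, the finiteness of this limit gives $\liminf_{r\to1^-}\frac{1-|\omega(r\zeta)|}{1-r}<\infty$, so JWC applies at $\zeta$: the angular derivative $\omega'(\zeta)$ exists, equals $\alpha\,\overline{\zeta}$ for a real $\alpha>0$, and agrees with the radial value. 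Comparing the two expressions gives $-2L=\alpha>0$, whence $f'(\zeta)=L=-\alpha/2$ is a negative real number.

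For the formula and the implication $\mathrm{BRNP}\Rightarrow\mathrm{repelling}$ in part (ii), I would differentiate the generation equation $\partial_t\phi_t(z)=-f(\phi_t(z))$ in $z$ to obtain the variational equation $\partial_t\phi_t'(z)=-f'(\phi_t(z))\,\phi_t'(z)$, hence
\[
\phi_t'(z)=\exp\!\left(-\int_0^t f'(\phi_s(z))\,ds\right),\qquad z\in\D.
\]
Assuming $\zeta$ is a boundary regular null point, so $f'(\zeta)$ is finite and negative by (i), I let $z\to\zeta$ inside a Stolz angle. The geometric input is the invariance of Stolz angles under the flow: each $\phi_s$ fixes $\zeta$ angularly and maps a Stolz angle at $\zeta$ into one, so $\phi_s(z)\to\zeta$ nontangentially and $f'(\phi_s(z))\to f'(\zeta)$ for each $s$. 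With a uniform bound permitting dominated convergence on $s\in[0,t]$, the integral tends to $t\,f'(\zeta)$, and since the angular limit of $\phi_t'$ coincides with the angular derivative $\phi_t'(\zeta)$, I obtain $\phi_t'(\zeta)=e^{-t f'(\zeta)}>1$; thus $\zeta$ is a repelling fixed point with the asserted value.

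For the converse, suppose $\phi_{t_0}'(\zeta)$ is finite for some $t_0>0$. The semigroup law together with the chain rule for angular derivatives at a common boundary fixed point gives $\phi_{t+s}'(\zeta)=\phi_t'(\zeta)\,\phi_s'(\zeta)$; combined with the JWC lower bound $\phi_t'(\zeta)\ge1$ and continuity in $t$, this forces $\phi_t'(\zeta)=e^{ct}$ for all $t>0$. Differentiating $f=\lim_{t\to0^+}\frac1t(\Id-\phi_t)$ and passing to the boundary then yields $f'(\zeta)=\lim_{t\to0^+}\frac{1-e^{ct}}{t}=-c$, finite, so $\zeta$ is a boundary regular null point and $\phi_t'(\zeta)=e^{-t f'(\zeta)}$. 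The main obstacle throughout is the rigorous boundary passage: upgrading the radial limit defining $f'(\zeta)$ to the angular statement needed for JWC, establishing Stolz-angle invariance of the trajectories $s\mapsto\phi_s(z)$, and justifying the interchange of the boundary limit with the $s$-integral and with the $t\to0^+$ limit. The positivity $\Re p>0$ and Julia's lemma are exactly what keep these interchanges under control.
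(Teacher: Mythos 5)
Your part (i) is sound and essentially complete: writing $f=zp$ with $p\in\Pp$, passing to $\omega=(1-p)/(1+p)\in\Omega$, using $1-|\omega|\le|1-\omega|$ to get the Julia--Wolff--Carath\'eodory hypothesis from the radial limit alone, and matching the radial difference quotient $-2f'(\zeta)\overline{\zeta}$ against $\omega'(\zeta)=\alpha\overline{\zeta}$ with $\alpha>0$ is the standard reduction. For context: the survey itself does not prove Theorem~\ref{thm-boud-p}; it cites \cite{DS-03, C-DM-P}, Sections 12.1--12.2 of \cite{B-C-DM-book} and Sections 2.2--2.4 of \cite{E-Sbook}, so your argument must stand on its own.

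In part (ii), however, the items you defer as ``the main obstacle'' are not routine boundary-passage technicalities --- they are the actual content of the theorem, and as written they are assumed rather than proved. In the forward direction you posit that ``each $\phi_s$ fixes $\zeta$ angularly and maps a Stolz angle at $\zeta$ into one''; but by Julia's lemma, an angular fixed point at which Stolz angles are preserved is essentially equivalent to $\zeta$ being a boundary \emph{regular} fixed point of $\phi_s$, which is precisely the conclusion you are deriving, so the dominated-convergence evaluation of $\exp\bigl(-\int_0^t f'(\phi_s(z))\,ds\bigr)$ is circular as it stands. The known proofs break this circle quantitatively: from your part (i) one transfers Julia's inequality for $\omega$ back to $p$ to get the generator inequality $|p(z)|^2/\Re p(z)\le \alpha\,|\zeta-z|^2/(1-|z|^2)$ with $\alpha=-2f'(\zeta)$, then differentiates the Julia quotient $u(z)=|\zeta-z|^2/(1-|z|^2)$ along trajectories and applies Gronwall to obtain $u(\phi_t(z))\le e^{\alpha t}u(z)$; since the angular derivative at a boundary fixed point is controlled by $\sup_z u(\phi_t(z))/u(z)$, this yields simultaneously the fixed point, its regularity, and the derivative bound, after which the identity $\phi_t'(\zeta)=e^{-tf'(\zeta)}$ follows (this is the route of \cite{C-DM-P} and \cite{B-C-DM-book}; see also \cite{ESZ}). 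The converse has parallel gaps: you invoke the chain rule ``at a common boundary fixed point'' when only $\phi_{t_0}'(\zeta)$ is assumed finite --- that boundary regular fixed points are common to all elements of the semigroup is itself a theorem, not a formality; continuity of $t\mapsto\phi_t'(\zeta)$ is not available (angular derivatives behave only semicontinuously), though here boundedness of the additive function $t\mapsto\log\phi_t'(\zeta)$ on $[0,t_0]$ would salvage the exponential form without continuity; and the final step $f'(\zeta)=\lim_{t\to0^+}(1-e^{ct})/t$ interchanges the locally uniform interior limit defining $f$ with a radial boundary limit with no justification. The standard repair is again via Julia: differentiate $u(\phi_t(z))\le e^{ct}u(z)$ at $t=0^+$ to obtain the generator inequality for $p$, and then run your part (i) backwards through $\omega$ to produce the radial limit defining $f'(\zeta)$.
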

It turns out that the existence of boundary regular null points of a generator is equivalent to the existence of so-called backward flow-invariant domains, see~\cite{ESZ}. For more details on these notions and Theorem~\ref{thm-boud-p} see \cite{DS-03, C-DM-P}, Sections 12.1--12.2 in \cite{B-C-DM-book} and Sections 2.2--2.4 in \cite{E-Sbook}.

\bigskip

\section{Basic concepts}\label{sect-basic}

\setcounter{equation}{0}

In this section we introduce basic notions of filtration theory and present two  filtrations that provide important dynamic properties for the semigroups generated by elements of their classes. The definitions and results presented here are mainly taken from \cite{BCDES} and~\cite{ESS}.\vspace{2mm}

\subsection{Notion of filtration}\label{subsect-notion-filt}
First we define filtration (or parametric embedding) of infinitesimal generators.

\begin{definition}\label{def-filt}
Let $J$ be a connected subset of $\R$. A {\sl filtration} of $\G_0$ is a family $\mathfrak{F}= \left \{\Ff_s\right\}_{s\in J},\
\mathfrak{F}_s\subseteq\G_0,$ such that $\mathfrak F_s\subseteq \mathfrak F_t$ whenever $s,t \in J$ and $s\le t$.
\end{definition}

To enable  a more detailed description of concrete filtrations, we introduce additional notions.

\begin{definition}\label{def-stric_filt}
 Let  $\mathfrak{F}= \left \{\Ff_s\right\}_{s\in J},\ \mathfrak{F}_s\subseteq\G_0,$ be a filtration of $\G_0$. We say that
the filtration $\left \{ \mathfrak F_s\right\}_{s\in J}$ is {\sl strict} if $\mathfrak F_s\subsetneq \mathfrak F_t$ for
$s<t,\ s,t\in J$. In this case, for $t\in J$ we define the boundary of $\mathfrak{F}_t$ by the formula
\begin{equation}\label{boundary}
\partial\mathfrak{F}_t=\mathfrak{F}_t \setminus\bigcup_{s\in J, s<t}\mathfrak{F}_s.
\end{equation}
\end{definition}

Note that if the boundaries defined by \eqref{boundary} are not empty for every $t\in J$, then the filtration $\mathfrak{F}$ is strict. At the same time, we do not know any strict filtration formed by sets with empty boundaries.

If an infinitesimal  generator is totally extremal in $\mathcal F$, the semigroup it generates has some extremal dynamical behavior among those semigroup generated by infinitesimal generators of the class $\mathcal F$, from which the reason of the name.

\begin{definition}\label{def-extrem1}
We say that a filtration $\mathfrak{F}=\{\mathfrak F_s\}_{s\in J}$ admits a net $\left\{f_s\right\}_{s\in J}$ of {\sl
totally extremal functions} if for every $s\in J $, the function $f_s$ is totally extremal for the class~$\mathfrak F_s$.
\end{definition}

The problems of finding a net of totally extremal functions as well as boundaries for a given  filtration is of interest and will also be considered for filtrations studied  along this survey.
We will see (in particular, in Subsection~\ref{subsect-pseu-anal}) that elements of a net of totally extremal functions for some filtration might not be boundary points of the corresponding classes. And boundary points might not be totally extremal. In this connection we ask:

\begin{question}
Is there a strict filtration that does not admit a net of totally extremal functions?
\end{question}

\begin{question}
Is there a strict filtration $\Ff$ such that $\partial\Ff_t=\emptyset$ for all $t\in J$?
\end{question}

The next question is important for theory of filtrations, but has not yet been studied.

Assume that  $\partial\Ff_t\neq\emptyset$ for all $t \in J$. Curious to see how far the element $\partial\Ff_t$ is from $\Ff_s$, $s<t$, $s \in J.$ To this aim consider a distance $d$ in $\G_0$ (it can be, for example, induced by a distance in $\Pp$).

\begin{question}
Given $s<t$ and $f^* \in \partial\Ff_t$, find $d(f^*, \Ff_s) := \inf\limits_{f\in \Ff_s} d(f,f^*)$.
\end{question}
\noindent Once one answered this question, it is natural to ask
\begin{question}
Find $d_{s,t} := \inf\limits_{f^*\in\partial\Ff_t} d(f^*,\Ff_s)$.
Further, find $\hat f_s\in\Ff_s$ and $\hat f_t\in\partial\Ff_t$ (if exist) such that $d_{s,t} =d(\hat f_s,  \hat f_t)$.
\end{question}

An additional problem can be posed as follows:
\begin{question}
Given a family $\left\{\Ao_{\al_1,\ldots,\al_n}\right\}$ of subsets of $\Ao$ depending on $n$ parameters, find conditions on the curve $J\ni s\mapsto (\al_1(s),\ldots,\al_n(s))$ providing the family $\left\{\Ao_{\al_1(s),\ldots,\al_n(s)}\right\}_ {s\in J}$ is a filtration of $\G_0$.
\end{question}
We address this issue several times throughout the text when relevant. Except for these cases, the question is open. 

\bigskip

\subsection{Exponentially squeezing filtration}\label{ssect-squeezing-filt}
\setcounter{question}{0}

To describe some important properties of exponentially squeezing semigroups (see Definition~\ref{def-squee}) and their connection with other classes of holomorphic functions, we use the family of sets $\Cf=\left\{\Cf_\al  \right\}_{\alpha \in[0,1]}$
 defined by
\begin{equation}\label{squeezing-filtr}
\Cf_\alpha:=\left\{f\in\Ao:\ \Re f(z)\overline{z} \geq (1-\alpha)
|z|^{2} \quad \forall z\in \D\right\}.
\end{equation}

The next theorem explains our interest to this family.
\begin{theorem}\label{th-squee}
The following assertions hold:
  \begin{itemize}
    \item[(a)] the family of sets $\Cf$ is a strict filtration such that $\Cf_0=\{\Id\}$ and $\Cf_1=\G_0$.
    \item[(b)] The filtration $\Cf$ admits a net $\left\{f_\alpha\right\}_{\alpha\in(0,1]}$ of totally extremal
functions, where
\[
f_\alpha(z)=z\,\frac{1+(1-2\alpha)z}{1+z} \, .
\]
    \item[(c)] The boundary $\partial\Cf_\al$ consists of such functions $f\in\Ao$ that satisfies $\inf\limits_{z\in\D}\Re \frac{f(z)}z =1-\al$. In particular, $f_\al\in\partial\Cf_\al $ for every $\al\in(0,1]$.
     \item[(d)]  Let $f\in\G_0$. Then $f\in\Cf_\alpha$ if and only if the semigroup $\{\phi_t\}_{t\ge0}$ generated by $f$ admits squeezing ratio $1-\alpha$, that is, $|\phi_t(z)| \leq e^{-(1-\alpha) t}|z|,\ z\in \D$.
  \end{itemize}
\end{theorem}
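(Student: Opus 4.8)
The plan is to begin by rewriting the defining inequality in a form adapted to the harmonic function $\Re\frac{f(z)}{z}$. Since every $f\in\Ao$ has $f(0)=0$ and $f'(0)=1$, dividing $\Re f(z)\overline{z}\ge(1-\alpha)|z|^2$ by $|z|^2$ for $z\neq0$ (the value at the origin being automatic) gives
\[
\Cf_\alpha=\left\{f\in\Ao:\ \Re\frac{f(z)}{z}\ge 1-\alpha\ \text{ for all }z\in\D\setminus\{0\}\right\}.
\]
Monotonicity in (a) is then immediate, since $\alpha\le\beta$ forces $1-\alpha\ge1-\beta$ and hence $\Cf_\alpha\subseteq\Cf_\beta$. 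The equality $\Cf_1=\G_0$ is exactly Theorem~\ref{th-gen-prop}(a) with $\tau=0$, while for $\Cf_0$ I would apply the minimum principle to the harmonic function $\Re\frac{f(z)}{z}$: it equals $1$ at the origin and is $\ge1$ throughout, so it is constant and $f=\Id$. Strictness I would obtain from the net in (b): a short M\"obius computation gives $\frac{f_\alpha(z)}{z}=\frac{1+(1-2\alpha)z}{1+z}$, whose image of $\D$ is the half-plane $\{\Re w>1-\alpha\}$, so $\inf_{z}\Re\frac{f_\alpha(z)}{z}=1-\alpha$ and $f_\beta\in\Cf_\beta\setminus\Cf_\alpha$ for $\alpha<\beta$.

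For (c) I would unwind the definition \eqref{boundary}: writing $m(f)=\inf_{z}\Re\frac{f(z)}{z}$, membership $f\in\Cf_\alpha$ means $m(f)\ge1-\alpha$, whereas $f\notin\Cf_s$ for every $s<\alpha$ means $m(f)<1-s$ for all such $s$, i.e.\ $m(f)\le1-\alpha$; together these give $\partial\Cf_\alpha=\{f:m(f)=1-\alpha\}$, and the computation above places $f_\alpha$ in it. Part (d) is a direct application of Proposition~\ref{charactexp copy(1)} with squeezing ratio $a=1-\alpha$ for $\alpha\in[0,1)$; the remaining endpoint $\alpha=1$ is trivial because $\Cf_1=\G_0$ and $|\phi_t(z)|\le|z|$ holds for every semigroup by the Schwarz lemma.

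The heart of the argument, and the step I expect to be the main obstacle, is the total extremality in (b). By Remark~\ref{rem-note} it suffices to show that for every $f\in\Cf_\alpha$, written $f(z)=zp(z)$, one has $p(\D_r)\subseteq\mathrm{conv}\,p_\alpha(\D_r)$ for all $r\in[0,1]$, where $p_\alpha(z)=\frac{1+(1-2\alpha)z}{1+z}$. I would normalize to the Carath\'eodory class via the affine substitution $q=\frac{p-(1-\alpha)}{\alpha}$, which sends $\Cf_\alpha$ into $\Pp$ and $p_\alpha$ into $q_\alpha(z)=\frac{1-z}{1+z}$, and which preserves inclusions and convex hulls. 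It then remains to prove the classical inclusion $q(\D_r)\subseteq\overline{q_\alpha(\D_r)}$ for every $q\in\Pp$. Here I would invoke the Riesz--Herglotz representation (Theorem~\ref{thm-RH}): for each fixed $\zeta$ the elementary kernel $z\mapsto\frac{1+z\overline\zeta}{1-z\overline\zeta}$ maps $\overline{\D_r}$ onto exactly the same closed disk (for $r<1$) or half-plane (for $r=1$) as $q_\alpha$ does, through the substitution $u=\pm\overline\zeta z$; since that image is convex, the probability average $q(z)=\oint_{\partial\D}\frac{1+z\overline\zeta}{1-z\overline\zeta}\,d\mu(\zeta)$ remains inside it, which yields the inclusion. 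The delicate points to verify carefully are that all the Herglotz kernels share the same image disk as the extremal map $q_\alpha$, and that $\mathrm{conv}\,p_\alpha(\D_r)$ is precisely this closed disk, so that the convex-hull formulation of Remark~\ref{rem-note} applies.
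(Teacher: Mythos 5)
Your proof is correct, and in parts (a), (c) and (d) it runs essentially parallel to the paper's: (c) by unwinding definition \eqref{boundary} for $m(f)=\inf_{z\in\D}\Re\frac{f(z)}{z}$, and (d) by citing Proposition~\ref{charactexp copy(1)}. You even fill two small gaps the paper leaves implicit, namely the identification $\Cf_0=\{\Id\}$ via the minimum principle for the harmonic function $\Re\frac{f(z)}{z}$, and the endpoint $\alpha=1$ in (d), which the cited proposition (stated for squeezing ratio $a\in(0,1]$, i.e.\ $\alpha\in[0,1)$) does not literally cover and which you correctly reduce to the Schwarz lemma. The genuine divergence is in the key step (b). The paper argues by subordination: since $p_\alpha(z)=\frac{1+(1-2\alpha)z}{1+z}$ maps $\D$ conformally onto the half-plane $\{\Re w>1-\alpha\}$, a function $f(z)=zp(z)$ lies in $\Cf_\alpha$ exactly when $p\prec p_\alpha$, whence $p(\D_r)\subseteq p_\alpha(\D_r)$, a disk, and Remark~\ref{rem-note} applies at once. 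You instead prove the disk inclusion from scratch: after the affine normalization $q=\frac{p-(1-\alpha)}{\alpha}$ into $\Pp$, each Herglotz kernel $\frac{1+z\overline\zeta}{1-z\overline\zeta}$ maps $\D_r$ onto the same disk as $q_\alpha(z)=\frac{1-z}{1+z}$ (via $u=-\overline\zeta z$), and a probability average of points of a convex set stays in it. Both routes are valid; the paper's is shorter (univalence of $p_\alpha$ plus the subordination principle), while yours is self-contained, bypassing the subordination principle entirely and making the convex-hull criterion of Remark~\ref{rem-note} explicit. One cosmetic point: your averaging a priori places $q(\D_r)$ only in the closed disk $\overline{q_\alpha(\D_r)}$ rather than in $\mathrm{conv}\,q_\alpha(\D_r)=q_\alpha(\D_r)$; this is harmless for Definition~\ref{def-extrem}, since the minimum of $\Re(\lambda\,\cdot)$ over the closed disk is attained on the boundary circle, which is exactly $q_\alpha(\{|z|=r\})$ (and in fact an average of points of an open convex set already lies in the open set).
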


\begin{proof}
Obviously, $\Cf$ is a filtration by construction. In addition, the function $p_\alpha$ defined~by
\begin{equation*}
p_\alpha(z)= \frac{f_\alpha (z)}z= \frac{1+(1-2\alpha)z}{1+z}
\end{equation*}
maps the open unit disc $\D$ conformally onto the half-plane $\{w\in\C:\ \Re w> 1-\alpha \}$. So, a function $f(z)=zp(z)$ belongs to the class $\Cf_{\alpha},\ \alpha \in(0,1],$ if and only if $p(0)=1$ and $\Re p(z)\ge 1-\alpha$, that is, $p\prec p_\al$ and hence $f_\alpha$ is totally extremal for $ \Cf_{\alpha}$. Thus assertion (b) is proved.

Further, it is clear that $f\in\Cf_\alpha,\ \al\in[0,1],$ if and only if $\inf\limits_{z\in\D}\Re \frac{f(z)}z \ge1-\al$. If it is the case, $f\in\Cf_\beta$ for some $\beta<\al$ if and only if this infimum is greater than $1-\al$. Otherwise, $f\in\partial\Cf_\al$.   Thus $f_\alpha\in \Cf_\al$, what implies assertion (c) and strictness of the filtration (so that assertion (a) is completely proved).

Assertion (d) follows immediately from Proposition\ref{charactexp copy(1)}.
\end{proof}
Assertion (d) of this theorem implies that every element $f \in \Cf_\alpha$, $\alpha \in [0,1] $, generates the semigroup with no boundary repelling fixed point. This assertion
gives the necessary and sufficient condition on a function of class $\G_0$ to generate the exponentially squeezing semigroup and emphasizes the importance of filtration $\Cf$.

\begin{definition}\label{def-filt-squee}
The filtration  $\Cf=\left\{\Cf_\al  \right\}_{\alpha \in[0,1]}$  defined by \eqref{squeezing-filtr} is called the {\it squeezing filtration}.
\end{definition}

Let $\mathcal{F}\subset\G_0$. Then $K(\mathcal{F})\ge 1-\al$ if and only $\mathcal{F}\subset \Cf_\alpha$ by Theorem~\ref{th-squee}. In particular, it follows from Marx and Strohh\"acker's Theorem~\ref{thm-M-S} that if $f$ is a normalized convex function then for every $z\in \D$, $z\neq 0$,
\[
\Re \frac{f(z)}z>\frac12\,.
\]
Hence  every  normalized convex function is an infinitesimal generator and belongs to  the class $\Cf_{\frac12}$.
Therefore by assertion (d) of Theorem~\ref{th-squee}, each semigroup generated by a convex function is exponentially squeezing with squeezing ratio $\frac12\,$.

\begin{example}
{\rm Consider the Cauchy problem:
\[
\frac{\partial u(t,z)}{\partial t}+\log(1+u(t,z))=0,\qquad u(0,z)=
z\in \D.
\]
Since $f(z)=\log(1+z)$ is a convex function, its solution
satisfies the estimate
\[
|u(t,z)|\le e^{-t/2}|z|.
\]}
\end{example}

\bigskip

\subsection{Sectorial filtration}\label{ssect-sectorial-filt}
We have already discussed the feasibility of  analytic extension of a semigroup with respect to its parameter to a sector in Subsection~\ref{subsect-semig} (in particular, Definition~\ref{def-an-sg}). Proposition~\ref{rotation} suggests considering the family of sets $\Sf=\left\{\Sf_\al  \right\}_{\alpha \in[0,1]}$ defined~by
\begin{equation}\label{anal-in-sector}
\Sf_\al:=\left\{f\in\Ao:\ \left|\arg \frac{f(z)}z \right|\leq \frac{\pi\alpha}2 \quad\mbox{for all } \  z\in \D\setminus \{0\}\right\}.
\end{equation}

\begin{theorem}\label{th-sect} The following assertions hold:
  \begin{itemize}
\item[(a)] The family of sets $\Sf$ is a strict filtration such that $\Sf_0=\{\Id\}$ and $\Sf_1=\G_0$.
\item[(b)] The filtration $\Sf$ admits a net $\left\{f_\alpha\right\}_{\alpha\in(0,1]}$ of totally extremal functions, where
\[
f_\alpha(z)=z\left(\frac{1-z}{1+z} \right)^\alpha .
\]
    \item[(c)] The boundary $\partial\Sf_\al$ consists of such functions $f\in\Ao$ that satisfies ${\sup\limits_{z \in \D}\left|\arg \frac{f(z)}z \right|= \frac{\pi\alpha}2}$. In particular, $f_\al\in\partial\Sf_\al $ for every $\al\in(0,1].$

    \item[(d)] Let $f\in\G_0$. Then $f\in\Sf_\alpha$ if and only if the semigroup $\{\phi_t\}_{t\ge0}$ generated by $f$ can be analytically extended with respect to the parameter $t$ to the sector $\Lambda\left(\frac{\pi}{2}(1-\alpha)\right)$.
  \end{itemize}
\end{theorem}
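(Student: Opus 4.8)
The plan is to mirror the already-proved parallel Theorem~\ref{th-squee}, substituting the sectorial geometry for the half-plane geometry and the argument condition for the real-part condition. Throughout, for $f\in\Ao$ I would write $p(z)=f(z)/z$, so that $p(0)=1$ and $f\in\Sf_\al$ means precisely $|\arg p(z)|\le\frac{\pi\al}2$ on $\D\setminus\{0\}$ (which tacitly forces $p$ to be zero-free). For assertion (a), the inclusion $\Sf_\al\subseteq\Sf_\beta$ for $\al\le\beta$ is immediate from $\frac{\pi\al}2\le\frac{\pi\beta}2$, so $\Sf$ is a filtration by construction. The endpoints are handled by elementary complex analysis: if $\al=0$ then $\arg p\equiv0$, so $p$ is holomorphic with image in the positive reals, hence constant by the open mapping theorem, and $p(0)=1$ gives $f=\Id$; if $\al=1$ the condition reads $\Re p\ge0$, and the minimum principle for the harmonic function $\Re p$ (whose value at the origin is $1$) upgrades this to $\Re p>0$, i.e. $p\in\Pp$, so $\Sf_1=\G_0$. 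Strictness I would defer to (c).

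The core is assertion (b). First I would identify $p_\al(z)=\left(\frac{1-z}{1+z}\right)^\al$ as a conformal map of $\D$ onto the open sector $\Lambda\left(\frac{\pi\al}2\right)$: the Cayley transform $z\mapsto\frac{1-z}{1+z}$ sends $\D$ onto the right half-plane, and $w\mapsto w^\al$ maps this univalently onto $\Lambda\left(\frac{\pi\al}2\right)$ for $\al\in(0,1]$; in particular $|\arg p_\al|<\frac{\pi\al}2$, so $f_\al\in\Sf_\al$. Next, for an arbitrary $f\in\Sf_\al$, I would observe that $\arg p=\Im\log p$ is harmonic on the simply connected disk $\D$ (as $p$ is zero-free) and bounded by $\frac{\pi\al}2$ in modulus; since $\arg p(0)=0$, the maximum principle forces the strict inequality $|\arg p(z)|<\frac{\pi\al}2$, that is $p(\D)\subseteq\Lambda\left(\frac{\pi\al}2\right)=p_\al(\D)$. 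As $p_\al$ is univalent and $p(0)=p_\al(0)=1$, this yields the subordination $p\prec p_\al$. Writing $p=p_\al\circ\omega$ with $\omega\in\Omega$ and applying the Schwarz lemma ($|\omega(z)|\le|z|$), I would conclude $p(\D_r)=p_\al(\omega(\D_r))\subseteq p_\al(\D_r)\subseteq\mathrm{conv}\,p_\al(\D_r)$ for every $r\in[0,1]$, which is exactly the condition for total extremality of $f_\al$ in the reformulation of Remark~\ref{rem-note}.

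For assertion (c), I would compute directly from \eqref{boundary}: a function lies in $\bigcup_{\beta<\al}\Sf_\beta$ precisely when $\sup_{z\in\D}|\arg p(z)|<\frac{\pi\al}2$, so $f\in\partial\Sf_\al$ exactly when $f\in\Sf_\al$ while this supremum fails to be strictly smaller, i.e. $\sup_{z\in\D}|\arg p(z)|=\frac{\pi\al}2$. Since $\arg p_\al$ fills the interval $\left(-\frac{\pi\al}2,\frac{\pi\al}2\right)$, its supremum in modulus equals $\frac{\pi\al}2$, whence $f_\al\in\partial\Sf_\al$; in particular every boundary set is nonempty, which establishes the strictness promised in (a). Finally, assertion (d) is a verbatim translation of Proposition~\ref{rotation}, whose hypothesis $|\arg(f(z)/z)|\le\frac{\pi\al}2$ is the defining condition of $\Sf_\al$ and whose conclusion is analytic extendability to $\Lambda\left(\frac{\pi(1-\al)}2\right)$; the degenerate case $\al=1$ is trivial since $\Sf_1=\G_0$ and the sector collapses to $\{0\}$.

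The step I expect to demand the most care is the passage from the \emph{closed}-sector membership condition defining $\Sf_\al$ to the \emph{open}-sector inclusion $p(\D)\subseteq p_\al(\D)$ needed for the subordination $p\prec p_\al$. This is exactly where the maximum principle for the harmonic function $\arg p=\Im\log p$ is indispensable, and it is the one place where the argument is not purely formal bookkeeping.
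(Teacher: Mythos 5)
Your proof is correct and follows essentially the same route as the paper's: identify $p_\al(z)=\left(\frac{1-z}{1+z}\right)^\al$ as the conformal map of $\D$ onto the sector $\Lambda\left(\frac{\pi\al}{2}\right)$, deduce the subordination $p\prec p_\al$ to get total extremality via Remark~\ref{rem-note}, characterize $\partial\Sf_\al$ by the dichotomy on $\sup_{z\in\D}\left|\arg\frac{f(z)}{z}\right|$, and invoke Proposition~\ref{rotation} for (d). The one place you go beyond the paper is the maximum-principle argument for the harmonic branch of $\arg p$ upgrading the closed-sector condition to the open-sector inclusion needed for subordination --- a detail the paper leaves implicit, and which you handle correctly.
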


\begin{proof}
The fact that $\Sf$ is a filtration follows immediately from \eqref{anal-in-sector}.
The function $p_\alpha$ defined by $p_ \alpha(z)=\frac{f_\alpha (z)}z= \left(\frac{1-z}{1+z} \right)^\alpha, $ maps the open unit disc $\D$ conformally onto the sector $\{\zeta\in\C:\ |\arg\zeta|< \frac{\pi\alpha}2 \}$. Hence a function $f(z)=zp(z)$ belongs to the class $\Sf_{\alpha},\ \alpha \in(0,1],$ if and only if $p(0)=1$ and $|\arg p(z)|\le \frac {\pi\alpha}2$. In other words, $p\prec p_\alpha$, that is, $f_\alpha$ is totally extremal for
$\Sf_{\alpha}$. This proves assertion (b).

Further, it is clear that $f\in\Sf_\alpha,\ \al\in[0,1],$ if and only if ${\sup\limits_{z \in \D}\left|\arg \frac{f(z)}z \right|\leq \frac{\pi\alpha}2}$. In this case, $f\in\Sf_\beta$ for some $\beta<\al$ if and only if this supremum is less than $\frac{\pi\alpha}2$. Otherwise, $f\in\partial\Sf_\al$.   Thus $f_\alpha\in \Sf_\al$. Therefore assertion (c) and strictness of the filtration (so, assertion (a) also) are proved.

Assertion (d) follows from Proposition~\ref{rotation}.
\end{proof}

The following fact was first noted in \cite[Corollary 63]{SD16} in a different context.
\begin{corollary}
If $f \in\G_0$ has a boundary regular null point, then $f \in \partial \Sf_1$.
\end{corollary}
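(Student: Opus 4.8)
The plan is to show directly that $f\in\partial\Sf_1=\Sf_1\setminus\bigcup_{s<1}\Sf_s$. Since $f\in\G_0=\Sf_1$ by hypothesis, it suffices to prove that $f\notin\Sf_s$ for every $s\in[0,1)$; equivalently, writing $p:=f/z\in\Pp$, I would show $\sup_{z\in\D}\left|\arg p(z)\right|=\frac\pi2$ and appeal to assertion (c) of Theorem~\ref{th-sect}. The case $s=0$ is immediate, because $\Sf_0=\{\Id\}$ and the identity has no boundary regular null point (the limit $\lim_{r\to1^-}f(r\zeta)/(\zeta(r-1))$ is infinite for $f=\Id$), so the real work is to exclude membership in $\Sf_s$ for $s\in(0,1)$.

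The first step is to extract the radial decay rate of $p$ at the null point. After a rotation I may assume $\zeta=1$. By Theorem~\ref{thm-boud-p}(i) the number $f'(1)$ is real and negative, and by the definition of a boundary regular null point $\frac{f(r)}{r-1}\to f'(1)$ as $r\to1^-$. Writing $p(r)=\frac{f(r)}{r-1}\cdot\frac{r-1}{r}$ then gives $\frac{p(r)}{1-r}\to|f'(1)|>0$; in particular $p(r)$ tends to $0$ along the positive real direction and $|p(r)|\sim|f'(1)|(1-r)$ as $r\to1^-$.

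The heart of the argument is a fractional-power trick that amplifies this decay beyond what a Carath\'eodory function can sustain. Suppose, for contradiction, that $f\in\Sf_s$ with $s\in(0,1)$, so that $|\arg p(z)|\le\frac{\pi s}2$ on $\D$. As $p$ is zero-free and omits the ray $(-\infty,0]$, the principal branch lets me set $q:=\exp\!\left(\tfrac1s\log p\right)$; then $q$ is holomorphic, $q(0)=1$, and $|\arg q|\le\frac1s\cdot\frac{\pi s}2=\frac\pi2$, while $q$ is non-constant (otherwise $p\equiv1$ and $f=\Id$, which has no such null point), so by the minimum principle $\Re q>0$ and hence $q\in\Pp$. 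Harnack's inequality (Theorem~\ref{thm-RH}) gives $|q(r)|\ge\Re q(r)\ge\frac{1-r}{1+r}$. On the other hand $|q(r)|=|p(r)|^{1/s}\sim|f'(1)|^{1/s}(1-r)^{1/s}$, so $\frac{|q(r)|}{1-r}\sim|f'(1)|^{1/s}(1-r)^{1/s-1}\to0$ because $1/s>1$, whereas Harnack forces $\frac{|q(r)|}{1-r}\ge\frac1{1+r}\to\frac12$. This contradiction shows $f\notin\Sf_s$ for any $s<1$, and the proof is complete.

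I expect the main obstacle to be the middle step: upgrading the qualitative fact ``$p\to0$ at $\zeta$'' to the sharp asymptotic $|p(r)|\sim|f'(1)|(1-r)$ with a genuinely positive real constant, together with verifying that $q=p^{1/s}$ is a well-defined single-valued element of $\Pp$ whose modulus equals $|p|^{1/s}$. Once these points are secured, the clash between the super-linear decay of $|q(r)|$ and Harnack's linear lower bound is automatic.
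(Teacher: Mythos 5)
Your proof is correct, and it is worth noting that the paper itself gives no argument for this corollary: it is stated as an observation following Theorem~\ref{th-sect}, with a pointer to \cite[Corollary~63]{SD16}. Your argument supplies exactly the missing mechanism, and every step checks out. The rotation reduction to $\zeta=1$ is harmless ($\tilde f(z)=\overline{\zeta}f(\zeta z)$ preserves $\G_0$, the sector condition, and the null point); the asymptotic $|p(r)|\sim|f'(1)|(1-r)$ follows from the definition of a boundary regular null point together with Theorem~\ref{thm-boud-p}(i); the branch issue is handled correctly, since a nonconstant $p$ with values in the closed sector $\left|\arg w\right|\le\frac{\pi s}{2}<\frac\pi2$ is zero-free and omits $(-\infty,0]$ by the open mapping theorem, so $q=p^{1/s}$ is single-valued on the simply connected $\D$ with $q(0)=1$, and the minimum principle upgrades $\Re q\ge0$ to $q\in\Pp$; then Harnack (Theorem~\ref{thm-RH}) forces $|q(r)|\ge\frac{1-r}{1+r}$, which clashes with $|q(r)|=|p(r)|^{1/s}=O\bigl((1-r)^{1/s}\bigr)$ because $1/s>1$. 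In effect you have proved the quantitative fact hiding behind the corollary: any $f\in\Sf_s$ satisfies $|f(z)/z|\ge c_s(1-|z|)^{s}$ radially, so a linear-rate null point pins $f$ to $s=1$. Two small simplifications you anticipated yourself: strict negativity of $f'(1)$ is never used (finiteness of the limit already gives $|p(r)|=O(1-r)$, which suffices for the contradiction), and the separate treatment of $s=0$ is redundant, since $\Sf_0\subset\Sf_s$ for $s\in(0,1)$, so excluding all $s\in(0,1)$ excludes $s=0$ automatically. With either your direct verification that $f\notin\Sf_s$ for all $s<1$, or equivalently the reformulation $\sup_{z\in\D}\left|\arg\frac{f(z)}{z}\right|=\frac\pi2$ fed into Theorem~\ref{th-sect}(c), the conclusion $f\in\partial\Sf_1$ follows from the definition \eqref{boundary} of the boundary.
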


Since assertion (d) of this theorem constitutes the necessary and sufficient condition on a function of class $\G_0$ to generate the semigroup that can be analytically extended to the sector $\Lambda\left(\frac{\pi}{2}(1-\alpha)\right)$ for a given $\alpha \in [0,1]$, it highlights the distinguishing feature of filtration~$\Sf$.

\begin{definition}\label{def-filt-sec}
The filtration  $\Sf=\left\{\Sf_\al  \right\}_{\alpha \in[0,1]}$ defined by \eqref{anal-in-sector} is called the {\it sectorial filtration}.
\end{definition}

Similarly to the above, we note that if $\mathcal{F}\subset\G_0$, then $B(\mathcal{F})\le \al$ if and only $\mathcal{F}\subset \Sf_\alpha$ by Theorem~\ref{th-sect}.

In the next sections,  for every given filtration  $\mathfrak{F}= \left \{\Ff_s\right\}_{s\in J}$ we try to find nets of totally extremal functions for $\Ff$ and boundary elements $f\in\partial\Ff_s$, as well as $K(\Ff_s)$ and $B(\Ff_s)$ for $s\in J.$

\bigskip

\section{Linear filtrations}\label{sect-_spec_linear}

\setcounter{equation}{0}

In this section, we deal with linear filtrations of first order, that is, those defined by expressions depending linearly on $f$ and $f'$.

\subsection{Generic linear filtration}\label{sect-linear}

Let $J$ be a connected subset of $\R$. The results of this subsection for $J=[0,1]$ were  obtained in \cite{BCDES}.

Denote by $\K$ the set of all functions $k: J\times [0,1)\to [-1,+\infty)$ such that for every $r\in [0,1)$ the function $k(\cdot,r)$ is non-decreasing on $J$.

\begin{definition}\label{def-linear-filtr}
Given $k\in\K$, we denote by $\mathfrak F[k] =\left\{ \mathfrak F_\al[k]  \right\}_{\alpha \in J}$ the family of sets defined~by
\begin{equation}\label{N-alpha}
\mathfrak F_\al[k] :=\left\{f\in\Ao:\  \Re \left[k(\alpha, |z|)\frac{f(z)}{z}+f'(z)  \right]\geq 0, \quad  z\in
\D\setminus \{0\}\right\}.
\end{equation}
\end{definition}

\begin{theorem}\label{filtra-N}
For any $k\in\K$, the family $\mathfrak F[k]$ is a filtration of $\G_0$. If, in addition, for some $\gamma\in J$, we have
\begin{equation}\label{eq-cresce}
k(\gamma,r)\ge \frac{1+r^2}{1-r^2},
\end{equation}
then
$\mathfrak F_{\gamma}[k] =\G_0$. 
\end{theorem}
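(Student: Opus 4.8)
The plan is to verify the two properties that make $\mathfrak F[k]$ a filtration of $\G_0$ — that each set $\mathfrak F_\alpha[k]$ lies in $\G_0$, and that the family is nested — and then, under the extra hypothesis, to promote the inclusion $\mathfrak F_\gamma[k]\subseteq\G_0$ to an equality. Throughout I write $p=f/z$, so that (since $f\in\Ao$) $p$ is holomorphic on $\D$ with $p(0)=1$, $f'(z)=p(z)+zp'(z)$, and the defining condition of $\mathfrak F_\alpha[k]$ becomes $\Re\bigl[(k(\alpha,|z|)+1)p(z)+zp'(z)\bigr]\ge0$ for $z\in\D\setminus\{0\}$.

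The heart of the matter, and the step I expect to be the main obstacle, is the inclusion $\mathfrak F_\alpha[k]\subseteq\G_0$, i.e. showing $\Re p>0$ on $\D$. Following Lemma~\ref{lemm-Psi} with $\gamma=0$, I introduce $\omega=(p-1)/(p+1)$, which satisfies $\omega(0)=0$, and aim to prove $\omega\in\Omega$. Arguing by contradiction, if $\omega\notin\Omega$ there is a smallest $r_0\in(0,1)$ and a point $z_0$ with $|z_0|=r_0$ at which $|\omega(z_0)|=1$ while $|\omega|<1$ on $\{|z|<r_0\}$ (so that $p$ is holomorphic and $p\ne-1$ up to that circle). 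By Jack's lemma (Lemma~\ref{lem-Jack}), $z_0\omega'(z_0)=c\,\omega(z_0)$ with $c\ge1$ real. Writing $\omega(z_0)=e^{i\theta}$, I would then compute that $p(z_0)=\frac{1+e^{i\theta}}{1-e^{i\theta}}$ is purely imaginary, so $\Re p(z_0)=0$, and that $z_0p'(z_0)=\frac{2z_0\omega'(z_0)}{(1-\omega(z_0))^2}=-\frac{c}{2\sin^2(\theta/2)}$ is a negative real number. Since $k(\alpha,r_0)+1\ge0$ (because $k$ takes values in $[-1,+\infty)$ on $\K$), this forces $\Re\bigl[(k(\alpha,r_0)+1)p(z_0)+z_0p'(z_0)\bigr]<0$, contradicting $f\in\mathfrak F_\alpha[k]$. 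Hence $\omega\in\Omega$, and by Lemma~\ref{lemm-Psi} we get $\Re p>0$, that is, $f\in\G_0$. The sign computation at $z_0$ is the one place demanding genuine care.

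Granting $\mathfrak F_\alpha[k]\subseteq\G_0$, the nesting is then immediate. For $s\le t$ in $J$ and $f\in\mathfrak F_s[k]$, the previous step gives $\Re p\ge0$, while $k(t,|z|)-k(s,|z|)\ge0$ because $k(\cdot,r)$ is non-decreasing; therefore $\Re[k(t,|z|)p+f']=\Re[k(s,|z|)p+f']+\bigl(k(t,|z|)-k(s,|z|)\bigr)\Re p\ge0$, so $f\in\mathfrak F_t[k]$. Together with the inclusion into $\G_0$, this shows $\mathfrak F[k]$ is a filtration of $\G_0$.

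For the final claim I would invoke Abate's characterization, Theorem~\ref{th-gen-prop}(b): every $f\in\G_0$ satisfies $\Re[2f(z)\overline{z}+(1-|z|^2)f'(z)]\ge0$. Since $2f(z)\overline{z}=2|z|^2p(z)$ exactly, dividing by $1-|z|^2>0$ gives $\Re\bigl[\frac{2|z|^2}{1-|z|^2}p(z)+f'(z)\bigr]\ge0$ on $\D\setminus\{0\}$. When $k(\gamma,|z|)\ge\frac{1+|z|^2}{1-|z|^2}\ge\frac{2|z|^2}{1-|z|^2}$, adding the nonnegative quantity $\bigl(k(\gamma,|z|)-\frac{2|z|^2}{1-|z|^2}\bigr)\Re p\ge0$ yields $\Re[k(\gamma,|z|)p+f']\ge0$, i.e. $f\in\mathfrak F_\gamma[k]$. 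Thus $\G_0\subseteq\mathfrak F_\gamma[k]$, which with the inclusion already established gives $\mathfrak F_\gamma[k]=\G_0$. (Alternatively one can reach the same conclusion from the classical bound $|p'(z)|\le\frac{2\Re p(z)}{1-|z|^2}$ for $p\in\Pp$, which likewise dominates $\Re[zp'(z)]$ from below by a multiple of $\Re p$; the hypothesis on $k(\gamma,\cdot)$ is comfortably stronger than what either route requires.)
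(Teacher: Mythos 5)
Your proof is correct, and it departs from the paper's in two of its three steps. For the core inclusion $\mathfrak F_\al[k]\subseteq\G_0$ the paper does not argue directly: after rewriting the defining condition in the form \eqref{Eq2-p1}, it simply invokes the proof of Lemma~3.5.3 in \cite{SD}. Your Clunie--Jack argument (Lemma~\ref{lem-Jack} applied to $\omega=(p-1)/(p+1)$ at a first unimodular point $z_0$, with $\Re p(z_0)=0$ and $z_0p'(z_0)=-c/(2\sin^2(\theta/2))<0$) is a correct, self-contained replacement; the sign computations check out, you handle the possible poles of $\omega$ properly, and note that $\omega(z_0)=1$ is automatically excluded since $p(z_0)$ is finite. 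One small redundancy: because $\Re p(z_0)=0$, the first term vanishes regardless of the sign of $k(\alpha,r_0)+1$, so the lower bound $k\ge-1$ plays no role at that point. The nesting step is identical to the paper's (monotonicity of $k(\cdot,r)$ plus $\Re p\ge0$). For the final claim the paper uses the classical Carath\'eodory-class estimate $\left|zp'(z)\right|\le\frac{2\Re p(z)}{1-|z|^2}$, which forces exactly $k(\gamma,r)+1\ge\frac{2}{1-r^2}$, i.e.\ precisely hypothesis \eqref{eq-cresce}; your primary route through Abate's inequality \eqref{abate} needs only $k(\gamma,r)\ge\frac{2r^2}{1-r^2}$, so it in fact establishes the equality $\mathfrak F_\gamma[k]=\G_0$ under a weaker assumption than the theorem states --- a genuine, if modest, improvement. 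Your parenthetical that the hypothesis is ``comfortably stronger than what either route requires'' is therefore accurate only for the Abate route; for the classical bound (the paper's route, which you mention as the alternative) the hypothesis is used exactly, with no slack. These are remarks on presentation, not gaps: the proof is sound throughout.
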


\begin{proof}
Let $f\in \mathfrak F_\al[k]$ for some $\alpha\in J$. Let $p(z):=\frac{f(z)}{z}$. To see that $\mathfrak F_ \al[k] \subseteq
\G_0$, it is enough to show that $\Re p(z)\geq 0$ for all $z\in \D$. The inequality in formula \eqref{N-alpha} can be rewritten as
\begin{equation}\label{Eq2-p1}
\left(1+ k(\alpha, |z|)\right)\Re p(z)  +\Re zp'(z) \geq 0, \quad
 z\in \D.
\end{equation}
Then by the proof of \cite[Lemma 3.5.3]{SD}, $\Re p(z)\geq 0$ for all $z\in\D$, and, hence, $\mathfrak F_\al[k]\subset \G_0$ for all $\alpha\in J$.

Fix $\alpha,\beta\in J$ such that $\al< \beta$ and let $f(z)=zp(z)\in\mathfrak F_\al[k]$. By \eqref{Eq2-p1}, for all $z\in \D$,
\begin{equation*}
\Re zp'(z)\geq -(1+ k(\alpha, |z|))\Re p(z) .
\end{equation*}
Hence, since $k(\cdot, |z|)$ is non-decreasing,
for all $z\in \D$, $z\neq 0$, we have
\begin{equation*}
\begin{split}
\Re\left[k(\beta,|z|)\frac{f(z)}{z}+f'(z)\right]&=
\Re \left[ (1+ k(\beta,|z|))p(z)+zp'(z) \right]\\
&\geq \Re \left[ (1+k(\beta,|z|))p(z)-(1+k(\alpha,|z|)) p(z)
\right]
\\&=[k(\beta,|z|)-k(\alpha,|z|)] \Re p(z) \geq 0.
\end{split}
\end{equation*}
Therefore, $\mathfrak F_\al[k]\subseteq \mathfrak F_\beta[k]$,
that is, $\mathfrak F[k]$ is a filtration of $\G_0$.

Assume \eqref{eq-cresce} holds. We show that any $f\in\G_0$
belongs to $\mathfrak F_\gamma[k]$. Indeed, given $f\in\G_0$, let $p(z):=\frac{f(z)}{z},\ p(0)=1$. By \eqref{b-p}, $\Re p(z)\geq 0$ for all $z\in \D$. It is well-known that $\displaystyle
\left|zp'(z)\right|\le\frac{2\Re p(z)}{1-|z|^2}$ (see, for
example, \cite{Duren, GAW} or \cite{GI-KG}). Therefore, according to our
assumption,
\begin{eqnarray*}
\Re \left[k(\gamma, |z|)\frac{f(z)}{z}+ f'(z) \right]  =  \Re
\left[ \left(k(\gamma, |z|)+1\right) p(z) +zp'(z) \right] \\
\ge \left[ \left(k(\gamma, |z|)+1\right) -\frac{2}{1-|z|^2}
\right] \Re p(z)\ge0.
\end{eqnarray*}
The proof is complete.
\end{proof}

Setting $J=[0,1]$ and $k(\alpha,r):=\alpha$ in the above theorem, we immediately get:
\begin{corollary}\label{Noshiro}
$\displaystyle \RA\subset \left\{f\in\Ao:\  \Re \left[\frac{f(z)}{z}+f'(z)  \right]\geq 0 \quad\mbox{for all\ } z\in
\D\setminus \{0\}\right\} \subset \G_0.
$
\end{corollary}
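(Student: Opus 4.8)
The plan is to read off both inclusions from the machinery already assembled, since the choice $k(\alpha,r):=\alpha$ makes the middle set of the corollary coincide with a single member of the filtration $\mathfrak{F}[k]$. First I would verify that this $k$ is admissible: with $J=[0,1]$ it maps $J\times[0,1)$ into $[0,1]\subset[-1,+\infty)$, and for each fixed $r$ the map $\alpha\mapsto\alpha$ is non-decreasing, so $k\in\K$ and Theorem~\ref{filtra-N} applies.

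Substituting $k(\alpha,|z|)=\alpha$ into \eqref{N-alpha} and taking $\alpha=1$, the set $\mathfrak{F}_1[k]$ is exactly the middle set appearing in the corollary. Since Theorem~\ref{filtra-N} asserts that $\mathfrak{F}[k]$ is a filtration of $\G_0$, every $\mathfrak{F}_\alpha[k]$ is contained in $\G_0$; in particular $\mathfrak{F}_1[k]\subseteq\G_0$, which is precisely the right-hand inclusion.

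For the left-hand inclusion I would argue directly. Let $f\in\RA$, so by definition $f'\in\Pp$ and hence $\Re f'(z)>0$ for $z\in\D$. By \eqref{NW1}, the very same hypothesis gives $\Re\frac{f(z)}{z}>0$ on $\D\setminus\{0\}$. Adding these two positive quantities yields
\[
\Re\left[\frac{f(z)}{z}+f'(z)\right]>0\ge0,\qquad z\in\D\setminus\{0\},
\]
so $f\in\mathfrak{F}_1[k]$, establishing $\RA\subset\mathfrak{F}_1[k]$.

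There is no genuine obstacle here: both inclusions collapse to facts already in hand — the filtration property from Theorem~\ref{filtra-N} for the outer inclusion, and the elementary observation \eqref{NW1} together with the defining property of $\RA$ for the inner one. The only point requiring any care is confirming that the constant-in-$r$ choice $k(\alpha,r)=\alpha$ genuinely lies in $\K$ and that it recovers the displayed expression at $\alpha=1$.
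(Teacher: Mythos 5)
Your proposal is correct and takes essentially the same route as the paper, which obtains the corollary precisely by specializing Theorem~\ref{filtra-N} to $J=[0,1]$ and $k(\alpha,r):=\alpha$, with $\alpha=1$ giving the middle set in \eqref{N-alpha} and the theorem giving the inclusion in $\G_0$. The only cosmetic difference is that you verify the left-hand inclusion directly by adding $\Re f'(z)>0$ to \eqref{NW1}, whereas the paper reads it off from $\RA\subseteq\mathfrak F_0[k]$ together with the filtration property $\mathfrak F_0[k]\subseteq\mathfrak F_1[k]$ --- in this constant-$k$ case these amount to the same one-line computation.
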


Now we see how linear filtrations can be used to get some information on the boundary behavior of a semigroup.

Let $f\in \mathcal G_0$. We say that a boundary point $\zeta\in\partial\D$ is a {\sl boundary critical point of order
$\lambda\in (0,1]$} (cf. \cite{BraGum16, BCDES}) for $f$ if
\begin{eqnarray}\label{cond1}
\lim_{r\to 1^-} \frac{f(r\zeta)}{\zeta(1-r)^\lambda}= \omega \neq 0,\infty\qquad\mbox{and}\qquad
\lim_{r\to 1^-}\frac{f'(r\zeta)}{(1-r)^{\lambda-1}}=-\lambda\omega.
\end{eqnarray}
By \cite[Theorem 6.4]{BraGum16}, for every $\lambda\in (0,1]$ there exists $f_\lambda\in \mathcal G_0$ having a  boundary critical point of order $\lambda$. A simple example of a function that satisfies \eqref{cond1} with arbitrary $\lambda\in (0,1]$ is $f(z)=z(1-z\overline\zeta)^\lambda$. Since $\Re \frac{f(z)}{z}\geq 0$, it follows immediately that
\begin{equation}\label{omegagreat}
\Re \omega\geq 0.
\end{equation}

\begin{proposition}\label{regular-filtr}
Let  $k\in\K$ be such that for some $\alpha\in J$ the limit
\[
\ell(\alpha):=\lim_{r\to 1^-}k(\alpha, r)(1-r)
\]
exists finitely. If $f\in \mathfrak F_\al [k]$ has a boundary critical point $\zeta\in\partial\D$ of order $\lambda\in (0,1]$, then
\[
 \lambda\le \ell(\alpha).
\]
\end{proposition}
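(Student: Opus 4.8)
The plan is to probe the membership condition \eqref{N-alpha} of $\mathfrak F_\al[k]$ along the radius ending at the boundary critical point $\zeta$ and then send $r\to1^-$, reading off a scalar inequality between $\lambda$ and $\ell(\alpha)$ from the asymptotics \eqref{cond1}.

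First I would put $p(z):=\frac{f(z)}{z}$ and restrict \eqref{N-alpha} to $z=r\zeta$, $0<r<1$, which gives
\[
\Re\left[k(\alpha,r)\,p(r\zeta)+f'(r\zeta)\right]\ge 0 .
\]
Because $\lambda\in(0,1]$, the quantity $(1-r)^{1-\lambda}$ is positive on $[0,1)$, so I may multiply through by it without changing the sign of the inequality. I then rewrite $p(r\zeta)=\frac{(1-r)^{\lambda}}{r}\cdot\frac{f(r\zeta)}{\zeta(1-r)^{\lambda}}$, so that the first term becomes $\frac{k(\alpha,r)(1-r)}{r}\cdot\frac{f(r\zeta)}{\zeta(1-r)^{\lambda}}$, while the second term is exactly $\frac{f'(r\zeta)}{(1-r)^{\lambda-1}}$. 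Letting $r\to1^-$ and using the definition of $\ell(\alpha)$ together with both limits in \eqref{cond1}, these two terms converge to $\ell(\alpha)\,\omega$ and $-\lambda\,\omega$, respectively. Since $\ell(\alpha)-\lambda$ is real, passing to the limit yields
\[
\bigl(\ell(\alpha)-\lambda\bigr)\,\Re\omega\ge 0 .
\]

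The main obstacle is that \eqref{omegagreat} only supplies $\Re\omega\ge0$, whereas the last display is informative only when $\Re\omega>0$. To upgrade the bound I would use the Carath\'eodory estimate $|zp'(z)|\le\frac{2\Re p(z)}{1-|z|^2}$ already invoked in the proof of Theorem~\ref{filtra-N}. Writing $f'=p+zp'$ and combining the two limits in \eqref{cond1}, I get $r\zeta p'(r\zeta)=f'(r\zeta)-p(r\zeta)=(1-r)^{\lambda-1}\bigl(-\lambda\omega+o(1)\bigr)$ as $r\to1^-$. Inserting this into the Carath\'eodory estimate at $z=r\zeta$, multiplying by $(1-r)$ and dividing by $(1-r)^{\lambda}$, the limit $r\to1^-$ produces $\lambda|\omega|\le\Re\omega$ (here one uses that $\frac{\Re p(r\zeta)}{(1-r)^{\lambda}}\to\Re\omega$, obtained by taking real parts in the first limit of \eqref{cond1}). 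As $\omega\neq0$ and $\lambda>0$, this forces $\Re\omega>0$.

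Dividing $\bigl(\ell(\alpha)-\lambda\bigr)\Re\omega\ge0$ by the positive number $\Re\omega$ then gives $\lambda\le\ell(\alpha)$, which is the assertion. I expect the only delicate point to be the bookkeeping of the error terms when interchanging the limit with the real part, but each factor above has an honest limit, so the interchange is legitimate.
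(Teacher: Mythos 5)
Your proposal is correct, and its first half coincides with the paper's own proof: restrict \eqref{N-alpha} to the radius ending at $\zeta$, normalize by $(1-r)^{\lambda-1}$, and let $r\to1^-$ using \eqref{cond1} and the definition of $\ell(\alpha)$ to obtain $\bigl(\ell(\alpha)-\lambda\bigr)\Re\omega\ge0$. Where you genuinely differ is the endgame. The paper concludes at this point by invoking \eqref{omegagreat}, i.e.\ only $\Re\omega\ge0$, which is strictly speaking incomplete: if $\Re\omega=0$, the displayed inequality is vacuous and nothing follows. Your additional step closes exactly this gap: feeding the asymptotics $zp'(z)=f'(z)-p(z)=(1-r)^{\lambda-1}\bigl(-\lambda\omega+o(1)\bigr)$ into the Carath\'eodory-type bound $|zp'(z)|\le 2\Re p(z)/(1-|z|^2)$ — legitimately available, since $\mathfrak F_\al[k]\subseteq\G_0$ by Theorem~\ref{filtra-N}, so $p=f(z)/z$ has positive real part, and the paper itself uses this bound in that proof — gives, after multiplying by $(1-r)^{1-\lambda}$ and passing to the limit, the inequality $\lambda|\omega|\le\Re\omega$. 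Since $\omega\neq0$ and $\lambda>0$, this forces $\Re\omega>0$, so dividing out is justified; it even yields the sharper by-product $|\arg\omega|\le\arccos\lambda$. In short, your argument is not merely an alternative route but a correct proof that repairs a small but genuine lacuna in the published one, at the modest cost of one extra distortion estimate.
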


\begin{proof}
Without loss of generality, one can assume that $\zeta=1$. Setting $z=r\in (0,1)$ in \eqref{N-alpha} and dividing by $(1-r)^{\lambda-1}$,  we obtain
\[
\Re \left[ k(\alpha, r)(1-r)
\frac{f(r)}{r(1-r)^\lambda}+\frac{f'(r)}{(1-r)^{\lambda-1}
}\right]\geq 0.
\]
Taking limit as $r\to 1^-$, we obtain
\[
\Re [\ell(\alpha) \omega -\lambda\omega]\geq 0.
\]
Hence, $(\ell(\alpha)-\lambda) \Re \omega \geq 0$. By \eqref{omegagreat}, we the result follows.
\end{proof}

We do not know wether such filtrations are strict in general.  Below we prove the strictness of some particular filtrations and find their boundaries. Also we will see  some important filtrations that are not strict.

\bigskip

We have already proved in Theorem~\ref{filtra-N} that for every $k\in\K$, the family $\mathfrak F[k]$ is a filtration of $\G_0$. At the same time, particular choices of function $k$ may be of independent interest and provide useful results. Next we treat two such choices.

\subsection{Analytic filtration}\label{subsec-analytic}

The simplest choice of function $k\in\K$ is a constant function (and this is the only case where $k$ is analytic). First we concentrate on the filtration of type $\mathfrak F[k]$ obtained by choosing $\alpha \in J=(-\infty, 1]$ and
\[
k(\alpha, r)=k_0(\alpha, r):=\frac{\alpha}{1-\alpha}>-1\,.
\]
Thus the inequality in condition \eqref{N-alpha} can be rewritten as
\begin{equation}\label{N-2}
\Re \left[\alpha \frac{f(z)}{z} +(1-\alpha) f'(z) \right]\geq 0,
\quad  z\in \D\setminus \{0\}.
\end{equation}
Denote by $\Af_\al$ the set of all $f\in \Ao$ which satisfy \eqref{N-2}.
Note that the family of sets $\Af =\left\{ \Af_\al\right\}_{\alpha \in(-\infty,1]}$ forms a filtration by Theorem~\ref{filtra-N}.
\begin{definition}
The family $\Af =\left\{ \Af_\al\right\}_{\alpha \in(-\infty,1]}$ is called the {\sl analytic filtration}.
\end{definition}

Our first result, that is a particular case of Proposition~\ref{thm4-g} below, states that the Hadamard product with certain functions preserves the class  $\Af_\al.$

\begin{proposition}[cf. \cite{Gi-Ku}]\label{thm4}
    Let $f\in\Af_\alpha$ and $g\in\Ao$ be such that $\Re \frac{g(z)}z>\frac12\,.$ Then $f * g \in \Af_\alpha$.
\end{proposition}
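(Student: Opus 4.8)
The plan is to exhibit $g$ as a probability average of the Hadamard ``units'' $\frac{z}{1-z\overline{\zeta}}$ and to push this average through the convolution, thereby reducing the desired positivity for $f*g$ to the positivity that already holds for $f$ itself.

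First I would rewrite the hypothesis on $g$. Since $g\in\Ao$, the quotient $g(z)/z$ extends holomorphically with value $g'(0)=1$ at the origin, and $\Re\frac{g(z)}{z}>\frac12$ is equivalent to $2\frac{g(z)}{z}-1\in\Pp$. Applying the Riesz--Herglotz representation (Theorem~\ref{thm-RH}) and using $\frac12\left(1+\frac{1+z\overline{\zeta}}{1-z\overline{\zeta}}\right)=\frac{1}{1-z\overline{\zeta}}$, I obtain a probability measure $\mu$ on $\partial\D$ with
\[
\frac{g(z)}{z}=\oint_{\partial\D}\frac{1}{1-z\overline{\zeta}}\,d\mu(\zeta),
\qquad\text{so}\qquad
g(z)=\oint_{\partial\D}\frac{z}{1-z\overline{\zeta}}\,d\mu(\zeta).
\]

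Next I would transport this representation through the Hadamard product. The basic formula $f(z)*\frac{1}{1-z\overline{\zeta}}=f(z\overline{\zeta})$ from \eqref{hadamar-pr}, combined with $f(0)=0$, gives $f(z)*\frac{z}{1-z\overline{\zeta}}=\overline{\zeta}^{\,-1}f(z\overline{\zeta})$; interchanging the convolution with the integral against $\mu$ then yields
\[
(f*g)(z)=\oint_{\partial\D}\frac{f(z\overline{\zeta})}{\overline{\zeta}}\,d\mu(\zeta).
\]
Differentiating term by term and dividing by $z$ give $(f*g)'(z)=\oint_{\partial\D}f'(z\overline{\zeta})\,d\mu(\zeta)$ and $\frac{(f*g)(z)}{z}=\oint_{\partial\D}\frac{f(z\overline{\zeta})}{z\overline{\zeta}}\,d\mu(\zeta)$. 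Forming the linear functional that defines $\Af_\alpha$ and combining the two integrals, I arrive at
\[
\alpha\,\frac{(f*g)(z)}{z}+(1-\alpha)(f*g)'(z)
=\oint_{\partial\D}\left[\alpha\,\frac{f(z\overline{\zeta})}{z\overline{\zeta}}+(1-\alpha)f'(z\overline{\zeta})\right]d\mu(\zeta).
\]
The bracketed expression is exactly the integrand of \eqref{N-2} evaluated at the point $z\overline{\zeta}\in\D$; since $f\in\Af_\alpha$ its real part is nonnegative there, and $\mu\ge0$, so the real part of the left-hand side is nonnegative for all $z\in\D\setminus\{0\}$. Because $f*g\in\Ao$ (its first two Taylor coefficients are $0$ and $1$, the latter using $\mu(\partial\D)=1$), this says precisely that $f*g\in\Af_\alpha$.

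The one point that will require genuine care is the interchange of the integral against $\mu$ with both the Hadamard product and term-by-term differentiation. I would settle this at the level of Taylor coefficients: writing $g(z)=\sum_{n\ge1}c_n z^n$ with $c_n=\oint_{\partial\D}\overline{\zeta}^{\,n-1}\,d\mu(\zeta)$, the convolution is $(f*g)(z)=\sum_{n\ge1}a_n c_n z^n$, and for $|z|\le\rho<1$, $|\zeta|=1$ the series $\sum_{n\ge1}a_n(z\overline{\zeta})^n$ converges uniformly (the Taylor series of $f$ converges uniformly on $\{|w|\le\rho\}$), so Fubini's theorem legitimizes every exchange performed above.
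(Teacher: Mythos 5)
Your proof is correct and takes essentially the same route as the paper's: the paper establishes the more general Proposition~\ref{thm4-g} (for the classes $\Gf_\alpha^\lambda$, of which this statement is the case $\lambda=1$) by exactly this combination of the Riesz--Herglotz representation of $g(z)/z$ and the Hadamard-product identities \eqref{hadamar-pr}, concluding that $\alpha\,\frac{(f*g)(z)}{z}+(1-\alpha)(f*g)'(z)$ is a probability average of values of $\alpha\,\frac{f(w)}{w}+(1-\alpha)f'(w)$ and hence has nonnegative real part. Your explicit coefficient-level Fubini justification of the interchanges is a careful extra that the paper leaves implicit.
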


We proceed with criteria of the membership of a function $f\in\Ao$ in the class $\Af_\alpha$ involving the Hadamard product \eqref{hadamar} obtained in \cite{Gi-Ku}, the Gau{\ss} hypergeometric function $\Gauss(a,b;c;z)$, see \eqref{gauss}, and Bernardi integral operator \eqref{I-ga}, see \cite{ESS}.

\begin{theorem}\label{so}
Let $\al\in(-\infty,1)$ and $F_1(z)=\frac{1+z}{1-z}$. For $\al<1$ we denote
\begin{equation}\label{f-alpha}
 F_\alpha(z)=I_{\frac{1}{1-\alpha}}[F_1](z)
=2\Gauss\left(1,\frac{1}{1-\alpha};\frac{2-\alpha}{1-\alpha};z\right)-1,
\end{equation}
and
\begin{equation}\label{f-alpha1}
 f_\alpha(z) :=zF_\al(-z).
\end{equation}

A function $f\in \Ao$ belongs to $\Af_\alpha$ if and only if one (hence all) of the following conditions holds:
\begin{itemize}
  \item [(1)] For all $\zeta$ with $\lvert \zeta \rvert =1$ we have  \[
    f(z) * z \left(\frac{ 1 -\alpha z}{(1  - z)^2}  - \frac{1+\zeta}{1-\zeta} \right) \neq 0.
\]
  \item [(2)] There is a function $q\in\Pp$ such that
\begin{equation*}
f(z)=zI_{\frac1{1-\al}}[q](z) = z\int_0^1q\left(t^{1-\alpha}z\right)dt.
\end{equation*}
Consequently, if $f\in\Af_\alpha$, then ${f(z)/z \prec F_{\alpha}(z).}$
\item[(3)] There is a probability measure $\mu$ on the unit circle $\partial\D$ such that
\begin{equation*}
f(z) = z\int\limits_{\partial\D} F_\alpha (z\overline{\zeta}) d\mu(\zeta).
\end{equation*}
\end{itemize}
\end{theorem}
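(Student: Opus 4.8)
The plan is to treat the reformulation in terms of $p(z):=f(z)/z$ as a hub and to read each of the three conditions off a different classical tool. Since $f\in\Ao$ we have $p(0)=1$ and $f'(z)=p(z)+zp'(z)$, so the bracketed expression in \eqref{N-2} equals
\[
q(z):=\alpha\,p(z)+(1-\alpha)\bigl(p(z)+zp'(z)\bigr)=p(z)+(1-\alpha)zp'(z),
\]
and $f\in\Af_\alpha$ is precisely the requirement $\Re q\ge0$ with $q(0)=1$. By the minimum principle for the harmonic function $\Re q$ this is the same as $q\in\Pp$. Each of the three equivalences will be obtained by relating $f$ to this $q$.

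For (2) I would invert the Bernardi operator. Put $\gamma=\frac{1}{1-\alpha}$, so $\gamma>0$ because $\alpha<1$. Differentiating the defining relation of $I_\gamma$ gives $\gamma\,I_\gamma[q]+z\,(I_\gamma[q])'=\gamma q$, whence the correspondence $p=I_\gamma[q]$ holds if and only if $q=p+\frac1\gamma zp'=p+(1-\alpha)zp'$. Thus $f\in\Af_\alpha$ holds iff $f(z)=zI_{\frac{1}{1-\alpha}}[q](z)=z\int_0^1 q(t^{1-\alpha}z)\,dt$ for some $q\in\Pp$, which is exactly the content of (2). The ``consequently'' clause then follows from Theorem~\ref{thm-H-R-75}: the function $F_1(z)=\frac{1+z}{1-z}$ is convex univalent with $F_1(0)=1$, every $q\in\Pp$ satisfies $q\prec F_1$, and therefore $f/z=I_\gamma[q]\prec I_\gamma[F_1]=F_\alpha$ by \eqref{f-alpha}.

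Condition (3) I would deduce from (2) by inserting the Riesz--Herglotz representation (Theorem~\ref{thm-RH}). Writing $q(z)=\oint_{\partial\D}F_1(z\overline\zeta)\,d\mu(\zeta)$ and using the integral form $I_\gamma[q](z)=\int_0^1 q(t^{1-\alpha}z)\,dt$, an interchange of the order of integration---legitimate since for fixed $z\in\D$ the arguments $t^{1-\alpha}z\overline\zeta$ stay in the compact disk $\overline{\D_{|z|}}$ on which the integrand is bounded---yields
\[
\frac{f(z)}{z}=\oint_{\partial\D}\left(\int_0^1 F_1(t^{1-\alpha}z\overline\zeta)\,dt\right)d\mu(\zeta)=\oint_{\partial\D}F_\alpha(z\overline\zeta)\,d\mu(\zeta).
\]
Reversing these steps, together with the converse half of Theorem~\ref{thm-RH}, turns a representation of the form (3) back into (2), closing the loop.

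The delicate point is the Hadamard description (1). Comparing Taylor coefficients shows that the function $\Phi_\alpha(z):=\frac{1-\alpha z}{(1-z)^2}$ has $n$-th coefficient $\alpha+(1-\alpha)n$, which is exactly the multiplier carrying $f(z)=\sum a_nz^n$ into $zq(z)$; so, using $f*z=z$ and the identities \eqref{hadamar-pr}, one verifies
\[
f(z)*z\left(\Phi_\alpha(z)-\frac{1+\zeta}{1-\zeta}\right)=z\left(q(z)-\frac{1+\zeta}{1-\zeta}\right).
\]
Since the values $\frac{1+\zeta}{1-\zeta}$ with $|\zeta|=1$, $\zeta\neq1$, fill the whole imaginary axis, (1) asserts that $q$ omits every purely imaginary value; as $q(\D)$ is connected and $q(0)=1$ lies in the right half-plane, this is equivalent to $\Re q>0$, hence to $f\in\Af_\alpha$. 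I expect the main obstacle to be precisely this step: correctly pinning down the kernel $\Phi_\alpha$ from the coefficient multipliers and then arguing that avoiding the imaginary axis forces the right half-plane (the minimum principle again excluding interior zeros of $\Re q$). The remaining manipulations are routine.
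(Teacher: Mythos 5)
Your proposal is correct and takes essentially the same route as the paper, which proves the two-parameter generalization (Theorem~\ref{so-g} for the classes $\Gf_\al^\lambda$) and obtains this statement as the case $\lambda=1$: the same hub $q(z)=\al f(z)/z+(1-\al)f'(z)\in\Pp$, the Bernardi/ODE inversion for condition (2) with Hallenbeck--Ruscheweyh's Theorem~\ref{thm-H-R-75} giving the subordination clause, Riesz--Herglotz plus Fubini for (3), and the Hadamard-multiplier computation with $\frac{1+\zeta}{1-\zeta}$ sweeping the imaginary axis for (1), where your connectedness/minimum-principle argument makes the reverse implication in (1) more explicit than the paper does. Only cosmetic slips: $\al+(1-\al)n$ is the $n$-th Taylor coefficient of $z\,\frac{1-\al z}{(1-z)^2}$ rather than of $\frac{1-\al z}{(1-z)^2}$ itself, the value $\zeta=1$ should be excluded in (1) (an omission shared by the paper's statement), and the ``iff'' in your Bernardi inversion tacitly uses uniqueness of the analytic solution, which is immediate since $1+(1-\al)n\neq0$ for all $n\ge1$ when $\al<1$.
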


This theorem can be obtained from Theorem~\ref{so-g} below taking $\lambda=1$.

It is worth mentioning that functions $F_\alpha$ defined by formula~\eqref{f-alpha} have important analytic properties.

Our next assertion follows from Theorem 5 in \cite{Rus75} with $n=1$ (see also \cite[\S 4]{SW16sp}). For an alternative proof suggested by P. Gumenyuk, see \cite{BCDES}.
\begin{proposition}\label{l-pasha}
For every $\al\leq 1$, the function $F_\al$ is a univalent  function.
\end{proposition}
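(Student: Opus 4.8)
The plan is to reduce the univalence of $F_\al$ to that of a normalized function and then to recognize the latter as the Bernardi transform of a convex function. For $\al=1$ there is nothing to prove, since $F_1(z)=\frac{1+z}{1-z}$ is a Möbius transformation. So fix $\al<1$ and set $\gamma=\frac{1}{1-\al}>0$, so that $F_\al=I_\gamma[F_1]$. Using the integral form of the operator in \eqref{I-ga} with $q=F_1$ and $1/\gamma=1-\al$, I would first record the representation
\[
F_\al(z)=\int_0^1\frac{1+s^{1-\al}z}{1-s^{1-\al}z}\,ds ,
\]
together with its differential consequence $zF_\al'(z)+\gamma F_\al(z)=\gamma F_1(z)$ (both are immediate from the series $F_\al(z)=1+\sum_{n\ge1}\frac{2\gamma}{n+\gamma}z^n$). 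Since $F_\al(0)=1$ and $F_\al'(0)=\frac{2\gamma}{1+\gamma}\neq0$, the map $F_\al$ is univalent if and only if the normalized function $g_\al(z):=\frac{1+\gamma}{2\gamma}\bigl(F_\al(z)-1\bigr)\in\Ao$ is univalent.

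The next step is the algebraic observation that, by linearity of $I_\gamma$ and $I_\gamma[1]=1$,
\[
F_\al(z)-1=I_\gamma[F_1-1](z)=I_\gamma\!\left[\tfrac{2z}{1-z}\right](z),
\]
so that $g_\al$ is exactly the normalized Bernardi transform of the convex univalent function $\ell(z)=\frac{z}{1-z}$ (whose image is the half-plane $\Re w>-\tfrac12$). I would then prove univalence of $g_\al$ through the close-to-convexity (Kaplan) criterion, which extends Theorem~\ref{thm-NW}: it suffices to exhibit a convex $\phi$ with $\Re\frac{g_\al'}{\phi'}>0$. Taking $\phi=\ell$ and using $F_1'(z)=\frac{2}{(1-z)^2}$ together with the representation above, a short computation gives
\[
\frac{g_\al'(z)}{\ell'(z)}=\frac{1+\gamma}{\gamma}\,\frac{F_\al'(z)}{F_1'(z)}=(1+\gamma)\int_0^1 t^{\gamma}\left(\frac{1-z}{1-tz}\right)^2 dt .
\]
Thus the whole statement comes down to the positivity
\[
\Re\int_0^1 t^{\gamma}\left(\frac{1-z}{1-tz}\right)^2 dt>0,\qquad z\in\D .
\]

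This last inequality is the heart of the matter, and I expect it to be the main obstacle. The integrand itself does not have positive real part (already the endpoint $t=0$ gives $(1-z)^2$, whose real part changes sign), so the weight $t^{\gamma}$ and the averaging must be used essentially. When $\gamma\ge1$, i.e. $\al\in[0,1)$, this is the classical assertion that the Bernardi operator preserves convexity/close-to-convexity, since the operator parameter $c=\gamma-1=\frac{\al}{1-\al}$ is then nonnegative; this recovers the range treated in \cite{BCDES}. The genuinely new and delicate regime is $\al<0$, that is $\gamma\in(0,1)$ and $c\in(-1,0)$, which lies outside the reach of the standard preservation theorems. For this range I would either establish the displayed integral estimate directly (controlling the argument of $\frac{1-z}{1-tz}$ on the boundary and exploiting that the mass of $t^{\gamma}\,dt$ concentrates where the kernel is well behaved) or, more efficiently, invoke the convolution/univalence criterion of Ruscheweyh \cite{Rus75} with $n=1$, which is tailored precisely to transforms of this Bernardi type and yields univalence for all $\gamma>0$ at once. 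The limiting check $\gamma\to0^+$, where $g_\al$ degenerates into the convex function $-\log(1-z)$, is consistent with and supports the expectation that univalence persists throughout $(-\infty,1]$.
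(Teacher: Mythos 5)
Your closing alternative---invoking Ruscheweyh's Theorem~5 of \cite{Rus75} with $n=1$---is precisely the paper's entire proof: the paper gives no direct argument, stating only that the proposition ``follows from Theorem 5 in \cite{Rus75} with $n=1$ (see also \cite[\S 4]{SW16sp})'', with an alternative proof in \cite{BCDES}. Had you committed to that branch, your proposal would simply coincide with the paper. As a self-contained argument, however, your primary route has a genuine gap: you reduce everything to the positivity
\[
\Re\int_0^1 t^{\gamma}\left(\frac{1-z}{1-tz}\right)^2 dt>0,\qquad z\in\D,
\]
that is, to close-to-convexity of $g_\al$ with respect to the \emph{specific} convex function $\ell(z)=z/(1-z)$, and you never prove it; for $\gamma\in(0,1)$ you offer only a plan. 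Moreover, even in the regime you call classical, the Bernardi preservation theorems do not deliver this particular inequality: preservation of close-to-convexity yields $\Re\bigl[z(L_c[f])'(z)/L_c[g](z)\bigr]>0$ with respect to the \emph{transformed} starlike function $L_c[g]$, not with respect to $\ell$ itself. (That weaker conclusion already suffices for univalence, so your ``the whole statement comes down to'' reduction both overstates what is needed and misattributes what the classical results supply.)

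The deeper issue is a parameter slip, and correcting it closes the gap for the entire range at once. Writing $L_c[f](z)=\frac{c+1}{z^c}\int_0^z t^{c-1}f(t)\,dt$, one reads off from $g_\al(z)=\sum_{n\ge1}\frac{1+\gamma}{n+\gamma}z^n$ and $L_c[\ell](z)=\sum_{n\ge1}\frac{c+1}{n+c}z^n$ that the Bernardi parameter is $c=\gamma=\frac{1}{1-\al}$, \emph{not} $c=\gamma-1$; equivalently $g_\al(z)=(1+\gamma)z^{-\gamma}\int_0^z\frac{\tau^{\gamma}}{1-\tau}\,d\tau$. Hence $c>0$ for every $\al<1$, and your ``delicate regime'' $\al<0$ (where $\gamma\in(0,1)$) is an artifact of the misidentification. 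Concretely, $z\,g_\al'(z)=L_\gamma[z\ell'](z)$ with $z\ell'(z)=\frac{z}{(1-z)^2}$ starlike, and setting $p(z)=\frac{z\,(L_\gamma[z\ell'])'(z)}{L_\gamma[z\ell'](z)}$ one obtains the exact Briot--Bouquet relation
\[
p(z)+\frac{zp'(z)}{p(z)+\gamma}=\frac{1+z}{1-z}\,,
\]
so the Miller--Mocanu theory (\cite{MiMo-85}; see also \cite{M-M}, the very tool this paper uses elsewhere), applicable since $\gamma>0$ gives $\Re\left[\frac{1+z}{1-z}+\gamma\right]>0$, yields $\Re p>0$. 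Thus $g_\al$, and with it $F_\al=1+\frac{2\gamma}{1+\gamma}\,g_\al$, is convex---in particular univalent---for all $\al<1$, the case $\al=1$ being the M\"obius map as you note. This repaired argument is genuinely different from the paper's citation of \cite{Rus75} and even gives convexity, which is stronger than the stated univalence; but as written, your proof is incomplete at its central inequality.
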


The next two theorems present the main results of this subsection.

\begin{theorem}\label{th-F-alpha}
The following assertions hold:
  \begin{itemize}
\item[(a)] The family $\Af$ forms a filtration with $\Af_{-\infty}:=\bigcap_{\alpha<1}\Af_\alpha=\{\Id\}$, $\Af_0=\RA$ and $\Af_1=\G_0$.
\item[(b)] The filtration $\Af$ admits a net $\left\{f_\alpha\right\}_{\alpha\in(-\infty,1]}$ of totally extremal functions, where $f_\alpha$ is defined by \eqref{f-alpha1}.
 \item[(c)] For every $\al\in(-\infty,1]$ we have $f_\al\in\partial\Af_\al $. Consequently, the filtration $\Af$ is strict.
  \end{itemize}
\end{theorem}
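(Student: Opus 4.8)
The plan is to prove the three assertions in order, leveraging the tools assembled in the excerpt—especially Theorem~\ref{so} (the integral representation and subordination characterization of $\Af_\al$), Proposition~\ref{l-pasha} (univalence of $F_\al$), and the abstract filtration result Theorem~\ref{filtra-N}.

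\textbf{Assertion (a).}
That $\Af$ is a filtration is already granted by Theorem~\ref{filtra-N} applied to $k_0(\al,r)=\frac{\al}{1-\al}$, which is non-decreasing in $\al$. For the three endpoint identifications I would argue as follows. For $\Af_0$, setting $\al=0$ in \eqref{N-2} gives exactly $\Re f'(z)\ge 0$, which is the defining condition of $\RA$. For $\Af_1$, taking $\al=1$ in \eqref{N-2} gives $\Re\frac{f(z)}{z}\ge 0$, the defining condition of $\G_0$; alternatively this follows from condition \eqref{eq-cresce} of Theorem~\ref{filtra-N}, since $k_0(\al,r)\to+\infty$ as $\al\to1^-$ eventually dominates $\frac{1+r^2}{1-r^2}$, but the direct reading of \eqref{N-2} at $\al=1$ is cleanest. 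For the bottom set $\bigcap_{\al<1}\Af_\al$, I would use the subordination $f(z)/z\prec F_\al(z)$ from Theorem~\ref{so}(2) together with the behavior of $F_\al$ as $\al\to-\infty$: as $\al\to-\infty$ the Bernardi parameter $\frac1{1-\al}\to0^+$, and $I_{\gamma}[F_1]\to F_1(0)=1$ pointwise as $\gamma\to0^+$, forcing $F_\al\to 1$ and hence $f(z)/z\equiv1$, i.e. $f=\Id$. Since $\Id$ clearly lies in every $\Af_\al$, the intersection is exactly $\{\Id\}$.

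\textbf{Assertion (b).}
Here the key is Theorem~\ref{so}(2), which states $f\in\Af_\al$ if and only if $f(z)/z\prec F_\al(z)$, with $F_\al$ univalent by Proposition~\ref{l-pasha}. Writing $f(z)=zp(z)$ and $f_\al(z)=zp_\al(z)$ with $p_\al(z)=F_\al(-z)$, the subordination $p\prec F_\al$ is, by univalence of $F_\al$ (hence of $p_\al$), equivalent to $p(\D)\subseteq F_\al(\D)=p_\al(\D)$. By Remark~\ref{rem-note}, total extremality of $f_\al$ for $\Af_\al$ amounts to checking that for every $r\in[0,1]$ the image $p(\D_r)$ lies in the convex hull of $p_\al(\D_r)$. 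I would establish this by invoking the measure representation Theorem~\ref{so}(3): every $f\in\Af_\al$ has $f(z)/z=\int_{\partial\D}F_\al(z\overline\zeta)\,d\mu(\zeta)$, exhibiting $p(z)$ as an average of the rotations $F_\al(z\overline\zeta)$ of the extremal profile, which immediately places $p(\D_r)$ in the closed convex hull of $\{F_\al(z\overline\zeta):|z|\le r\}=p_\al(\D_r)$ (after accounting for the sign $z\mapsto-z$, which is just a rotation by $\zeta=-1$). This is the cleanest route to the convex-hull condition of Remark~\ref{rem-note}.

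\textbf{Assertion (c) and strictness.}
I must show $f_\al\in\partial\Af_\al$, i.e. $f_\al\in\Af_\al$ but $f_\al\notin\Af_\beta$ for every $\beta<\al$. Membership $f_\al\in\Af_\al$ follows from (b) (a totally extremal function lies in its class, being the trivial self-subordination $p_\al\prec F_\al$). For the non-membership, the natural obstruction is the boundary behavior of $F_\al$: I would examine $\Re[\al\frac{f_\al(z)}{z}+(1-\al)f_\al'(z)]$ and show it vanishes (touches zero) as $z$ approaches a suitable boundary point, so the strict inequality needed for membership in any smaller class $\Af_\beta$ fails. Concretely, since $p_\al=F_\al(-\,\cdot\,)$ maps $\D$ onto a region whose boundary is attained in the limit $z\to1$, the defining functional of $\Af_\al$ achieves its infimum $0$ on $\partial\D$; for $\beta<\al$ the corresponding functional would then become negative, ruling out $f_\al\in\Af_\beta$. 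Once $f_\al\in\partial\Af_\al$ for all $\al$, the remark after Definition~\ref{def-stric_filt} (nonempty boundaries force strictness) gives that $\Af$ is strict. \emph{I expect the boundary computation in (c)—pinning down precisely where and how the defining functional of $\Af_\al$ degenerates for the extremal $f_\al$—to be the main technical obstacle}, since it requires control of $F_\al$ and its derivative near $z=\pm1$, where the hypergeometric representation \eqref{f-alpha} must be analyzed asymptotically.
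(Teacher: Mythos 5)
Your assertions (a) and (b) are essentially correct and run close to the paper's own route: the paper obtains this theorem as the special case $\lambda\equiv 1$ of Theorem~\ref{thm-gener-analy}, where total extremality of $f_\alpha$ comes from the subordination $f(z)/z\prec F_\alpha(z)$ (Theorem~\ref{so}(2), fed through Hallenbeck--Ruscheweyh's Theorem~\ref{thm-H-R-75} in Proposition~\ref{prop-new1-g}(a)); your alternative via the measure representation of Theorem~\ref{so}(3) is equally valid, since $\Re\bigl(\lambda\,p(z)\bigr)=\oint_{\partial\D}\Re\bigl(\lambda F_\alpha(z\overline\zeta)\bigr)d\mu(\zeta)\ge\min_{|w|=r}\Re\bigl(\lambda F_\alpha(w)\bigr)$ for $|z|=r$ verifies Definition~\ref{def-extrem} directly. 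Your identification $\Af_{-\infty}=\{\Id\}$ through the decay of the coefficients $\frac{2}{n(1-\alpha)+1}$ of $F_\alpha$ is sound and in fact more explicit than anything the paper writes down. One aside in (a) is wrong, though: condition \eqref{eq-cresce} can never be invoked here, because $k_0(\alpha,\cdot)=\frac{\alpha}{1-\alpha}$ is constant in $r$ for each fixed $\alpha<1$ while $\frac{1+r^2}{1-r^2}$ is unbounded; only your ``direct reading of \eqref{N-2} at $\alpha=1$'' argument for $\Af_1=\G_0$ stands.

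The genuine divergence, and the genuine gap, is in (c). The paper's proof (Proposition~\ref{prop-new1-g}(b) with $\lambda=\nu=1$) is a two-line coefficient comparison: if $f_\alpha\in\Af_\beta$, then $q(z)=\beta\frac{f_\alpha(z)}{z}+(1-\beta)f_\alpha'(z)$ lies in the Carath\'eodory class, so its Taylor coefficients are bounded by $2$; since the coefficients of $F_\alpha$ are $\frac{2}{n(1-\alpha)+1}$, this forces $\frac{2\left(n(1-\beta)+1\right)}{n(1-\alpha)+1}\le 2$ for all $n$, i.e.\ $\beta\ge\alpha$ --- no boundary asymptotics at all. Your boundary route can be repaired, but as written it contains a non sequitur: from ``the $\alpha$-functional of $f_\alpha$ has infimum $0$'' it does \emph{not} follow that the $\beta$-functional becomes negative, because writing $p_\alpha(z)=F_\alpha(-z)$ the two functionals differ by $(\alpha-\beta)\,zp_\alpha'(z)$, and you need $\Re\left[zp_\alpha'(z)\right]$ to have a strictly \emph{negative} limit at the degeneration point --- if it were nonnegative there, no contradiction would arise. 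The missing ingredient is the identity $F_\alpha(z)+(1-\alpha)zF_\alpha'(z)=\frac{1+z}{1-z}$ (stated in the proof of Theorem~\ref{th-F-alpha-2}), which gives $p_\alpha(z)+(1-\alpha)zp_\alpha'(z)=\frac{1-z}{1+z}$, so the $\alpha$-functional at $f_\alpha$ is exactly $\frac{1-z}{1+z}$ and, letting $z=r\to1^-$, one gets $zp_\alpha'(r)\to-\frac{\varkappa(\alpha)}{1-\alpha}<0$ by \eqref{kappa}; hence the $\beta$-functional tends to $-\frac{(\alpha-\beta)\varkappa(\alpha)}{1-\alpha}<0$ for $\alpha<1$ (the case $\alpha=1$, where $\varkappa(1)=0$, needs the separate direct computation with $p_1(z)=\frac{1-z}{1+z}$). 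So your plan in (c) is salvageable, but the coefficient bound the paper uses gets the same conclusion uniformly in $\alpha$ and with far less work; if you keep your route, the sign computation above must actually be carried out, not asserted.
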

\noindent This theorem is a particular case (namely,  $\lambda_{**}=1$) of Theorem~\ref{thm-gener-analy} below.

\vspace{3mm}

\indent To proceed we introduce (cf. \cite{BCDES, ESS}) the following three functions:
\begin{eqnarray}\label{kappa}
\varkappa(\alpha) &:=&  \int_0^1\frac{1-t^{1-\alpha}}{1+t^{1-\alpha}}\,dt = \Re F_\alpha(-1)= \inf_{z\in\D} \Re F_\alpha(z) , \\
  \delta(\al) &:=& \frac2\pi\cdot\max_{0<\theta<\pi} \arg F_\alpha(e^{i\theta}), \nonumber
\end{eqnarray}
where $F_\alpha$ is given in Theorem \ref{so},  and the function $b=b(\alpha)$ is the unique solution in $[0,1]$ to the equation
\[
\frac{\pi b}2 + \arctan\left((1-\alpha)b \right) =\frac\pi2\,.
\]

It can be seen (see~Fig.~\ref{fig:kappa}) that $\varkappa$ is a decreasing function with $\displaystyle \lim\limits_{\al\to-\infty}\varkappa(\alpha)=1$, $\varkappa(0)=2\ln 2-1$, $\varkappa(1)=0$. For more details see Lemma~\ref{lem-kappa} below. Functions $\delta$ and $b$ are increasing  with $\lim\limits_{\al\to-\infty}\delta(\al)=\lim\limits_{\al\to-\infty} b(\al)=0$ and $\delta(1)= b(1)=1$ (see Fig.~\ref{fig:B}).
Numerical calculations using Maple give $\varkappa(-5)\approx 0.8075$, $\delta(-5)\approx 0.2024$, $b(-5)\approx 0.3122$.
We will see that these functions play an important role in filtration theory.

\begin{figure}
\begin{center}
\includegraphics[angle=0,width=5cm,totalheight=5cm]{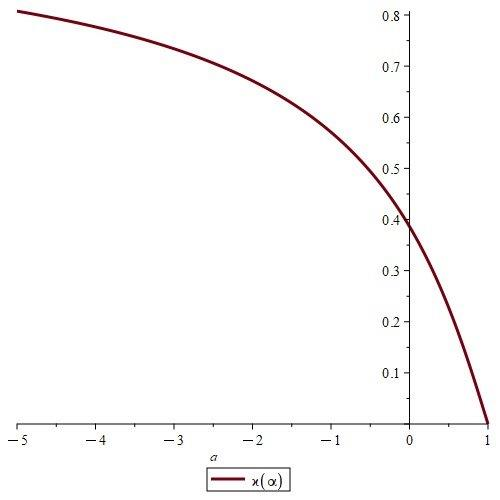}
\caption{The graph of $\varkappa(\alpha).$}\label{fig:kappa}
\end{center}
\end{figure}

\begin{figure}
\begin{center}
\includegraphics[angle=0,width=5cm,totalheight=5cm]{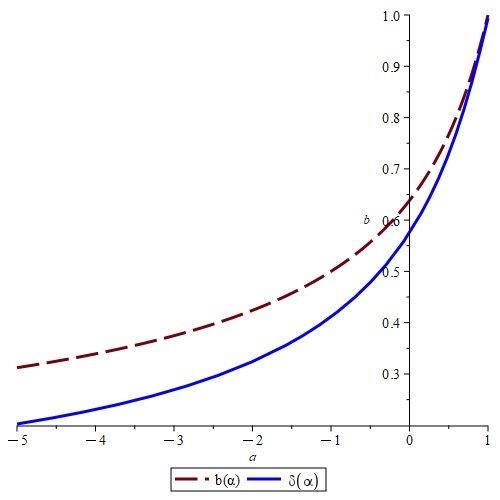}
\caption{The graphs of $b(\alpha)$ and $\delta(\alpha).$}\label{fig:B}
\end{center}
\end{figure}

\vspace{2mm}

Membership of a function $f$ to the analytic filtration
means that the semigroup generated by  $f$ owns certain dynamical properties, like the uniform rate of convergence, analytic extension of the semigroup with respect to its parameter (see Definitions~\ref{def-squee} and~\ref{def-an-sg} above). Indeed, Remark~\ref{rem-note}, assertion (d) in Theorem~\ref{th-squee}, assertion (d) in Theorem~\ref{th-sect} and inequality~\eqref{ineq-analy1-g} below imply:
\begin{theorem}\label{th-F-alpha-2}
Let  $\alpha<1$, $f\in\Af_ \alpha$ and $\left\{ \phi _{t}(z)\right\}_{t\ge0} $ be the semigroup generated by $f$. Then
\begin{itemize}
  \item [(i)] this semigroup has no repelling fixed points;
\item[(ii)]  this semigroup is exponentially squeezing with squeezing ratio $\varkappa (\alpha )$;
  \item [(iii)]  this semigroup  extends analytically to the sector
$\Lambda\left(\frac{\pi(1-\delta(\al))}2\right)$, where $\delta(\al) \le b(\al)$.
\end{itemize}
\end{theorem}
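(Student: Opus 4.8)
The plan is to deduce each of the three items from the preliminary machinery that the paper has already assembled, treating $f\in\Af_\alpha$ through the subordination $f(z)/z\prec F_\alpha(z)$ established in Theorem~\ref{so}(2). The key point is that all three dynamic properties are governed by the range of the function $p(z):=f(z)/z$, and subordination forces $p(\D)\subseteq F_\alpha(\D)$ because $F_\alpha$ is univalent (Proposition~\ref{l-pasha}). Thus every quantitative claim about $p$ reduces to the corresponding claim about the single explicit extremal function $F_\alpha$, and this is exactly what the functions $\varkappa(\alpha)$ and $\delta(\alpha)$ in \eqref{kappa} are designed to measure.

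First I would prove item (ii). Since $f(z)/z\prec F_\alpha(z)$ and $F_\alpha$ is univalent, we have $\Re p(z)\ge \inf_{w\in F_\alpha(\D)}\Re w$. By the definition \eqref{kappa}, this infimum equals $\varkappa(\alpha)=\Re F_\alpha(-1)$. For $\alpha<1$ one has $\varkappa(\alpha)>0$, so $\Re\frac{f(z)}{z}\ge\varkappa(\alpha)>0$ for all $z\in\D\setminus\{0\}$. Proposition~\ref{charactexp copy(1)} (together with assertion (d) of Theorem~\ref{th-squee}) then says precisely that the generated semigroup is exponentially squeezing with squeezing ratio $\varkappa(\alpha)$. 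Item (i) follows immediately: a repelling fixed point would be a boundary regular null point of $f$, which by the Corollary in Subsection~\ref{ssect-sectorial-filt} forces $f\in\partial\Sf_1$, i.e. $\sup_{z}|\arg(f(z)/z)|=\pi/2$ and in particular $\inf_z\Re(f(z)/z)=0$; but $\Re p(z)\ge\varkappa(\alpha)>0$ for $\alpha<1$ rules this out. Equivalently, strict positivity of $\Re p$ means $0\notin\overline{p(\D)}$, so no boundary null point can occur.

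For item (iii) I would again invoke subordination and univalence to conclude $\sup_{z\in\D}|\arg(f(z)/z)|\le\sup_{w\in F_\alpha(\D)}|\arg w|$. Since $F_\alpha$ is univalent, the supremum of $\arg$ over the image is attained along the boundary, and by the symmetry of $F_\alpha(\D)$ about the real axis this boundary supremum equals $\max_{0<\theta<\pi}\arg F_\alpha(e^{i\theta})=\frac{\pi}{2}\delta(\alpha)$ by the definition of $\delta$ in \eqref{kappa}. Hence $|\arg(f(z)/z)|\le\frac{\pi\delta(\alpha)}{2}$, which by Proposition~\ref{rotation} (equivalently assertion (d) of Theorem~\ref{th-sect}) yields analytic extension of the semigroup to the sector $\Lambda\!\left(\frac{\pi(1-\delta(\alpha))}{2}\right)$. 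The remaining comparison $\delta(\alpha)\le b(\alpha)$ is a separate inequality between two explicitly defined scalar functions of $\alpha$; I expect this to be the one genuinely computational point, presumably handled by the referenced inequality~\eqref{ineq-analy1-g} which compares the exact boundary argument of $F_\alpha$ against the tangent-line estimate defining $b(\alpha)$.

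The main obstacle is the sharp evaluation of the boundary argument of $F_\alpha$ and the verification that $\delta(\alpha)\le b(\alpha)$. Establishing items (i) and (ii) is essentially a formal consequence of subordination plus the already-proved characterizations, but pinning down $\max_{0<\theta<\pi}\arg F_\alpha(e^{i\theta})$ requires genuine analysis of the Gau\ss\ hypergeometric representation \eqref{f-alpha} of $F_\alpha$ on the unit circle. Rather than computing this directly, I would lean on inequality~\eqref{ineq-analy1-g} cited in the statement, which provides the bound $\delta(\alpha)\le b(\alpha)$ and thereby certifies that the sector $\Lambda\!\left(\frac{\pi(1-\delta(\alpha))}{2}\right)$ contains at least $\Lambda\!\left(\frac{\pi(1-b(\alpha))}{2}\right)$; the explicit equation defining $b(\alpha)$ via $\arctan((1-\alpha)b)$ comes from the tangent line to the boundary of $F_\alpha(\D)$ at the relevant corner, making $b(\alpha)$ a clean, computable surrogate for the more delicate quantity $\delta(\alpha)$.
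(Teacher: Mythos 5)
Your treatment of items (i) and (ii) and of the first half of (iii) is sound and in substance follows the paper's route: total extremality of $f_\alpha$, i.e.\ the subordination $f(z)/z\prec F_\alpha(z)$ from Theorem~\ref{so}, reduces everything to the single function $F_\alpha$; then $\Re\frac{f(z)}{z}\ge\varkappa(\alpha)>0$ for $\alpha<1$ together with Proposition~\ref{charactexp copy(1)} gives (ii), and $\sup_{z\in\D}\left|\arg\frac{f(z)}{z}\right|\le\frac{\pi}{2}\,\delta(\alpha)$ together with Proposition~\ref{rotation} gives the extension to $\Lambda\left(\frac{\pi(1-\delta(\alpha))}{2}\right)$. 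For (i) you in fact take a different (and valid) route: a boundary regular null point $\zeta$ forces $p(r\zeta)=\frac{f(r\zeta)}{r\zeta}\to 0$, hence $0\in\overline{p(\D)}$, contradicting $\Re p\ge\varkappa(\alpha)>0$. The paper instead notes that for the constant kernel $k_0(\alpha,r)=\frac{\alpha}{1-\alpha}$ the limit $\ell(\alpha)$ in Proposition~\ref{regular-filtr} vanishes, so no boundary critical point of any order $\lambda\in(0,1]$ exists, and concludes via Theorem~\ref{thm-boud-p}. One caveat in your first formulation: $f\in\partial\Sf_1$, i.e.\ $\sup_z\left|\arg\frac{f(z)}{z}\right|=\frac{\pi}{2}$, does not by itself imply $\inf_z\Re\frac{f(z)}{z}=0$ (the modulus could blow up as the argument tends to $\frac{\pi}{2}$), but your ``equivalently'' clause via $0\notin\overline{p(\D)}$ is correct and repairs this.

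The genuine gap is the inequality $\delta(\alpha)\le b(\alpha)$, which you defer to inequality~\eqref{ineq-analy1-g}. That inequality is nothing more than the total-extremality statement $\min_{|z|=r}\Re\left(\lambda\frac{f(z)}{z}\right)\ge\min_{|z|=r}\Re\left(\lambda F_\alpha^\lambda(z)\right)$; it compares $f$ with $F_\alpha$ and carries no information relating $\delta(\alpha)$ to $b(\alpha)$, so it cannot ``handle'' this step, and your heuristic that the $\arctan$ in the definition of $b(\alpha)$ comes from a tangent line to $\partial F_\alpha(\D)$ is not a proof. The paper's actual argument is a differential subordination: differentiating the Bernardi-operator representation $F_\alpha=I_{\frac{1}{1-\alpha}}[F_1]$ yields the identity $F_\alpha(z)+(1-\alpha)zF_\alpha'(z)=\frac{1+z}{1-z}$, and Miller--Mocanu's Theorem~3.1c from \cite{M-M}, applied with parameter $\lambda=1-\alpha$, gives $F_\alpha(z)\prec\left(\frac{1+z}{1-z}\right)^{b}$, where $b=b(\alpha)$ is exactly the solution of $\frac{\pi b}{2}+\arctan\left((1-\alpha)b\right)=\frac{\pi}{2}$; since the right-hand side maps $\D$ into the sector of half-opening $\frac{\pi b}{2}$, this forces $\delta(\alpha)\le b(\alpha)$. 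Without this step (or some equivalent analysis of $\arg F_\alpha$ on the unit circle), item (iii) as stated remains unproved in your proposal.
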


\begin{proof}
Indeed, the limit $\ell(\alpha)$ in Proposition~\ref{regular-filtr} is equal zero. Therefore $f$ has no boundary regular null points. Hence assertion (i) follows by Theorem~\ref{thm-boud-p}.

Further, assertion (ii) and the first part of assertion (iii) follows from Remark~\ref{rem-note} and Theorem~\ref{th-F-alpha}. So, we have to show that $\delta(\alpha)\le b(\al)$.

Consider the function $F_\alpha$ defined by \eqref{f-alpha1}. A straightforward calculation shows that
\[
F_\alpha(z)+(1-\alpha)zF_\alpha'(z)=\frac{1+z}{1-z}\,.
\]
Hence, by  \cite[Theorem~3.1c]{M-M} applied with $\lambda=1-\alpha$, we have $\displaystyle F_\alpha(z)\prec\left (
\frac{1+z}{1-z}\right)^b$, which completes the proof.
\end{proof}

As an immediate consequence of the last theorem and assertion (iii) in Proposition~\ref{l-pasha}, we conclude:
\begin{corollary}\label{cor-Aconseq}
The quantities $K(\Af_\alpha)$ and $B(\Af_\alpha)$ are given by
$$
K(\Af_\alpha)=\varkappa(\alpha)\quad\mbox{and}\quad B(\Af_\alpha)=\delta(\alpha).
$$
\end{corollary}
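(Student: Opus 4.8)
The plan is to read off both identities from the total extremality of $f_\alpha$, rather than to recompute the extrema over the whole class $\Af_\alpha$ directly. By assertion (b) of Theorem~\ref{th-F-alpha} the function $f_\alpha$ defined in \eqref{f-alpha1} is totally extremal for $\Af_\alpha$, and $f_\alpha\in\Af_\alpha\subset\G_0$. Hence the first bullet of Remark~\ref{rem_27_02} applies and collapses the two nested extrema in Definition~\ref{def-K-B} into extrema of the single function $f_\alpha$:
\begin{equation*}
K(\Af_\alpha)=\inf_{z\in\D}\Re\frac{f_\alpha(z)}{z},\qquad
B(\Af_\alpha)=\frac2\pi\sup_{z\in\D}\left|\arg\frac{f_\alpha(z)}{z}\right|.
\end{equation*}
Since $f_\alpha(z)=zF_\alpha(-z)$, we have $\frac{f_\alpha(z)}{z}=F_\alpha(-z)$, so the whole computation reduces to understanding $F_\alpha$ on $\D$. (Equivalently, assertions (ii) and (iii) of Theorem~\ref{th-F-alpha-2} already give the one-sided bounds $K(\Af_\alpha)\ge\varkappa(\alpha)$ and $B(\Af_\alpha)\le\delta(\alpha)$ valid for every member of the class; the point of invoking $f_\alpha$ is to show these bounds are attained, hence sharp.)

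For the first identity I would substitute $w=-z$. As $\D$ is invariant under $z\mapsto -z$, the variable $w$ again ranges over $\D$, and therefore
\begin{equation*}
\inf_{z\in\D}\Re F_\alpha(-z)=\inf_{w\in\D}\Re F_\alpha(w)=\varkappa(\alpha),
\end{equation*}
the last equality being precisely the definition of $\varkappa$ in \eqref{kappa}. This gives $K(\Af_\alpha)=\varkappa(\alpha)$ at once.

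For the second identity the same substitution yields $\sup_{z\in\D}\bigl|\arg F_\alpha(-z)\bigr|=\sup_{w\in\D}\bigl|\arg F_\alpha(w)\bigr|$, and it remains to match this with $\max_{0<\theta<\pi}\arg F_\alpha(e^{i\theta})$. Here I would use that $F_\alpha=I_{1/(1-\alpha)}[F_1]\in\Pp$ (since $F_1\in\Pp$ and $I_\gamma$ preserves $\Pp$, cf. Theorem~\ref{so}), so $F_\alpha$ is zero-free; thus $\log F_\alpha$ is holomorphic and $\arg F_\alpha=\Im\log F_\alpha$ is a bounded harmonic function on $\D$ with values in $(-\tfrac\pi2,\tfrac\pi2)$. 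By the maximum principle its supremum is attained in the boundary limit, and since the hypergeometric series \eqref{f-alpha} has real parameters, $F_\alpha$ has real Taylor coefficients, so $\arg F_\alpha(\bar z)=-\arg F_\alpha(z)$. Consequently the two-sided supremum of $|\arg F_\alpha|$ over $\D$ equals the one-sided maximum of $\arg F_\alpha$ over the upper boundary arc $\{e^{i\theta}:0<\theta<\pi\}$, which is exactly $\frac\pi2\delta(\alpha)$ by \eqref{kappa}. This yields $B(\Af_\alpha)=\delta(\alpha)$.

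The substitution $w=-z$ and the appeals to the definitions in \eqref{kappa} are routine; the single delicate point is the last step for $B$, namely justifying that $\sup_{z\in\D}|\arg F_\alpha(z)|$ is realized on the boundary and, via the reflection symmetry, reduces to the single-sided maximum over $(0,\pi)$ used to define $\delta$. This step leans on $F_\alpha$ mapping $\D$ into the right half-plane (univalence from Proposition~\ref{l-pasha} together with $F_\alpha\in\Pp$), which makes $\arg F_\alpha$ harmonic and bounded so that the maximum principle and the reflection argument apply; I would also check that no difficulty arises at the boundary endpoints $z=\pm1$, where $\arg F_\alpha$ stays controlled (indeed $\arg F_\alpha\to 0$ as $z\to 1$), so the maximum of $\arg F_\alpha(e^{i\theta})$ is genuinely attained at an interior $\theta\in(0,\pi)$.
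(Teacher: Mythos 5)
Your overall route is the paper's own: the corollary is derived there as an immediate consequence of the total extremality of $f_\alpha$ (Theorem~\ref{th-F-alpha}(b) combined with Remark~\ref{rem_27_02}), the definitions in \eqref{kappa}, and Proposition~\ref{l-pasha}; your computation of $K(\Af_\alpha)$ via the substitution $w=-z$ is exactly this and is complete.

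For $B(\Af_\alpha)$, however, one step is under-justified. The reflection symmetry $\arg F_\alpha(\bar z)=-\arg F_\alpha(z)$ together with the maximum principle yields only $\sup_{z\in\D}\left|\arg F_\alpha(z)\right|=\sup_{0<\theta<\pi}\left|\arg F_\alpha(e^{i\theta})\right|$, a two-sided quantity, whereas $\delta(\alpha)$ is defined in \eqref{kappa} through the \emph{one-sided} maximum of $\arg F_\alpha(e^{i\theta})$. Your ``consequently'' silently drops the absolute value, which requires $\arg F_\alpha\ge 0$ on the upper half-disk, i.e., that $F_\alpha$ is typically real; this does \emph{not} follow from real coefficients plus membership in $\Pp$ alone (for instance $q(z)=\frac{1-z^2}{1+z^2}$ lies in $\Pp$ and has real coefficients, yet $\Im q\left(\frac{1+i}{2}\right)<0$). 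This is precisely where Proposition~\ref{l-pasha} is needed, and why the paper cites it in deriving the corollary: since $F_\alpha$ is univalent with real coefficients, the identity $F_\alpha(\bar z)=\overline{F_\alpha(z)}$ and injectivity force $F_\alpha$ to be real only on $(-1,1)$, so $\Im F_\alpha$ keeps a constant sign on the upper half-disk, and the sign is positive because the Taylor coefficients of $F_\alpha=I_{1/(1-\alpha)}[F_1]$ are positive; hence $\arg F_\alpha\ge0$ there and the one-sided reduction is legitimate. In your write-up univalence is invoked only to secure harmonicity and boundedness of $\arg F_\alpha$, for which $F_\alpha\in\Pp$ (zero-free, values in the right half-plane) already suffices; its actual role is the typical realness just described. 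With that single insertion your argument is a complete proof and coincides with the paper's intended one.
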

In particular, $K(\RA)=\varkappa(0)=2\ln2-1\approx 0.3863$ and $B(\RA)=\delta(0)\approx 0.5804$. Thus any function of the Noshiro--Warschawski class $\RA$ generates the  semigroup that satisfies $\left|\phi_t(z) \right|\le e^{-0.3862 t}|z|$ and can be analytically extended to the sector $\Lambda\left(0.2097\pi\right)$.

\bigskip

\subsection{Hyperbolic filtrations}\label{subsec-hyperb}

In this subsection we discuss two filtrations (very close one to another) that are linear combinations of Abate's formula and the inequality $\Re\frac{f(z)}{z}>0$ that follows from the Berkson--Porta representation (see Theorem~\ref{th-gen-prop}). Since the origin of both mentioned formulas is in the hyperbolic geometry of the unit disk, we call them {\it hyperbolic filtrations}. In this subsection we mainly follow \cite{BCDES}.

The first one is of type  $\mathfrak F[k]$ and can be obtained by taking $J=[0,1)$ and function $k_1\in \mathcal{K}$ defined
by
\[
k_1(\alpha, r):=\frac{\alpha r^2}{(1-\alpha)(1-r^2)}.
\]
In this case, condition \eqref{N-alpha} can be rewritten as
\begin{equation}\label{N-1}
\Re \left[\alpha |z|^2\frac{f(z)}{z}+(1-\alpha)(1-|z|^2) f'(z) \right]\geq 0, \quad  z\in \D\setminus \{0\}.
\end{equation}

\begin{definition}\label{def-hyp-filt}
We say that $f\in\Ao$ belongs to $\Hf_\al$ if $f$ satisfies \eqref{N-1}, and the family of the sets $\Hf =\left\{ \Hf_\al\right\}_{\alpha \in[0,1]}$ is called the {\sl hyperbolic filtration}.
\end{definition}

In fact,  the inequalities defining the filtration
are convex combinations of derivatives of inequalities containing the infinitesimal hyperbolic metric and  the hyperbolic distance:
\begin{equation*}
\omega (\phi_{t}(z),\phi_{t}(w) )\le \omega(z,w) \qquad \mbox{and}\qquad \kappa (\phi_t(z);f(\phi_t(z)))  \le  \kappa (z;f(z)).
\end{equation*}
The last two inequalities hold for every semigroup $(\phi_t(z))$ with infinitesimal generator $f$, where $\omega$ denotes the hyperbolic distance and $\kappa$ the infinitesimal hyperbolic metric on $\D$. This is the reason why we call this filtration {\sl hyperbolic}.

Note that $\Hf_0=\RA=\{f\in \Ao : \Re f'(z)>0 , \, z \in \D\}$ consists of univalent functions.  It is natural to ask: for a given $\alpha\in \left(0,1\right) $, how far away are the classes $\Hf_\alpha$  from the class $\RA$? The following statement answers this question.

\begin{proposition}[see \cite{SD16}]\label{der}
Let $\alpha\in \left( 0,1\right) $ and $f \in \Hf_\alpha$. Then
\begin{equation*}
\Re f^{\prime }(z)\geq -\frac{\alpha}{1-\alpha}\cdot\left(\frac{\left\vert z\right\vert }{1-\left\vert z\right\vert }\right)^2, \quad  z\in \D.
\end{equation*}%
\end{proposition}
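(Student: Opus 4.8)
The plan is to isolate $\Re f'(z)$ from the defining inequality \eqref{N-1} and then control the surviving term by Harnack's inequality. First I would record the one structural fact I need: since $\Hf_\alpha$ is a filtration of $\G_0$ (Definition~\ref{def-hyp-filt} together with Theorem~\ref{filtra-N}), every $f\in\Hf_\alpha$ lies in $\G_0$, so the function $p(z):=f(z)/z$ belongs to the Carath\'eodory class $\Pp$. In particular $\Re p(z)\ge 0$ for all $z\in\D$, which is precisely what will make the subsequent bound meaningful.

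Next, fixing $z\in\D\setminus\{0\}$ and using that both $1-\alpha>0$ and $1-|z|^2>0$, I would move the first term of \eqref{N-1} to the other side to get
\[
(1-\alpha)(1-|z|^2)\,\Re f'(z)\ge -\alpha|z|^2\,\Re\frac{f(z)}{z},
\]
and therefore
\[
\Re f'(z)\ge -\frac{\alpha|z|^2}{(1-\alpha)(1-|z|^2)}\,\Re\frac{f(z)}{z}.
\]
Here the coefficient multiplying $\Re\frac{f(z)}{z}$ is nonpositive while $\Re\frac{f(z)}{z}\ge0$, so the right-hand side is smallest when $\Re\frac{f(z)}{z}$ is as large as possible; this is the only point where one must be attentive to signs.

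This is exactly where Harnack's inequality (Theorem~\ref{thm-RH}) enters: because $p\in\Pp$, we have $\Re\frac{f(z)}{z}\le \frac{1+|z|}{1-|z|}$. Substituting this upper bound and simplifying via the factorization $1-|z|^2=(1-|z|)(1+|z|)$, the factor $(1+|z|)$ cancels and the estimate collapses to
\[
\Re f'(z)\ge -\frac{\alpha}{1-\alpha}\cdot\frac{|z|^2}{(1-|z|)^2}=-\frac{\alpha}{1-\alpha}\left(\frac{|z|}{1-|z|}\right)^{2},
\]
which is the claimed inequality (the case $z=0$ being trivial since both sides vanish). The argument is elementary throughout; the only genuine decision is recognizing that the negative sign of the coefficient forces the use of the \emph{upper} Harnack bound rather than the lower one, after which the cancellation is automatic.
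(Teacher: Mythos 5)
Your proof is correct and follows essentially the same route as the paper: rearrange the defining inequality \eqref{N-1}, note $\Re\frac{f(z)}{z}\geq 0$ since $\Hf_\alpha\subset\G_0$, apply the upper Harnack bound $\Re\frac{f(z)}{z}\leq\frac{1+|z|}{1-|z|}$, and cancel the factor $1+|z|$ against $1-|z|^2$. The only cosmetic difference is that the paper writes the bound as $\Re f(z)\overline{z}\leq |z|^2\,\Re f'(0)\,\frac{1+|z|}{1-|z|}$, which is identical to yours since $\Re f(z)\overline{z}=|z|^2\Re\frac{f(z)}{z}$ and $f'(0)=1$ by normalization.
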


\begin{proof}
It follows from Definition~\ref{def-hyp-filt} that
\begin{equation*}
(1-\alpha)\Re f^{\prime }(z)(1-\left\vert z\right\vert ^{2})\geq -\alpha\Re %
f(z)\overline{z}.
\end{equation*}
Since $\Re f(z)\overline{z}\geq 0,$ $z\in \D,$ we get by Harnack's inequality (see, for example, \cite{Duren, GI-KG, SD})  that
\begin{equation*}
\Re f(z)\overline{z}\leq \left\vert z\right\vert ^{2}\Re f^{\prime
}(0)\frac{1+\left\vert z\right\vert }{1-\left\vert z\right\vert }.
\end{equation*}%
So,
\begin{equation*}
\Re f^{\prime }(z)(1-\left\vert z\right\vert ^{2})\geq -\frac{\alpha}{1-\alpha}
\left\vert z\right\vert ^{2}\cdot\frac{1+\left\vert
z\right\vert }{1-\left\vert z\right\vert }.
\end{equation*}
Hence the assertion follows.
\end{proof}
Thus letting $\alpha \to 0$ we see that $\Hf_\alpha$ is `close' to the Noshiro--Warschawski class.
\vspace{2mm}

Observe further that $\Hf_1=\mathcal G_0$. At the same time, condition \eqref{eq-cresce} of Theorem~\ref{filtra-N} does not hold, so it is not a priori clear whether $\Hf_\alpha=\mathcal G_0$
for some $\alpha\in[0,1)$.
In fact, for $\alpha=\frac{2}{3}$\,, equation
\eqref{N-1} reduces to the Abate's formula \eqref{abate}.
 Therefore,
\[
\Hf_\alpha= \G_0\quad\mbox{for all } \alpha\in
\left[\frac{2}{3},\, 1\right].
\]

As for $\alpha \in \left[0, \frac{2}{3}\right)$, we state the following assertion that follows from Proposition~\ref{regular-filtr} similarly to the proof of Theorem~\ref{th-F-alpha-2} (i).
\begin{proposition}
Let $f \in \Hf_\alpha$, $\alpha \in \left[0, \frac{2}{3}\right)$. Then the semigroup generated by $f$ has no repelling fixed points.
\end{proposition}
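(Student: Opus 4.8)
The plan is to follow the scheme used for Theorem~\ref{th-F-alpha-2}(i), replacing the analytic filtration by the hyperbolic one and exploiting the fact that $\Hf=\mathfrak F[k_1]$ with $k_1(\alpha,r)=\frac{\alpha r^2}{(1-\alpha)(1-r^2)}$. First I would compute the limit $\ell(\alpha)$ appearing in Proposition~\ref{regular-filtr}. Since $1-r^2=(1-r)(1+r)$, one gets
\[
\ell(\alpha)=\lim_{r\to1^-}k_1(\alpha,r)(1-r)=\lim_{r\to1^-}\frac{\alpha r^2}{(1-\alpha)(1+r)}=\frac{\alpha}{2(1-\alpha)},
\]
which exists finitely for every $\alpha\in[0,1)$, so Proposition~\ref{regular-filtr} is applicable.

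Next I would translate the dynamic statement into a geometric one. By Theorem~\ref{thm-boud-p}(ii) a point $\zeta\in\partial\D$ is a repelling fixed point of the semigroup generated by $f$ if and only if $\zeta$ is a boundary regular null point of $f$; moreover, by Theorem~\ref{thm-boud-p}(i), at such a point $f'(\zeta)$ is a finite, strictly negative real number. I would then note that a boundary regular null point is exactly a boundary critical point of order $\lambda=1$ in the sense of \eqref{cond1}: setting $\omega:=-f'(\zeta)>0$ gives $\lim_{r\to1^-}\frac{f(r\zeta)}{\zeta(1-r)}=\omega\neq0,\infty$ together with $\lim_{r\to1^-}f'(r\zeta)=-\omega$, which is precisely \eqref{cond1} with $\lambda=1$.

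Finally I would argue by contradiction. Suppose $f\in\Hf_\alpha$ had a repelling fixed point. By the previous paragraph $f$ would then possess a boundary critical point of order $\lambda=1$, and Proposition~\ref{regular-filtr} would force $1=\lambda\le\ell(\alpha)=\frac{\alpha}{2(1-\alpha)}$. However, $\frac{\alpha}{2(1-\alpha)}<1$ is equivalent to $3\alpha<2$, i.e. to $\alpha<\frac{2}{3}$, which holds by hypothesis; this contradiction completes the proof. The only delicate point is the verification that a boundary regular null point genuinely is a boundary critical point of order $1$ with $\omega\neq0$---this is where Theorem~\ref{thm-boud-p}(i), guaranteeing that $f'(\zeta)$ is a nonzero (negative) real number, is essential, since Proposition~\ref{regular-filtr} requires the normalization $\omega\neq0,\infty$ built into \eqref{cond1}.
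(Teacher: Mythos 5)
Your proof is correct and takes essentially the same route as the paper: the paper derives this proposition from Proposition~\ref{regular-filtr} ``similarly to the proof of Theorem~\ref{th-F-alpha-2}(i)'', which is exactly what you carry out by computing $\ell(\alpha)=\lim_{r\to1^-}k_1(\alpha,r)(1-r)=\frac{\alpha}{2(1-\alpha)}<1$ for $\alpha<\frac{2}{3}$ and invoking Theorem~\ref{thm-boud-p}. The identification of a repelling fixed point with a boundary regular null point, viewed as a boundary critical point of order $\lambda=1$ with $\omega=-f'(\zeta)>0$, is the same (implicit) step the paper relies on, so there is no gap relative to the paper's argument.
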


Our next aim is to show that for $0<\alpha\leq\frac23$ the boundaries $\partial\Hf_\alpha$ are not empty, and hence the filtration $\{\Hf_\alpha\}_{\alpha\in \left[0,\frac23\right]}$ is strict. For $\al\in(0,\, 2/3]$ and $r\in (0,1)$, we denote
\begin{eqnarray*}
   \phi(\alpha, r) &:=& \frac{1-\al +r^2(2\al-1)}{2r(1-\al)+r^3(3\al-2)}\,, \\
   \psi(\alpha, r)&:=& \frac{(1-\alpha)+r^2(2\alpha -1)}{3r^2 (1-\alpha) + r^4(4\alpha-3) }\,.
\end{eqnarray*}

It can be shown that for every fixed $\al\in\left(0, \frac23\right]$ these functions attain their minima
\begin{equation}\label{eta}
\eta_1=\eta_1(\al):=\min_{r\in (0,1]}\phi(\alpha, r)>0\quad\mbox{and}\quad \eta_2=\eta_2(\al):=\min_{r\in (0,1]}\psi(\alpha, r)>0.
\end{equation}

\begin{theorem}\label{th-N-alpha}
Let $\al\in\left(0, \frac23\right]$ and $\eta_1,\eta_2$ be defined by \eqref{eta}. Then the boundary  $\partial\Hf_\alpha$ contains the functions $f_{\al,1},\ f_{\al,2}$ and $f_{\al,3}$ defined by
\[
f_{\al,1}(z):=z(1-z)^{\frac\alpha{2(1-\alpha)}},\quad  f_{\al,2}(z):= z(1+\eta_1z),\quad  f_{\al,3}(z):= z(1+\eta_2 z^2).
\]
Consequently, the filtration $\{\Hf_\alpha\}_{\alpha\in \left[0,\frac23\right]}$ is strict. Moreover, for any $f\in\G_0\setminus\Hf_\al$, let $r\in [0,1)$ be the largest number such that $f_r\in\Hf_\al,$ where $f_r(z):=\frac1r f(rz)$. Then $f_r\in\partial\Hf_\al.$
\end{theorem}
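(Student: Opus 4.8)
The plan is to reduce everything to the elementary fact that the defining expression in \eqref{N-1} is \emph{affine} in the parameter $\al$. For $f\in\Ao$ and $z\in\D$ put
\[
G_\al(f,z):=\Re\left[\al|z|^2\frac{f(z)}{z}+(1-\al)(1-|z|^2)f'(z)\right]=(1-\al)u(z)+\al v(z),
\]
where $u(z):=(1-|z|^2)\Re f'(z)$ and $v(z):=|z|^2\Re\frac{f(z)}{z}$, so that $f\in\Hf_\al$ iff $G_\al(f,\cdot)\ge0$ on $\D\setminus\{0\}$. For $f\in\G_0$ one has $v\ge0$ and, by the minimum principle, $\Re\frac{f(z)}{z}>0$ throughout $\D\setminus\{0\}$ (the value at $0$ being $1$), so at an interior point $z_0\neq0$ where $G_\al(f,z_0)=0$ the relation $(1-\al)u(z_0)+\al v(z_0)=0$ forces $u(z_0)=-\frac{\al}{1-\al}v(z_0)\le0$, whence the slope $\partial_\al G_\al(f,z_0)=v(z_0)-u(z_0)=\frac{v(z_0)}{1-\al}>0$. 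This yields the working criterion I will use repeatedly: \emph{if $f\in\Hf_\al$ and $G_\al(f,z_0)=0$ at some interior $z_0$, then $f\in\partial\Hf_\al$}, because $G_\beta(f,z_0)=-(\al-\beta)(v-u)(z_0)<0$ for every $\beta<\al$.

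Next I would verify membership and locate an equality point for the three explicit functions. For the polynomials $f_{\al,2}(z)=z(1+\eta_1z)$ and $f_{\al,3}(z)=z(1+\eta_2z^2)$ the computation is elementary: writing $z=re^{i\theta}$, the $\theta$-dependence of $G_\al$ is, respectively, $\eta_1r\cos\theta\cdot B(r)$ and $\eta_2r^2\cos2\theta\cdot\widetilde B(r)$, with $B(r)=2(1-\al)+r^2(3\al-2)>0$ and $\widetilde B(r)=3(1-\al)+r^2(4\al-3)>0$ on $(0,1]$ when $\al\le\frac23$. Minimizing over $\theta$ (at $\cos\theta=-1$, resp.\ $\cos2\theta=-1$) produces exactly $rB(r)\big(\phi(\al,r)-\eta_1\big)$ and $r^2\widetilde B(r)\big(\psi(\al,r)-\eta_2\big)$, which by the definition \eqref{eta} of $\eta_1,\eta_2$ are nonnegative and vanish at the minimizing radius. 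Hence $f_{\al,2},f_{\al,3}\in\Hf_\al$ with $G_\al=0$ at $z_0=-r^*$, resp.\ $z_0=ir^{**}$; when the minimizer satisfies $r^*<1$ the point is interior, $\Re\frac{f}{z}(z_0)>0$, and the criterion gives the boundary membership, while the borderline case $r^*=1$ is covered by the boundary expansion used below. For $f_{\al,1}(z)=z(1-z)^\lambda$ with $\lambda=\frac{\al}{2(1-\al)}$ I would factor out $(1-z)^{\lambda-1}$; along the real axis this collapses (after cancelling a factor $1-r$) to $(1-r)^{\lambda}\big[1-\lambda r-(1-\lambda)r^2\big]\ge0$, vanishing as $r\to1^-$. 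The boundary membership then follows from the asymptotics near $z=1$: expanding at $r=1-\varepsilon$ gives $G_\beta(f_{\al,1},1-\varepsilon)=\frac{\beta-\al}{1-\al}\,\varepsilon^{\lambda}+o(\varepsilon^{\lambda})$, which is negative for every $\beta<\al$, so $f_{\al,1}\notin\Hf_\beta$, while the same expansion at $\beta=\al$ is consistent with membership.

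The genuinely delicate point is the full two–dimensional inequality $G_\al(f_{\al,1},\cdot)\ge0$ on all of $\D$, not merely on the real axis, since the factor $(1-z)^{\lambda-1}$ carries a nontrivial argument and the infimum is attained only in the boundary limit $z\to1$. I would establish it as the estimate
$\Re\big[(1-z)^{\lambda-1}\big((1+|z|^2(2\lambda-1))-z((1+\lambda)-|z|^2(1-\lambda))\big)\big]\ge0$,
reducing to a one–variable minimization in $\theta$ for each fixed $r$; alternatively this is exactly the content established for the hyperbolic filtration in \cite{BCDES, SD16}, which I would cite. This is the main obstacle; the polynomial cases, by contrast, are routine.

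Finally, for the ``moreover'' statement fix $f\in\G_0\setminus\Hf_\al$ and consider $r\mapsto\min_{z\in\oD}G_\al(f_r,z)$, where $f_r(z)=\frac1r f(rz)\in\G_0$. For $r<1$ each $f_r$ is holomorphic on a neighbourhood of $\oD$, so this minimum depends continuously on $r$; it is positive at $r=0$ (where $f_0=\Id$), and a pointwise limit argument shows it cannot stay nonnegative up to $r=1$ (else $f=f_1\in\Hf_\al$). Hence $r_0:=\max\{r:f_r\in\Hf_\al\}$ is well defined with $r_0<1$, closedness under locally uniform limits gives $f_{r_0}\in\Hf_\al$, and by maximality together with continuity in $r$ the minimum equals $0$ at $r=r_0$. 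An interior minimizer $z_0$ of $G_\al(f_{r_0},\cdot)$ then satisfies $\Re\frac{f_{r_0}(z_0)}{z_0}>0$: the value $0$ cannot be attained on $\partial\D$, where $G_\al(f_{r_0},z)=\al|z|^2\Re\frac{f_{r_0}(z)}{z}>0$, nor at $0$, where $G_\al=1-\al>0$. So the criterion yields $f_{r_0}\in\partial\Hf_\al$. In particular $\partial\Hf_\al\neq\emptyset$ for every $\al\in(0,\frac23]$, and by the remark following Definition~\ref{def-stric_filt} the filtration $\{\Hf_\al\}_{\al\in[0,2/3]}$ is strict.
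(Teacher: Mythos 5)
Your argument is correct, and it is worth noting at the outset that the paper itself contains no proof of this theorem: it is explicitly omitted, with references to \cite{ESS} (for $f_{\al,1}$), \cite{BCDES} (for $f_{\al,3}$), and \cite{E-S-Tu1} (for $f_{\al,2}$ and the final statement). So your proposal is in fact more self-contained than the survey's treatment. Its backbone — that $G_\al(f,z)=(1-\al)u(z)+\al v(z)$ is affine in $\al$, and at any interior zero $z_0\neq0$ of $G_\al(f,\cdot)$ one has $u(z_0)\le0$ and $v(z_0)>0$ (by the minimum principle applied to $\Re\frac{f(z)}{z}$, which equals $1$ at the origin), so that $G_\beta(f,z_0)=-(\al-\beta)\frac{v(z_0)}{1-\al}<0$ for every $\beta<\al$ — is a clean certificate for boundary membership in the sense of \eqref{boundary}, and I verified the supporting computations: the identities $rB(r)=2r(1-\al)+r^3(3\al-2)$ and $r^2\widetilde B(r)=3r^2(1-\al)+r^4(4\al-3)$ make your minimized expressions exactly $rB(r)(\phi(\al,r)-\eta_1)$ and $r^2\widetilde B(r)(\psi(\al,r)-\eta_2)$; the real-axis factorization $G_\al(f_{\al,1},r)=(1-\al)(1-r)^{\lambda+1}\bigl(1+(1-\lambda)r\bigr)$ with $\lambda=\frac{\al}{2(1-\al)}$ is right; and the leading coefficient in your expansion $G_\beta(f_{\al,1},1-\varepsilon)=\frac{\beta-\al}{1-\al}\varepsilon^{\lambda}+o(\varepsilon^{\lambda})$ checks out. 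The ``moreover'' argument via continuity of $r\mapsto\min_{\oD}G_\al(f_r,\cdot)$ is also sound; the decisive observation that on $|z|=1$ one has $G_\al(f_{r_0},z)=\al\,\Re\frac{f(r_0z)}{r_0z}>0$ (these are interior values of $f$) both forces the zero-minimum to be interior and proves existence of the largest admissible $r$, which the theorem's statement tacitly assumes.

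Two loose ends deserve attention. First, the borderline case $r^*=1$ for the polynomial functions is not hypothetical: it occurs precisely when $\eta_j=1$, and this happens at $\al=\frac23$, where $\phi\left(\frac23,r\right)=\frac{1+r^2}{2r}$ and $\psi\left(\frac23,r\right)=\frac{1+r^2}{r^2(3-r^2)}$ are minimized at $r=1$ with value $1$. There your interior criterion is unavailable and the boundary expansion you gesture at must actually be carried out; it does work — for instance $\left.\frac{d}{dr}G_\beta(f_{\al,2},-r)\right|_{r=1}=2-3\beta>0$ for $\beta<\frac23$, so $G_\beta(f_{\al,2},-r)<0$ just inside the disk — but as written this step is only asserted. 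Second, the genuinely hard ingredient, the full two-dimensional inequality $G_\al(f_{\al,1},\cdot)\ge0$ on $\D$ (not merely on the real axis), is deferred to the literature; this mirrors what the survey itself does, though the source the paper designates for $f_{\al,1}$ is \cite{ESS} rather than \cite{BCDES,SD16}. With that citation repaired and the $\eta_j=1$ expansion written out, your proof is complete and arguably preferable to the paper's pure pointer to three external articles, since one uniform mechanism (the affine slope criterion) handles all three functions and the rescaling statement at once.
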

We omit the proof that can be found in the papers \cite{ESS} for $f_{\al,1}$, \cite{BCDES} for $f_{\al,3}$ and \cite{E-S-Tu1} for $f_{\al,2}$ and the last statement of the theorem.

\vspace{2mm}

Obviously, for each $\al\in(0,\frac23]$ we have $f_{\al,1}\notin\Cf_\beta$ for $\beta<1$ and $f_{\al,1}\in\Sf_{\frac{\al}{2(1-\al)}}$, where $\Cf$ and $\Sf$ are the squeezing and sectorial filtrations, respectively. Taking in mind Theorem~\ref{th-squee}, the inclusion $f_{\al,1}\in\Hf_\al$ implies the following fact:
\begin{corollary}\label{cor-K-hyperb}
  $K(\Hf_\al)=0$ and $B(\Hf_\al)\ge\frac{\al}{2(1-\al)}$  for every $\al\in\left(0,\frac23\right]$.
\end{corollary}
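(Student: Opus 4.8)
The plan is to deduce both claims from the single witness function $f_{\al,1}(z)=z(1-z)^{\frac{\al}{2(1-\al)}}$, which belongs to $\Hf_\al$ by Theorem~\ref{th-N-alpha}, by unwinding the definitions in Definition~\ref{def-K-B} and invoking the characterizations of the squeezing and sectorial filtrations. No deep obstacle is expected here: the genuinely nontrivial inputs (membership $f_{\al,1}\in\Hf_\al$, together with $f_{\al,1}\notin\Cf_\beta$ for $\beta<1$ and $f_{\al,1}\in\Sf_{\frac{\al}{2(1-\al)}}$) are supplied by Theorem~\ref{th-N-alpha} and the discussion preceding the corollary, so the remaining work is bookkeeping with the two quantities $K$ and $B$.

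For the equality $K(\Hf_\al)=0$ I would first record the free lower bound: since $\Hf_\al\subseteq\G_0$ and every $g\in\G_0$ satisfies $\Re\frac{g(z)}z\ge0$ by the Berkson--Porta representation (Theorem~\ref{th-gen-prop}), the double infimum gives $K(\Hf_\al)\ge0$. For the reverse inequality I would test the definition of $K$ on $f_{\al,1}$. Unwinding the definition \eqref{squeezing-filtr}, membership $f_{\al,1}\in\Cf_\beta$ is equivalent to $\inf_{z}\Re\frac{f_{\al,1}(z)}z\ge1-\beta$; since $f_{\al,1}\notin\Cf_\beta$ for every $\beta<1$, this gives $\inf_z\Re\frac{f_{\al,1}(z)}z<1-\beta$ for all such $\beta$, and letting $\beta\to1^-$ yields $\inf_z\Re\frac{f_{\al,1}(z)}z\le0$. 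Combined with $f_{\al,1}\in\G_0$ (whence the infimum is $\ge0$) this infimum equals $0$, so $K(\Hf_\al)\le\inf_z\Re\frac{f_{\al,1}(z)}z=0$, and together with the lower bound we get $K(\Hf_\al)=0$.

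For the inequality $B(\Hf_\al)\ge\frac{\al}{2(1-\al)}$ I would again evaluate the defining supremum on $f_{\al,1}$. Here $\frac{f_{\al,1}(z)}z=(1-z)^{\frac{\al}{2(1-\al)}}$; since $\Re(1-z)>0$ on $\D$, the principal branch is well defined and $\arg\frac{f_{\al,1}(z)}z=\frac{\al}{2(1-\al)}\arg(1-z)$, with $\arg(1-z)$ filling the whole interval $\left(-\frac\pi2,\frac\pi2\right)$ as $z$ ranges over $\D$ (and $\frac{\al}{2(1-\al)}\in(0,1]$ for $\al\in(0,\frac23]$). Hence $\sup_{z\in\D}\bigl|\arg\frac{f_{\al,1}(z)}z\bigr|=\frac\pi2\cdot\frac{\al}{2(1-\al)}$, i.e.\ $f_{\al,1}\in\partial\Sf_{\frac{\al}{2(1-\al)}}$ in agreement with Theorem~\ref{th-sect}(c). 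Since $f_{\al,1}\in\Hf_\al$, the definition of $B$ as a supremum over the whole class gives
\[
B(\Hf_\al)=\frac2\pi\sup_{f\in\Hf_\al}\sup_{z\in\D}\Bigl|\arg\frac{f(z)}z\Bigr|\ge\frac2\pi\sup_{z\in\D}\Bigl|\arg\frac{f_{\al,1}(z)}z\Bigr|=\frac{\al}{2(1-\al)}.
\]

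The point worth stressing is the asymmetry between the two assertions: the bound $K\ge0$ comes for free from $\Hf_\al\subseteq\G_0=\Sf_1$, so the two estimates on $K$ meet and yield an equality, whereas for $B$ there is no companion upper bound below the trivial $B\le1$, so only the one-sided estimate can be asserted. Thus the whole argument reduces to testing Definition~\ref{def-K-B} against the explicit extremal function $f_{\al,1}$, and the only calculation is the elementary evaluation of $\inf_z\Re\frac{f_{\al,1}(z)}z$ and $\sup_z\bigl|\arg\frac{f_{\al,1}(z)}z\bigr|$.
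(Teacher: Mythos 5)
Your proof is correct and follows essentially the same route as the paper: both arguments rest on the single witness $f_{\al,1}(z)=z(1-z)^{\frac{\al}{2(1-\al)}}\in\Hf_\al$ from Theorem~\ref{th-N-alpha}, using $f_{\al,1}\notin\Cf_\beta$ for $\beta<1$ to force $K(\Hf_\al)=0$ and $\sup_{z\in\D}\left|\arg\frac{f_{\al,1}(z)}{z}\right|=\frac{\pi}{2}\cdot\frac{\al}{2(1-\al)}$ to get the lower bound on $B(\Hf_\al)$. Your explicit computation of the argument range of $(1-z)^{\frac{\al}{2(1-\al)}}$ merely spells out what the paper labels ``obvious,'' so there is no substantive difference.
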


\vspace{2mm}

Another interesting filtration of type $\mathfrak F[k]$ was introduced in \cite{SD16}. It can be obtained by choosing $J=(-\infty,1)$ and function $k_2\in \mathcal{K}$ defined by
\[
k_2(\beta, r):=\frac{2 r^2}{(1-\beta)(1-r^2)}.
\]
In this case, condition \eqref{N-alpha} gets the form
\begin{equation}\label{N}
\Re \left(2f(z)\overline{z}+(1-|z|^{2})(1-\beta )\Re f' (z)\right)\geq 0.
\end{equation}
On the other hand, this inequality (as well as \eqref{N-1}) is a linear combination of Abate's formula and the Berkson--Porta condition.  Therefore it can be obtained by changing the parameter in \eqref{N-1}. Indeed, setting $\alpha=\frac{2}{3-\beta}$ there, one obtains~\eqref{N}.

\begin{definition}
We say that $f\in\Ao$ belongs to $\mathfrak{H}^1_\al$ if $f$ satisfies \eqref{N} and name $\Hf^1 =\left\{\Hf_\al^1\right\}_{\alpha \in(-\infty,1]}$ the {\sl modified hyperbolic filtration}.
\end{definition}

Due to the mentioned connection of the modified hyperbolic filtration $\Hf^1$ with filtration $\Hf$, we conclude the following:

\begin{theorem}\label{thm-gen 2}
  Filtration $\Hf^1$ has the following properties:
  \begin{itemize}
  \item [(i)]  $\Hf_\beta^1 =\G_0$ for all $\beta\in [0,1]$;
  \item [(ii)]  Filtration $\left\{\Hf_\beta^1\right\}_{\beta \in(-\infty,0]}$ is strict. Moreover, the boundaries $\partial\Hf_\beta^1$ are not empty.
  \item [(iii)] Let $f \in \Hf^1_\beta$, $\beta<0$. Then the semigroup generated by $f$ has no repelling fixed points.
  \item [(iv)] $K(\Hf^1_\beta)=0$ and $B(\Hf^1_\beta)\ge\frac{1}{1-\beta}$  for every $\beta<0$.
\end{itemize}
\end{theorem}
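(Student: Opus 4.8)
The plan is to exploit the explicit change of parameter already indicated in the text, which reduces everything to known facts about the hyperbolic filtration $\Hf$. Setting $\alpha=\frac{2}{3-\beta}$ in the defining inequality \eqref{N-1}, using the identity $|z|^2\frac{f(z)}{z}=\overline z\,f(z)$ and clearing the denominator by the positive factor $3-\beta$ (positive since $\beta\le 1$), one recovers exactly condition \eqref{N}. Hence I would first record the clean equivalence
\[
f\in\Hf^1_\beta\quad\Longleftrightarrow\quad f\in\Hf_{\alpha(\beta)},\qquad \alpha(\beta)=\frac{2}{3-\beta},
\]
valid for every $\beta\in(-\infty,1]$. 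Since $\alpha'(\beta)=\frac{2}{(3-\beta)^2}>0$, the map $\beta\mapsto\alpha(\beta)$ is a strictly increasing bijection; in particular it sends $[0,1]$ onto $\left[\frac23,1\right]$ and $(-\infty,0]$ onto $\left(0,\frac23\right]$. The whole proof then consists of transporting the already-established structure of $\Hf$ through this order-isomorphism.

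For (i), $\beta\in[0,1]$ corresponds to $\alpha(\beta)\in\left[\frac23,1\right]$, and it was shown above that $\Hf_\alpha=\G_0$ for every $\alpha\in\left[\frac23,1\right]$; thus $\Hf^1_\beta=\Hf_{\alpha(\beta)}=\G_0$.

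For (ii) and (iii) I would use that $\beta\mapsto\alpha(\beta)$ is an increasing bijection of $(-\infty,0]$ onto $\left(0,\frac23\right]$, so strictness and nonemptiness of boundaries transfer verbatim. Indeed, $\beta_1<\beta_2$ gives $\alpha(\beta_1)<\alpha(\beta_2)$, whence $\Hf^1_{\beta_1}=\Hf_{\alpha(\beta_1)}\subsetneq\Hf_{\alpha(\beta_2)}=\Hf^1_{\beta_2}$ by Theorem~\ref{th-N-alpha}; moreover $\partial\Hf^1_\beta=\partial\Hf_{\alpha(\beta)}$, which is nonempty for $\alpha(\beta)\in\left(0,\frac23\right]$ by the same theorem (one checks that adjoining $\alpha=0$ to the index set does not alter the union in \eqref{boundary}, since $\Hf_0\subset\Hf_{\alpha'}$ for every $\alpha'>0$). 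This proves (ii). Statement (iii) is immediate because $\beta<0$ forces $\alpha(\beta)\in\left(0,\frac23\right)$, so the Proposition above, asserting absence of repelling fixed points for $f\in\Hf_\alpha$ with $\alpha\in\left[0,\frac23\right)$, applies directly.

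For (iv), Corollary~\ref{cor-K-hyperb} gives $K(\Hf_{\alpha(\beta)})=0$ and $B(\Hf_{\alpha(\beta)})\ge\frac{\alpha(\beta)}{2(1-\alpha(\beta))}$ for $\alpha(\beta)\in\left(0,\frac23\right]$. The first equality yields $K(\Hf^1_\beta)=0$ at once; for the second, the only computation is the algebraic simplification, using $1-\alpha(\beta)=\frac{1-\beta}{3-\beta}$, which gives $\frac{\alpha(\beta)}{2(1-\alpha(\beta))}=\frac{1}{1-\beta}$, so $B(\Hf^1_\beta)\ge\frac{1}{1-\beta}$. I do not expect a genuine obstacle here: the entire content lies in the correspondence $\alpha=\frac{2}{3-\beta}$, and the only points demanding care are confirming that the inequality direction is preserved when clearing denominators (guaranteed by $3-\beta>0$) and that the parameter ranges match up under the bijection.
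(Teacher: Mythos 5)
Your proposal is correct and is essentially the paper's own argument: the paper itself introduces the substitution $\alpha=\frac{2}{3-\beta}$ transforming \eqref{N-1} into \eqref{N} and then declares the theorem to follow immediately from the earlier results on $\Hf$ (namely $\Hf_\alpha=\G_0$ for $\alpha\in\left[\frac23,1\right]$, Theorem~\ref{th-N-alpha}, the proposition on absence of repelling fixed points, and Corollary~\ref{cor-K-hyperb}). You simply make explicit what the paper leaves implicit --- the strictly increasing bijection of parameter ranges, the harmless adjunction of the endpoint $\alpha=0$ in the boundary definition \eqref{boundary}, and the algebraic identity $\frac{\alpha(\beta)}{2(1-\alpha(\beta))}=\frac{1}{1-\beta}$ --- and all of these details check out.
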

The proof follows immediately from the previous results in this section.

\bigskip

In the next subsection we consider linear filtrations that are not of  type  $\mathfrak F[k]$ defined in Subsection~\ref{sect-linear}.

\subsection{Generalized analytic filtrations}\label{subsec-gener-analytic}

In this subsection we present a family of filtrations for which the squeezing filtration $\Cf$ and the analytic filtration $\Af$ are, in a sense, its `border' cases. Moreover, some other filtrations considered here may serve a `homotopy' between $\Cf$ and $\Af$.

Let start with the two-parametric family of sets defined for parameters $\alpha\le1$ and $\lambda\in[0,1]$ as follows
\begin{equation}\label{2-param}
\Gf_{\al}^{\lambda}:=\left\{\Re \left[\alpha \frac{f(z)}{z} +(1-\alpha) f'(z) \right]\geq 1-\lambda,
\quad  z\in \D\setminus \{0\}\right\}.
\end{equation}
It was introduced in~\cite{K-R}, where an integral transform between different sets $\Gf_{\al}^{\lambda}$ was established. Clearly, $\Gf_\al^0=\{\Id\}$, $\Gf_{1}^\lambda=\Cf_\lambda$ and $\Gf_{\al}^1=\Af_\al$, while the sets $\Gf_0^\lambda$ were studied in~\cite{Hal}. It can be easily seen that the semigroup $\{\phi\}_{t\geq 0}$ generated by $f \in \Gf_0^\lambda$ satisfies $\left|\phi_t'(z)\right|\leq e^{(\lambda-1)t}$.

In the following assertion we show that these sets $\Gf_\al^\lambda$ consist of generators and the Hadamard product with certain functions preserves them.

\begin{proposition}\label{thm4-g}
    Let $\alpha\le1$ and $\lambda\in[0,1]$. Let $f\in\Gf_\alpha^\lambda$ and $g\in\Ao$ be such that $\Re \frac{g(z)}z>\frac12\,.$ Then $f\in\G_0$ and $f * g \in \Gf_\alpha^\lambda$.
\end{proposition}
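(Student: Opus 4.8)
The plan is to prove the two claims separately. The statement $f\in\G_0$ is simpler: since $\Gf_\alpha^\lambda\subseteq\Gf_\alpha^1=\Af_\alpha\subseteq\G_0$ (the inclusion in $\lambda$ follows because the defining inequality in \eqref{2-param} for $\lambda$ implies the one for $\lambda=1$, as $1-\lambda\ge0$), and $\Af_\alpha\subset\G_0$ by Theorem~\ref{filtra-N}, membership $f\in\G_0$ is immediate. So the substance is the Hadamard-product claim $f*g\in\Gf_\alpha^\lambda$.

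For the Hadamard-product part, first I would rewrite the defining linear functional in \eqref{2-param} in a product-friendly form. Writing $L_\alpha[f](z):=\alpha\frac{f(z)}{z}+(1-\alpha)f'(z)$, the key observation is that $L_\alpha$ commutes with the Hadamard product against a fixed function in the sense that $L_\alpha[f*g]=f*L_\alpha[g]$ (or, using the second identity in \eqref{hadamar-pr}, $L_\alpha[f*g]=\left(L_\alpha[f]\right)*(g(z)/z)\cdot z$-type relations). Concretely I would use $z(f*g)'=(zf')*g$ from \eqref{hadamar-pr} to see that
\[
\alpha\frac{(f*g)(z)}{z}+(1-\alpha)(f*g)'(z)
=\left[\alpha\frac{f(z)}{z}+(1-\alpha)f'(z)\right]*\frac{g(z)}{z},
\]
i.e. $L_\alpha[f*g]=L_\alpha[f]*(g(z)/z)$. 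Setting $h:=L_\alpha[f]$, the hypothesis $f\in\Gf_\alpha^\lambda$ says $\Re h(z)\ge1-\lambda$ with $h(0)=1$, equivalently $h-(1-\lambda)\in\Pp$ up to normalization, while $p(z):=g(z)/z$ satisfies $\Re p>\tfrac12$ with $p(0)=1$.

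The main obstacle, and the crux of the argument, is to show that the condition $\Re h\ge1-\lambda$ is preserved under taking the Hadamard product $h*p$ when $\Re p>\tfrac12$. This is exactly a convolution-preservation property in the spirit of the Pólya–Schoenberg / Ruscheweyh theory: a function with $\Re p>\tfrac12$ and $p(0)=1$ admits a Herglotz-type representation $p(z)=\int_{\partial\D}\frac{1}{1-z\bar\zeta}\,d\mu(\zeta)$ for a probability measure $\mu$, and then by the first formula in \eqref{hadamar-pr} one gets
\[
(h*p)(z)=\int_{\partial\D} h(z\bar\zeta)\,d\mu(\zeta).
\]
Since $\Re h(z\bar\zeta)\ge1-\lambda$ for every $\zeta\in\partial\D$ and $z\in\D$, averaging against the probability measure $\mu$ gives $\Re(h*p)(z)\ge1-\lambda$. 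Translating back, $\Re L_\alpha[f*g]\ge1-\lambda$, which is precisely $f*g\in\Gf_\alpha^\lambda$. The two things I would check carefully are the normalization (that $g\in\Ao$, so $p(0)=1$ and $\mu$ is genuinely a probability measure rather than a signed or unnormalized one, which is where $\Re\frac{g(z)}{z}>\tfrac12$ enters via Theorem~\ref{thm-M-S}/Lemma~\ref{lemm-Psi}) and that $f*g$ is still normalized in $\Ao$, i.e. $(f*g)(0)=0$ and $(f*g)'(0)=1$, which is immediate from the definition \eqref{hadamar} since both $f,g\in\Ao$.
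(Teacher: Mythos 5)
Your proposal is correct and follows essentially the same route as the paper's proof: the paper also reduces the claim $f\in\G_0$ to the inclusion $\Gf_\al^\lambda\subset\Af_\al$, uses the Riesz--Herglotz representation of $g(z)/z$ (valid since $\Re\frac{g(z)}{z}>\frac12$ yields a genuine probability measure), and combines the identities in \eqref{hadamar-pr} to show that $\alpha\frac{(f*g)(z)}{z}+(1-\alpha)(f*g)'(z)$ is an average of values of $\alpha\frac{f(z\overline{\zeta})}{z\overline{\zeta}}+(1-\alpha)f'(z\overline{\zeta})$, hence lies in the half-plane $\Re w\ge 1-\lambda$. Your operator formulation $L_\alpha[f*g]=L_\alpha[f]*\frac{g(z)}{z}$ is exactly the computation carried out in the paper, just stated more compactly.
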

\begin{proof}
By definition, $f\in\Gf_\al^\lambda\subset\Af_\al$. Hence $f\in\G_0$ as it is explained in Section~\ref{subsec-analytic} (see also  Theorem~\ref{filtra-N}).

 It follows from the Riesz--Herglotz representation (see Theorem~\ref{thm-RH}) that
  \begin{equation*}
   g(z)  = z\oint\limits_{\partial\D} \frac{1}{1- z\overline{\zeta}}\,d\mu(\zeta)
  \end{equation*}
  with some probability measure $\mu$ on the unit circle.

     Denote $F= f * g.$  Then $ z F'(z) = \left(z f'(z) \right) * g(z).$ Therefore
\begin{align*}
     &\frac{\alpha F(z)+ (1-\alpha)z F'(z)}{z} = \frac{\alpha(f(z)*g(z))+\left((1-\alpha)z f'(z)*g(z)\right)}{z} \\
= &  \frac{\alpha f(z)  + (1 - \alpha)z f'(z) }{z} * \frac{g(z)}z  =  \frac{\alpha f(z)  + (1 - \alpha)z f'(z) }{z} *\oint\limits_{\partial\D} \frac{1}{1- z\overline{\zeta}}\,d\mu(\zeta)  \\
=& \oint\limits_{\partial\D}  \frac{\alpha f(z\overline{\zeta})  + (1 - \alpha)z\overline{\zeta} f'(z\overline{\zeta}) }{z\overline{\zeta}} \,d\mu(\zeta) = \oint\limits_{\partial\D} \left( \al  \frac{f(z\overline{\zeta})}{z\overline{\zeta}} +(1-\al) f'(z\overline{\zeta}) \right)d\mu(\zeta).
\end{align*}
   Thus the function $\alpha \frac{F(z)}{z} +(1-\alpha) F'(z)$ takes values in the convex hull of the image of $\alpha \frac{f(z)}{z} +(1-\alpha) f'(z)$. So, the result follows.
  \end{proof}

 In addition to notations \eqref{f-alpha}--\eqref{f-alpha1}, we denote
\[
F_\al^\lambda(z)=\lambda F_\al(z)+1-\lambda\qquad \mbox{ and } \qquad f_\al^\lambda(z)=zF_\al^\lambda(z).
\]

Criteria  of membership of a function $f\in\Ao$ to the class $\Gf_\alpha^\lambda$ are described in the next theorem (cf. Theorem~\ref{so}).
\begin{theorem}\label{so-g}
A function $f\in \Ao$ belongs to $\Gf_\alpha^\lambda$ if and only if one (hence all) of the following conditions holds:
\begin{itemize}
  \item [(1)] For all $\zeta$ with $\lvert \zeta \rvert =1$ we have
  \[
    f(z) * z \left(\frac{ 1 -\alpha z}{(1  - z)^2}  -1- \frac{2\lambda\zeta}{1-\zeta} \right) \neq 0.
\]
  \item [(2)] There is a function $q\in\Pp$ such that $\Re q(z)\ge1-\lambda,$ $z\in\D$, and
\begin{equation}\label{q-g}
f(z)=z I_{\frac1{1-\al}}[q](z) = z\int_0^1q\left(t^{1-\alpha}z\right)dt.
\end{equation}
Consequently, if $f\in\Gf_\alpha^\lambda$, then $f(z)/z \prec F_{\alpha}^\lambda(z).$
\item[(3)] There is a probability measure $\mu$ on the unit circle $\partial\D$ such that
\begin{equation}\label{F-al-l-g}
f(z) = z\int\limits_{\partial\D} F_\alpha^\lambda (z\overline{\zeta}) d\mu(\zeta).
\end{equation}
\end{itemize}
\end{theorem}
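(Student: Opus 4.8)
The plan is to route all three conditions through the single linear expression $q(z):=\alpha\frac{f(z)}{z}+(1-\alpha)f'(z)$, whose real part defines $\Gf_\alpha^\lambda$. Writing $f(z)=zp(z)$ and differentiating gives $q=p+(1-\alpha)\,zp'$, which is precisely the relation inverted by the Bernardi operator $I_\gamma$ with $\gamma=\frac1{1-\alpha}$; that is, $p=I_\gamma[q]$, so $f(z)/z=I_\gamma[q](z)$. Since $f\in\Ao$ forces $q(0)=1$, membership $f\in\Gf_\alpha^\lambda$ is, by definition, exactly the statement that $q(0)=1$ and $\Re q(z)\ge 1-\lambda$ on $\D$. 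The whole theorem is then the translation of this one condition on $q$ into its three equivalent guises, and the correspondence $f\leftrightarrow q$ via $I_\gamma$ is the backbone of the argument.

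Given this dictionary, condition (2) is essentially a restatement: $f\in\Gf_\alpha^\lambda$ means $q=\alpha f/z+(1-\alpha)f'$ satisfies $\Re q\ge1-\lambda$ (hence $q\in\Pp$, using the minimum principle for the harmonic $\Re q$ in the borderline case $\lambda=1$) and $f/z=I_\gamma[q]=\int_0^1 q(t^{1-\alpha}z)\,dt$. For the subordination tail I would observe that $\frac{q-(1-\lambda)}{\lambda}$ has positive real part and value $1$ at $0$, so it is subordinate to $\frac{1+z}{1-z}$; thus $q\prec Q:=(1-\lambda)+\lambda\frac{1+z}{1-z}$, where $Q$ is convex univalent with $Q(0)=1$. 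Theorem~\ref{thm-H-R-75} then gives $I_\gamma[q]\prec I_\gamma[Q]$, and since $I_\gamma$ fixes constants and is linear, $I_\gamma[Q]=(1-\lambda)+\lambda F_\alpha=F_\alpha^\lambda$, yielding $f/z\prec F_\alpha^\lambda$.

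Condition (3) is the integral form of (2): I would insert the Riesz--Herglotz representation (Theorem~\ref{thm-RH}) of $\widetilde q:=\frac{q-(1-\lambda)}{\lambda}\in\Pp$ into $f(z)/z=\int_0^1 q(t^{1-\alpha}z)\,dt$, write $q(w)=\lambda\oint_{\partial\D}\frac{1+w\overline\zeta}{1-w\overline\zeta}\,d\mu(\zeta)+(1-\lambda)$, and exchange the $t$-integral with the $\zeta$-integral by Fubini. The inner integral equals $\lambda F_\alpha(w)+(1-\lambda)=F_\alpha^\lambda(w)$ with $w=z\overline\zeta$, producing \eqref{F-al-l-g}; reading the same computation backwards recovers (2) from (3), with $\mu$ encoding $q$.

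The delicate part, and the step I expect to be the main obstacle, is the Hadamard (duality) characterization (1). From the definition of the product one has $f*\frac{z}{1-z}=f$ and $f*\frac{z}{(1-z)^2}=zf'$, and since $\frac{z(1-\alpha z)}{(1-z)^2}=\alpha\frac{z}{1-z}+(1-\alpha)\frac{z}{(1-z)^2}$, this records the identity $z\,q(z)=f(z)*\frac{z(1-\alpha z)}{(1-z)^2}$. As $f*(cz)=cz$ for any constant $c$ (because $a_1=1$), the convolution in (1) collapses to $z\bigl(q(z)-1-\frac{2\lambda\zeta}{1-\zeta}\bigr)$, so (1) asserts exactly that $q(z)\neq1+\frac{2\lambda\zeta}{1-\zeta}$ for all $z\in\D$ and $|\zeta|=1$. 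The crux is that $\zeta\mapsto1+\frac{2\lambda\zeta}{1-\zeta}$ maps $\partial\D$ onto the vertical line $\{\Re w=1-\lambda\}$; hence (1) says that $q(\D)$ avoids this line, and since $q(0)=1$ lies in $\{\Re w>1-\lambda\}$ and $q(\D)$ is connected, this is equivalent to $\Re q>1-\lambda$ on $\D$. The remaining friction is reconciling this open inequality with the closed one $\Re q\ge1-\lambda$ in the definition: I would again invoke the minimum principle for $\Re q$, so that equality at an interior point forces $q$ constant and hence (via $q(0)=1$) the degenerate case $\lambda=0$, $\Gf_\alpha^0=\{\Id\}$, which is checked directly. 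One should also flag the standing convention that the non-vanishing in (1) is read on $\D\setminus\{0\}$ (equivalently for the normalized quotient), since every Hadamard product of functions fixing $0$ vanishes at the origin.
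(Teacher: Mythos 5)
Your proposal is correct and follows essentially the same route as the paper: membership is translated into the half-plane condition on $q=\alpha f/z+(1-\alpha)f'$, which is inverted by the Bernardi operator for (2) (with Hallenbeck--Ruscheweyh's Theorem~\ref{thm-H-R-75} supplying the subordination tail, exactly as in Proposition~\ref{prop-new1-g}), combined with Riesz--Herglotz plus Fubini for (3), and the Hadamard identities $f*\frac{z}{1-z}=f$ and $f*\frac{z}{(1-z)^2}=zf'$ for the duality condition (1). If anything you are more careful than the paper, which argues only the forward implication in (1) and is silent on the open-versus-closed inequality and the degenerate cases $\lambda=0$ and $z=0$; your line-avoidance/connectivity argument and the minimum-principle reconciliation fill these gaps within the same framework.
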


\begin{proof}
According to formula \eqref{2-param}, the function $f$ belongs to the class $\Gf_\alpha^\lambda$ if and only if
\[
  \alpha  \frac{f(z)}{z}  + ( 1 - \alpha ) f'(z) \prec \frac{2\lambda z}{1-z}+1,
\]
       which ensure the existence of a function $\omega\in\Omega$ such that
\[
      \al  \frac{f(z)}{z}  + ( 1 - \al ) f'(z) = \frac{2\lambda \omega(z)}{1- \omega(z)}+1.
\]
In turn, this implies that
\[
     \alpha  f(z)   + ( 1 - \alpha ) zf'(z) \neq z\left( \frac{2\lambda \zeta}{1- \zeta}+1\right) \quad \mbox{for all }\  z\in \D \ \mbox{ and }\ \zeta\in\partial\D.
\]
Using properties \eqref{hadamar-pr} of the Hadamard product we obtain
\[
 f(z) * \left(  \frac{\alpha z}{1-z}  +  \frac{(1 - \alpha ) z}{(1-z)^2} - z\left( \frac{2\lambda\zeta}{1-\zeta} +1\right) \right) \neq 0,
 \]
       which completes the proof of (1).

To proceed, assume that $f\in\Gf_\al^\lambda$. Then there is a function $q$ of the Carath\'eodory class such that  $\Re q(z)\ge1-\lambda$ and
\begin{equation*}
\alpha \frac{f(z)}{z} +(1-\alpha) f'(z)=q(z).
\end{equation*}
Solving this differential equation, we get \eqref{q-g}.

Assume now that $f$ can be represented by formula \eqref{q-g} with a function $q\in\Hol$ that satisfies $\Re q(z)\ge1-\lambda$. It follows from the Riesz--Herglotz Theorem~\ref{thm-RH} that there is a probability measure $\mu$ on the unit circle such that
\[
q(z)= \lambda \oint\limits_{\partial\D} \frac{1+z\overline{\zeta}}{1-z\overline{\zeta}}\,d\mu(\zeta) +1-\lambda.
\]
Now formula \eqref{q-g} becomes
\begin{eqnarray*}
 f(z)  &=& z \int_0^1 \left( \lambda \oint\limits_{\partial\D} \frac{1+t^{1-\alpha}z\overline{\zeta}}{1-t^{1-\alpha}z\overline{\zeta}}\,  d\mu(\zeta) +1-\lambda \right)dt  \\
   &=& z \oint\limits_{\partial\D} \left(  \int_0^1\lambda \frac{1+t^{1-\alpha}z\overline{\zeta}}{1-t^{1-\alpha}z\overline{\zeta}}\,dt +1-\lambda \right) d\mu(\zeta)  =   z\int\limits_{\partial\D} F_\alpha^\lambda (z\overline{\zeta}) d\mu(\zeta),
\end{eqnarray*}
so \eqref{F-al-l-g}    holds.

Assume that condition (3) is fulfilled and hence
\[
f'(z)= \int\limits_{\partial\D} F_\alpha^\lambda (z\overline{\zeta}) d\mu(\zeta) + z \int\limits_{\partial\D} (F_\alpha^\lambda)' (z\overline{\zeta}) \overline{\zeta}d\mu(\zeta).
\]
 Therefore
 \begin{eqnarray*}
   \alpha \frac{f(z)}{z} +(1-\alpha) f'(z)  &=&  \int\limits_{\partial\D} F_\alpha^\lambda (z\overline{\zeta}) d\mu(\zeta) + (1-\al) \int\limits_{\partial\D} (F_\alpha^\lambda)' (z\overline{\zeta}) z\overline{\zeta}d\mu(\zeta) \\
    &=& \int\limits_{\partial\D}\left(\lambda \frac{1+ z\overline{\zeta}}{1- z\overline{\zeta}} +1-\lambda\right) d\mu(\zeta)
 \end{eqnarray*}
 by the definition of functions $F_\al^\lambda$. This implies that $f\in\Gf_\al^\lambda$.  The proof is complete.
\end{proof}

Following an idea in \cite{E-S-Tu2}, we present a property of functions $f_\al^\lambda$.
\begin{proposition}\label{prop-new1-g}
  Let $\alpha,\beta\le1$ and $\lambda,\nu\in(0,1]$. Then the function $f_{\al}^\lambda$
   \begin{itemize}
     \item [(a)] belongs to $\Gf_\al^\lambda$ and is totally extremal for this class;
     \item [(b)] does not belong to $\Gf_\beta^\nu$ whenever $\frac{1-\beta}\nu>\frac{1-\alpha}\lambda $. Consequently, taking $\nu=1$, we conclude that $f_1^\lambda\in \partial\Af_1$ for any $\lambda\in(0,1]$.
   \end{itemize}
\end{proposition}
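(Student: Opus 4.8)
The plan for (a) is a direct computation that collapses everything onto the defining functional of $\Gf_\al^\lambda$ evaluated at the candidate extremal. Writing $f_\al^\lambda(z)=zF_\al^\lambda(z)$ with $F_\al^\lambda=\lambda F_\al+1-\lambda$, I would compute
\[
\alpha\frac{f_\al^\lambda(z)}{z}+(1-\alpha)(f_\al^\lambda)'(z)=F_\al^\lambda(z)+(1-\alpha)z(F_\al^\lambda)'(z)=\lambda\bigl[F_\al(z)+(1-\alpha)zF_\al'(z)\bigr]+1-\lambda,
\]
and then invoke the identity $F_\al(z)+(1-\alpha)zF_\al'(z)=\frac{1+z}{1-z}$ established in the proof of Theorem~\ref{th-F-alpha-2}. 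This gives $\lambda\frac{1+z}{1-z}+1-\lambda$, whose real part is at least $1-\lambda$ since $\Re\frac{1+z}{1-z}>0$; hence $f_\al^\lambda\in\Gf_\al^\lambda$ (and the computation is valid also at $\alpha=1$, where the identity is trivial). For total extremality I would use criterion (2) of Theorem~\ref{so-g}: every $f\in\Gf_\al^\lambda$ satisfies $f(z)/z\prec F_\al^\lambda(z)$, while $f_\al^\lambda(z)/z=F_\al^\lambda$ is univalent, being an affine image of the univalent $F_\al$ (Proposition~\ref{l-pasha}). By the subordination principle, $\frac{f(z)}{z}(\D_r)\subseteq F_\al^\lambda(\D_r)$ for each $r\in[0,1]$, so $\frac{f(z)}{z}(\D_r)$ lies in the convex hull of $F_\al^\lambda(\D_r)$, which is exactly the condition of Remark~\ref{rem-note} for $f_\al^\lambda$ to be totally extremal.

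For (b) I would first reduce non-membership to a failure of subordination. By criterion (2) of Theorem~\ref{so-g} applied to $\Gf_\beta^\nu$, if $f_\al^\lambda\in\Gf_\beta^\nu$ then $F_\al^\lambda=f_\al^\lambda(z)/z\prec F_\beta^\nu$; thus it suffices to prove this subordination fails. Since $F_\al^\lambda$ and $F_\beta^\nu$ are univalent and both fix $0\mapsto 1$, the subordination is equivalent to the image inclusion $F_\al^\lambda(\D)\subseteq F_\beta^\nu(\D)$, and I would disprove the latter. I note that the hypothesis rewrites cleanly as $\frac{\lambda}{1-\alpha}>\frac{\nu}{1-\beta}$, a comparison of the two maps' growth rates at their common (and only) point at infinity, the boundary point $z=1$.

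The heart of the argument is the boundary behaviour near $z=1$. Using $F_\gamma(z)=\int_0^1\frac{1+t^{1-\gamma}z}{1-t^{1-\gamma}z}\,dt$ (criterion (2) of Theorem~\ref{so}) I would establish, for $\gamma<1$, the key fact
\[
\sup_{z\in\D}\Im F_\gamma(z)=\frac{\pi}{1-\gamma},\qquad F_\gamma(\D)\subseteq\Bigl\{w:\ |\Im w|<\tfrac{\pi}{1-\gamma}\Bigr\},
\]
so that $F_\gamma^\mu(\D)=\mu F_\gamma(\D)+1-\mu$ is trapped in the open horizontal strip of half-height $\frac{\pi\mu}{1-\gamma}$ (while for $\gamma=1$ the image is a half-plane, a strip of infinite height). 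Granting this, the hypothesis $\frac{\lambda}{1-\alpha}>\frac{\nu}{1-\beta}$ gives $\sup_\D\Im F_\al^\lambda=\frac{\pi\lambda}{1-\alpha}>\frac{\pi\nu}{1-\beta}$, so $F_\al^\lambda(\D)$ contains a point with imaginary part exceeding $\frac{\pi\nu}{1-\beta}$, which therefore lies outside $F_\beta^\nu(\D)$. Hence $F_\al^\lambda\not\prec F_\beta^\nu$ and $f_\al^\lambda\notin\Gf_\beta^\nu$. For the consequence, I take $\alpha=1$ (so $F_1^\lambda(\D)$ is a half-plane) and $\nu=1$: the hypothesis becomes $\frac{1-\beta}{1}>0$, valid for every $\beta<1$, and since $\Gf_\beta^1=\Af_\beta$ while $f_1^\lambda\in\Gf_1^1=\G_0$ by part (a), we obtain $f_1^\lambda\in\G_0\setminus\bigcup_{\beta<1}\Af_\beta=\partial\Af_1$.

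The main obstacle is the rigorous strip lemma displayed above: it is easy to compute the radial limit $\lim_{\theta\to0^+}\Im F_\gamma(e^{i\theta})=\frac{\pi}{1-\gamma}$, but one must rule out any overshoot of the imaginary part at intermediate boundary points so that this limit really is the supremum. I would handle this by analysing the boundary function
\[
\Im F_\gamma(e^{i\theta})=\int_0^1\frac{2\,t^{1-\gamma}\sin\theta}{1-2t^{1-\gamma}\cos\theta+t^{2(1-\gamma)}}\,dt,
\]
substituting $u=t^{1-\gamma}$ to reduce the singular contribution as $\theta\to0^+$ to an arctangent integral concentrated near $u=1$ (yielding the value $\frac{\pi}{1-\gamma}$), and showing the expression is monotone in $\theta$ on $(0,\pi)$, so the supremum is attained only in the limit $z\to1$. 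This monotonicity/no-overshoot verification is the single genuinely technical point; the rest of the proof is the bookkeeping sketched above.
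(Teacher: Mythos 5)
Your proposal is correct, and while part (a) coincides with the paper's argument (direct computation of the defining functional via $F_\al+(1-\al)zF_\al'=\frac{1+z}{1-z}$, then total extremality from the subordination $f(z)/z\prec F_\al^\lambda$ of Theorem~\ref{so-g}(2)), your part (b) takes a genuinely different route. The paper disposes of (b) in three lines by pure coefficient comparison: the Taylor coefficients of $f_\al^\lambda$ are $a_n=\frac{2\lambda}{n(1-\al)+1}$, and membership in $\Gf_\beta^\nu$ forces the Carath\'eodory bound $a_n\bigl(n(1-\beta)+1\bigr)\le 2\nu$ for all $n$, which upon letting $n\to\infty$ yields $\frac{1-\beta}{\nu}\le\frac{1-\al}{\lambda}$ --- no boundary analysis, no univalence, no image geometry. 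You instead disprove the subordination $F_\al^\lambda\prec F_\beta^\nu$ by a strip-separation argument, whose technical heart is the claim $\sup_{z\in\D}\Im F_\gamma(z)=\frac{\pi}{1-\gamma}$ with no overshoot. That claim is in fact true, and the no-overshoot part, which you flag as the delicate step and propose to settle by a monotonicity analysis, admits a cleaner proof than you sketch: writing $F_\gamma(z)=\frac{1}{1-\gamma}\int_0^1\frac{1+uz}{1-uz}\,u^{\gamma/(1-\gamma)}\,du$ and noting $\frac{2u\rho\sin\theta}{1-2u\rho\cos\theta+u^2\rho^2}=2u\,\frac{d}{du}\arctan\frac{u\rho-\cos\theta}{\sin\theta}$ for $z=\rho e^{i\theta}$, a single integration by parts against the increasing function $A(u)=\arctan\frac{u\rho-\cos\theta}{\sin\theta}$ (with $A(0)=\theta-\frac\pi2$ and $A(1)\le\frac\theta2$) gives
\[
\Im F_\gamma\bigl(\rho e^{i\theta}\bigr)\le\frac{\pi-\theta}{1-\gamma},\qquad 0<\theta<\pi,\ 0<\rho<1,
\]
valid uniformly inside the disk, so the monotonicity verification can be bypassed entirely; the radial limit computation you give then shows the bound is sharp. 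Your handling of the logical direction (subordination implies image inclusion, which needs no univalence of $F_\beta^\nu$), of the edge cases $\al=1$ and $\beta=1$, and of the consequence $f_1^\lambda\in\partial\Af_1$ via the filtration-boundary definition \eqref{boundary} is all correct. What each approach buys: the paper's coefficient argument is shorter and entirely elementary, whereas yours yields extra geometric information --- the exact horizontal-strip height $\frac{\pi\lambda}{1-\al}$ of $F_\al^\lambda(\D)$ --- which is of independent interest for quantities like $B(\Gf_\al^\lambda)$ in Corollary~\ref{cor-KB-g}.
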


\begin{proof}
  First we note that
  \[
  \Re \left(\al \frac{f_{\al}^\lambda(z)}{z} +(1-\al)(f_{\al}^\lambda)'(z) \right) =  \lambda \Re \left(\al \frac{f_\al(z)}{z} + (1-\al)(f_\al)'(z) \right) +1-\lambda\ge 1-\lambda\,,
  \]
  and hence $f_\al^\lambda\in \Gf_\al^\lambda$.

  If a function $f$ belongs to $\Gf_\al^\lambda$, it admits representation \eqref{q-g}. The subordination $q\prec F_1^\lambda$ implies that $\frac{f(z)}{z}\prec F_\al^\lambda$ by Hallenbeck--Ruscheweyh's Theorem~\ref{thm-H-R-75}. Therefore, for every $\lambda \in\C$,
\begin{equation}\label{ineq-analy1-g}
\min_{|z|=r} \Re \left(\lambda\frac {f(z)}{z}\right)\ge
\min_{|z|=r}\Re\left(\lambda F_\alpha^\lambda(z)\right),
\end{equation}
so the function $f_\alpha^\lambda$ is totally extremal for the set $\Gf_\al^\lambda$.

Further, the Taylor coefficients of $f_\al^\lambda$ are $a_n=\frac{2\lambda}{n(1-\al)+1}$. Assume that $f_\al^\lambda\in\Gf_\beta^\nu$, that is, $\Re \left(\beta \frac{f_{\al}^\lambda(z)}{z} +(1-\beta)(f_{\al}^\lambda)'(z) \right)\ge1-\nu$, or equivalently, $\Re\frac{F_\al^\lambda(z) +(1-\beta)z (F_\al^\lambda)'}{\nu}\ge0$. This implies that $\frac{a_n(n(1-\beta)+1)}{\nu}\le2$ for all $n\in\N$. This leads to
\[
\frac{2\lambda}{n(1-\al)+1} \le \frac{2\nu}{n(1-\beta)+1},\qquad n\in\N.
\]
Therefore $\frac{1-\beta}{\nu}\le\frac{1-\al}{\lambda}\,.$ So, the conclusion follows.
\end{proof}
Using Remark~\ref{rem_27_02} one sees that statement (a) in Proposition~\ref{prop-new1-g} implies the following
\begin{corollary}\label{cor-KB-g}
$ K(\Gf_\alpha^\lambda)=\lambda\varkappa(\alpha)+1-\lambda$\quad and \quad $B(\Gf_\alpha^\lambda)=\frac2\pi\cdot\max\limits_{0<\theta<\pi} \arg F_\alpha^\lambda(e^{i\theta}).$
\end{corollary}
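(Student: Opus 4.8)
The plan is to deduce both equalities directly from the fact, established in Proposition~\ref{prop-new1-g}(a), that $f_\al^\lambda$ is totally extremal for $\Gf_\al^\lambda$. Once this is in hand, Remark~\ref{rem_27_02} lets me replace the double infimum/supremum of Definition~\ref{def-K-B} by the single quantities
\[
K(\Gf_\al^\lambda)=\inf_{z\in\D}\Re\frac{f_\al^\lambda(z)}{z}=\inf_{z\in\D}\Re F_\al^\lambda(z),\qquad
B(\Gf_\al^\lambda)=\frac2\pi\sup_{z\in\D}\left|\arg F_\al^\lambda(z)\right|,
\]
since $f_\al^\lambda(z)=zF_\al^\lambda(z)$. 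Thus the whole computation reduces to understanding the single function $F_\al^\lambda=\lambda F_\al+(1-\lambda)$.

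For the $K$-part I would simply invoke linearity of $\Re$: since $F_\al^\lambda=\lambda F_\al+(1-\lambda)$ with $\lambda\ge 0$ and $1-\lambda$ real,
\[
\inf_{z\in\D}\Re F_\al^\lambda(z)=\lambda\inf_{z\in\D}\Re F_\al(z)+(1-\lambda),
\]
and the definition of $\varkappa$ in \eqref{kappa} gives $\inf_{z\in\D}\Re F_\al(z)=\varkappa(\al)$. This yields $K(\Gf_\al^\lambda)=\lambda\varkappa(\al)+1-\lambda$ with no further work.

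For the $B$-part the only genuine point to settle is why the supremum of $|\arg F_\al^\lambda|$ over the open disk equals the maximum of $\arg F_\al^\lambda$ over the upper boundary arc. Here I would argue as follows: for $\al<1$ the $K$-part just obtained gives $\Re F_\al^\lambda(z)\ge\lambda\varkappa(\al)+1-\lambda>0$, so $F_\al^\lambda$ never vanishes and maps $\D$ into the right half-plane; consequently $\arg F_\al^\lambda=\Im\log F_\al^\lambda$ is a bounded harmonic function and, by the maximum principle, its supremum over $\D$ is attained as a boundary value on $\partial\D$. Since $F_\al$ has real Taylor coefficients, $F_\al^\lambda(\overline z)=\overline{F_\al^\lambda(z)}$, whence $\arg F_\al^\lambda(e^{-i\theta})=-\arg F_\al^\lambda(e^{i\theta})$; this symmetry collapses the boundary supremum to $\max_{0<\theta<\pi}\arg F_\al^\lambda(e^{i\theta})$, giving the claimed formula for $B$.

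The step I expect to be most delicate is this boundary reduction, because it tacitly uses boundary regularity of $F_\al^\lambda$ (the same regularity already invoked when $\delta(\al)$ is defined via $F_\al(e^{i\theta})$ in \eqref{kappa}). A clean way to sidestep any worry about the existence of boundary values is to note that $F_\al^\lambda$ is, by Proposition~\ref{l-pasha}, an affine image of a univalent function and hence itself univalent, so the extremal argument is genuinely realized along the boundary of the image domain $F_\al^\lambda(\D)$. Everything else is direct substitution and carries no analytic difficulty.
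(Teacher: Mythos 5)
Your proposal is correct and follows exactly the paper's route: the paper deduces the corollary in one line from Proposition~\ref{prop-new1-g}(a) (total extremality of $f_\al^\lambda$) together with Remark~\ref{rem_27_02}, which is precisely your reduction to the single function $F_\al^\lambda=\lambda F_\al+1-\lambda$. Your extra material --- linearity of $\Re$ for the $K$-part, and the boundary/symmetry discussion (with the univalence fallback via Proposition~\ref{l-pasha}) for the $B$-part --- only fills in details the paper leaves implicit, since it already takes the same boundary reduction for granted when defining $\delta(\al)$ after \eqref{kappa}.
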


\vspace{2mm}

It is clear that the two-parameter family $\left\{\Gf_\alpha^\lambda\right\}_{a\le1,\lambda\in[0,1]}$ contains numerious different  filtrations. In addition to the squeezing and analytic filtrations mentioned above (presented in Fig.~\ref{fig-a} by bold right vertical and upper horizontal lines, respectively), notice that for every given $\al_0\le1$, the family of sets $\left\{\Gf_{\al_*}^\lambda\right\}_{\lambda\in[0,1]}$ is obviously a filtration (bold left vertical line in Fig.~\ref{fig-a} for $\al_0=-2$; dash lines will be explained later).

To study more filtrations formed by these sets the following fact is needed.

\begin{lemma}[see Corollary 2.4.4 in \cite{E-R-S-19}]\label{lemma-g}
 Let $A>0,\ B<1$ and $p\in\Pp$ satisfy $\Re\left(p(z)+Azp'(z) \right)\ge B.$ Then for every $A_1\in[0,A]$ we have
 \[
 \Re\left(p(z)+A_1zp'(z) \right)\ge B_1:=(1-B)\frac{A-A_1}{A}\cdot\varkappa(1-A) +B.
 \]
\end{lemma}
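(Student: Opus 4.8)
The plan is to relate the differential expression to the Bernardi operator $I_{1/A}$ and thereby reduce the whole estimate to a single lower bound on $\Re p$. First I would introduce $q(z):=p(z)+Azp'(z)$, so that $q\in\Hol$, $q(0)=1$, and $\Re q(z)\ge B$ by assumption. Since the relation $q=p+Azp'$ is precisely $\gamma q=\gamma p+zp'$ with $\gamma=1/A$, the definition of the Bernardi operator gives $p=I_{1/A}[q]$. The elementary but decisive identity is the affine decomposition
\[
p(z)+A_1zp'(z)=\Bigl(1-\tfrac{A_1}{A}\Bigr)p(z)+\tfrac{A_1}{A}\,q(z),\qquad A_1\in[0,A],
\]
which, together with the given bound $\Re q\ge B$, reduces the problem to bounding $\Re p$ from below.

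To estimate $\Re p$, I would normalize by setting $\tilde q:=\frac{q-B}{1-B}$ (this is where $B<1$ enters); then $\tilde q(0)=1$ and $\Re\tilde q\ge0$, so by the minimum principle for the harmonic function $\Re\tilde q$ either $\tilde q\equiv1$, in which case the assertion is immediate, or $\tilde q\in\Pp$. Because $I_{1/A}$ fixes constants, $p=(1-B)\,I_{1/A}[\tilde q]+B$. Now $\tilde q\prec\frac{1+z}{1-z}=F_1$, and $F_1$ is convex univalent with $F_1(0)=1$, so Hallenbeck--Ruscheweyh's Theorem~\ref{thm-H-R-75} (applicable since $\Re(1/A)>0$) gives $I_{1/A}[\tilde q]\prec I_{1/A}[F_1]$. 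By~\eqref{f-alpha} one has $I_{1/A}[F_1]=F_{1-A}$, which is univalent by Proposition~\ref{l-pasha}; hence the image of $I_{1/A}[\tilde q]$ is contained in $F_{1-A}(\D)$, and
\[
\Re\,I_{1/A}[\tilde q](z)\ge\inf_{w\in\D}\Re F_{1-A}(w)=\varkappa(1-A),
\]
the last equality being the definition~\eqref{kappa}. Thus $\Re p(z)\ge(1-B)\varkappa(1-A)+B$.

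Finally I would substitute the two bounds $\Re q\ge B$ and $\Re p\ge(1-B)\varkappa(1-A)+B$ into the affine decomposition:
\[
\Re\bigl(p+A_1zp'\bigr)\ge\Bigl(1-\tfrac{A_1}{A}\Bigr)\bigl[(1-B)\varkappa(1-A)+B\bigr]+\tfrac{A_1}{A}B=\tfrac{A-A_1}{A}(1-B)\varkappa(1-A)+B,
\]
which is exactly $B_1$.

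The only genuinely delicate step is the lower bound on $\Re p$; the rest is bookkeeping. The main obstacle there is to pass from the qualitative subordination $I_{1/A}[\tilde q]\prec F_{1-A}$ to the sharp numerical constant $\varkappa(1-A)$. This passage relies on the univalence of $F_{1-A}$ (so that subordination amounts to containment of images) together with the identity $\inf_{\D}\Re F_{1-A}=\varkappa(1-A)$; both facts are supplied by the earlier material, so no new computation is needed and the constant automatically comes out sharp.
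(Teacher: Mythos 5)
Your proof is correct. Note first that the paper does not prove Lemma~\ref{lemma-g} at all: it is imported as Corollary~2.4.4 of \cite{E-R-S-19}, so there is no internal proof to compare with; what you have produced is a self-contained derivation assembled entirely from the survey's own toolkit, and it is the natural one. Every step checks out: $q:=p+Azp'$ satisfies $q(0)=1$ and $p=I_{1/A}[q]$ (by uniqueness of the normalized analytic solution of the first-order ODE, which matches the Bernardi definition with $\gamma=1/A$); the identity $p+A_1zp'=\bigl(1-\tfrac{A_1}{A}\bigr)p+\tfrac{A_1}{A}q$ is exact; and the chain $\tilde q\prec F_1$, $I_{1/A}[\tilde q]\prec I_{1/A}[F_1]=F_{1-A}$ via Theorem~\ref{thm-H-R-75} (valid since $1/A>0$ and $1-A<1$), together with $\inf_{\D}\Re F_{1-A}=\varkappa(1-A)$ from \eqref{kappa}, yields $\Re p\ge(1-B)\varkappa(1-A)+B$; the final arithmetic gives exactly $B_1$. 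Two minor remarks. First, your case split ``either $\tilde q\equiv1$ or $\tilde q\in\Pp$'' is not quite the right dichotomy: since $\Re\tilde q\ge0$ and $\Re\tilde q(0)=1$, the minimum principle for harmonic functions forces $\Re\tilde q>0$ throughout $\D$ (a zero of $\Re\tilde q$ at an interior point would make it vanish identically, contradicting the value at the origin), so $\tilde q\in\Pp$ in all cases --- and the constant $1$ lies in $\Pp$ anyway. Second, the univalence of $F_{1-A}$ (Proposition~\ref{l-pasha}) is not needed: subordination $f\prec g$ gives the containment $f(\D)\subseteq g(\D)$ for arbitrary $g$, univalence being required only for the converse implication. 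It is also worth observing that your convex-combination step loses nothing: for $q=(1-B)F_1+B$ both lower bounds $\Re p\to(1-B)\varkappa(1-A)+B$ and $\Re q\to B$ are attained simultaneously as $z\to-1$, since $F_1(-1)=0$ and $\Re F_{1-A}(-1)=\varkappa(1-A)$, which shows the constant $B_1$ is sharp.
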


Recall that function $\varkappa$ is defined by \eqref{kappa}, see Fig.~\ref{fig:kappa}. In addition to $\varkappa$, consider three functions defined as follows:
\[
\chi_1(\al):=(1-\alpha)(1-\varkappa(\alpha)),\quad \chi_2(\al):=\frac{\varkappa(\alpha)}{1-\alpha},\quad \chi_3(\al):=\frac{(1-\alpha)(1-\varkappa(\alpha))}{\varkappa(\alpha)}.
\]

Their graphs are presented in Fig.~\ref{figs_of_chis}. In fact, behavior of these functions reflects certain properties of $\varkappa$. We summarize them in the following technical lemma.

\begin{lemma}\label{lem-kappa}
We state that
\begin{itemize}
  \item [(i)] the function $\varkappa$ maps $(-\infty,1]$ onto $(0,1]$ with $\varkappa(0)=2 \ln 2-1$ and satisfies $\varkappa'(\al)<\varkappa''(\al)<0$ and $1-\varkappa(\al)<\frac{3-2\al}{2}\left|\varkappa'(\al)\right|$; consequently, it is decreasing and concave;
  \item [(ii)] the function $\chi_1$ is decreasing  and maps $(-\infty,1]$ onto $[0,2\ln2)$;
  \item [(iii)] the function $\chi_2$ is increasing and maps $(-\infty,1)$ onto $(0,\frac12)$.
\item [(iv)] the function $\chi_3$ is increasing and maps $(-\infty,1)$ onto $(2\ln2,2)$.
\end{itemize}
\end{lemma}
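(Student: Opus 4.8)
The plan is to collapse the entire statement onto one real variable through the substitution $s:=1-\al\in[0,\infty)$. Writing
\[
g(s):=\int_0^1\frac{1-t^s}{1+t^s}\,dt ,
\]
we have $\varkappa(\al)=g(s)$, while the three auxiliary functions become $\chi_1=s\,(1-g(s))$, $\chi_2=g(s)/s$ and $\chi_3=s\,(1-g(s))/g(s)$. Since $t\mapsto t^s$ maps $[0,1]$ into itself, the endpoint values and ranges in (i)--(iv) follow from elementary limits: $g(0)=0$, $g(1)=2\ln2-1$, and $g(s)\to1$ as $s\to\infty$ by dominated convergence; moreover $g'(0)=\tfrac12\int_0^1|\ln t|\,dt=\tfrac12$, and after the substitution $u=t^s$ one gets $s\,(1-g(s))=\int_0^1\frac{2\,u^{1/s}}{1+u}\,du\to 2\ln2$ as $s\to\infty$, together with $g(s)\sim s/2$ as $s\to0^+$. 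These pin down $\varkappa(0)=2\ln2-1$, $\varkappa\to1$ as $\al\to-\infty$, and all four ranges (e.g. $\chi_2\to\tfrac12$ and $\chi_3\to2$ as $\al\to1$, while $\chi_3\to2\ln2$ as $\al\to-\infty$). Differentiating under the integral sign yields
\[
g'(s)=\int_0^1\frac{-2\,t^s\ln t}{(1+t^s)^2}\,dt>0,\qquad
g''(s)=\int_0^1\frac{-2\,(1-t^s)\,t^s(\ln t)^2}{(1+t^s)^3}\,dt<0,
\]
the signs being immediate because $\ln t<0$ and $0<t^s<1$ on $(0,1)$. As $\varkappa'(\al)=-g'(s)$ and $\varkappa''(\al)=g''(s)$, this already gives the qualitative half of (i): $\varkappa$ is decreasing and concave.

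For the two \emph{quantitative} estimates in (i) I would first integrate by parts to obtain the more flexible representation
\[
s\,g'(s)=2\int_0^1\frac{|\ln t|-1}{1+t^s}\,dt
\]
(the boundary terms vanish). The estimate $\varkappa'<\varkappa''$ is equivalent to $g'+g''>0$, and differentiating the displayed identity recasts it as $(s-2)\,g'(s)+2\int_0^1\frac{t^s(\ln t)^2}{(1+t^s)^2}\,dt>0$: this is trivial for $s\ge2$, and for $s<2$ it reduces to a single integral whose integrand has the sign of $|\ln t|-(2-s)$, positive for small $t$ and negative near $t=1$. The bound $1-\varkappa<\frac{3-2\al}{2}|\varkappa'|$ becomes, in the same variables, a comparison of $1-g=\int_0^1\frac{2\,t^s}{1+t^s}\,dt$ with $\tfrac{1+2s}{2}\,g'$, again to be treated through the integration-by-parts form. \emph{These estimates are the main obstacle}: unlike the signs of $g'$ and $g''$, the relevant integrands are not of one sign, so they require genuinely controlling the cancellation between the region near $t=0$ and the region near $t=1$ rather than a termwise bound.

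The monotonicity statements (ii)--(iv) I would then reduce, via $\frac{d}{d\al}=-\frac{d}{ds}$, to inequalities among $g$, $g'$ and $s$. The case $\chi_1$ is clean: combining the two integral forms,
\[
\frac{d}{ds}\bigl[s\,(1-g)\bigr]=(1-g)-s\,g'=2\Bigl(1-\int_0^1\frac{|\ln t|}{1+t^s}\,dt\Bigr)>0 ,
\]
since $\frac{1}{1+t^s}<1$ forces $\int_0^1\frac{|\ln t|}{1+t^s}\,dt<\int_0^1|\ln t|\,dt=1$; hence $\chi_1$ increases in $s$, i.e. decreases in $\al$, proving (ii). For (iii) set $\Psi(s):=s\,g'(s)-g(s)$; then $\Psi(0)=0$ and $\Psi'(s)=s\,g''(s)<0$, so $\Psi<0$, whence $\chi_2=g/s$ decreases in $s$ and increases in $\al$. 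Thus (iii) follows from concavity alone.

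The remaining case (iv) is the genuinely delicate one. Here $\chi_3$ increases in $\al$ if and only if $s\,g'>g\,(1-g)$, i.e., after inserting the integral forms, if and only if
\[
\int_0^1|\ln t|\,\frac{t^s}{1+t^s}\,dt<2\Bigl(\int_0^1\frac{t^s}{1+t^s}\,dt\Bigr)^2 .
\]
This inequality is tight as $s\to0^+$ (both sides tend to $\tfrac12$), which already rules out a crude termwise estimate; in fact the Chebyshev integral inequality, applied to the decreasing factor $|\ln t|$ and the increasing factor $\frac{t^s}{1+t^s}$, only gives the left side $\le\int_0^1\frac{t^s}{1+t^s}\,dt$, and since this mass is $\le\tfrac12$ that bound is weaker than the squared right-hand side. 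Establishing (iv) cleanly — equivalently, the sharp lower control of $|\varkappa'|$ that it encodes — is where the real work lies, and it is naturally carried out alongside the two quantitative estimates of step (i) using the same integration-by-parts representation and a careful, non-termwise sign analysis.
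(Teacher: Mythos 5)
There is no paper proof to compare against here: Lemma~\ref{lem-kappa} is stated as a ``technical lemma'' supported only by the plots in Fig.~\ref{figs_of_chis}, and the authors themselves pose the proof of statement (iv) as an open question in Subsection~\ref{ssect-quest1} (``Prove statement (iv) after Lemma~\ref{lem-kappa}''), noting that it is what completes the proof of Theorem~\ref{thm-gener-analy}. Measured against that, your attempt supplies strictly more than the paper does. I checked your computations: with $s=1-\alpha$ and $g(s)=\int_0^1\frac{1-t^s}{1+t^s}\,dt$, the formulas and signs of $g'$ and $g''$ are right, giving the decreasing/concave half of (i); the integration-by-parts identity $s\,g'(s)=2\int_0^1\frac{|\ln t|-1}{1+t^s}\,dt$ is correct (the boundary terms $2t|\ln t|/(1+t^s)$ vanish at both endpoints), and together with $\int_0^1\frac{|\ln t|}{1+t^s}\,dt<\int_0^1|\ln t|\,dt=1$ it yields a complete proof of (ii); your argument for (iii) via $\Psi(s)=s\,g'(s)-g(s)$, $\Psi(0)=0$, $\Psi'(s)=s\,g''(s)<0$ is complete as well; and the endpoint limits ($g(1)=2\ln2-1$, $g(s)\to1$, $s(1-g(s))\to2\ln2$, $g(s)/s\to g'(0)=\tfrac12$) are all correct. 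One small point you did not flag: your own values $g(0)=0$ and $g(s)\to1$ show that the range in (i) should read $[0,1)$ rather than $(0,1]$, consistent with the paper's earlier remark that $\varkappa(1)=0$; the lemma's statement there is a typo.

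The genuine gap --- which you identify honestly --- is that the two quantitative estimates in (i) and the monotonicity in (iv) are only reduced, not proved. Your reductions do check out: differentiating the identity above gives $s(g'+g'')=(s-2)g'+2\int_0^1\frac{t^s(\ln t)^2}{(1+t^s)^2}\,dt$, and with $A=\int_0^1\frac{t^s}{1+t^s}\,dt$ and $L=\int_0^1\frac{|\ln t|\,t^s}{1+t^s}\,dt$ one has $s\,g'=2(A-L)$ and $g(1-g)=2A-4A^2$, so (iv) is exactly $L<2A^2$. Your diagnosis that no termwise or Chebyshev-type bound can succeed is also accurate, and the situation is in fact tighter than you state: the two sides of $L<2A^2$ agree to \emph{first} order at $s=0$ (both equal $\tfrac12-\tfrac{s}{2}+o(s)$, with $2A^2-L=\tfrac{s^2}{8}+o(s^2)$, since $A''(0)=L''(0)=0$), so any proof must capture a second-order cancellation. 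As a proof of the full lemma the proposal is therefore incomplete; but your gap at (iv) coincides exactly with the paper's own acknowledged open problem, and the quantitative inequalities in (i) are likewise nowhere proved in the paper. Note finally that the ``onto $(2\ln2,2)$'' claim in (iv) also rests on the unproven monotonicity: the endpoint limits plus continuity only show that the image \emph{contains} $(2\ln2,2)$.
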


\begin{figure}[b]
\centering
\begin{subfigure}{.33\textwidth}
  \centering
  \includegraphics[width=.9\linewidth]{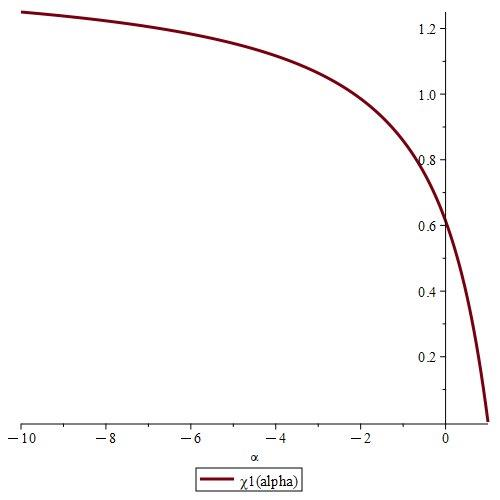}
  \label{fig-chi1}
\end{subfigure}%
\begin{subfigure}{.33\textwidth}
  \centering
  \includegraphics[width=.9\linewidth]{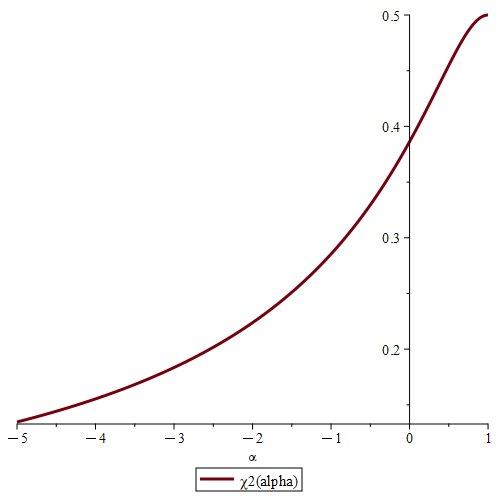}
  \label{fig-chi2}
\end{subfigure}
\begin{subfigure}{.33\textwidth}
  \centering
  \includegraphics[width=.9\linewidth]{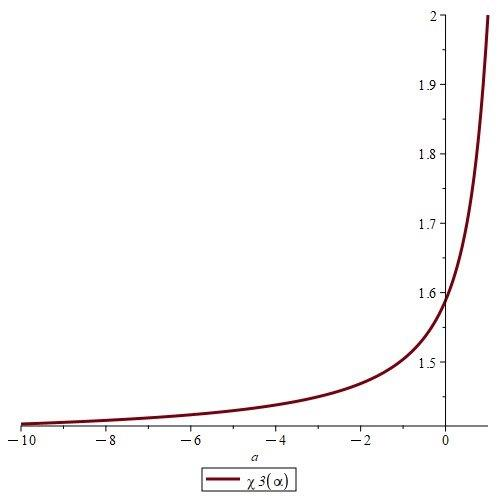}
  \label{fig-chi3}
\end{subfigure}
\caption{Graphs of functions $\chi_1,\chi_2,\chi_3$}
\label{figs_of_chis}
\end{figure}


For any $\lambda_*\in(0,1)$ by statement (iii) of Lemma~\ref{lem-kappa}, there is the unique  solution less than $1$ to the equation $\lambda_*=\frac{2\varkappa(\al)}{1-\al}$. We denote it by $\alpha_*$ and consider the sets
\[
  \Gf_\al^{(0)} := \Gf_\alpha ^ {\lambda_0(\al)}, \quad  \mbox{where}\quad   \alpha\in[\al_*,1]\quad \mbox{and} \quad \lambda_0(\al)=\frac{\lambda_*}{2\chi_2(\al)} .
\]
 (For illustration, in Fig.~\ref{fig-a}  the sets $\Gf_\al^{(0)}$ with $\lambda_*=0.19,\ \al_*\approx-8.11$ are presented by the descending dashed curve.)

\begin{figure}[b]
\begin{center}
\includegraphics[angle=0,width=5cm,totalheight=5cm]{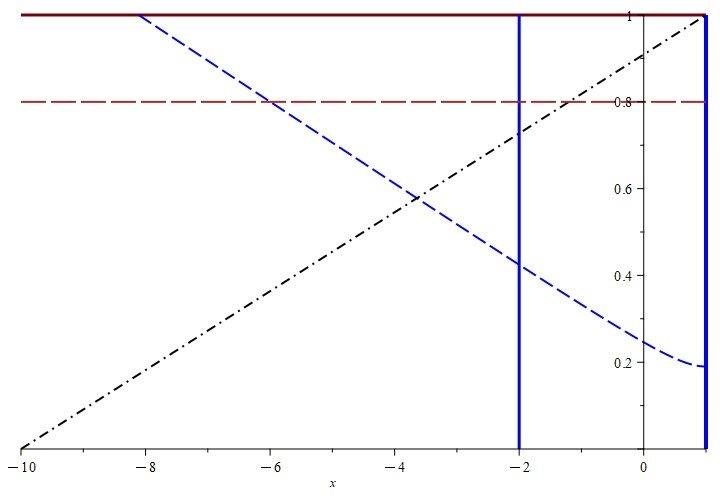}
\caption{Different filtrations in the range of parameters $(\al,\lambda)$.}\label{fig-a}
\end{center}
\end{figure}

\begin{theorem}\label{thm-gener-analy}
 Let $\lambda=\lambda(\al)$ be a continuous function on an interval $J$ such that the function $\lambda(\al)\chi_2(\al)$ is non-decreasing on $J$.
Then the family $\left\{\Gf_\al^{\lambda(\al)}\right\}_{\alpha\in J }$ is a strict filtration with non-empty boundaries of its  sets. Moreover, it admits a net of totally extremal functions.
\end{theorem}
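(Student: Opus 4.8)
The plan is to prove the three assertions—being a filtration, strictness with non-empty boundaries, and admitting a net of totally extremal functions—separately, reducing each to results already established for the two-parameter family $\Gf_\alpha^\lambda$ together with the analytic properties of $\varkappa$ recorded in Lemma~\ref{lem-kappa}. Throughout I assume $\lambda(\alpha)\in(0,1]$, since for $\lambda\equiv0$ the sets collapse to $\{\Id\}$ and the statement is vacuous.

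For the filtration property, fix $\alpha_1<\alpha_2$ in $J$ and write $\lambda_i:=\lambda(\alpha_i)$. The key observation is that, writing $p:=f/z$, one has $\alpha f(z)/z+(1-\alpha)f'(z)=p(z)+(1-\alpha)zp'(z)$, so membership in $\Gf_\alpha^\lambda$ is exactly $\Re[p(z)+(1-\alpha)zp'(z)]\ge1-\lambda$. Given $f\in\Gf_{\alpha_1}^{\lambda_1}$ we have $p\in\Pp$, because $\Gf_{\alpha_1}^{\lambda_1}\subseteq\Af_{\alpha_1}\subseteq\G_0$, and I would apply Lemma~\ref{lemma-g} with $A=1-\alpha_1$, $A_1=1-\alpha_2\in[0,A]$ and $B=1-\lambda_1<1$. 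Since $\varkappa(1-A)=\varkappa(\alpha_1)$ and $\tfrac{A-A_1}{A}\varkappa(\alpha_1)=(\alpha_2-\alpha_1)\chi_2(\alpha_1)$, this yields
\[
\Re\bigl[p(z)+(1-\alpha_2)zp'(z)\bigr]\ \ge\ \lambda_1(\alpha_2-\alpha_1)\chi_2(\alpha_1)+1-\lambda_1 .
\]
Thus $f\in\Gf_{\alpha_2}^{\lambda_2}$ as soon as the right-hand side is at least $1-\lambda_2$, i.e. as soon as $\lambda_2\ge\lambda_1\bigl(1-(\alpha_2-\alpha_1)\chi_2(\alpha_1)\bigr)$; call this inequality (N).

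It remains to deduce (N) from the hypothesis that $\lambda\chi_2$ is non-decreasing, and this is where the main work lies. The hypothesis gives $\lambda_2\ge\lambda_1\chi_2(\alpha_1)/\chi_2(\alpha_2)$, so (N) follows once $\chi_2(\alpha_1)/\chi_2(\alpha_2)\ge1-(\alpha_2-\alpha_1)\chi_2(\alpha_1)$, which after dividing by $\chi_2(\alpha_1)$ and rearranging is equivalent to the monotonicity of the auxiliary function
\[
h(\alpha):=\alpha+\frac{1}{\chi_2(\alpha)}=\alpha+\frac{1-\alpha}{\varkappa(\alpha)} .
\]
Hence the entire filtration property reduces to showing $h$ is non-decreasing on $(-\infty,1)$. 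Differentiating, $h'\ge0$ is equivalent to $(1-\alpha)|\varkappa'(\alpha)|\ge\varkappa(\alpha)(1-\varkappa(\alpha))$, and I expect to obtain this by combining the estimate $1-\varkappa(\alpha)<\tfrac{3-2\alpha}{2}|\varkappa'(\alpha)|$ from Lemma~\ref{lem-kappa} with the elementary bound $\varkappa(\alpha)\le\tfrac{2(1-\alpha)}{3-2\alpha}$ (readily read off from the integral representation of $\varkappa$), which together give $\varkappa(1-\varkappa)<\varkappa\cdot\tfrac{3-2\alpha}{2}|\varkappa'|\le(1-\alpha)|\varkappa'|$. This delicate inequality for $\varkappa$ is the principal obstacle. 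I emphasize that the argument is a finite-difference one and so needs $\lambda$ only continuous, not differentiable, consistent with the hypothesis; an instructive check is the borderline case $\lambda\chi_2\equiv\mathrm{const}$ (the descending dashed curve of Fig.~\ref{fig-a}), where (N) becomes exactly $h(\alpha_2)\ge h(\alpha_1)$.

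For strictness, the non-emptiness of the boundaries, and the net of totally extremal functions, I would invoke Proposition~\ref{prop-new1-g}. By part (a) each $f_\alpha^{\lambda(\alpha)}$ lies in $\Gf_\alpha^{\lambda(\alpha)}$ and is totally extremal for it, so $\{f_\alpha^{\lambda(\alpha)}\}_{\alpha\in J}$ is the required net. To place $f_\alpha^{\lambda(\alpha)}$ on $\partial\Gf_\alpha^{\lambda(\alpha)}=\Gf_\alpha^{\lambda(\alpha)}\setminus\bigcup_{\beta\in J,\,\beta<\alpha}\Gf_\beta^{\lambda(\beta)}$, I must show $f_\alpha^{\lambda(\alpha)}\notin\Gf_\beta^{\lambda(\beta)}$ for every $\beta<\alpha$ in $J$. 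By Proposition~\ref{prop-new1-g}(b) this holds whenever $\frac{1-\beta}{\lambda(\beta)}>\frac{1-\alpha}{\lambda(\alpha)}$. Using $1-\gamma=\varkappa(\gamma)/\chi_2(\gamma)$, this ratio equals $\varkappa(\gamma)/(\lambda(\gamma)\chi_2(\gamma))$; since $\varkappa$ is strictly decreasing while $\lambda\chi_2$ is non-decreasing, for $\beta<\alpha$ I get
\[
\frac{\varkappa(\beta)}{\lambda(\beta)\chi_2(\beta)}\ \ge\ \frac{\varkappa(\beta)}{\lambda(\alpha)\chi_2(\alpha)}\ >\ \frac{\varkappa(\alpha)}{\lambda(\alpha)\chi_2(\alpha)},
\]
the last inequality being strict. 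Hence $f_\alpha^{\lambda(\alpha)}\in\partial\Gf_\alpha^{\lambda(\alpha)}$, so every boundary is non-empty, and by the remark following Definition~\ref{def-stric_filt} the filtration is strict. This completes the plan.
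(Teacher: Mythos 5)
Your proposal follows the paper's architecture quite closely, and two of its three components are fully correct. The strictness and boundary argument (via $\frac{1-\beta}{\lambda(\beta)}=\frac{\varkappa(\beta)}{\lambda(\beta)\chi_2(\beta)}>\frac{\varkappa(\alpha)}{\lambda(\alpha)\chi_2(\alpha)}=\frac{1-\alpha}{\lambda(\alpha)}$, then Proposition~\ref{prop-new1-g}(b)) and the extremal net (Proposition~\ref{prop-new1-g}(a)) coincide with the paper's, and your version is actually cleaner: the paper proves these only for the one-parameter family $\Gf^{(0)}$ with $\lambda\chi_2$ constant and then compares a general curve against those families, whereas you run Lemma~\ref{lemma-g} directly between two points $\alpha_1<\alpha_2$ of the curve. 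Your condition (N) and its reduction to the monotonicity of $h(\alpha)=\alpha+1/\chi_2(\alpha)=\alpha+\frac{1-\alpha}{\varkappa(\alpha)}$ is likewise exactly the paper's reduction in disguise: since $h=1+\chi_3$, your key inequality is precisely statement~(iv) of Lemma~\ref{lem-kappa}, the same inequality the paper invokes at the line ``the last inequality follows from statement (iv) above.''

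The genuine gap is in your claimed proof of that monotonicity. Note first that the paper never proves Lemma~\ref{lem-kappa}; it explicitly poses ``Prove statement (iv)'' as an open question in Subsection~\ref{ssect-quest1}, remarking that this ``will complete the proof of Theorem~\ref{thm-gener-analy}'' --- so the paper's own proof is conditional at exactly this point. You attempt to close the gap by combining $1-\varkappa(\alpha)<\frac{3-2\alpha}{2}|\varkappa'(\alpha)|$ from Lemma~\ref{lem-kappa}(i) with the (correct) elementary bound $\varkappa(\alpha)\le\frac{1-\alpha}{2-\alpha}\le\frac{2(1-\alpha)}{3-2\alpha}$. But the cited inequality from (i) is false as printed: a direct computation gives $\varkappa'(0)=2\int_0^1\frac{t\ln t}{(1+t)^2}\,dt=2\ln 2-\frac{\pi^2}{6}$, so $\frac{3}{2}|\varkappa'(0)|=\frac{\pi^2}{4}-3\ln 2\approx 0.388$, while $1-\varkappa(0)=2-2\ln 2\approx 0.614$; at $\alpha=1$ one has $\varkappa'(1)=-\frac12$, so the right-hand side is $\frac14$ while the left-hand side is $1$. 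Hence the first link of your chain $\varkappa(1-\varkappa)<\varkappa\,\frac{3-2\alpha}{2}|\varkappa'|\le(1-\alpha)|\varkappa'|$ breaks, and the needed inequality $(1-\alpha)|\varkappa'(\alpha)|\ge\varkappa(\alpha)\bigl(1-\varkappa(\alpha)\bigr)$ --- numerically plausible and equivalent to the open statement (iv) --- remains unproven. Your proof is therefore conditional in exactly the same way as the paper's; the apparent two-line derivation of the crux from Lemma~\ref{lem-kappa}(i) would, had it been sound, have resolved the paper's open question, which should itself have been a warning sign.
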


\begin{proof}
First we prove this theorem for the family $\Gf^{(0)}$, for which $\lambda(\al)\chi_2(\al)$ is a constant.
Let $\al_*\le\alpha<\beta\le1$ and $f\in\Gf_\al^{(0)}$, that is, for all $z\in \D\setminus \{0\}$, the function $f$ satisfies
  \[
 \Re \left[\alpha \frac{f(z)}{z} +(1-\alpha) f'(z) \right]\geq 1-\lambda_0(\alpha)   .
  \]

  Function $f$ can be represented in the form $f(z)=zp(z)$. Substituting this in the inequality in~\eqref{2-param}, we get $\Re\left(p(z)+(1-\al)zp'(z) \right)\ge1-\lambda_0(\al).$ Therefore we can apply Lemma~\ref{lemma-g} with $A=1-\al$, $B=1-\lambda_0(\al)$ and $A_1=1-\beta$. This leads to
  \[
  \Re\left(p(z)+(1-\beta)zp'(z) \right)\ge \lambda_0(\al)\frac{\beta-\al}{1-\al}\cdot\varkappa(\al)+1-\lambda_0(\alpha).
  \]

  If we show that the term in the right-hand side of this inequality is not less than $1-\lambda_0(\beta)$, this implies $\Gf^{(0)}$ is a filtration.  Thus we have to show that
    \[
  \lambda_0(\al)\left(1-\frac{\beta-\al}{1-\al}\cdot\varkappa(\al) \right) \le\lambda_0(\beta),
  \]
  or, which is the same,
  \begin{eqnarray*}
     &&  \frac{1-\alpha}{\varkappa(\al)}\left(1-\frac{\beta-\al}{1-\al}\cdot\varkappa(\al) \right) \le\frac{1-\beta}{\varkappa(\beta)}, \\
     &&  \frac{1-\alpha}{\varkappa(\al)} -(1-\al) \le \frac{1-\beta}{\varkappa(\beta)} - (1-\beta).
 \end{eqnarray*}
 The last inequality follows from statement (iv) above. So, $\Gf^{(0)}$ is a filtration.

 Further, let $\al_*\le\gamma<\al$. By statement (i) of Lemma~\ref{lem-kappa},
 \[
  \frac{1-\gamma}{\lambda_0(\gamma)} = \frac2{\lambda_*}\varkappa(\gamma)> \frac2{\lambda_*}\varkappa(\al) =\frac{1-\al}{\lambda_0(\al)}.
\]
Therefore, it follows from Proposition~\ref{prop-new1-g} that the function $f_{\al}^{\lambda_0(\al)}$ belongs to $\Gf_\al^{(0)}$ and does not belong to $\Gf_\gamma^{(0)}$. Thus $f_{\al}^{\lambda_0(\al)}\in\partial \Gf_\al^{(0)}$ and the filtration is strict.

Turn now to a general case of a family  $\left\{\Gf_\al^{\lambda(\al)}\right\}_{\alpha\in J }$.  The fact that the function $\lambda(\al)\chi_2(\al)$ is non-decreasing implies that the curve $\lambda=\lambda(\al)$ either intersects every curve of the form $\lambda=\lambda_0(\al)$ at a single point, or (maybe, partially) coincides with it.

Let $\al<\beta\le1$ and $f \in \Gf_\al^{\lambda(\al)}$. Choose $\lambda_*$ such that $\lambda_0(\al)=\lambda(\al)$. Hence $f \in \Gf_\al^{(0)} \subset \Gf_\beta^{(0)}$ by  the proved part of the theorem.

So, $\lambda_0(\beta)\le \lambda(\beta)$ and one concludes that  $\Gf_\beta^{(0)}\subset \Gf_\beta^{\lambda(\beta)}.$
Thus the family $\left\{\Gf_\al^{\lambda(\al)}\right\}_{\alpha\in J }$ is a filtration. The strictness follows from the proved fact that filtration $\Gf^{(0)}$ is strict (cf. Proposition~\ref{prop-new1-g} (b)).
The existence of a net of totally extremal functions follows from Proposition~\ref{prop-new1-g} (a).
\end{proof}

\begin{example}
Take any $\lambda_{**}\in(0,1]$, $\alpha_{**}<1$ and consider the sets
\begin{equation*}
\begin{array}{lll}
   \Gf_\al^{(1)} := \Gf_\alpha ^ {\lambda_{**}},&  \mbox{ where } & \alpha\in(-\infty,1] ,\vspace{2mm} \\
  \Gf_\al^{(2)} := \Gf_\alpha ^ {\lambda_2(\al)}, &  \mbox{ where } &  \alpha\in[\al_{**},1]\mbox{ and } \lambda_2(\al)=\frac{\al-\al_{**}}{1-\al_{**}}.
\end{array}
 \end{equation*}
 Theorem~\ref{thm-gener-analy} implies that $\Gf^{(1)}$ and $\Gf^{(2)}$ are strict filtrations. In Fig.~\ref{fig-a}  the sets $\Gf_\al^{(1)}$ with $\lambda_{**}=0.8$ are presented by the long dash line, and $\Gf_\al^{(2)}$ with $\al_{**}=-10$ by the dash-dot line.
 \end{example}

\bigskip

\subsection{Open questions}\label{ssect-quest1}

Recall that generic linear filtrations  $\mathfrak F[k] =\left\{ \mathfrak F_\al[k]  \right\}_{\alpha \in J}$, $k\in\K$, were defined by \eqref{N-alpha}. We do not know whether such filtrations are strict in general.

\begin{question}
What conditions on a function $k(\alpha,|z|)$ imply that the filtration $\mathfrak F[k]$ is strict?  How to find boundaries of its sets?
\end{question}

Note that both analytic $\Af$ and hyperbolic filtrations $\Hf$ obtained by specific choices $k\in\K$ are strict.

\begin{question}
  What are boundaries $\partial\Af_\al$, $\al\le1$, and $\partial\Hf_\beta,\ \beta\in(0,1]$?
\end{question}

Concerning the analytic filtration $\Af$, we assume that the following conjecture is true.
\begin{conjecture}
The boundary $\partial\Af_\al$ coincides with the set $\left\{zF_{\alpha}(e^{i\theta}z): \theta\in\R  \right\}$ for every $\al\le1$.
\end{conjecture}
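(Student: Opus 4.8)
The plan is to pass from $f$ to the associated Carath\'eodory function and to recognize the conjectured family as the extreme points of $\Pp$. Fix $\al<1$, write $p(z)=f(z)/z$ and set $q_\al(z):=\al\frac{f(z)}{z}+(1-\al)f'(z)=p(z)+(1-\al)zp'(z)$. By Theorem~\ref{so}, $f\in\Af_\al$ if and only if $q_\al\in\Pp$, and then $p=I_{\frac1{1-\al}}[q_\al]$, i.e. $\Re p(z)=\int_0^1\Re q_\al(t^{1-\al}z)\,dt$ is a radial average of $\Re q_\al$. For $s<\al$ the same linear combination reads $q_s(z):=s\frac{f(z)}{z}+(1-s)f'(z)=(1+c)q_\al(z)-c\,p(z)$ with $c:=\frac{\al-s}{1-\al}>0$, using $zp'=\frac1{1-\al}(q_\al-p)$; hence $f\in\Af_s$ precisely when $(1+c)\Re q_\al\ge c\,\Re p$ on $\D$. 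Consequently $f\in\partial\Af_\al$ (in the sense of~\eqref{boundary}) if and only if $m(f)=0$, where
\[
m(f):=\inf\Big\{\tfrac{\Re q_\al(z)}{\Re\big(p(z)-q_\al(z)\big)}:\ z\in\D,\ \Re\big(p(z)-q_\al(z)\big)>0\Big\}.
\]
The conjecture thus becomes the assertion that $m(f)=0$ exactly when $q_\al(z)=\frac{1+e^{i\theta}z}{1-e^{i\theta}z}$, $\theta\in\R$, which by Riesz--Herglotz (Theorem~\ref{thm-RH}) are the extreme points of $\Pp$.

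The inclusion $\big\{zF_\al(e^{i\theta}z):\theta\in\R\big\}\subseteq\partial\Af_\al$ I would prove directly. For $f(z)=zF_\al(e^{i\theta}z)$ one has $p(z)=F_\al(e^{i\theta}z)$, and the identity $F_\al(w)+(1-\al)wF_\al'(w)=\frac{1+w}{1-w}$ from the proof of Theorem~\ref{th-F-alpha-2} gives $q_\al(z)=\frac{1+e^{i\theta}z}{1-e^{i\theta}z}$. Letting $z\to -e^{-i\theta}$, so $w=e^{i\theta}z\to-1$, yields $\Re q_\al\to 0$ while, by~\eqref{kappa}, $\Re p\to\Re F_\al(-1)=\varkappa(\al)>0$. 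Hence for every $c>0$ the quantity $\Re q_s=(1+c)\Re q_\al-c\,\Re p$ tends to $-c\,\varkappa(\al)<0$ near $-e^{-i\theta}$, so $f\notin\Af_s$ for any $s<\al$ and therefore $f\in\partial\Af_\al$. (The degenerate case $\al=1$, where $\Af_1=\G_0$ and the reduction breaks down, I would treat separately, building on Proposition~\ref{prop-new1-g}(b).)

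The reverse inclusion $\partial\Af_\al\subseteq\big\{zF_\al(e^{i\theta}z)\big\}$ is where the genuine difficulty lies, and it is the step I expect to be the main obstacle. I would argue by contraposition: assuming $q_\al\in\Pp$ is \emph{not} an extreme point, represent it as $q_\al=\oint_{\partial\D}\frac{1+\bar\zeta z}{1-\bar\zeta z}\,d\mu(\zeta)$ with $\mu$ not a single point mass, and try to produce a genuine $c>0$ with $(1+c)\Re q_\al\ge c\,\Re p$ on all of $\D$, that is, $m(f)>0$. Since $\Re q_\al>0$ in the interior (minimum principle, as $q_\al(0)=1$), the infimum defining $m(f)$ can only be approached along sequences tending to $\partial\D$; everything then reduces to a \emph{quantitative boundary comparison} between $\Re q_\al$ and its radial average $\Re p$. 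Concretely, one must show that at every boundary point where $\Re q_\al$ has radial limit $0$, the average $\Re p$ decays comparably, so that the ratio stays bounded below. This is exactly the place where the fine structure of $\mu$ must be exploited, and it is delicate: at a boundary point $\zeta_0$ not charged by $\mu$ one has $\Re q_\al\to0$, yet $\Re p$ involves the full radial average and need not decay, which is precisely the mechanism that could keep such non-extremal $f$ inside $\partial\Af_\al$.

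To push the comparison through I would combine the subordination $q_\al\prec\frac{1+z}{1-z}$ with Harnack-type control (Theorem~\ref{thm-RH}) of $\Re q_\al$ along radii, and quantify how the integral operator $I_{\frac1{1-\al}}$ redistributes boundary mass. If a uniform lower bound $m(f)>0$ for all non-extremal $q_\al$ can be secured, the two inclusions combine to prove the conjecture for each $\al<1$; the endpoint $\al=1$ would be recovered by a limiting argument. Should this boundary estimate resist — which my preliminary analysis of the averaging mechanism suggests it may — the natural sharp reformulation is to replace the set-difference boundary~\eqref{boundary} by the set of extreme points of the convex set $\Af_\al$, which the affine bijection $q_\al\mapsto f$ identifies verbatim with the conjectured family $\big\{zF_\al(e^{i\theta}z)\big\}$.
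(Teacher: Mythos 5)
The paper offers no proof to compare against: this statement appears there as an open conjecture, so your proposal must stand on its own, and it does not close the question. What you do establish is correct and useful. The reduction $q_s=(1+c)\,q_\al-c\,p$ with $c=\frac{\al-s}{1-\al}$ is an exact identity, your criterion $f\in\partial\Af_\al\iff m(f)=0$ follows from it, and the inclusion $\left\{zF_\al(e^{i\theta}z):\theta\in\R\right\}\subseteq\partial\Af_\al$ is genuinely proved for $\al<1$: the identity $F_\al(w)+(1-\al)wF_\al'(w)=\frac{1+w}{1-w}$ gives $q_\al(z)=\frac{1+e^{i\theta}z}{1-e^{i\theta}z}$, and along the radius toward $-e^{-i\theta}$ one has $\Re q_s\to -c\,\varkappa(\al)<0$ because $\varkappa(\al)=\Re F_\al(-1)>0$ by \eqref{kappa}. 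But the reverse inclusion, which you candidly leave as a programme, is the entire content of the conjecture, so the proposal is incomplete.

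Worse, the obstacle you flag in your third paragraph is not merely delicate — it is fatal, and your own criterion converts it into a counterexample to the reverse inclusion as stated with the set-difference boundary \eqref{boundary}. Fix $\al<1$, take the non-extreme point $q_\al(z)=\frac{1+z^2}{1-z^2}\in\Pp$ (equal point masses at $\pm1$), and let $f(z)=z\int_0^1 q_\al(t^{1-\al}z)\,dt$, so $f\in\Af_\al$ by Theorem~\ref{so}(2). Along $z=ir$ one computes $\Re q_\al(ir)=\frac{1-r^2}{1+r^2}\to0$, while $\Re p(ir)=\int_0^1\frac{1-t^{2(1-\al)}r^2}{1+t^{2(1-\al)}r^2}\,dt\to\varkappa(2\al-1)>0$; hence for every $c>0$, $\Re q_s(ir)=(1+c)\Re q_\al(ir)-c\,\Re p(ir)\to-c\,\varkappa(2\al-1)<0$, so $f\notin\Af_s$ for all $s<\al$ and therefore $f\in\partial\Af_\al$ in the sense of \eqref{boundary}. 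Yet $f$ is not in the conjectured family: its second coefficient is $a_2=0$, whereas $zF_\al(e^{i\theta}z)$ has $a_2=\frac{2e^{i\theta}}{2-\al}\ne0$. So no quantitative boundary comparison between $\Re q_\al$ and its radial average can yield $m(f)>0$ for all non-extreme $q_\al$ — the estimate you hope to secure is false, and the only viable version of the statement is exactly your fallback reformulation via extreme points of the convex set $\Af_\al$ (equivalently, extreme points of $\Pp$ under the affine bijection $q_\al\mapsto f$). Finally, your plan to recover $\al=1$ by ``building on Proposition~\ref{prop-new1-g}(b)'' cuts the other way: that proposition shows $f_1^\lambda\in\partial\Af_1$ for every $\lambda\in(0,1]$, and for $\lambda<1$ these functions are not rotations $zF_1(e^{i\theta}z)$ (their image half-plane is $\Re w>1-\lambda$), so the endpoint case of the conjecture is already contradicted by the paper's own results.
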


As for the hyperbolic filtration $\Hf$, the searching of the whole boundaries seems to be more complicated. It was shown in Theorem~\ref{th-N-alpha} that the functions
\[
f_{\al,1}(z):=z(1-z)^{\frac\alpha{2(1-\alpha)}},\quad  f_{\al,2}(z):= z(1+\eta_1z),\quad  f_{\al,3}(z):= z(1+\eta_2 z^2)
\]
 belong to the boundary $\partial\Hf_\alpha,\ \al\in(0,\frac23].$
We believe that
\begin{conjecture}\label{conj-beta}
  $f\in\partial\mathfrak{H}_\alpha$ whenever $\displaystyle f(z)=z\left( \frac{1+z}{1-z}\right) ^{\frac\alpha {2(1-\alpha)}}$ for $\alpha\in(0,2/3]$.
\end{conjecture}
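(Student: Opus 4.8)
I set $\sigma:=\frac{\alpha}{2(1-\alpha)}$, so that $\alpha\in(0,\tfrac23]$ corresponds to $\sigma\in(0,1]$, and put $f(z)=z\,p(z)$ with $p(z)=\left(\frac{1+z}{1-z}\right)^{\sigma}$. Since $\frac{1+z}{1-z}$ maps $\D$ onto the right half-plane, $p$ maps $\D$ into the sector $\{|\arg\zeta|<\sigma\pi/2\}\subseteq\{\Re\zeta>0\}$; in particular $p\in\Pp$, so $f\in\G_0$. The feature of this $p$ that drives everything is the first-order relation $(1-z^2)p'(z)=2\sigma\,p(z)$, equivalently $zp'(z)=\frac{2\sigma z}{1-z^2}\,p(z)$.

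Using $f'=p+zp'$ and this relation, I rewrite the functional from \eqref{N-1}, for an arbitrary parameter $\beta$, as
\[
N_\beta(z):=\beta|z|^2p(z)+(1-\beta)(1-|z|^2)f'(z)=p(z)\,B_\beta(z),\qquad B_\beta(z):=P_\beta(|z|^2)+2\sigma(1-\beta)(1-|z|^2)\frac{z}{1-z^2},
\]
where $P_\beta(t):=(1-\beta)+(2\beta-1)t$. Then $f\in\Hf_\alpha$ is exactly the assertion $\Re N_\alpha(z)\ge0$ on $\D$, while $f\in\partial\Hf_\alpha$ additionally requires $\Re N_\beta$ to be negative somewhere for each $\beta<\alpha$. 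The plan splits into these two halves.

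The non-membership half is elementary and, pleasantly, forces both the exponent $\sigma$ and the range of $\alpha$. Evaluating along the negative real axis $z=-r$, $r\in(0,1)$, where $p(-r)>0$ is real and $2\sigma(1-\alpha)=\alpha$, gives
\[
N_\beta(-r)=p(-r)\Big[\beta r^2+(1-\beta)(1-r^2)-2\sigma(1-\beta)r\Big],\qquad \beta r^2+(1-\beta)(1-r^2)-2\sigma(1-\beta)r\ \xrightarrow[r\to1^-]{}\ \frac{\beta-\alpha}{1-\alpha}.
\]
For $\beta<\alpha$ the limit $\frac{\beta-\alpha}{1-\alpha}$ is negative, so the bracket is negative for $r$ near $1$ and hence $\Re N_\beta(-r)=N_\beta(-r)<0$; thus $f\notin\Hf_\beta$. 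Taking instead $\beta=\alpha$, the same bracket equals $(1-r)\big[(1-\alpha)+r(1-2\alpha)\big]$, which is nonnegative on $(0,1]$ precisely when $\alpha\le\tfrac23$ — this is the real-axis shadow of membership and explains the restriction $\alpha\in(0,\tfrac23]$ (beyond it, and consistently with $\Hf_\alpha=\G_0$ there, the statement changes character).

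The membership half — proving $\Re N_\alpha(z)\ge0$ for \emph{all} $z\in\D$, not merely on the real axis — is the crux. My approach is the substitution $w=\frac{1+z}{1-z}=\rho e^{i\phi}$ ($\rho>0$, $|\phi|<\pi/2$), under which $p=w^\sigma$, $\frac{z}{1-z^2}=\frac{w^2-1}{4w}$, $1-|z|^2=\frac{4\Re w}{|w+1|^2}$ and $|z|^2=\frac{|w-1|^2}{|w+1|^2}$; then $\Re[zp']=\frac{\sigma}{2}\Re(w^{\sigma+1}-w^{\sigma-1})$, and $\Re N_\alpha$ turns into an explicit expression in $(\rho,\phi)$ assembled from $\cos(\sigma\phi)$, $\cos((\sigma\pm1)\phi)$ and rational functions of $\rho$ and $\cos\phi$. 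I would try to establish nonnegativity either (i) directly, writing $\Re N_\alpha=P_\alpha(|z|^2)\Re p+(1-\alpha)(1-|z|^2)\Re[zp']$ and bounding the indefinite term $\Re[zp']$ from below against the strictly positive term $P_\alpha(|z|^2)\Re p=P_\alpha(|z|^2)\rho^\sigma\cos(\sigma\phi)$, or (ii) through the argument estimate $|\arg(p\,B_\alpha)|\le\pi/2$, using $|\arg p|\le\sigma\pi/2$ together with a sign analysis of $B_\alpha$. I expect this to be the genuine obstacle: the inequality is sharp (equality is forced in the corner $z\to-1$ by the computation above), and the indefinite term must be controlled uniformly in $\phi$ both as $\rho\to\infty$ (i.e. $z\to1$) and near the critical $\rho$ producing that corner. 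The decisive simplification I would aim for is to reduce the two-variable problem to one variable — showing, say, that the minimum over $\phi$ is attained at $\phi=0$, or over $\rho$ at a boundary configuration — after which the residual single-variable estimate should be amenable to the same monotonicity/convexity analysis of the $\varkappa$-type integrals already used for the companion extremal function $f_{\al,1}(z)=z(1-z)^{\sigma}$ in Theorem~\ref{th-N-alpha}.
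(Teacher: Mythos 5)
Be aware, first, that the statement you were asked to prove is not proved in the paper at all: it appears as Conjecture~\ref{conj-beta}, explicitly labelled an open problem (with \cite{ESS} cited only for motivation), so there is no proof of the authors to compare against. Your proposal does not close it either, and the gap is exactly where you locate it. What you do prove rigorously is the non-membership half: the identity $(1-z^2)p'(z)=2\sigma p(z)$ and the resulting factorization $N_\beta=p\,B_\beta$ are correct; the evaluation along $z=-r$ is correct; and the limit $\frac{\beta-\alpha}{1-\alpha}<0$ for $\beta<\alpha$ does yield $\Re N_\beta(-r)<0$ near $r=1$, hence $f\notin\Hf_\beta$ for every $\beta<\alpha$, which is exactly what the boundary definition \eqref{boundary} requires on that side. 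Your factorization of the $\beta=\alpha$ bracket as $(1-r)\left[(1-\alpha)+r(1-2\alpha)\right]$ also correctly exposes $\alpha\le\frac23$ as the sharp real-axis threshold. Note, moreover, that at the endpoint $\alpha=\frac23$ your argument actually completes the proof of the conjecture, since $\Hf_{2/3}=\G_0$ (Abate's condition, as the paper observes) and $p\in\Pp$ gives membership for free; it would be worth stating this explicitly.

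For $\alpha\in\left(0,\frac23\right)$, however, the membership half — the inequality $\Re\left[p(z)B_\alpha(z)\right]\ge 0$ on all of $\D$, not merely on the real axis — is precisely the open content of the conjecture, and your treatment of it consists of a change of variables (the $w=\frac{1+z}{1-z}$ bookkeeping is correct) followed by conditional language: ``I would try'', ``I expect'', ``I would aim to reduce to one variable''. No inequality is established. In particular, the hoped-for reduction that the minimum over $\phi$ is attained at $\phi=0$ is unverified, and it is exactly the kind of claim that can fail for such mixed expressions: the indefinite term $\Re[zp']=\frac{\sigma}{2}\Re\left(w^{\sigma+1}-w^{\sigma-1}\right)$ changes sign in $\D$, and its interplay with the positive term $P_\alpha(|z|^2)\rho^{\sigma}\cos(\sigma\phi)$ is not monotone in $\phi$ in any obvious way; the sharpness you note (equality forced as $z\to-1$) means there is no slack to absorb a crude bound. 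So the verdict is: easy half proved, the restriction $\alpha\le\frac23$ correctly explained, endpoint case settled, but the crux remains a plan rather than a proof — your proposal, like the paper, leaves the conjecture open.
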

For more explanation regarding this conjecture see \cite{ESS}. \vspace{2mm}

Remind that $\Af_0=\Hf_0=\RA$ and so, this set consists of univalent functions.

\begin{question}
  Does there exist $\beta>0$ such that $\Af_\beta$ (respectively, $\Hf_\beta$) consists of univalent functions? Find suprema of such $\beta$.
\end{question}
In connection to this question, we recall that the set $\Af_1=\Hf_{\frac23}=\G_0$ contains not only univalent functions.\vspace{2mm}

Recall also that the `starting' set for the analytic filtration is the singleton $\{\Id\}$ while for the hyperbolic one $\Hf_0=\RA$.
\begin{question}
   Is it possible to complete the filtration $\Hf$ by a `natural way' to negative values of the parameter, say $\alpha\ge-a,\ a>0,$ such that $\Hf_{-a}=\{\Id\}$?
\end{question}

In addition, the following question is still open.
\begin{question}
Does the hyperbolic filtration $\Hf$ admit a net of totally extremal functions?
\end{question}

We have already shown in Corollary~\ref{cor-K-hyperb} that for each $\al$ there is no common squeezing ratio for semigroups generated by elements of $\Hf_\al$ and that $B(\Hf_\al)\ge\frac{\al}{2(1-\al)}$. The question of finding $B(\Hf_\al)$ or at least its upper bound remains open.
\begin{question}
  Calculate $B(\Hf_\al).$
\end{question}

As for generalized analytic filtrations presented in Section~\ref{subsec-gener-analytic} the most important problem is

\begin{question}
Prove statement (iv) after Lemma~\ref{lem-kappa}
\end{question}
This will complete the proof of Theorem~\ref{thm-gener-analy}.
If this is done, we turn to filtration  $\Gf^{(0)}$. It is of special interest since in its definition the parameter $\lambda_0(\al)= \frac{\lambda_*}2\frac{1-\alpha}{\varkappa(\al)}$ is {\it decreasing}.

\begin{question}
Find boundaries $\partial\Gf^{(0)}_\al$.
\end{question}

We already know that semigroups generated by elements of squeezing and analytic filtrations have no  boundary repelling fixed points. So the following conjecture is natural:
\begin{conjecture}
Every element $f \in \Gf^{(0)}_\alpha$, $\alpha \in [\alpha_*,1] $, generates the semigroup with no boundary repelling fixed point.
\end{conjecture}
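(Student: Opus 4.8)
The plan is to reduce the assertion to the absence of boundary regular null points and then to recognise each class $\Gf^{(0)}_\al$ as contained in a filtration class for which this is already established. By Theorem~\ref{thm-boud-p}(ii) a repelling fixed point of the semigroup generated by $f$ coincides with a boundary regular null point of $f$, so it suffices to show that no $f\in\Gf^{(0)}_\al$ possesses one. The key first step is the inclusion $\Gf^{(0)}_\al=\Gf_\al^{\lambda_0(\al)}\subseteq\Gf_\al^{1}=\Af_\al$, valid for every $\al\in[\al_*,1)$. Since $\Gf_\al^{\lambda}\subseteq\Gf_\al^{\lambda'}$ whenever $\lambda\le\lambda'$, this reduces to $\lambda_0(\al)\le1$; writing $\lambda_0(\al)=\frac{\lambda_*}{2\chi_2(\al)}$ and using $\chi_2(\al_*)=\lambda_*/2$, the inequality $\lambda_0(\al)\le1$ is equivalent to $\chi_2(\al)\ge\chi_2(\al_*)$, which holds by the monotonicity of $\chi_2$ (Lemma~\ref{lem-kappa}(iii)) since $\al\ge\al_*$.

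With this inclusion in hand the range $\al\in[\al_*,1)$ is immediate: by Theorem~\ref{th-F-alpha-2}(i) every generator in $\Af_\al$ with $\al<1$ produces a semigroup without repelling fixed points, hence so does every $f\in\Gf^{(0)}_\al\subseteq\Af_\al$. The endpoint $\al=1$ has to be treated separately, because $\Af_1=\G_0$ does contain generators with repelling fixed points. Here $\lambda_0(1)=\lim_{\al\to1^-}\frac{\lambda_*}{2\chi_2(\al)}=\lambda_*$ (as $\chi_2(\al)\to\frac12$), so $\Gf^{(0)}_1=\Gf_1^{\lambda_*}=\Cf_{\lambda_*}$. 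Since $\lambda_*\in(0,1)$, Theorem~\ref{th-squee}(d) shows that each $f\in\Cf_{\lambda_*}$ generates an exponentially squeezing semigroup with ratio $1-\lambda_*>0$, and, as observed right after Theorem~\ref{th-squee}, such semigroups have no boundary repelling fixed point. This completes the argument.

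The subtle point — indeed the main obstacle — is to avoid inadvertently invoking the still-open statement (iv) of Lemma~\ref{lem-kappa}. The most tempting argument, namely that $\Gf^{(0)}$ is itself a filtration so that $\Gf^{(0)}_\al\subseteq\Gf^{(0)}_1=\Cf_{\lambda_*}$ with the uniform ratio $1-\lambda_*$, rests precisely on (iv) through Theorem~\ref{thm-gener-analy}. I would therefore also record the stronger exponential squeezing directly through the squeezing constant: since $f_\al^{\lambda_0(\al)}$ is totally extremal for $\Gf^{(0)}_\al$ (Proposition~\ref{prop-new1-g}(a)), Corollary~\ref{cor-KB-g} gives $K(\Gf^{(0)}_\al)=\lambda_0(\al)\varkappa(\al)+1-\lambda_0(\al)=1-\frac{\lambda_*}{2}\chi_3(\al)$. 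Positivity of this quantity should not be deduced from $\chi_3<2$ (which is exactly (iv)); instead, writing $\chi_3(\al)=\frac{1}{\chi_2(\al)}-(1-\al)$ and comparing with $\frac{2}{\lambda_*}=\frac{1}{\chi_2(\al_*)}$, the monotonicity of $\chi_2$ yields $\frac{1}{\chi_2(\al)}\le\frac{1}{\chi_2(\al_*)}$ for $\al\ge\al_*$, whence $\chi_3(\al)<\frac{2}{\lambda_*}$ owing to the strictly negative term $-(1-\al)$. Thus $K(\Gf^{(0)}_\al)>0$ for $\al<1$, giving exponential squeezing and a fortiori the absence of repelling fixed points — a conclusion stronger than the conjecture — while $\al=1$ is once more covered by $\Cf_{\lambda_*}$.
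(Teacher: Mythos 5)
This statement appears in the paper only as an open conjecture --- the paper offers no proof of it --- so your proposal, if sound, actually settles it, and after checking each step I believe it does. Two structural remarks. First, the inclusion $\Gf^{(0)}_\al=\Gf_\al^{\lambda_0(\al)}\subseteq\Af_\al$ is immediate from \eqref{2-param} for \emph{any} $\lambda\in[0,1]$, since $1-\lambda\ge 0$ (the paper itself invokes exactly this inclusion in the proof of Proposition~\ref{thm4-g}); your monotonicity computation via $\chi_2$ is therefore not needed for the inclusion as such, though it is still worth recording because it verifies $\lambda_0(\al)\in[\lambda_*,1]$, i.e.\ that the sets $\Gf^{(0)}_\al$ really lie in the admissible parameter range, with $\lambda_0(\al_*)=1$. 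With the inclusion in hand, Theorem~\ref{th-F-alpha-2}(i) disposes of all $\al\in[\al_*,1)$. Second, your separate treatment of the endpoint $\al=1$ is genuinely necessary: $\Af_1=\G_0$ contains, e.g., $f(z)=z(1-z)$, which has a boundary regular null point (hence a repelling fixed point) at $\zeta=1$, so the inclusion argument cannot cover $\al=1$. Your identification $\Gf^{(0)}_1=\Cf_{\lambda_*}$, via $\lim_{\al\to1^-}\chi_2(\al)=\frac12$ (which follows from Lemma~\ref{lem-kappa}(iii), $\chi_2$ increasing onto $\left(0,\frac12\right)$), is the correct continuous reading of the paper's definition and matches the description of Fig.~\ref{fig-a}; exponential squeezing with ratio $1-\lambda_*>0$ then excludes boundary fixed points, as the paper observes after Theorem~\ref{th-squee}.

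Your final paragraph is, to my mind, the most valuable part: the tempting route ``$\Gf^{(0)}$ is a filtration, so $\Gf^{(0)}_\al\subseteq\Gf^{(0)}_1=\Cf_{\lambda_*}$'' would indeed rest on Theorem~\ref{thm-gener-analy}, whose proof is incomplete pending the open statement (iv) of Lemma~\ref{lem-kappa}, and you correctly avoid it. The direct computation $K(\Gf^{(0)}_\al)=1-\frac{\lambda_*}{2}\chi_3(\al)$ from Corollary~\ref{cor-KB-g}, together with the elementary bound $\chi_3(\al)=\frac{1}{\chi_2(\al)}-(1-\al)\le\frac{2}{\lambda_*}-(1-\al)$ (using only $\chi_2(\al)\ge\chi_2(\al_*)=\frac{\lambda_*}{2}$, i.e.\ Lemma~\ref{lem-kappa}(iii)), gives $K(\Gf^{(0)}_\al)\ge\frac{\lambda_*}{2}(1-\al)>0$ for $\al<1$, hence exponential squeezing --- strictly stronger than the conjecture. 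The only caveats you inherit are the paper's own: Theorem~\ref{th-F-alpha-2}(i) tacitly uses that a repelling fixed point yields a boundary critical point of order $1$ in the two-limit sense of \eqref{cond1}, and the remark after Theorem~\ref{th-squee} is valid only for strictly positive squeezing ratio; both are unproblematic here since $\lambda_*<1$. Within the framework of results the paper establishes, your argument is complete and correct.
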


\bigskip

\section{Non-linear filtrations}\label{sect-non-l}

\setcounter{equation}{0}

In Section \ref{sect-_spec_linear}, our focus was on filtrations depending linearly on $f$ and $f'$. In this section we present results on several filtrations determined by non-linear expressions depending on $f, f'$ and $f''$. The study of such filtrations is usually more complicated. To make it easier to read, in the first subsection we present filtrations formed by classes of functions well-known in geometric function theory. Of course, in general these classes consist not only of generators and not always are embedded one in another. Therefore we need to specify the conditions under which a family of some sets is a filtration of $\G_0$.

\subsection{Starlike and prestarlike filtrations}\label{subsec-prestar}

We begin with a refitment of Marx--Strohh\"{a}cker's Theorem~\ref{thm-M-S}. Namely, for $0<\alpha \leq\frac{1}{2}$ consider the class (cf. \cite{Duren, GI-KG})
\begin{equation*}
\mathcal{S}^*(1-\alpha)=\left\{f \in \Ao : \Re \frac{zf'(z)}{f(z)}\geq 1-\alpha\right\}.
\end{equation*}

\begin{theorem}\label{thm-st-filtr}
The following assertion hold:
\begin{itemize}
  \item [(a)] The family of the sets $\{\mathcal{S}^*(1-\alpha)\}_{\al\in\left[0,\frac{1}{2}\right]}$ is a strict filtration of $\G_0$;
  \item [(b)] Let $f_\alpha,\, \alpha\in\left(0,\frac12\right]$, be defined by $f_\alpha(z)=\frac{z}{(1-z)^{2\alpha}}$, then $f_\al\in\partial \mathcal{S}^*(1-\alpha)$;
  \item [(c)]  The net $\left\{f_\alpha\right\}_{\alpha\in\left(0,\frac12\right]}$ is a net of totally extremal functions for this filtration;
  \item [(d)] For every $\al\in\left[0,\frac{1}{2}\right]$ we have $K(\mathcal{S}^*(1-\alpha))=\frac{1}{2^{2\alpha}}$ and $B(\mathcal{S}^*(1-\alpha))=2\al$.
\end{itemize}
\end{theorem}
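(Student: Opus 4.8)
The plan is to treat the four assertions in the order (a), (b), (c), (d), since the quantitative claims in (d) will follow once the totally extremal net has been identified in (b)--(c). Throughout I would exploit the elementary identity, valid for $f_\alpha(z)=z(1-z)^{-2\alpha}$,
\[
\frac{zf_\alpha'(z)}{f_\alpha(z)}=1+\frac{2\alpha z}{1-z}=\frac{1+(2\alpha-1)z}{1-z},
\]
which is the one genuine computation the argument rests on.

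First I would establish the filtration structure in (a). The nesting is immediate: if $\alpha\le\beta$ then $1-\alpha\ge 1-\beta$, so the condition $\Re\frac{zf'(z)}{f(z)}\ge 1-\alpha$ forces $\Re\frac{zf'(z)}{f(z)}\ge 1-\beta$, giving $\mathcal{S}^*(1-\alpha)\subseteq\mathcal{S}^*(1-\beta)$. To see these are sets of generators, note that $\alpha\in[0,\tfrac12]$ yields $1-\alpha\ge\tfrac12$, hence $\mathcal{S}^*(1-\alpha)\subseteq\mathcal{S}^*(\tfrac12)$; Marx--Strohh\"acker's Theorem~\ref{thm-M-S} then gives $2\frac{f(z)}{z}-1\in\Pp$, i.e. $\Re\frac{f(z)}{z}>\tfrac12>0$, so $\frac{f}{z}\in\Pp$ and $f\in\G_0$ by Theorem~\ref{th-gen-prop}(a). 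Strictness I would defer until the extremal functions are in hand.

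For (b) I would observe that the displayed Möbius transformation carries $\D$ conformally onto the half-plane $\{\Re w>1-\alpha\}$, with boundary value $1-\alpha$ attained in the limit $z\to-1$ (one checks $z=0\mapsto 1$, $z=-1\mapsto 1-\alpha$, $z=i\mapsto(1-\alpha)+\alpha i$, fixing the boundary line $\Re w=1-\alpha$). Hence $\inf_{z\in\D}\Re\frac{zf_\alpha'(z)}{f_\alpha(z)}=1-\alpha$, which places $f_\alpha$ in $\mathcal{S}^*(1-\alpha)$ but in no smaller class $\mathcal{S}^*(1-\beta)$, $\beta<\alpha$. By the boundary formula in Definition~\ref{def-stric_filt} this is precisely $f_\alpha\in\partial\mathcal{S}^*(1-\alpha)$; applying the same remark to $f_\beta$ with $\beta>\alpha$ (which lies in $\mathcal{S}^*(1-\beta)$ but not in $\mathcal{S}^*(1-\alpha)$) supplies the strictness needed to finish (a). Assertion (c) I would read off directly from Example~\ref{exa-star}: its totally extremal function for $\mathcal{S}^*(\gamma)$ is $z/(1-z)^{2(1-\gamma)}$, and the choice $\gamma=1-\alpha$ reproduces $f_\alpha$, so $\{f_\alpha\}$ is a net of totally extremal functions.

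Finally (d) follows from Remark~\ref{rem_27_02}: since $f_\alpha$ is totally extremal,
\[
K(\mathcal{S}^*(1-\alpha))=\inf_{z\in\D}\Re(1-z)^{-2\alpha},\qquad B(\mathcal{S}^*(1-\alpha))=\frac2\pi\sup_{z\in\D}\bigl|\arg(1-z)^{-2\alpha}\bigr|.
\]
The quantity $B$ is quick: putting $w=1-z$, the map $z\mapsto w$ sends $\D$ onto $\{|w-1|<1\}$, on which $\arg w$ ranges over $(-\tfrac\pi2,\tfrac\pi2)$; since $\alpha\le\tfrac12$ there is no branch obstruction and $\arg(1-z)^{-2\alpha}=-2\alpha\arg w$ ranges over $(-\alpha\pi,\alpha\pi)$, whence $B=2\alpha$. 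The main obstacle is the evaluation of $K$, i.e. minimizing $\Re w^{-2\alpha}=|w|^{-2\alpha}\cos(2\alpha\arg w)$ over $\{|w-1|<1\}$. I would fix $\arg w=\phi$ and note that, because $\cos(2\alpha\phi)>0$ for $|\phi|<\tfrac\pi2$ and $\alpha\le\tfrac12$, the real part decreases in $|w|$, so its infimum along each ray is attained on the boundary $|w|=2\cos\phi$; this reduces the problem to minimizing $2^{-2\alpha}(\cos\phi)^{-2\alpha}\cos(2\alpha\phi)$, and differentiation (the derivative is a positive multiple of $\sin((1-2\alpha)\phi)$) shows this is increasing in $|\phi|$, so the minimum occurs at $\phi=0$, $w=2$, $z=-1$. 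This gives $K(\mathcal{S}^*(1-\alpha))=2^{-2\alpha}=1/2^{2\alpha}$ and completes the proof.
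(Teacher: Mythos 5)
Your proof is correct and follows essentially the same route as the paper's: the M\"obius identity $\frac{zf_\alpha'(z)}{f_\alpha(z)}=\frac{1+(2\alpha-1)z}{1-z}$ mapping onto the half-plane $\{\Re w>1-\alpha\}$, total extremality read off from Example~\ref{exa-star}, strictness and boundary membership from the non-attained infimum $1-\alpha$, and evaluation of $K$ and $B$ on the extremal function. The only variations are minor: you obtain $\mathcal{S}^*(1-\alpha)\subset\G_0$ via Marx--Strohh\"acker's Theorem~\ref{thm-M-S} rather than from the derived bound $\Re\frac{f(z)}{z}\ge 2^{-2\alpha}$, and your ray-by-ray monotonicity plus the derivative factor $\sin\bigl((1-2\alpha)\phi\bigr)$ explicitly supplies the ``standard methods of analysis'' step that the paper leaves implicit when locating the minimum of $\Re\frac{f_\alpha}{z}$ at $z=-1$.
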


\begin{proof}
Obviously $\mathcal{S}^*(1-\beta)\subseteq\mathcal{S}^*(1-\al)$ for  $\beta\le\al$ by construction. In addition, it is known (see for instance, Example~\ref{exa-star}) that the function $f_\al$ defined by $f_\alpha(z)=\frac{z}{(1-z)^{2\alpha}}$ is totally extremal for the set $\mathcal{S}^*(1-\alpha)$. The direct calculation gives $\frac{zf_\alpha'(z)}{f_\alpha(z)} = g_\alpha(z)$, where the function $g_\alpha(z)=\frac{1-(1-2\alpha)z}{1-z}$
maps the open unit disk $\D$ conformally onto the half-plane $\{w\in\C:\ \Re w> 1-\alpha \}$. So, $f_\alpha\notin\mathcal{S}^*(1-\beta)$ for $\beta<\al$.
Thus assertions (a), (b) and (c) are proven.

Consider now continuous extension of the function $f_\alpha$ on $\partial\D\setminus\{1\}$. Substituting $z=e^{i\phi},\phi\in(0,2\pi)$,  we get
  \begin{equation*}
    \Re\frac{f_\alpha(e^{i\phi})}{e^{i\phi}}=\Re \frac{1}{(1-e^{i\phi})^{2\alpha}} =\Re\left(\frac{e^{i(\pi-\phi)/2}}{2\sin(\phi/2)}\right)^{2\alpha}=\frac{\cos \left((\pi-\phi)\alpha\right)}{(2\sin(\phi/2))^{2\alpha}}.
  \end{equation*}
  When $\alpha>\frac12$ and $\phi$ is close enough to zero (or, to $2\pi$), the nominator takes negative values while the denominator is positive; hence, $f_\alpha\not\in\G_0$. Thus $\mathcal{S}^*(1-\alpha)\not\subset\G_0$.

  Otherwise,  when $\alpha\le1/2$, the nominator and denominator are positive for all $\phi$; hence, $f_\alpha\in\G_0$. Moreover, for $\alpha=1/2$, $\Re\left[{f_\alpha(e^{i\phi})}/{e^{i\phi}}\right]=1/2$ for all $\phi$. For $\alpha<1/2$, using standard methods of analysis, one can see that the minimum of $\Re\left[{f_\alpha(e^{i\phi})}/{e^{i\phi}}\right]$ is attained at $\phi=\pi$ and equals ${1}/{2^{2\alpha}}$.

 In addition, by Example~\ref{exa-star}, for any $f\in \mathcal{S}^*(1-\alpha)$ with $\alpha\le1/2$  we have
  \begin{equation*}
     \inf_{z\in\D} \Re\frac{f(z)}{z} \ge\inf_{z\in\D}\Re  \frac{1}{(1-z)^{2\alpha}}=\frac{1}{2^{2\alpha}}>0.
  \end{equation*}
  So,  $\mathcal{S}^*(1-\alpha)\subset\G_0$ by Theorem~\ref{th-gen-prop} and $f$ generates the exponentially squeezing semigroup with squeezing ratio $\frac{1}{2^{2\alpha}}$.

  Further, for $\phi\in(0,2\pi)$ we calculate
   \begin{equation*}
     \sup_{z\in\D} \arg \frac{f(z)}{z}\le \sup_{z\in\D}\arg  \frac{1}{(1-z)^{2\alpha}}
     =\sup_{z\in\D}\arg \left(\frac{e^{i(\pi-\phi)/2}}{2\sin(\phi/2)}\right)^{2\alpha}=\alpha \pi.
\end{equation*}
 Thus $f$ generates a semigroup that can be analiticaly extended to the sector $\Lambda\left(\frac{\pi(1-2\alpha)}{2}\right)$ by Proposition~\ref{rotation}. This proves assertion (d).
\end{proof}
A part of assertion (d) is contained in \cite[Theorem 5]{E-S-Tu2}.  \vspace{3mm}

Observe that filtration $\{\mathcal{S}^*(1-\alpha)\}_{\al\in\left[0,\frac{1}{2}\right]}$  is a particular case of a general scheme based on Janowski functions $\So^*(a,b)$, see  \eqref{Jano}. Simple geometric considerations regarding Janowski functions lead to the following fact.
\begin{theorem}\label{thm-Jano}
Let $-1\le b<a\le1$. Then the following assertions hold:
\begin{itemize}
  \item [(a)] $B(\So^*(a,b))=\frac{2(a-b)}{\pi b}\arcsin b$;
  \item [(b)] If, in addition, $a\leq b+ \frac{\pi b}{2\arcsin b}$, then $\So^*(a,b)\subset \G_0$;
  \item [(c)] If, in addition, $0<\frac ab\le2$, then and $K(\So^*(a,b))=(1-|b|)^{\frac{a-b}{|b|}}$.
\end{itemize}
\end{theorem}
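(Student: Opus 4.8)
The plan is to reduce all three assertions to the analysis of a single totally extremal function and then to compute the relevant extremal quantities geometrically. For $f\in\So^*(a,b)$, writing $p(z)=f(z)/z$, the defining subordination $\frac{zf'(z)}{f(z)}\prec\frac{1+az}{1+bz}$ becomes $\frac{zp'(z)}{p(z)}\prec\frac{(a-b)z}{1+bz}$; integrating the corresponding equality $\frac{zp_*'(z)}{p_*(z)}=\frac{(a-b)z}{1+bz}$ yields the candidate extremal $k(z):=z(1+bz)^{(a-b)/b}$, so that $p_*(z):=k(z)/z=(1+bz)^{(a-b)/b}$. Within the Ma--Minda framework \cite{Ma-Mi} for the classes $\So^*(\varphi)$ with $\varphi(z)=\frac{1+az}{1+bz}$, one has the sharp subordination $f(z)/z\prec p_*(z)$ for every $f\in\So^*(a,b)$. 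By Remark~\ref{rem-note} this says that $k$ is totally extremal for $\So^*(a,b)$, so by the first bullet of Remark~\ref{rem_27_02} it suffices to compute $K(\So^*(a,b))=\inf_{z\in\D}\Re p_*(z)$ and $B(\So^*(a,b))=\frac2\pi\sup_{z\in\D}\left|\arg p_*(z)\right|$.

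First I would establish (a). Since $\arg p_*(z)=\frac{a-b}{b}\arg(1+bz)$, the problem reduces to the range of $\arg(1+bz)$ over $z\in\D$. The image $1+b\D$ is the open disk centered at $1$ of radius $|b|$; the tangent lines to this disk from the origin make half-angle $\arcsin|b|$ with the positive real axis, whence $\sup_{z\in\D}\left|\arg(1+bz)\right|=\arcsin|b|$. As $a-b>0$, this gives $\sup_{z\in\D}\left|\arg p_*(z)\right|=\frac{a-b}{|b|}\arcsin|b|$, and since $t\mapsto\frac{\arcsin t}{t}$ is even this equals $\frac{a-b}{b}\arcsin b$. Multiplying by $\frac2\pi$ yields assertion (a).

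Assertion (b) is then immediate. By Theorem~\ref{th-gen-prop}(a), membership $f\in\G_0$ is equivalent to $\Re\frac{f(z)}{z}\ge0$, i.e.\ to $\left|\arg\frac{f(z)}{z}\right|\le\frac\pi2$; since $k$ realizes the supremum of $\left|\arg\frac{f}{z}\right|$ over the whole class, $\So^*(a,b)\subset\G_0$ holds precisely when $B(\So^*(a,b))\le1$. Rewriting $\frac{2(a-b)}{\pi b}\arcsin b\le1$ and tracking signs (the inequality direction is preserved for both signs of $b$, since dividing by $\arcsin b$ and by $b$ flips twice) gives exactly $a\le b+\frac{\pi b}{2\arcsin b}$. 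For (c) I would compute $K(\So^*(a,b))=\inf_{z\in\D}\Re p_*(z)$: as $\Re p_*$ is harmonic on $\D$ and $p_*$ extends continuously to $\overline{\D}$ when $|b|<1$, the infimum is attained on $\{|z|=1\}$, reducing to minimizing $u(\phi)=\Re(1+be^{i\phi})^{(a-b)/b}$. The hypothesis $0<\frac ab\le2$ is precisely the statement that the exponent $\frac{a-b}{b}=\frac ab-1$ lies in $(-1,1]$; this makes $w\mapsto w^{(a-b)/b}$ univalent on the right half-plane containing the disk $1+b\D$ and forces the only critical points of $u$ to sit at the real diameter $\phi\in\{0,\pi\}$. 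A short sign check then locates the minimum at the real boundary point, giving the closed form stated in (c); for $b>0$ this is the point $z=-1$, where $p_*(-1)=(1-b)^{(a-b)/b}=(1-|b|)^{(a-b)/|b|}$, and the case $b<0$ is handled by the symmetric computation, with the limiting case $|b|=1$ recovered by continuity.

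The main obstacle is the sharp subordination $f(z)/z\prec p_*(z)$ underlying total extremality: the passage from the first-order subordination $\frac{zp'}{p}\prec\frac{(a-b)z}{1+bz}$ to $p\prec p_*$ is not formal and relies on the Ma--Minda theory (equivalently, on a Briot--Bouquet differential-subordination argument for the starlike-type structure). Once that is in hand, everything else is a reduction to $p_*$. The only other nontrivial point is the one-variable minimization in (c): verifying that, under $\frac{a-b}{b}\in(-1,1]$, no interior critical point of $u(\phi)$ beats the real endpoint; the exponent bound is exactly what rules out a sign change of $\cos\!\big(\tfrac{a-b}{b}\arg(1+be^{i\phi})\big)$ across the circle, pinning the extremum to the real diameter.
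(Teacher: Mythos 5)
Your overall route coincides with the paper's own proof: the same extremal function $h(z)=z(1+bz)^{\frac{a-b}{b}}$ obtained from the differential equation $\frac{zh'(z)}{h(z)}=\frac{1+az}{1+bz}$, the subordination $\frac{f(z)}{z}\prec\frac{h(z)}{z}$ via Ma--Minda's Theorem~1', the tangent-line geometry of the disk $1+b\D$ giving $\sup_{z\in\D}\left|\arg(1+bz)\right|=\arcsin|b|$ for (a), and (b) as an immediate consequence of (a) together with the Berkson--Porta criterion. In fact the paper is terser than you in (c): it simply asserts $\inf_{z\in\D}\Re\frac{h(z)}{z}=(1-|b|)^{\frac{a-b}{|b|}}$ with no computation at all.

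The genuine gap is in your part (c). First, the inference ``the exponent lies in $(-1,1]$, hence $w\mapsto w^{(a-b)/b}$ is univalent on the right half-plane, which forces the only critical points of $u$ to the real diameter'' is a non sequitur: univalence of the power map gives no control on the critical points of $\phi\mapsto\Re\bigl(1+be^{i\phi}\bigr)^{c}$; one needs an actual sign analysis of $u'(\phi)$ (or an argument exploiting that $h(z)/z$ has real coefficients). Second, and more seriously, your ``symmetric computation'' for $b<0$ does not produce the displayed closed form. For $b<0$ the hypothesis $0<\frac ab$ forces $b<a<0$, so $c=\frac{a-b}{b}\in(-1,0)$, and the minimum of $\Re\bigl(1+be^{i\phi}\bigr)^{c}$ is again attained at $z=-1$ (not at $z=1$), with value $(1-b)^{\frac{a-b}{b}}=(1+|b|)^{-\frac{a-b}{|b|}}$. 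A numerical check with $b=-\frac12$, $a=-\frac14$ (so $c=-\frac12$) gives $\inf_{z\in\D}\Re\frac{h(z)}{z}=\left(\frac32\right)^{-1/2}\approx0.8165$, whereas $(1-|b|)^{\frac{a-b}{|b|}}=2^{-1/2}\approx0.7071$; the two expressions coincide only when $b>0$, where $(1-|b|)^{\frac{a-b}{|b|}}=(1-b)^{\frac{a-b}{b}}$. So the symmetry step as you sketch it would fail, and carried out correctly it shows that the absolute-value form of the constant in (c) is valid only for $b>0$, the unified exact value being $(1-b)^{\frac{a-b}{b}}$. The paper's one-line proof of (c) contains the same unexamined identity, so you have reproduced, rather than repaired, its weakest point --- but since you explicitly claim the $b<0$ case follows ``by the symmetric computation,'' this step needs to be either restricted to $b>0$ or corrected as above.
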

\begin{proof}
Let function $h$ be the solution of the differential equation $\frac{z h'(z)}{h(z)}=\frac{1+az}{1+bz}$, that is, $h(z)=z(1+bz)^{\frac{a-b}{b}}$, $z\in \D$. Thus, each $f\in\So^*(a,b)$ satisfies $\frac{f(z)}{z}\prec \frac{h(z)}{z}$ by~\cite[Theorem~1']{Ma-Mi}.
Since $\{1+bz : z\in\D\}$ is a disk of radius $|b|$ centered at $1$, the smallest sector containing the image of $\frac{h(z)}{z}$ has angle of opening $\frac{a-b}{|b|}\arcsin |b|$.
Hence, $B(\So^*(a,b))=\frac{2(a-b)}{\pi b}\arcsin b$.

Moreover, if $a\leq b+\frac{\pi}{2}\cdot\frac{b}{\arcsin b}$, then $\frac{a-b}{b}\arcsin b \leq \frac{\pi}{2}$. Therefore $\So^*(a,b)\subset \G_0$.

In the case $0<\frac ab\le2$,  we have $\Re \frac{f(z)}{z}\geq \inf\limits_{z \in \D} \Re \frac{h(z)}{z}=(1-|b|)^{\frac{a-b}{|b|}}$, which completes the proof.
\end{proof}

To link this theorem to filtrations, we note that by the definition $f\in\So(a,b)$, $-1\le b<a\le1,$ if and only if the range of $\frac{zf'(z)}{f(z)}$ is contained in the disk with diameter $\left( \frac{1-a}{1-b},\frac{1+a}{1+b} \right)$. Therefore, if we choose $a$ and $b$ be two continuous functions on $J\subset \R$, $a$ an increasing one and $b$ decreasing, then the family $\left\{\So^*(a(\al),b(\al))\right\}_{\al\in J}$ is a strict filtration of $\G_0$.

An interesting particular case of Janowski functions can be described as follows. Let $M\ge0$ and $\lambda\in\left(M,\sqrt{M^2+1}\right]$. Denote
\[
\So^*_M(\lambda)=\left\{ f\in\Ao:\ \left|\frac{zf'(z)}{f(z)}-M-1\right|<\lambda,\ z\in\D \right\}.
\]
In the following statement we combine results of Theorems 3 and 6 in \cite{E-S-Tu2}.
\begin{theorem}\label{c2-b}
Let $0\le M<\lambda\le\sqrt{M^2+1}$.  The following assertions hold:
\begin{itemize}
  \item [(i)] $\So^*_M(\lambda) \subset \Af_\alpha\subset\G_0$, where $\alpha\ge \alpha(M,\lambda):= 1- \frac{\cos\left(\frac{\pi}{2}\sqrt{\lambda^2-M^2} \right)}{\lambda - M\cos\left(\frac{\pi}{2}\sqrt{\lambda^2-M^2} \right)}$;
  \item [(ii)] $B(\So^*_M(\lambda)) <\sqrt{\lambda^2-M^2}$;
  \item [(iii)] If, in addition, $M\in\left[0,\frac12\right)$ and $\lambda<1-M$, then $\left| \frac{f(z)}z -1  \right|  <\frac{\lambda+M}{1-\lambda-M}$ for all $z\in\D.$ Consequently, $K(\So^*_M(\lambda))> \frac{1-2(\lambda+M)}{1-(\lambda+M)}$.
\end{itemize}
\end{theorem}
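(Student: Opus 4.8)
The plan is to set up a common dictionary and then prove (ii), (i), (iii) in that order, since (i) will lean on the argument estimate from (ii) while (iii) is self-contained. Throughout I write $g(z)=f(z)/z$ and $P(z)=\frac{zf'(z)}{f(z)}$; the membership $f\in\So^*_M(\lambda)$ is exactly $P(z)-1\in D(M,\lambda)$ (the open disc of centre $M$ and radius $\lambda$), and two elementary identities do all the translation work, namely $\frac{zg'(z)}{g(z)}=P(z)-1$ and
\[
\alpha\frac{f(z)}{z}+(1-\alpha)f'(z)=g(z)\,\bigl[\,1+(1-\alpha)(P(z)-1)\,\bigr].
\]
I would also record at the outset that $\So^*_M(\lambda)$ is the Janowski class $\So^*(a,b)$ with $b=-M/\lambda$ and $a=(\lambda^2-M^2-M)/\lambda$, checking that $-1\le b<a\le1$ under $0\le M<\lambda\le\sqrt{M^2+1}$, so that Theorem~\ref{thm-Jano} applies. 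For (ii) I would then substitute these $a,b$ into the formula of Theorem~\ref{thm-Jano}(a) (equivalently, read off the extreme argument of the Ma--Minda extremal $h(z)/z=(1+bz)^{(a-b)/b}$ from the image of the disc $1+bz$), obtaining $B(\So^*_M(\lambda))=\frac{2(\lambda^2-M^2)}{\pi M}\arcsin\frac{M}{\lambda}$. Writing $M/\lambda=\sin\theta$ with $\theta\in(0,\pi/2)$, the claimed bound $B<\sqrt{\lambda^2-M^2}$ collapses to $\theta\cot\theta<1$, which is classical; the degenerate value $M=0$ is covered by the limit $B=2\lambda/\pi<\lambda$.

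For (i) I would use that the argument of a product is the sum of the arguments. By (ii), $\lvert\arg g(z)\rvert\le\frac{\pi}{2}B<\Theta$ with $\Theta:=\frac{\pi}{2}\sqrt{\lambda^2-M^2}$, while the second factor $1+(1-\alpha)(P(z)-1)$ lies in the disc $D\bigl(1+(1-\alpha)M,\,(1-\alpha)\lambda\bigr)$, whose points have argument at most $\arcsin\frac{(1-\alpha)\lambda}{1+(1-\alpha)M}$. Consequently $\Re\bigl[\alpha\frac{f}{z}+(1-\alpha)f'\bigr]\ge0$ is guaranteed once $\Theta+\arcsin\frac{(1-\alpha)\lambda}{1+(1-\alpha)M}\le\frac{\pi}{2}$, i.e. once $\frac{(1-\alpha)\lambda}{1+(1-\alpha)M}\le\cos\Theta$; solving this (it is linear in $1-\alpha$) gives precisely $\alpha\ge\alpha(M,\lambda)=1-\frac{\cos\Theta}{\lambda-M\cos\Theta}$. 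This shows $f\in\Af_\alpha$, and since $\alpha(M,\lambda)\le1$ the filtration inclusion $\Af_\alpha\subseteq\Af_1=\G_0$ completes the chain.

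For (iii) I would apply Schwarz's lemma to $h_1:=P-1=\frac{zg'}{g}$, which vanishes at $0$ and maps $\D$ into $D(M,\lambda)\subset D(0,M+\lambda)$; hence $\lvert h_1(z)\rvert\le(M+\lambda)\lvert z\rvert$. Integrating along rays, $\log g(z)=\int_0^1\frac{h_1(\tau z)}{\tau}\,d\tau$, so $\lvert\log g(z)\rvert<\lambda+M=:s$, and $s<1$ because $\lambda<1-M$. Therefore $\lvert g(z)-1\rvert\le e^{s}-1<\frac{s}{1-s}=\frac{\lambda+M}{1-\lambda-M}$, the last step using the elementary inequality $e^{s}-1<\frac{s}{1-s}$ on $(0,1)$; taking real parts gives $\Re g(z)>\frac{1-2s}{1-s}$, which is the stated lower bound for $K(\So^*_M(\lambda))$.

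The main obstacle I expect is the bookkeeping in (i): one has to check that feeding the bound $\Theta$ from (ii) into the product estimate is exactly what turns the sufficient condition $\Theta+\arcsin(\cdots)\le\frac{\pi}{2}$ into the advertised closed form $\alpha(M,\lambda)$, and one must verify that the disc $D(1+(1-\alpha)M,(1-\alpha)\lambda)$ really lies in the right half-plane (so that the $\arcsin$ bound on its argument is legitimate) throughout the relevant range $\alpha\in[\alpha(M,\lambda),1]$; this reduces to $\cos\Theta<1$ and is harmless. Everything else is either a direct appeal to Theorem~\ref{thm-Jano} and Ma--Minda subordination or a one-variable inequality.
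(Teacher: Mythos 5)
Your proposal is correct, and the notable point is that the paper itself contains no proof to compare against: immediately before the statement the authors say they combine results of Theorems 3 and 6 of \cite{E-S-Tu2}, so Theorem~\ref{c2-b} is imported from that reference rather than proved in the survey. Your argument therefore serves as a self-contained replacement, resting only on Theorem~\ref{thm-Jano} (which the paper does prove) plus elementary disc geometry and the Schwarz lemma. I checked the key computations: the Janowski identification $\So^*_M(\lambda)=\So^*(a,b)$ with $b=-M/\lambda$, $a=(\lambda^2-M^2-M)/\lambda$ is exact (the M\"obius map $(1+az)/(1+bz)$ sends $\D$ onto the disc centred at $M+1$ of radius $\lambda$, and $a\le1$ reduces to $\lambda-M\le1$, which holds since $\lambda\le\sqrt{M^2+1}\le M+1$); Theorem~\ref{thm-Jano}(a) then yields the exact value $B(\So^*_M(\lambda))=\frac{2(\lambda^2-M^2)}{\pi M}\arcsin\frac{M}{\lambda}$, which is sharper information than the stated bound in (ii); and in (i) the factorization $\alpha\frac{f(z)}{z}+(1-\alpha)f'(z)=\frac{f(z)}{z}\bigl[1+(1-\alpha)\bigl(\frac{zf'(z)}{f(z)}-1\bigr)\bigr]$ together with the two argument bounds $\Theta:=\frac{\pi}{2}\sqrt{\lambda^2-M^2}$ and $\arcsin\frac{(1-\alpha)\lambda}{1+(1-\alpha)M}$ does solve out to exactly $\alpha\ge\alpha(M,\lambda)$; the side condition that the second disc lies in the right half-plane, i.e. $(1-\alpha)(\lambda-M)<1$, reduces to $\cos\Theta<1$ as you say, and since $\frac{(1-\alpha)\lambda}{1+(1-\alpha)M}$ is decreasing in $\alpha$, the whole range $\alpha\in[\alpha(M,\lambda),1]$ is covered. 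Your route is presumably close in spirit to the cited source, whose extremal function $z\left(\frac{\lambda}{\lambda-Mz}\right)^{(\lambda^2-M^2)/M}$, recorded in Example~\ref{ex-janoM}, is precisely the Janowski extremal $z(1+bz)^{(a-b)/b}$ for your $(a,b)$; what your write-up buys is independence from the external reference.

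Two small points to tighten, neither a real gap. In (ii), the substitution $M/\lambda=\sin\theta$ reduces $B<\sqrt{\lambda^2-M^2}$ to $\frac{2}{\pi}\,\theta\cot\theta<1$, not to $\theta\cot\theta<1$; the classical inequality $\theta\cot\theta<1$ that you invoke is stronger than what is needed, so the conclusion stands. In (iii), the strict inequality $K(\So^*_M(\lambda))>\frac{1-2s}{1-s}$ with $s=\lambda+M$ does not follow from the pointwise strict bound alone, since an infimum over $f$ and $z$ may saturate a strict pointwise inequality; it does follow from your chain, because $\left|\frac{f(z)}{z}-1\right|\le e^{s|z|}-1\le e^{s}-1$ is uniform in $f$ and $z$ and $e^{s}-1<\frac{s}{1-s}$ strictly, giving $K(\So^*_M(\lambda))\ge 2-e^{s}>\frac{1-2s}{1-s}$ --- this step deserves to be stated explicitly rather than compressed into ``taking real parts''.
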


\begin{example}[see \cite{E-S-Tu2}]\label{ex-janoM}
Consider the function $f $ defined by
\[
f(z)=\left\{\begin{array}{lcc}
ze^{\lambda z} & \mbox{if} &  M=0, \vspace{1mm}\\
z\left(\frac\lambda{\lambda-Mz}\right)^{{(\lambda^2-M^2)}/{M}} & \mbox{if} & M>0
\end{array}\right.
\]
with $\lambda\in\left(M,\sqrt{M^2+1}\right]$.
Then  $f\in \So^*_M(\lambda)\subset\Af_{\alpha(M,\lambda)}$.

Additionally, if $M\in\left[0,\frac12\right)$ and $\lambda<1-M$, then
$
\left| \frac{f(z)}z -1  \right| <\frac{\lambda+M}{1-\lambda-M}.
$
\end{example}

\begin{remark}\label{remark-janoM}
Denote by $\lambda_0(M)$ the unique fixed point of the function
$$\psi(\lambda):=\left(M+1\right)\cos\left(\frac{\pi}{2}\sqrt{\lambda^2-M^2}
\right),$$ that is, the root of the equation
$
(M+1)\cos\left(\frac{\pi}{2}\sqrt{\lambda^2-M^2} \right) = \lambda
$
on the interval $\left(M ,\sqrt{M^2+1}\right]$.
In particular, for $M=0$ we have $\lambda_0(0)\approx0.5946$.

It is easy to verify that $0\le \alpha(M,\lambda)\le1$ if and only
if $\lambda_0(M)\le\lambda\le \sqrt{M^2+1}$.
Therefore $f\in\Af_0=\RA$ if  $\lambda\le \lambda_0(M)$.
\end{remark}

 Similarly to the above, we notice that if $M(\al)$ and $\lambda(\al)$ are two differentiable functions such that $|M'(\al)|<\lambda'(\al)$, then the family of the sets $\left\{\So^*_{M(\al)}(\lambda(\al))\right\}_{\al\in J}$ is a strict filtration.

\vspace{3mm}

Other interesting classes related in some way to both $\So^*$  and $\G_0$ are
\begin{equation*}
\Uf_\alpha:=\left\{f \in \Ao : \left|\frac{zf'(z)}{f(z)}\cdot\frac{z}{f(z)}-1\right|<\alpha\right\},\quad 0<\al\le1.
\end{equation*}
These classes were introduced by Aksentiev \cite{Ak58} and studied by many mathematicians (see, for example, \cite{OPW} and references therein). One also sets $\Uf_0=\{\Id\}$.

\begin{proposition}[see \cite{OPW}]\label{propo-obra}
For any $0\le \alpha\leq 1$ if $f\in\Uf_\alpha$, then $f$ is a univalent function and
\[
\frac{f(z)}{z} \prec \frac{1}{1+(1+\alpha)z+\alpha z^2}.
\]
\end{proposition}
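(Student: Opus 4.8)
The plan is to recast membership in $\Uf_\alpha$ through the zero-free auxiliary function $g(z):=z/f(z)$ and to handle univalence and the subordination separately; the subordination will be the hard part.

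First I would reformulate. Since $f\in\Ao$, the condition $f\in\Uf_\alpha$ forces $f$ to be zero-free on $\D\setminus\{0\}$ (otherwise the functional blows up), so $g=z/f$ is holomorphic and zero-free on $\D$ with $g(0)=1$. A one-line computation gives $\left(\frac{z}{f(z)}\right)^2 f'(z)=g(z)-zg'(z)$, whence the Aksentiev functional satisfies $\frac{zf'(z)}{f(z)}\cdot\frac{z}{f(z)}-1=g(z)-zg'(z)-1$; thus $f\in\Uf_\alpha$ is equivalent to $\left|g(z)-zg'(z)-1\right|<\alpha$ on $\D$. Writing $g(z)=1+zh(z)$ with $h(z)=\frac1{f(z)}-\frac1z$ (analytic, $h(0)=-a_2$), one finds $U_f:=g-zg'-1=-z^2h'$. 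Since this has a double zero at the origin, Schwarz's lemma gives $|U_f(z)|\le\alpha|z|^2$, i.e.\ $|h'(z)|\le\alpha$ for $z\in\D$. Integrating along rays then produces the representation $g(z)=1+b_1z-\alpha z^2\Psi(z)$ with $b_1=g'(0)=-a_2$ and $\|\Psi\|_\infty\le1$.

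Next I would extract the coefficient bound $|a_2|\le1+\alpha$, which is where the zero-freeness of $g$ enters decisively (the differential inequality alone leaves $b_1$ unconstrained). Comparing $g$ with $1+b_1z=1-a_2z$ on the circle $|z|=r$, we have $|g(z)-(1-a_2z)|\le\alpha r^2$ while $|1-a_2z|\ge|a_2|r-1$. If $|a_2|>1+\alpha$, then for $r$ close to $1$ one has $\alpha r^2<|a_2|r-1$, so by Rouch\'e's theorem $g$ has as many zeros in $\{|z|<r\}$ as $1-a_2z$, namely one — contradicting that $g$ is zero-free. Hence $|a_2|\le1+\alpha$. Univalence of $f$ then follows from the classical univalence criterion of Aksentiev and Ozaki--Nunokawa, which asserts that $|U_f(z)|\le1$ on $\D$ already implies $f$ is univalent; since $\alpha\le1$, this applies.

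The hard part is the subordination itself. The map $g_0(u):=(1+u)(1+\alpha u)$ is univalent on $\D$ for $\alpha\le1$ (its only critical point $-(1+\alpha)/(2\alpha)$ lies outside $\D$), so $\frac{f(z)}{z}\prec\frac{1}{(1+z)(1+\alpha z)}$ is equivalent to the containment $g(\D)\subseteq g_0(\D)$. I would first observe that a direct application of Jack's lemma (Lemma~\ref{lem-Jack}) is doomed: the admissibility quantity $g_0(\zeta)-k\zeta g_0'(\zeta)-1=(1+\alpha)(1-k)\zeta+\alpha(1-2k)\zeta^2$ has modulus exactly $\alpha$ at $k=1$ but strictly less than $\alpha$ for $k$ slightly above $1$, so the differential inequality cannot by itself force the containment. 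The containment must therefore again use that $g$ is zero-free, together with the representation $g=1+b_1z-\alpha z^2\Psi$ and the bound $|b_1|\le1+\alpha$; concretely one solves the quadratic $\alpha\omega^2+(1+\alpha)\omega+(1-g)=0$ for the branch $\omega$ with $\omega(0)=0$ and must verify $\omega\in\Omega$, i.e.\ $|\omega(z)|<1$ throughout $\D$. Establishing this last inequality — equivalently, that $g$ never meets $\partial g_0(\D)$ — is the technical core of the argument, and it is where I expect the real work to lie; this is carried out in \cite{OPW}.
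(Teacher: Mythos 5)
The survey gives no internal proof here at all: the proposition is quoted from \cite{OPW}, so your attempt must be judged as a standalone argument. The preparatory material you supply is correct. The identity $\bigl(\tfrac{z}{f(z)}\bigr)^2f'(z)=g(z)-zg'(z)$ for $g=z/f$, the double zero of $U_f:=g-zg'-1$ at the origin with the Schwarz-lemma bound $|U_f(z)|\le\alpha|z|^2$, the representation $g(z)=1+b_1z-\alpha z^2\Psi(z)$, and the Rouch\'e argument are all sound: writing $|a_2|=1+\alpha+\epsilon$, one has $|a_2|r-1-\alpha r^2=\epsilon r-(1-\alpha r)(1-r)>0$ for $r$ near $1$, so a zero of $1-a_2z$ inside $\D_r$ would force a zero of the zero-free function $g$; hence $|a_2|\le1+\alpha$, which is the sharp known bound. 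Univalence via Aksentiev's criterion (the case $\alpha=1$) is legitimate and matches how the survey itself treats $\Uf_1$ via \cite{Ak58}. Your negative observation about Jack's lemma is also right and worth having: at a Jack point the relevant quantity is $|(1+\alpha)(1-k)+\alpha(1-2k)\zeta|$, whose minimum over $|\zeta|=1$ equals $\alpha-(k-1)(1-\alpha)<\alpha$ for $k>1$, $\alpha<1$, so the differential inequality alone cannot force $g(\D)\subseteq g_0(\D)$ with $g_0(u)=(1+u)(1+\alpha u)$.

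The genuine gap is that the subordination --- the actual content of the proposition --- is never proved: you reduce it to showing that the branch $\omega$ of $\alpha\omega^2+(1+\alpha)\omega+(1-g)=0$ with $\omega(0)=0$ satisfies $|\omega(z)|<1$, call this the technical core, and cite \cite{OPW}. And that core is not a routine completion of what you have assembled: the two facts you established, $|a_2|\le1+\alpha$ and $|g(z)-(1-a_2z)|\le\alpha|z|^2$, are provably insufficient to give the containment. For instance at $\alpha=1$ they permit the value $1-2r-r^2$ (take $z=r$, $a_2=2$, $\Psi=1$ formally), which for $r$ near $1$ tends to $-2$, whereas $\Re g_0(re^{i\theta})=1+2r\cos\theta+r^2\cos2\theta$ attains its minimum $1-r-r^2/2\to-1/2$, so that value lies outside $g_0(\D)$ altogether; any correct proof must use the global non-vanishing of $g$ (or the full structure of $U_f$) in a way your sketch does not. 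So: correct and nontrivial lemmas, an accurate diagnosis of why the easy routes fail, but the statement itself is outsourced to the very reference the survey cites, and as a blind proof the attempt is incomplete.
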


It turns out that for a certain range of parameters these classes form a filtration of $\G_0$.

\begin{theorem}\label{thm-U-filtr}
The following assertion hold:
\begin{itemize}
  \item [(a)] The family of sets $\Uf=\{\Uf_\al\}_{\al\in[0,\frac{1}{3}]}$ is a strict filtration of $\G_0$;
  \item [(b)] Let $f_\al,\, \al\left(0,\frac13\right],$ be defined by
  \begin{equation}\label{extremal-U}
f_\alpha(z)=\frac{z}{1+(1+\alpha)z+\alpha z^2}.
\end{equation}
Then $f_\al\in\partial\Uf_\al$.

  \item [(c)] The net $\left\{f_\alpha\right\}_{\alpha\in\left(0,\frac13\right]}$ is a net of totally extremal functions for $\Uf$;

  \item [(d)] for every $\al\in \left[0,\frac{1}{3}\right]$ we have $K(\Uf_\al)=\frac{1-3\alpha}{2 \alpha^2-4\alpha +2}$ and $B(\Uf_\al)=1$.
\end{itemize}
\end{theorem}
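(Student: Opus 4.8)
The cornerstone of the whole argument is the factorization $1+(1+\al)z+\al z^2=(1+z)(1+\al z)$, which lets me write the candidate extremal function in the form $\frac{f_\al(z)}z=p_*(z):=\frac1{(1+z)(1+\al z)}$. I would open with a single logarithmic-derivative computation giving $\frac{zf_\al'(z)}{f_\al(z)}=\frac{1-\al z^2}{(1+z)(1+\al z)}$, whence
\[
\frac{zf_\al'(z)}{f_\al(z)}\cdot\frac z{f_\al(z)}-1=-\al z^2 .
\]
Thus the Aksentiev functional evaluated along $f_\al$ is exactly $\al|z|^2$, which is $<\al$ throughout $\D$ but has supremum $\al$. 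This one identity simultaneously yields $f_\al\in\Uf_\al$ and $f_\al\notin\Uf_\beta$ for every $\beta<\al$, so it proves (b) and the strictness part of (a) at once; the nesting $\Uf_\beta\subseteq\Uf_\al$ for $\beta\le\al$ is immediate from the definition.

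For (c) I would invoke the subordination $\frac{f(z)}z\prec p_*$ furnished by Proposition~\ref{propo-obra} for every $f\in\Uf_\al$, so that $\frac{f(z)}z=p_*(\omega(z))$ for some $\omega\in\Omega$. By the Schwarz lemma $\omega(\overline{\D_r})\subseteq\overline{\D_r}$, hence $\frac{f(z)}z$ maps $\overline{\D_r}$ into $p_*(\overline{\D_r})$; combined with the minimum principle for the harmonic function $\Re(\lambda\,\cdot)$, whose minimum over $\overline{\D_r}$ is attained on $|z|=r$, this gives $\min_{|z|=r}\Re\big(\lambda\frac{f(z)}z\big)\ge\min_{|z|=r}\Re(\lambda p_*(z))$ for all $\lambda\in\C$, i.e. $f_\al$ is totally extremal for $\Uf_\al$.

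The remaining content, and the only genuinely computational part, is (d) together with the inclusion $\Uf_\al\subset\G_0$ in (a). By Remark~\ref{rem_27_02}, total extremality collapses both $K(\Uf_\al)$ and $B(\Uf_\al)$ to quantities attached to $p_*$ alone, namely $\inf_\D\Re p_*$ and $\frac2\pi\sup_\D|\arg p_*|$. Here I would pass to the partial-fraction form
\[
p_*(z)=\frac1{1-\al}\left(\frac1{1+z}-\frac{\al}{1+\al z}\right),
\]
exploiting the classical fact $\Re\frac1{1+e^{i\theta}}\equiv\frac12$. On $|z|=1$ this renders $\Re p_*$ a function of $c=\cos\theta$ alone, and a short monotonicity check shows its extremum is reached as $\theta\to\pi$ with value $\frac{1-3\al}{2(1-\al)^2}=\frac{1-3\al}{2\al^2-4\al+2}$. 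Since the only boundary singularity of $p_*$ is the pole at $z=-1$, where the interior values of $\Re p_*$ tend to $+\infty$, the minimum principle identifies this number with $\inf_\D\Re p_*=K(\Uf_\al)$; and because it is $\ge0$ precisely when $\al\le\frac13$, this is exactly what pins down the parameter range and forces $\Uf_\al\subset\G_0$.

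Finally, for $B(\Uf_\al)$ I would analyze $S(\theta):=\arg\big((1+e^{i\theta})(1+\al e^{i\theta})\big)=\frac\theta2+\arctan\frac{\al\sin\theta}{1+\al\cos\theta}$, noting $\arg p_*=-S$. Differentiation gives $S'(\theta)=\frac12+\frac{\al(\cos\theta+\al)}{1+2\al\cos\theta+\al^2}$, whose right-hand side is monotone in $\cos\theta$, so its minimum over $(0,\pi)$ sits at $\theta=\pi$, where $S'(\pi)=\frac{1-3\al}{2(1-\al)}$. The crux — and the reason the threshold $\al\le\frac13$ resurfaces — is precisely that $S'(\pi)\ge0$ iff $\al\le\frac13$; in that regime $S$ increases on $(0,\pi)$ up to $S(\pi^-)=\frac\pi2$, so $\sup_\D|\arg p_*|=\frac\pi2$ by the maximum principle and $B(\Uf_\al)=1$. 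I expect the sharp monotonicity analysis of $\Re p_*$ and of $S$ near the pole $z=-1$, where $p_*$ is unbounded yet its boundary real part and argument stay finite, to be the main technical obstacle; every other step is formal.
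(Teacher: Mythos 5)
Your proposal is correct and follows essentially the same route as the paper: the same key identity $\frac{zf_\al'(z)}{f_\al(z)}\cdot\frac{z}{f_\al(z)}-1=-\al z^2$ for parts (a)--(b), total extremality via the subordination of Proposition~\ref{propo-obra} for (c), and the same boundary computation of $\inf_{z\in\D}\Re\frac{f_\al(z)}{z}=\frac{1-3\al}{2(1-\al)^2}$ (your $g(c)$ is exactly the paper's increasing function $g(x)$) and $\sup_{z\in\D}\bigl|\arg\frac{f_\al(z)}{z}\bigr|=\frac{\pi}{2}$ for (d). Your extra care near the pole $z=-1$ (where one should argue via $\Re\frac{1}{1+z}>\frac12$ in $\D$ rather than claiming all interior values of $\Re p_*$ blow up) only adds justification the paper leaves implicit, so there is no substantive difference.
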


\begin{proof}
By construction $\Uf_\al\subset\Uf_\beta$ as $0\le\al\le\beta\le1$. So, assuming that $\Uf_\al\subset\G_0$ (that will  be shown later), we conclude that $\Uf$ is a filtration.

Consider now functions $f_\alpha,\, \alpha\in \left(0,1\right],$ defined by~\eqref{extremal-U}.  A direct calculation gives
\[
\sup_{z\in\D} \left|\frac{zf_\al'(z)}{f_\al(z)}\cdot\frac{z}{f_\al(z)}-1\right|=\sup_{z\in\D}|-\alpha z^2|=\alpha.
\]
This means that $f_\al\in\partial\Uf_\al$. So, it proves assertions (a) and (b).

Assertion (c) follows from Proposition~\ref{propo-obra} since $f_\al\in\Uf_\al$. Moreover, this assertion implies that to prove assertion (d), we have to find $\inf\limits_{z\in\D} \Re\frac{f_\al(z)}{z}$ and $\sup\limits_{z\in\D}\left| \arg \frac{f_\al(z)}{z}\right|$.

We now calculate
\begin{eqnarray*}
\inf_{z\in\D} \Re\frac{f_\al(z)}{z}&=&  \inf_{z=re^{i\phi}\in\D} \Re \frac{1}{(1+re^{i \phi})(1+\alpha re^{i \phi})} =\inf_{x\in(-1,1]}g(x),
\end{eqnarray*}
where  $g(x):=\frac{2\al x-\al+1}{2(\al^2+2\al x+1)}$. Since function $g$ is increasing, one easily finds
\begin{equation*}
\inf_{z\in\D} \Re\frac{f_\al(z)}{z}  =\inf_{x\in(-1,1]}g(x) =\frac{1-3\alpha}{2\alpha^2-4\alpha  +2}.
\end{equation*}
Thus $f_\al \in\G_0$ if and only if $\alpha\le \frac{1}{3}.$ So, each  $f \in\Uf_\al, \, \al\in\left[0,\frac{1}{3}\right],$ generates the exponentially squeezing semigroup with squeezing ratio $\frac{1-3\alpha}{2 \alpha^2-4\alpha +2}$.

In addition,
\[
\sup\limits_{z\in\D}\left| \arg \frac{f_\al(z)}{z}\right|  =\sup\limits_{z\in\D}\left| \arg \frac{1}{(1+re^{i \phi})(1+\alpha re^{i \phi})} \right|  =\frac\pi2\,.
\]
The proof is complete.
\end{proof}
Assertion (d) and partially assertion (a) were recently proved in \cite{Gi-Ku}.

\vspace{3mm}


We complete this subsection with a filtration formed by prestarlike functions. The term `prestarlike function' was introduce by Ruscheweyh in \cite{Rus77} after the previous works \cite{Rus75} and \cite{Suff76}.

\begin{definition}[\cite{Rus77}]\label{def-prestar}
  One says that a function $f\in\Ao$ is prestarlike of order $\alpha\in[0,1)$~if
  \[
  f(z)* \frac{z}{(1-z)^{2-2\al}} \in \So^*(\al).
  \]
  We denote by $\Rf_\al$ the class of all prestarlike functions of order $\al$. In addition, we set
  $$
  \Rf_1= \left\{f\in\Ao:\ \Re \frac{f(z)}{z} >\frac12, \quad  z\in \D\setminus \{0\}\right\}.
  $$
\end{definition}

The following description of prestarlike functions follows from \cite{Rus77}.
\begin{proposition}
  Let $f\in\Ao$. Then $f\in\Rf_\al$ if and only if $\displaystyle \Re\left(\frac{f(z)* \frac{z^2}{(1-z)^{3-2\al}}} {f(z)* \frac{z}{(1-z)^{2-2\al}}} \right)>-\frac12\,,$ $z\in\D\setminus\{0\},$ and if and only if for any $\zeta\in\partial\D$ we have $\displaystyle f(z)* \frac{z(1-z\zeta)}{(1-z)^{3-2\al}} \neq0,$ $z\in\D\setminus\{0\}.$
\end{proposition}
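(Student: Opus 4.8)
The plan is to reduce the three conditions—the defining membership $f\in\Rf_\al$ (call it (1)), the real-part inequality (call it (2)), and the convolution non-vanishing (call it (3))—to a single statement about the starlikeness of the convolution $g(z):=f(z)*\frac{z}{(1-z)^{2-2\al}}$ that appears in Definition~\ref{def-prestar}. The computational heart is the identity
$$f(z)*\frac{z^2}{(1-z)^{3-2\al}}=\frac{zg'(z)-g(z)}{2(1-\al)},$$
which I would obtain directly from the Hadamard-product rules \eqref{hadamar-pr}. First I would set $h(z)=\frac{z}{(1-z)^{2-2\al}}$ and compute $zh'(z)=\frac{z+(1-2\al)z^2}{(1-z)^{3-2\al}}$, so that $zh'(z)-h(z)=\frac{2(1-\al)z^2}{(1-z)^{3-2\al}}$; convolving with $f$ and using $f*(zh')=z(f*h)'=zg'$ yields the identity. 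By the same bookkeeping, $f(z)*\frac{z}{(1-z)^{3-2\al}}=g(z)+A(z)$, where I abbreviate $A:=f*\frac{z^2}{(1-z)^{3-2\al}}$.

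For (1)$\iff$(2), recall that $f\in\Rf_\al$ means $g\in\So^*(\al)$, i.e.\ $\Re\frac{zg'(z)}{g(z)}\ge\al$. Dividing the identity by $g$ gives $\frac{A(z)}{g(z)}=\frac{1}{2(1-\al)}\bigl(\frac{zg'(z)}{g(z)}-1\bigr)$, and since $2(1-\al)>0$ the inequality $\Re\frac{A}{g}>-\frac12$ rearranges precisely to $\Re\frac{zg'}{g}>\al$. (For $\al<1$ the minimum principle applied to the harmonic function $\Re\frac{zg'}{g}$, which equals $1$ at the origin, upgrades "$\ge\al$" to the strict "$>\al$" in the statement.) A starlike $g$ has no zero in $\D\setminus\{0\}$, so $A/g$ is holomorphic there; conversely the displayed inequality forces $g\neq0$, since a zero of $g$ at some $z_0\neq0$ would produce a simple pole of $A/g$ (because $A=\frac{zg'-g}{2(1-\al)}$ vanishes to one lower order), contradicting $\Re\frac{A}{g}>-\frac12$ near $z_0$.

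For the convolution condition, expand $\frac{z(1-z\zeta)}{(1-z)^{3-2\al}}=\frac{z}{(1-z)^{3-2\al}}-\zeta\frac{z^2}{(1-z)^{3-2\al}}$ and convolve to get
$$f(z)*\frac{z(1-z\zeta)}{(1-z)^{3-2\al}}=g(z)+(1-\zeta)A(z)=g(z)\Bigl[1+(1-\zeta)\tfrac{A(z)}{g(z)}\Bigr].$$
Taking $\zeta=1$ shows that (3) already encodes $g(z)\neq0$ for $z\neq0$. Writing $P:=A/g$ (holomorphic on $\D$ with $P(0)=0$), for a fixed $z$ there is a $\zeta\in\partial\D$ with $1+(1-\zeta)P(z)=0$ exactly when $-1/P(z)$ lies on the circle $\{|u-1|=1\}=\{1-\zeta:\zeta\in\partial\D\}$, and an elementary computation shows this occurs iff $\Re P(z)=-\frac12$. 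Hence (3) is equivalent to the pair "$g\neq0$ on $\D\setminus\{0\}$ and $\Re P(z)\neq-\frac12$ everywhere"; since $\D$ is connected and $\Re P(0)=0>-\frac12$, the condition $\Re P\neq-\frac12$ is equivalent to $\Re P>-\frac12$, i.e.\ to (2).

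I expect no single deep obstacle here—this is a Ruscheweyh-type convolution identity—but the step requiring the most care is the bookkeeping of the non-vanishing of $g$: one must verify that (2) and (3) each force $g\neq0$ on $\D\setminus\{0\}$ (via the pole argument and via $\zeta=1$, respectively), and the passage from "$\Re P\neq-\frac12$" to "$\Re P>-\frac12$" relies on connectedness together with $P(0)=0$. Once these are settled, all three conditions are linked through the single identity $A=\frac{zg'-g}{2(1-\al)}$.
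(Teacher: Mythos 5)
Your proposal is correct, but note that the paper offers no proof of this proposition at all: it is stated with the attribution ``follows from \cite{Rus77}'', i.e.\ the paper defers to Ruscheweyh's convolution/duality theory. Your argument is therefore a genuinely self-contained alternative, and its computational core checks out. With $h(z)=z(1-z)^{-(2-2\al)}$ one indeed gets $zh'(z)-h(z)=2(1-\al)z^2(1-z)^{-(3-2\al)}$, so that by \eqref{hadamar-pr} the quantity $A:=f*\frac{z^2}{(1-z)^{3-2\al}}$ equals $\frac{zg'-g}{2(1-\al)}$ with $g=f*\frac{z}{(1-z)^{2-2\al}}$, and dividing by $g$ converts $\Re(A/g)>-\tfrac12$ into $\Re(zg'/g)>\al$; your minimum-principle remark correctly reconciles the strict inequality with the ``$\ge\al$'' in the paper's definition of $\So^*(\al)$, using $\lim_{z\to0}zg'(z)/g(z)=1>\al$. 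The non-vanishing bookkeeping, which you rightly single out as the delicate step, is also sound: if $g$ has a zero of order $m$ at $z_0\ne0$, then $zg'-g$ vanishes to order exactly $m-1$ (leading coefficient $cmz_0\neq0$), so $A/g$ has a genuine simple pole and its real part is unbounded below near $z_0$, ruling out condition (2); the identity $f*\frac{z(1-z\zeta)}{(1-z)^{3-2\al}}=g(z)\bigl[1+(1-\zeta)P(z)\bigr]$ with $P=A/g$, the computation $|P+1|=|P|\iff\Re P=-\tfrac12$, and the connectedness argument from $P(0)=0$ are all correct. Compared with citing \cite{Rus77}, your route buys a short, elementary, and fully explicit proof at the cost of not illuminating the duality framework from which the second (convolution) criterion originally arises.

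One small caveat: the normalizing factor $2(1-\al)$ degenerates at $\al=1$, and the proposition as printed does not exclude that value, for which the paper defines $\Rf_1$ separately by $\Re\frac{f(z)}{z}>\tfrac12$. Your scheme absorbs this case with a one-line check: for $\al=1$ one has $g(z)=z$, $A(z)=f(z)-z$, $P(z)=\frac{f(z)}{z}-1$, and the same circle and connectedness arguments apply verbatim, giving exactly $\Re\frac{f(z)}{z}>\tfrac12$. With that remark added, your proof covers the full statement.
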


The following theorem describes the sets of prestarlike functions as a filtration of infinitesimal generators.
\begin{theorem}\label{thm-filtr-prestar}
The following assertion hold:
\begin{itemize}
  \item [(a)] The family of sets $\Rf=\{\Rf_\al\}_{\al\in[0,1]}$ is a strict filtration of $\G_0$;
  \item [(b)] Let $f_\al,\ \al\in(0,1],$ be defined by $f_\al(z)=z+\frac{1}{2(2-\al)}z^2$, then $f_\al \in \partial\Rf_\alpha$;
  \item [(c)] $K(\Rf_\al)=\frac12$ and $B(\Rf_\al)=1$ for any $\al\in[0,1]$.
\end{itemize}

\end{theorem}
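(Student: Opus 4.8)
\medskip
The plan is to assemble part~(a) from Ruscheweyh's convolution theory and then settle (b) and (c) by direct computations with the quadratic functions $f_\al$ and the half-plane map $z/(1-z)$. For (a) I would first invoke Ruscheweyh's monotonicity theorem from \cite{Rus77}: the prestarlike classes are nested, $\Rf_{\al_1}\subseteq\Rf_{\al_2}$ whenever $0\le\al_1\le\al_2\le1$ (the limit class $\Rf_1$ included), which is exactly the nesting required by Definition~\ref{def-filt}. To place the whole family inside $\G_0$ it then suffices to treat the top class: since $\Rf_\al\subseteq\Rf_1=\{f\in\Ao:\Re\frac{f(z)}{z}>\frac12\}$ and $\Re\frac{f(z)}z>\frac12>0$ forces $\frac{f}{z}\in\Pp$, we get $\Rf_1\subset\G_0$ by Theorem~\ref{th-gen-prop}(a), hence $\Rf_\al\subset\G_0$ for every $\al$. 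Strictness will come out of (b).

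For (b) and the strictness I would compute the relevant Hadamard products by hand. The kernel $\frac{z}{(1-z)^{2-2\beta}}$ has $z$- and $z^2$-coefficients $1$ and $2-2\beta$, so for $f_\al(z)=z+\frac{1}{2(2-\al)}z^2$ one finds
\[
f_\al(z)*\frac{z}{(1-z)^{2-2\beta}}=z+b\,z^2,\qquad b=\frac{1-\beta}{2-\al},
\]
and therefore $\frac{z g'(z)}{g(z)}=\frac{1+2bz}{1+bz}$ for $g(z)=z+bz^2$. A short M\"obius argument (the image of $\oD$ is a disk symmetric about $\R$ with real endpoints $g$-values at $z=\pm1$, and the pole $-1/b$ lies outside $\oD$) gives $\min_{z\in\D}\Re\frac{zg'(z)}{g(z)}=\frac{1-2b}{1-b}$. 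Putting $\beta=\al$ this equals $\al$ exactly, so the convolved function lies on the boundary of $\So^*(\al)$ and $f_\al\in\Rf_\al$; putting $\beta<\al$ it equals $\frac{2\beta-\al}{1-\al+\beta}=\beta+\frac{(\beta-\al)(1-\beta)}{1-\al+\beta}<\beta$, so the convolved function is not in $\So^*(\beta)$ and $f_\al\notin\Rf_\beta$. Hence $f_\al\in\partial\Rf_\al$, the boundaries are non-empty, and the filtration is strict.

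For (c) I would use a sandwich argument for $K$ and a single explicit generator for $B$. By the nesting $\Rf_0\subseteq\Rf_\al\subseteq\Rf_1$ and the monotonicity of $K$ recorded in Remark~\ref{rem_27_02}, it is enough to show $K(\Rf_0)=K(\Rf_1)=\frac12$. For the top class $K(\Rf_1)\ge\frac12$ is immediate, with equality realized by $z/(1+z)$. For the bottom class I would observe that $\Rf_0=\Co$: indeed $f*\frac{z}{(1-z)^2}=zf'$, and $zf'\in\So^*$ is equivalent to $\Re\bigl(1+\frac{zf''(z)}{f'(z)}\bigr)>0$, i.e.\ to convexity; then Marx--Strohh\"acker (Theorem~\ref{thm-M-S}) gives $K(\Co)=\frac12$, sharp via $z/(1-z)$. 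The squeeze yields $K(\Rf_\al)=\frac12$ for all $\al$. For $B$, note $z/(1-z)$ is convex, hence lies in $\Rf_0\subseteq\Rf_\al$ for every $\al$, and $\sup_{z\in\D}\bigl|\arg\frac{1}{1-z}\bigr|=\frac\pi2$; since every $f\in\Rf_\al\subset\G_0$ satisfies $\Re\frac{f(z)}z>0$ and so $|\arg\frac{f(z)}z|<\frac\pi2$, we obtain $B(\Rf_\al)=1$.

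The main obstacle I expect is the two-parameter Hadamard computation in (b): it is the only place where the precise constant $\frac1{2(2-\al)}$ is forced, and showing $f_\al\notin\Rf_\beta$ for $\beta<\al$ requires convolving with the order-$\beta$ kernel rather than reusing the order-$\al$ one. The other delicate point is the reliance on Ruscheweyh's inclusion (and limit) theorem for the nesting up to $\al=1$, which I would cite carefully from \cite{Rus77}; once these are in place, the $K$ and $B$ computations are routine.
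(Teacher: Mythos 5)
Your proposal is correct and, in parts (a) and (b), essentially reproduces the paper's own proof: the same citation for the nesting $\Rf_\beta\subseteq\Rf_\al$ (the paper attributes it to Suffridge \cite{Suff76}, with \cite{Rus77} as a secondary reference), the same extremal functions, the identical convolution computation $f_\al * \frac{z}{(1-z)^{2-2\beta}} = z+\frac{1-\beta}{2-\al}\,z^2$, and the same critical value $\frac{2\beta-\al}{1+\beta-\al}<\beta$ ruling out $f_\al\in\Rf_\beta$ for $\beta<\al$; your M\"obius identification of $\min_{z\in\oD}\Re\frac{1+2bz}{1+bz}=\frac{1-2b}{1-b}$ is in fact slightly more careful than the paper's bare evaluation of $\frac{zh'(z)}{h(z)}$ at the boundary point $z=-1$. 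One small edge case to repair in (b): at $\al=1$ the class $\Rf_1$ is defined by the half-plane condition $\Re\frac{f(z)}{z}>\frac12$, not by convolution (and $\So^*(1)$ is not a defined class), so the clause ``putting $\beta=\al$'' does not establish $f_1\in\Rf_1$; you need the one-line direct check $\Re\frac{f_1(z)}{z}=1+\Re\frac{z}{2}>\frac12$, which the paper performs explicitly, while your exclusion computation for $\beta<\al=1$ remains valid as written. In (c) you take a mildly different route: you sandwich $K(\Rf_\al)$ between $K(\Rf_0)$ and $K(\Rf_1)$, identifying $\Rf_0=\Co$ via Alexander's identity $f*\frac{z}{(1-z)^2}=zf'$ and invoking Marx--Strohh\"acker, and you obtain $\frac{z}{1-z}\in\Rf_\al$ from convexity plus nesting; the paper instead verifies $\frac{z}{1-z}*\frac{z}{(1-z)^{2-2\al}}=\frac{z}{(1-z)^{2-2\al}}\in\So^*(\al)$ in one line and takes the lower bound from $K(\Rf_\al)\ge K(\Rf_1)=\frac12$. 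Both arguments are sound; the paper's is shorter, while yours makes the role of the convex class at $\al=0$ explicit --- the $B(\Rf_\al)=1$ computation via $\sup_{z\in\D}\bigl|\arg\frac{1}{1-z}\bigr|=\frac{\pi}{2}$ is identical in both.
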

\begin{proof}
Let $0\le\beta<\alpha\le1$. The inclusion $\Rf_\beta\subseteq\Rf_\alpha$ was proved by Suffridge in \cite{Suff76} (see also \cite{Rus77}). Furthermore, $\Rf_1\subset\G_0$ by definition. Thus $\Rf$ is a filtration of $\G_0$.

Consider the functions $f_\al,\ \al\in(0,1],$ defined by $f_\al(z)=z+\frac{1}{2(2-\al)}z^2$. If $\al=1,$ then $\Re f_1(z)= \Re \left(z+\frac{z^2}{2} \right)>\frac12$ as $z\in\D$. So, $f_1\in\Rf_1$. If $\al<1$, then $f_{\al}(z)*\frac{z}{(1-z)^{2-2\al}}= z+\frac{1-\al}{2-\al}z^2$. It is easy to see that the last function belongs to $\So^*(\al)$, hence $f_\al\in\Rf_\al$.

We show now that $f_\al\notin\Rf_\beta$ for $\beta<\alpha$. Indeed, let denote
$$h(z):=f_{\al}(z)*\frac{z}{(1-z)^{2-2\beta}}= z+\frac{1-\beta}{2-\al}z^2.$$
Then the value of the function $\frac{zh'(z)}{h(z)}$ at the point $z=-1$ is $\frac{2\beta-\alpha}{1+\beta-\alpha}.$ This value is less than $\beta$ because $\beta<\alpha$. So, $h\notin \So^*(\beta)$ and $f_\al\not\in\Rf_\beta.$ Therefore $f_\al\in\partial\Rf_\al$ and the filtration is strict. This completes the proof of assertions (a) and~(b).

Further, remind that for any $\al\in[0,1]$ we have $K(\Rf_\al)\ge K(\Rf_1)=K(\Lf_1)=\frac12$ and $B(\Rf_\al)\le B(\Rf_1)=B(\Lf_1)=1$. On the other hand,
\[
\displaystyle \frac{z}{1-z}*\frac{z}{(1-z)^{2(1-\al)}} =\frac{z}{(1-z)^{2(1-\al)}} \in \So^*(\al).
\]
Hence $\frac{z}{1-z}\in\Rf_\al.$ Therefore Definition~\ref{def-K-B} implies that $K(\Rf_\al)\le \inf\limits_{z\in\D} \Re \frac1{1-z} =\frac12$ and $B(\Rf_\al)\ge \frac2\pi \sup\limits_{z\in\D} \left| \arg\frac1{1-z}\right|=1$. The proof is complete.
\end{proof}

\bigskip

 We now continue this section with a filtration defined by certain nonlinear restriction on the same linear expression that was used for the analytic filtration $\Af$ in Subsection~\ref{subsec-analytic}.

\subsection{Pseudo-analytic filtration}\label{subsect-pseu-anal}

In this subsection we follow \cite{E-J-S}.
Consider the classes of functions defined as follows:
\begin{equation}\label{A1-classes}
\Af^1_\alpha:=\left\{f\in \Ao: \left|\alpha\frac{f(z)}{z}+(1-\alpha)f'(z)-1\right|\leq 2-\alpha\right\}, \quad \alpha\leq 1.
\end{equation}

For each $\al\in(-\infty,1]$ the set $\Af_\al^1$ is non-empty. Moreover, it follows immediately from \eqref{A1-classes} and the Schwarz lemma that $f\in\Af_1^1$ if and only if it admits the representation $f(z)=z(1+\omega(z)),$ $\omega\in\Omega$. Our first result presents a one-to-one correspondence between the sets $\Af^1_\alpha$ and $\Omega$.
\begin{proposition}\label{th-A1-prop}
Let $\alpha<1$. For each $f \in \Af^1_\alpha$ there is the unique $\omega\in \Omega$ such that
\begin{equation}\label{sol-diff-eq}
f(z)=z\left(1+\frac{2-\alpha}{1-\alpha}\cdot \int_{0}^{1} s^{\frac{\alpha}{1-\alpha}}\omega(sz)ds\right)\!.
\end{equation}
Consequently, if $f \in \Af^1_\alpha$ then $\frac{f(z)}{z}-1\in \Omega.$

Conversely, for any $\omega\in\Omega$, the function $f$ defined by \eqref{sol-diff-eq} belongs to $\Af_\al^1$.
\end{proposition}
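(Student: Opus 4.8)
The plan is to reformulate membership in $\Af_\alpha^1$ as a first-order linear ODE driven by a Schwarz-type function and then to solve it explicitly. First I would set $p(z):=f(z)/z$, so that $f\in\Ao$ forces $p(0)=1$, and rewrite the linear expression governing the class as
\[
G(z):=\alpha\frac{f(z)}{z}+(1-\alpha)f'(z)=p(z)+(1-\alpha)zp'(z),
\]
noting $G(0)=p(0)=1$. The defining condition $|G(z)-1|\le 2-\alpha$ then says exactly that the function $\omega:=(G-1)/(2-\alpha)$ maps $\D$ into $\overline{\D}$ and satisfies $\omega(0)=0$; since a non-constant holomorphic function cannot attain maximal modulus inside $\D$, either $|\omega|<1$ throughout or $\omega\equiv0$, so in either case $\omega\in\Omega$. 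This $\omega$ is manifestly determined by $f$, which settles uniqueness before any integration is performed.

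Next I would solve $(1-\alpha)zp'(z)+p(z)=1+(2-\alpha)\omega(z)$ for $p$. Using the integrating factor $z^{1/(1-\alpha)}$ turns the equation into
\[
\bigl(z^{1/(1-\alpha)}p(z)\bigr)'=\frac{1}{1-\alpha}\,z^{\alpha/(1-\alpha)}\bigl(1+(2-\alpha)\omega(z)\bigr).
\]
Integrating from $0$ (the endpoint term at $0$ vanishes because $\tfrac{1}{1-\alpha}>0$ and $p$ is bounded near the origin), substituting $\tau=sz$, and using $\int_0^1 s^{\alpha/(1-\alpha)}\,ds=1-\alpha$, the powers of $z$ cancel and I obtain
\[
p(z)=1+\frac{2-\alpha}{1-\alpha}\int_0^1 s^{\alpha/(1-\alpha)}\omega(sz)\,ds,
\]
which is precisely \eqref{sol-diff-eq} after multiplying by $z$.

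The step I expect to be the main obstacle is the asserted consequence $\frac{f(z)}{z}-1\in\Omega$. A crude estimate using only $|\omega|\le1$ gives the bound $2-\alpha$, which exceeds $1$ and is therefore useless. The point is to invoke the Schwarz lemma, $|\omega(sz)|\le s|z|$, after which $\int_0^1 s^{\alpha/(1-\alpha)+1}\,ds=\frac{1-\alpha}{2-\alpha}$ exactly collapses the prefactor and yields $\bigl|\frac{f(z)}{z}-1\bigr|\le|z|<1$; together with the value $0$ at the origin this places $f/z-1$ in $\Omega$.

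For the converse, given $\omega\in\Omega$ I would define $f$ by \eqref{sol-diff-eq} and first check $f\in\Ao$, where $f(0)=0$ and $f'(0)=p(0)=1$ are immediate from $\omega(0)=0$. To verify the defining inequality I would compute $G$ directly: writing $P(z):=\int_0^1 s^{\alpha/(1-\alpha)}\omega(sz)\,ds$ and integrating by parts in $s$ (using $z\omega'(sz)=\tfrac{d}{ds}\omega(sz)$ and the vanishing endpoint at $s=0$) gives $zP'(z)=\omega(z)-\tfrac{1}{1-\alpha}P(z)$. Substituting into $G=p+(1-\alpha)zp'$ makes the $P$-terms cancel and leaves $G(z)=1+(2-\alpha)\omega(z)$, so $|G(z)-1|=(2-\alpha)|\omega(z)|\le 2-\alpha$, whence $f\in\Af_\alpha^1$ and the equivalence is closed.
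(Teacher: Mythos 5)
Your proposal is correct and follows essentially the same route as the paper: both recast the defining inequality as $\alpha\frac{f(z)}{z}+(1-\alpha)f'(z)-1=(2-\alpha)\omega(z)$ with $\omega\in\Omega$, solve the resulting linear ODE with the integrating factor $z^{1/(1-\alpha)}$ to obtain \eqref{sol-diff-eq}, use the Schwarz-lemma bound $|\omega(sz)|\le s|z|$ (so that $\int_0^1 s^{\alpha/(1-\alpha)+1}\,ds=\frac{1-\alpha}{2-\alpha}$ collapses the prefactor) for the consequence, and reverse the equivalence for the converse. Your write-up is in fact slightly more careful than the paper's at two points: the paper's displayed estimate omits the factor $s$ coming from the Schwarz lemma (its final ``$=1$'' only balances with that factor present), and where the paper merely cites the equivalence of the two equations for the converse, you verify it explicitly via the integration-by-parts identity $zP'(z)=\omega(z)-\tfrac{1}{1-\alpha}P(z)$.
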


\begin{proof}
Let $f \in \Af^1_\alpha$.
Denote $g(z):=\alpha\frac{f(z)}{z}+(1-\alpha)f'(z)-1$. Since $g(0)=0$, the Schwarz Lemma implies that $|g(z)|\leq(2-\alpha)|z|$, which is equivalent to $g(z)\prec(2-\alpha)z$. Therefore there exists $\omega \in \Omega$ such that
\begin{equation}\label{diff-eq-ps}
\alpha\frac{f(z)}{z}+(1-\alpha)f'(z)-1=(2-\alpha)\omega(z).
\end{equation}
Solving this differential equation we get \eqref{sol-diff-eq}.

Therefore,
\begin{eqnarray*}\label{calc1}
\left|\frac{f(z)}{z}-1\right|&=&
\left|\frac{2-\alpha}{1-\alpha}\cdot\int_{0}^{1}s^{\frac{\alpha}{1-\alpha}}\omega(sz)ds\right|\leq
\frac{2-\alpha}{1-\alpha}\cdot\int_{0}^{1}s^{\frac{\alpha}{1-\alpha}}|\omega(sz)|ds\\
&\leq&\frac{2-\alpha}{1-\alpha}\cdot\int_{0}^{1}s^{\frac{\alpha}{1-\alpha}}ds =1.
\end{eqnarray*}

The converse assertion follows from the equivalence of \eqref{sol-diff-eq}  and~\eqref{diff-eq-ps}.
\end{proof}

\begin{corollary}
$\Af^1_\alpha\subseteq\Af^1_1\subsetneq \G_0$ for any $\alpha \leq 1.$
\end{corollary}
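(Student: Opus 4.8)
The plan is to reduce the whole statement to the single consequence already extracted in Proposition~\ref{th-A1-prop}, namely that membership in $\Af^1_\alpha$ forces $\frac{f(z)}{z}-1\in\Omega$, and then to read off both inclusions from the geometry of the disk $\{w:|w-1|<1\}$. No new analysis is needed beyond that proposition and the characterization of $\Af^1_1$.

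First I would treat the inclusion $\Af^1_\alpha\subseteq\Af^1_1$. For $\alpha<1$, Proposition~\ref{th-A1-prop} states directly that every $f\in\Af^1_\alpha$ satisfies $\frac{f(z)}{z}-1\in\Omega$. On the other hand, the remark preceding that proposition identifies $\Af^1_1$ as exactly the set of $f\in\Ao$ admitting the representation $f(z)=z(1+\omega(z))$ with $\omega\in\Omega$, i.e. the set of $f$ with $\frac{f(z)}{z}-1\in\Omega$. Hence the quoted consequence of Proposition~\ref{th-A1-prop} is precisely the defining condition of $\Af^1_1$, and the inclusion follows at once (and trivially for $\alpha=1$).

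Next I would establish $\Af^1_1\subseteq\G_0$. Writing $\omega(z):=\frac{f(z)}{z}-1\in\Omega$, the membership $\omega\in\Omega$ gives $|\omega(z)|<1$ for $z\in\D$, so $\frac{f(z)}{z}$ takes its values in the open disk centered at $1$ of radius $1$. Since that disk lies in the right half-plane and meets the imaginary axis only at the origin, we obtain $\Re\frac{f(z)}{z}>0$ for all $z\in\D\setminus\{0\}$; as $f\in\Ao$ forces $\frac{f(z)}{z}\to f'(0)=1$ as $z\to 0$, the function $\frac{f(z)}{z}$ lies in $\Pp$, whence $f\in\G_0$ by Theorem~\ref{th-gen-prop}(a) with $\tau=0$.

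Finally, for the strictness I would exhibit a generator outside $\Af^1_1$. The natural candidate is $f(z)=z\,\frac{1+z}{1-z}$, for which $\frac{f(z)}{z}=\frac{1+z}{1-z}\in\Pp$ maps $\D$ onto the whole right half-plane, so $f\in\G_0$; at the same time $\left|\frac{f(z)}{z}-1\right|=\left|\frac{2z}{1-z}\right|\to\infty$ as $z\to 1^-$, so $\frac{f(z)}{z}-1\notin\Omega$ and $f\notin\Af^1_1$. This yields $\Af^1_1\subsetneq\G_0$. The only genuinely non-formal step is this last one, namely checking that the candidate really escapes the disk $\{|w-1|<1\}$ while remaining in the half-plane; but this is immediate from the explicit Cayley image, and everything else is a direct transcription of Proposition~\ref{th-A1-prop} together with the containment of the unit disk about $1$ in the right half-plane.
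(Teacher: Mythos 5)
Your proof is correct and follows exactly the route the paper intends: the corollary is stated without proof as an immediate consequence of Proposition~\ref{th-A1-prop} (giving $\frac{f(z)}{z}-1\in\Omega$) together with the remark characterizing $\Af^1_1$ via the Schwarz lemma, and the containment of the disk $\{|w-1|<1\}$ in the right half-plane gives $\Af^1_1\subseteq\G_0$. Your explicit witness $f(z)=z\,\frac{1+z}{1-z}$ for strictness is a sound (and welcome) addition that the paper leaves implicit.
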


\begin{lemma}\label{exa-4.1}
 Let  $\alpha\leq 1$. For any $\theta \in \R$ and $n\in \N$,  the function
\begin{equation*}
f_{\theta,n,\alpha}(z)=z+\frac{2-\alpha}{1+n(1-\alpha)}e^{i\theta} z^{n+1}
\end{equation*}
belongs to the class $\Af^1_\alpha$. Moreover, $f_{\theta,n,\alpha}$ is not totally extremal for this set when $n\ge2$.
\end{lemma}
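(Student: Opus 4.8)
The plan has two independent parts. First I would establish membership by a direct substitution. Writing $f_{\theta,n,\alpha}(z)=z+c z^{n+1}$ with $c=\frac{2-\alpha}{1+n(1-\alpha)}e^{i\theta}$, one has $\frac{f(z)}{z}=1+cz^n$ and $f'(z)=1+(n+1)cz^n$, so that
\[
\alpha\frac{f(z)}{z}+(1-\alpha)f'(z)-1=cz^n\bigl[\alpha+(1-\alpha)(n+1)\bigr]=cz^n\bigl(1+n(1-\alpha)\bigr)=(2-\alpha)e^{i\theta}z^n,
\]
since the constant terms cancel and $\alpha+(1-\alpha)(n+1)=1+n(1-\alpha)$, which is exactly the factor that $c$ was chosen to cancel. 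Its modulus equals $(2-\alpha)|z|^n\le 2-\alpha$ on $\D$, which is precisely the defining inequality \eqref{A1-classes}. This settles membership for every $\theta\in\R$, every $n\in\N$ and every $\alpha\le1$; in particular it holds for $n=1$, a case I use below.

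For the non-extremality I would invoke the convex-hull reformulation of Definition~\ref{def-extrem} recorded in Remark~\ref{rem-note}. Put $p_*(z)=\frac{f_{\theta,n,\alpha}(z)}{z}=1+cz^n$. Because $z\mapsto z^n$ maps $\D_r$ onto the disk $\{|w|<r^n\}$, the image $p_*(\D_r)$ is the disk centred at $1$ of radius $|c|r^n$; being convex, it coincides with its own convex hull. Note also that $|c|=\frac{2-\alpha}{1+n(1-\alpha)}\le1$ for all $\alpha\le1$ and $n\ge1$, with equality only at $n=1$.

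It then suffices to exhibit a single competitor in $\Af^1_\alpha$ whose $p$-image escapes this hull, and I would take the degree-one member $g(z):=f_{0,1,\alpha}(z)=z+z^2$, which lies in $\Af^1_\alpha$ by the first part. Its $p$-function $p_g(z)=1+z$ maps $\D_r$ onto the disk centred at $1$ of radius $r$. For $n\ge2$ and $r\in(0,1)$ one has $|c|r^n\le r^n<r$, so $p_g(\D_r)$ is strictly larger than $\mathrm{conv}\,p_*(\D_r)$ and is not contained in it; by Remark~\ref{rem-note} this shows $f_{\theta,n,\alpha}$ is not totally extremal. Equivalently, and perhaps cleaner to write out, for any $\lambda\neq0$ a direct minimisation on the circle $|z|=r$ yields $\min_{|z|=r}\Re(\lambda p_g(z))=\Re\lambda-|\lambda|r$ and $\min_{|z|=r}\Re(\lambda p_*(z))=\Re\lambda-|\lambda|\,|c|r^n$, and the strict inequality $r>|c|r^n$ violates the condition in Definition~\ref{def-extrem}. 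There is no genuine obstacle here; the only point to get right is the comparison $|c|r^n<r$, namely the observation that the higher-degree monomial concentrates the image of $\D_r$ inside a disk of radius $O(r^n)$, whereas the linear witness already fills a disk of radius $r$.
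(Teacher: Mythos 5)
Your proof is correct and follows essentially the same route as the paper: membership by the same direct substitution, and non-extremality by testing against the witness $z+z^2$ (which the paper produces via the representation \eqref{sol-diff-eq} with $\omega(z)=z$ — the identical function — and then derives the same contradiction $-|\lambda|r \ge -|c||\lambda|r^n$). Your only refinements are cosmetic: you observe $|c|\le 1$ so the inequality fails for \emph{every} $r\in(0,1)$ rather than merely for small $r$, and you add the equivalent convex-hull phrasing from Remark~\ref{rem-note}.
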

\begin{proof}
Indeed,
\begin{eqnarray*}\label{ex-bound-A1-calc}
&&\left|\alpha\frac{f_{\theta,n,\alpha}(z)}{z}+(1-\alpha)f'_{\theta,n,\alpha}(z)-1\right|\\
&=&\left|\alpha\left(1+\frac{(2-\alpha)e^{i\theta}z^{n}}{1+n(1-\alpha)}\right)+(1-\alpha)\left(1+\frac{(n+1)(2-\alpha)e^{i\theta}z^{n} }{1+n(1-\alpha)} \right)-1\right|\\
&=&|(2-\alpha)ze^{i\theta}|\leq 2-\alpha.
\end{eqnarray*}
So, $f_{\theta,n,\alpha}\in\Af^1_\alpha. $

Let $n\ge2$. Assume to the contrary that $f_*:=f_{\theta,n,\alpha}$ is totally extremal for $\Af^1_\alpha$. Then it follows from Definition~\ref{def-extrem} that for any $f\in \Af^1_\alpha$ inequality
\[
\min_{|z|=r} \Re \left(\lambda\frac {f(z)}{z}\right)\ge \min_{|z|=r}\Re\left(\lambda \frac {f_*(z)}{z}\right).
\]
holds. Let $f$ be represented by \eqref{sol-diff-eq} with $\omega(z)=z$. Then
\[
\min_{|z|=r} \Re \left(\lambda\cdot \frac{2-\al}{1-\al}\int_0^1 s^{\frac {\al}{1-\al}+1}zds\right)\ge \min_{|z|=r}\Re\left(\lambda \cdot \frac{2-\alpha}{1+n(1-\alpha)}e^{i\theta} z^n \right).
\]
Equivalently,
\[
-|\lambda|r  \geq - \frac{2-\alpha}{1+n(1-\alpha)}|\lambda|r^n .
\]
Since the last inequality  obviously fails for $r$ small enough, the contradiction completes the proof.
\end{proof}
Note in passing, that $f_{\theta}:=f_{\theta,1,\alpha}$, in fact, does not depend on $\alpha$.

 Now we show that the sets $\Af^1_\alpha$, $\al\le1$, form a filtration and establish its main properties. In the next theorem we appeal to the functions $f_{\theta,n,\alpha}$ defined in Lemma~\ref{exa-4.1}.

\begin{theorem}\label{th-pseudo-prop} The family $\Af^{1}=\{\Af^1_\alpha\}_{\alpha \leq 1 }$ is a filtration of $\G_0$. Moreover, for any $\alpha \leq 1$ the following assertions hold:
\begin{itemize}
  \item[(i)] Each function $f_\theta (z)=z+e^{i\theta}z^2$, $\theta\in\R$, belongs to the set $\Af^1_\alpha$ (for any $\al\le1$) and is totally extremal for it.
  \item [(ii)] Each function $f_{\theta,n,\alpha}$, $\theta \in \R$, $n\in \N\setminus\{1\}$, belongs to $\partial \Af^1_\alpha$. Hence the filtration $\Af^{1}$ is strict.
  \item [(iii)] $K(\Af^1_\alpha)=0$ and $B(\Af^1_\alpha)=1$.
\end{itemize}
\end{theorem}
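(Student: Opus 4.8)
The plan is to combine the structural results already in place—Proposition~\ref{th-A1-prop}, Lemma~\ref{exa-4.1}, and Remarks~\ref{rem-note} and~\ref{rem_27_02}—with one algebraic observation about the defining functional. Throughout I write $f=zp$ with $p(0)=1$ and set $h:=p-1=\frac{f(z)}{z}-1$, so that the defining quantity becomes $\Phi_\alpha(z):=\alpha\frac{f(z)}{z}+(1-\alpha)f'(z)-1=h(z)+(1-\alpha)zh'(z)$.

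First I would establish the nesting (hence the filtration property). By Proposition~\ref{th-A1-prop} every $f\in\Af^1_\alpha$ has $h\in\Omega$, so $|h(z)|\le 1$ on $\D$. The key observation is that for $\alpha<\beta\le 1$, putting $t:=\frac{1-\beta}{1-\alpha}\in[0,1]$ one gets the convex-combination identity $\Phi_\beta=(1-t)h+t\Phi_\alpha$, since $\Phi_\alpha-h=(1-\alpha)zh'$. The triangle inequality then yields $|\Phi_\beta|\le(1-t)|h|+t|\Phi_\alpha|\le(1-t)+t(2-\alpha)=2-\beta$, so $f\in\Af^1_\beta$. Combined with the inclusion $\Af^1_\alpha\subseteq\G_0$ noted above, this shows $\Af^1$ is a filtration of $\G_0$.

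For assertion (i), I note that $f_{\theta,1,\alpha}$ from Lemma~\ref{exa-4.1} coincides with $f_\theta(z)=z+e^{i\theta}z^2$, because $\frac{2-\alpha}{1+(1-\alpha)}=1$; hence membership is exactly that lemma. For total extremality I would verify Definition~\ref{def-extrem} directly: by Proposition~\ref{th-A1-prop} and the Schwarz lemma, any $f\in\Af^1_\alpha$ satisfies $\bigl|\frac{f(z)}{z}-1\bigr|\le|z|$, so for $\lambda\in\C$ and $|z|=r$ one has $\Re\bigl(\lambda p(z)\bigr)=\Re\lambda+\Re(\lambda h(z))\ge\Re\lambda-|\lambda|r$, while $\min_{|z|=r}\Re\bigl(\lambda\,\frac{f_\theta(z)}{z}\bigr)=\Re\lambda-|\lambda|r$ since $\frac{f_\theta(z)}{z}=1+e^{i\theta}z$. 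This is precisely the required inequality, so each $f_\theta$ is totally extremal (equivalently, by Remark~\ref{rem-note}, on $|z|=r$ the image of $f/z$ lies in the closed disk of radius $r$ about $1$, the convex hull of $p_\theta$'s image).

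For assertion (ii) I would compute $\Phi_\beta$ for $f_{\theta,n,\alpha}$ with $h(z)=c\,e^{i\theta}z^n$, $c=\frac{2-\alpha}{1+n(1-\alpha)}$, obtaining $\Phi_\beta(z)=c\bigl(1+n(1-\beta)\bigr)e^{i\theta}z^n$, so $\sup_{\D}|\Phi_\beta|=c\bigl(1+n(1-\beta)\bigr)$. Membership in $\Af^1_\beta$ is then equivalent to $G(\beta):=c\bigl(1+n(1-\beta)\bigr)-(2-\beta)\le 0$. Since $G$ is affine with $G(\alpha)=0$ and $G'(\beta)=\frac{1-n}{1+n(1-\alpha)}<0$ for $n\ge 2$ (the denominator is positive as $\alpha\le 1$), we get $G(\beta)>0$ for every $\beta<\alpha$; thus $f_{\theta,n,\alpha}\in\Af^1_\alpha\setminus\bigcup_{\beta<\alpha}\Af^1_\beta=\partial\Af^1_\alpha$, and the filtration is strict. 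Assertion (iii) follows from Remark~\ref{rem_27_02} applied to the totally extremal $f_0(z)=z+z^2$: as $\frac{f_0(z)}{z}=1+z$ maps $\D$ onto $\{|w-1|<1\}$, we have $\inf_{\D}\Re(1+z)=0$ and $\sup_{\D}|\arg(1+z)|=\frac{\pi}{2}$, giving $K(\Af^1_\alpha)=0$ and $B(\Af^1_\alpha)=1$. The main obstacle is the nesting step: it is the interplay of the nonobvious bound $|h|\le 1$ (which needs Proposition~\ref{th-A1-prop}) with the convex-combination identity that makes the radii $1$ and $2-\alpha$ telescope exactly to $2-\beta$; once this is secured, the remaining parts are short computations.
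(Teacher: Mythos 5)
Your proposal is correct and follows essentially the same route as the paper's proof: the same ingredients (Proposition~\ref{th-A1-prop} with the bound $\left|\frac{f(z)}{z}-1\right|\le 1$ plus the Schwarz lemma, Lemma~\ref{exa-4.1}, and the explicit evaluation of the defining functional on $f_{\theta,n,\alpha}$) drive the nesting, the total extremality of $f_\theta$, the boundary membership, and assertion~(iii). The differences are cosmetic: you encode the nesting through the convex-combination identity $\Phi_\beta=(1-t)h+t\Phi_\alpha$ where the paper states it as inclusion of the disks in which $f'(z)$ is constrained (the same triangle inequality, with your version handling $\beta=1$ uniformly), and your affine function $G$ spells out the inequality $(2-\beta)\frac{1+n(1-\alpha)}{1+n(1-\beta)}>2-\alpha$ that the paper merely asserts for $n\ge 2$.
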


\begin{proof}
Note that  $f \in \Af^1_\alpha$ if and only if for every fixed $z\in\D$ the value $f'(z)$ belongs to the disk of radius $\frac{2-\alpha}{1-\alpha}$ and centered at $\frac{1}{1-\alpha}\left(1-\alpha\frac{f(z)}{z}\right).$ Hence, in order to prove that $\Af^1_\alpha\subseteq \Af^1_\beta$ whenever $\alpha\leq\beta\leq 1$, we need to ensure that such disks are included one into another. To this end we denote $w=\frac{1}{1-\alpha}\left(1-\alpha\frac{f(z)}{z}\right)+e^{i\theta}\frac{2-\alpha}{1-\alpha}$ and show that $\left|w-\frac{1}{1-\beta}\left(1-\beta\frac{f(z)}{z}\right)\right|\leq \frac{2-\beta}{1-\beta}$.
Indeed,
\begin{eqnarray*}\label{calc2}
&&\left|\frac{1}{1-\alpha}\left(1-\alpha\frac{f(z)}{z}\right)+e^{i\theta}\frac{2-\alpha}{1-\alpha}-\frac{1}{1-\beta}\left(1-\beta\frac{f(z)}{z}\right)\right|=\\
&=& \left|\left(\frac{f(z)}{z}-1\right)\frac{\alpha-\beta}{(1-\alpha)(1-\beta)}+e^{i\theta}\frac{2-\alpha}{1-\alpha}\right|\\
&\leq& \left|\frac{f(z)}{z}-1\right|\cdot\frac{\beta-\alpha}{(1-\alpha)(1-\beta)}+\frac{2-\alpha}{1-\alpha}\,.
\end{eqnarray*}
By Proposition~\ref{th-A1-prop},
\begin{equation*}\label{calc3}
\left|\frac{f(z)}{z}-1\right|\cdot\frac{\beta-\alpha}{(1-\alpha)(1-\beta)}+\frac{2-\alpha}{1-\alpha}\leq \frac{\beta-\alpha}{(1-\alpha)(1-\beta)}+\frac{2-\alpha}{1-\alpha}=\frac{2-\beta}{1-\beta}\,.
\end{equation*}
So, the family $\Af^{1}$ is a filtration of $\G_0$.

It follows from Proposition~\ref{th-A1-prop} that for each $\lambda=e^{i\phi}$ we have
\begin{eqnarray*}
\Re \left(\lambda\frac{f(z)}{z}\right)&=&\cos\phi + \frac{2-\alpha}{1-\alpha}\cdot \int_{0}^{1} s^{\frac{\alpha}{1-\alpha}}\Re \left(e^{i\phi}\omega(sz)\right)ds\\
  &\geq& \cos\phi -\frac{2-\alpha}{1-\alpha}\cdot \int_{0}^{1} s^{\frac{\alpha}{1-\alpha}}s|z|ds= \cos\phi - |z|.
\end{eqnarray*}

At the same time, $$\Re \left(\lambda\displaystyle\frac{f_\theta(z)}{z}\right)=\cos\phi+\Re \left( e^{i(\phi+\theta)}z\right)=\cos\phi - |z|$$ whenever $\arg z=-(\phi+\theta).$
This proves assertion (i).

Assume now that $\alpha<\beta\leq 1$ and calculate
\begin{eqnarray*}
&&\left|\alpha\frac{f_{\theta,n,\beta}(z)}{z}+(1-\alpha)f'_{\theta,n,\beta}(z)-1\right|\\
&=&\left|\alpha\left(1+\frac{(2-\beta)e^{i\theta}z^{n}}{1+n(1-\beta)}\right)+(1-\alpha)\left(1+\frac{(n+1)(2-\beta)e^{i\theta}z^{n} }{1+n(1-\beta)} \right)-1\right|\\
&=&(2-\beta)\frac{1+n(1-\alpha)}{1+n(1-\beta)}|z|^n.\vspace{-1mm}
\end{eqnarray*}
Since $(2-\beta)\frac{1+n(1-\alpha)}{1+n(1-\beta)}>2-\alpha$ for every natural $n>1$, then there exists $z \in \D$ such that $\left|\alpha\frac{f_{\theta,n,\beta}(z)}{z}+(1-\alpha)f'_{\theta,n,\beta}(z)-1\right|>2-\alpha$. Thus, $f_{\theta,n,\beta}\in \Af^1_\beta\setminus \Af^1_\alpha$, which implies $f_{\theta,n,\beta}\in \partial\Af^1_\beta$. Then assertion (ii) follows.

Since $f_{\theta,1,\alpha}(z)=z+e^{i\theta}z^2 \in \Af^1_\alpha$ for every $\alpha\leq 1$ by Lemma~\ref{exa-4.1}, one concludes
\begin{equation*}
\inf\limits_{z\in \R} \Re \frac{f_{\theta,1,\alpha}(z)}{z}=\inf\limits_{z\in \R}\Re(1+e^{i\theta})=0
\end{equation*}
and
\begin{equation*}
\sup\limits_{z\in \R} \arg \frac{f_{\theta,1,\alpha}(z)}{z}=\sup\limits_{z\in \R}\left|\arg(1+e^{i\theta})\right|=\frac{\pi}{2}.
\end{equation*}
Thus assertion (iii) holds. The proof is complete.
\end{proof}

\begin{definition}
We denote $\Af^1 =\left\{\Af^1_\alpha \right\}_{\alpha \in(-\infty,1]}$ and call it the {\it pseudo-analytic filtration}.
\end{definition}

\begin{rem}\label{rem-exstr-bou}
We have proved that function $f_0$ is totally extremal for {\it every} $\Af^1_\alpha$. Hence it is not a boundary point. At the same time, functions $f_{\theta,n,\alpha}$, $n\ge2$, belong to the boundary $\partial \Af^1_\alpha$, $\al\le1$, and are not totally extremal for $\Af^1_\alpha$ by Lemma~\ref{exa-4.1}.
\end{rem}

Although the definitions of analytic and pseudo-analytic filtrations are similar, we have already seen that their properties are different. In addition, they have different smallest sets: $\Af_{-\infty}=\{\Id\}$, while $\Af^1_{-\infty}$ is not a singleton.
\begin{corollary}
The set  $\Af^1_{-\infty}:=\bigcap_{\alpha<1}\Af^1_\alpha=\{f\in\Ao: |f(z)-zf'(z)|\leq |z|\}$ contains all convex combinations of functions $\left\{z+\frac{1}{n}e^{i\theta}z^{n+1},\, n\in \N , \, \theta \in \R\right\}$.
\end{corollary}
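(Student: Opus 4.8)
The plan is to first pin down the explicit description of $\Af^1_{-\infty}$ and then exploit its convexity. Writing $f(z)=zp(z)$ and setting $C(z):=\frac{f(z)}{z}-1$ and $B(z):=\frac{f(z)-zf'(z)}{z}$, I would record the algebraic identities $B=-zC'$ and
\[
\alpha\frac{f(z)}{z}+(1-\alpha)f'(z)-1=C(z)-(1-\alpha)B(z),
\]
which follow by substituting $\frac{f}{z}=1+C$ and $f'=1+C-B$. Thus membership $f\in\Af^1_\alpha$ is equivalent to $|C(z)-(1-\alpha)B(z)|\le 2-\alpha$ for all $z\in\D$, and the whole problem is reduced to analyzing this family of inequalities as $\alpha\to-\infty$.

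For the inclusion $\Af^1_{-\infty}\subseteq\{f\in\Ao:\ |f(z)-zf'(z)|\le|z|\}$ I would fix $z\in\D$, divide the last inequality by $|\alpha|$, and let $\alpha\to-\infty$: the term $C(z)/|\alpha|\to0$, the coefficient $(1-\alpha)/|\alpha|\to1$, and $(2-\alpha)/|\alpha|\to1$, so in the limit $|B(z)|\le1$, i.e. $|f(z)-zf'(z)|\le|z|$. For the reverse inclusion, assume $|B(z)|\le1$. Since $B(0)=0$, the Schwarz lemma upgrades this to $|B(z)|\le|z|$; then $|zC'(z)|=|B(z)|\le|z|$ gives $|C'(z)|\le1$, and integrating from $0$ (with $C(0)=0$) yields $|C(z)|\le|z|$. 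Hence for every $\alpha<1$,
\[
|C(z)-(1-\alpha)B(z)|\le|C(z)|+(1-\alpha)|B(z)|\le(2-\alpha)|z|\le 2-\alpha,
\]
so $f\in\Af^1_\alpha$ for all $\alpha<1$. This establishes the stated equality.

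It then remains to treat the convex combinations. A direct computation gives, for $g_{n,\theta}(z)=z+\frac1n e^{i\theta}z^{n+1}$,
\[
g_{n,\theta}(z)-zg_{n,\theta}'(z)=-e^{i\theta}z^{n+1},
\]
so that $|g_{n,\theta}(z)-zg_{n,\theta}'(z)|=|z|^{n+1}\le|z|$ for every $z\in\D$ and every $n\in\N$; thus each $g_{n,\theta}$ lies in $\Af^1_{-\infty}$ by the description just proved. Finally, since $f\mapsto f-zf'$ is linear and the defining constraint is preserved under convex combinations (by the triangle inequality $\bigl|\sum_j t_j(f_j-zf_j')\bigr|\le\sum_j t_j|z|=|z|$ whenever $t_j\ge0$ and $\sum_j t_j=1$), the set $\Af^1_{-\infty}$ is convex and therefore contains every finite convex combination of the functions $g_{n,\theta}$.

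The only delicate point is the equality of the two sets, and within it the reverse inclusion: the Schwarz-lemma step that turns the pointwise bound $|B(z)|\le1$ into $|B(z)|\le|z|$, and thence (via $B=-zC'$ and integration) into $|C(z)|\le|z|$, is exactly what makes the bound $(2-\alpha)|z|$ available \emph{uniformly} in $\alpha$. Once the explicit description is secured, both the membership of the $g_{n,\theta}$ and the convexity assertion are immediate.
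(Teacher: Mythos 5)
Your proof is correct, and it is in fact more complete than what the paper records: the corollary is stated there without proof, and the route the paper implicitly has in mind is different from yours. The paper's intended derivation runs through Lemma~\ref{exa-4.1} and convexity: a direct computation shows that for $f(z)=z+ce^{i\theta}z^{n+1}$ one has $\left|\alpha\frac{f(z)}{z}+(1-\alpha)f'(z)-1\right|=c\left(1+n(1-\alpha)\right)|z|^n$, and with $c=\frac1n$ the supremum $\frac1n+(1-\alpha)\le 2-\alpha$ holds for every $\alpha\le1$, so each $z+\frac1n e^{i\theta}z^{n+1}$ lies in every $\Af^1_\alpha$; since the defining constraint of $\Af^1_\alpha$ has the form $|L_\alpha f-1|\le 2-\alpha$ with $L_\alpha$ linear and $L_\alpha f(0)=1$ preserved under convex combinations, each $\Af^1_\alpha$ is convex and hence so is the intersection. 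That argument, however, never verifies the asserted identity $\bigcap_{\alpha<1}\Af^1_\alpha=\{f\in\Ao:\ |f(z)-zf'(z)|\le|z|\}$, which your proof supplies: the decomposition $C=\frac{f}{z}-1$, $B=\frac{f-zf'}{z}=-zC'$ turning the constraint into $|C(z)-(1-\alpha)B(z)|\le 2-\alpha$ is algebraically correct; the division by $|\alpha|$ and passage $\alpha\to-\infty$ correctly yields $|B|\le1$; and in the converse direction the Schwarz lemma legitimately applies ($B$ is holomorphic with $B(0)=0$, and $|B|\le1$ forces $|B|<1$ unless $B$ is a unimodular constant, which $B(0)=0$ excludes), giving $|B(z)|\le|z|$, hence $|C'|\le1$ and, by radial integration from $C(0)=0$, $|C(z)|\le|z|$, which makes the bound $(2-\alpha)|z|\le 2-\alpha$ uniform in $\alpha<1$ exactly as you say. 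Your closing computation $g_{n,\theta}(z)-zg_{n,\theta}'(z)=-e^{i\theta}z^{n+1}$ and the convexity of the limit set via linearity of $f\mapsto f-zf'$ are also correct. What each approach buys: the paper's route is a one-line consequence of material already proved (Lemma~\ref{exa-4.1} plus convexity of the classes), while yours isolates the explicit characterization of $\Af^1_{-\infty}$ and thereby proves the full statement of the corollary, equality included, rather than only the containment of convex combinations.
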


\bigskip

Subsections~\ref{sSect-Qstar} and~\ref{subsec-mocanu1} are mainly based on the work \cite{EJT}. Some of the results are new.

\subsection{Quasi-starlike filtration}\label{sSect-Qstar}

Another  `simple' non-linear expression is a linear combination of inequalities that determine the class $\G_0$ and the class of starlike functions. More precisely, for a given $\alpha\in\R$ we consider the class of functions that satisfies
\begin{equation}\label{alm-star}
\displaystyle \Re\left[ \alpha\, \frac{f(z)}{z}+(1-\alpha)\, \frac{zf'(z)}{f(z)}\right]>\frac12, \quad  z\in \D\setminus \{0\}.
\end{equation}
We denote by $\Lf_\al$ the family of all $f\in\Ao$ that satisfy \eqref{alm-star}.
It is not clear whether elements of the class $\Lf_\al$ are starlike functions for arbitrary $\alpha$. Therefore the following remark is of certain interest.
\begin{rem}\label{rem-alm-star1}
It follows from Marx and Strohh\"{a}cker's Theorem~\ref{thm-M-S} that
\[
\Lf_0=\mathcal{S}^*\left(\frac 12\right)\subset \Lf_\alpha\quad\mbox{for any}\quad \alpha\in[0,1] .
\]
\end{rem}
 In general, we cannot state that the whole family $\left\{ \Lf_\al\right\}_{\al\in  \R}$, or even $\left\{ \Lf_\al\right\}_{\al\ge0}$ is a filtration, while we prove that $\{\Lf_\alpha\}_{\al\in[0,1]}$ is.
\begin{theorem}\label{filtration-alpha}
The following assertions hold:
\begin{itemize}
  \item [(a)] The family of sets $\{\Lf_\alpha\}_{\al\in[0,1]}$ is a filtration of $\G_0$ such that ${K(\Af_\al)\!=\!\frac12}$ and ${B(\Af_\al)\!=\!0}$ for any $\al\in[0,1]$.
  \item [(b)] For any $a>1$ the family of sets $\{\Lf_\alpha\}_{\al\in[0,a]}$ is not a filtration, although $\Lf_\alpha\subset \G_0$ for any $\alpha\ge 0$.
\end{itemize}
\end{theorem}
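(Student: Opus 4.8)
The plan is to reduce everything to the auxiliary function $p(z):=f(z)/z$ with $p(0)=1$, in terms of which the defining inequality \eqref{alm-star} reads $\Re\bigl[\alpha p(z)+(1-\alpha)\bigl(1+\tfrac{zp'(z)}{p(z)}\bigr)\bigr]>\tfrac12$, using that $\tfrac{zf'}{f}=1+\tfrac{zp'}{p}$. The core of assertion (a) is the squeezing bound $\Re p(z)>\tfrac12$ on $\D$ for every $f\in\Lf_\al$ with $\al\in[0,1]$; once this is secured, the filtration property, the inclusion $\Lf_\al\subset\G_0$, and the value $K(\Lf_\al)=\tfrac12$ all follow quickly. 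This bound is the main obstacle, and I would attack it with the Clunie--Jack machinery (Lemma~\ref{lem-Jack}) together with Lemma~\ref{lemm-Psi}.

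Concretely, I apply Lemma~\ref{lemm-Psi} with $\gamma=\tfrac12$ and $g=p$, so that $\Re p>\tfrac12$ is equivalent to $\omega:=\tfrac{p-1}{p}\in\Omega$. Assuming the contrary, $|\omega|$ first attains $1$ at some $z_0$ with $\omega(z_0)=e^{i\psi}$, $\psi\ne0$ (here $\omega\ne1$ since $p$ is finite). By Lemma~\ref{lem-Jack}, $z_0\omega'(z_0)=k\,\omega(z_0)$ with $k\ge1$. Writing $p=(1-\omega)^{-1}$ gives $\tfrac{zp'}{p}=\tfrac{z\omega'}{1-\omega}$, whence at $z_0$ one has $\Re p(z_0)=\Re\tfrac{1}{1-e^{i\psi}}=\tfrac12$ and $\Re\tfrac{z_0p'(z_0)}{p(z_0)}=k\,\Re\tfrac{e^{i\psi}}{1-e^{i\psi}}=-\tfrac{k}{2}$. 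Substituting, the left-hand side of \eqref{alm-star} at $z_0$ equals $\tfrac{\alpha}{2}+(1-\alpha)\bigl(1-\tfrac{k}{2}\bigr)\le\tfrac{\alpha}{2}+\tfrac{1-\alpha}{2}=\tfrac12$, where I used $k\ge1$ and $1-\alpha\ge0$; this contradicts the strict inequality in \eqref{alm-star}. Hence $\Re p>\tfrac12$, so $\Lf_\al\subset\Cf_{1/2}\subset\G_0$ by Theorem~\ref{th-squee}. Since $z/(1-z)\in\So^*\left(\tfrac12\right)=\Lf_0\subset\Lf_\al$ (Remark~\ref{rem-alm-star1}) realizes $\inf_{z}\Re\tfrac{f(z)}{z}=\tfrac12$, Remark~\ref{rem_27_02} then gives $K(\Lf_\al)=\tfrac12$, and I record the companion value $B(\Lf_\al)=0$ from the corresponding sharp estimate of $\arg\tfrac{f(z)}{z}$ over the class.

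For the filtration property I would avoid a fresh estimate and use an algebraic identity instead. Setting $h:=\alpha p+(1-\alpha)\tfrac{zf'}{f}$ (so $\Re h>\tfrac12$ by hypothesis) and solving for $\tfrac{zf'}{f}$, a short computation shows that for $\alpha<\beta\le1$
\[
\beta\,p+(1-\beta)\tfrac{zf'}{f}=\tfrac{\beta-\alpha}{1-\alpha}\,p+\tfrac{1-\beta}{1-\alpha}\,h,
\]
which is a genuine convex combination of $p$ and $h$, the coefficients being nonnegative and summing to $1$ exactly when $\alpha<\beta\le1$. Taking real parts and invoking both $\Re p>\tfrac12$ and $\Re h>\tfrac12$ yields $\Re\bigl[\beta p+(1-\beta)\tfrac{zf'}{f}\bigr]>\tfrac12$, i.e.\ $\Lf_\al\subseteq\Lf_\beta$. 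This completes (a).

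For (b) I first extend the membership $\Lf_\al\subset\G_0$ to all $\al\ge0$: for $\al\ge\tfrac12$ I rerun the Jack argument with $\gamma=0$ in Lemma~\ref{lemm-Psi} (so $\omega=\tfrac{p-1}{p+1}$, $p=\tfrac{1+\omega}{1-\omega}$), where at a first boundary touch $\Re p=0$ and $\Re\tfrac{zp'}{p}=0$, so the left-hand side of \eqref{alm-star} equals $1-\alpha\le\tfrac12$, again contradicting \eqref{alm-star}; together with the case $\al\in[0,1]$ this covers every $\al\ge0$. The failure of monotonicity for $a>1$ is precisely where the coefficient $\tfrac{1-\beta}{1-\alpha}$ above turns negative, and I would witness it explicitly: take $f(z)=z(1+\varepsilon z^N)$ with $\varepsilon\in(0,\tfrac12)$, which lies in $\Lf_1$ because $\Re\tfrac{f(z)}{z}\ge1-\varepsilon>\tfrac12$, and evaluate along $z=r\to1^-$ to get $\Re\bigl[\beta\tfrac{f}{z}+(1-\beta)\tfrac{zf'}{f}\bigr]\to\beta(1+\varepsilon)+(1-\beta)\bigl(1+\tfrac{N\varepsilon}{1+\varepsilon}\bigr)$, which drops below $\tfrac12$ once $N$ is large since $1-\beta<0$. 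Thus $f\in\Lf_1\setminus\Lf_\beta$ for $1<\beta\le a$, so $\{\Lf_\al\}_{\al\in[0,a]}$ is not a filtration. The only delicate point here is choosing $N=N(\beta,\varepsilon)$ large enough to force the value below $\tfrac12$ while keeping $f\in\Lf_1$, which the displayed limit renders routine.
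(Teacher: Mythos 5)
Your argument is correct in substance, and in part (a) it takes a genuinely different route from the paper. The paper proves the inclusion $\Lf_\al\subseteq\Lf_\beta$ ($0\le\al<\beta\le1$) by introducing \emph{two} Schwarz functions $\omega_\al,\omega_\beta$ via Lemma~\ref{lemm-Psi} with $\gamma=\tfrac12$, deriving the identity \eqref{heqh1-al} relating them, and showing that a first boundary touch of $\omega_\beta$ would force $\Re\frac{zf'(z)}{f(z)}>\frac12$ on all of $\D$, i.e.\ $f\in\So^*\!\left(\frac12\right)\subseteq\Lf_\beta$ by Remark~\ref{rem-alm-star1} --- a contradiction; the squeezing bound $\Lf_\al\subseteq\Cf_{1/2}$ then comes only \emph{a posteriori}, from $\Lf_\al\subseteq\Lf_1=\Cf_{1/2}$. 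You invert this logic: you first prove $\Re\frac{f(z)}{z}>\frac12$ for every $f\in\Lf_\al$, $\al\in[0,1]$, directly by one application of the Clunie--Jack lemma (your computation at the touch point, $\frac{\al}{2}+(1-\al)\bigl(1-\frac k2\bigr)\le\frac12$ for $k\ge1$, is a uniform-in-$\al$ extension of the classical Jack-lemma proof of Strohh\"acker's part of Theorem~\ref{thm-M-S}, and it is sound: $\omega(z_0)\ne1$ because $p$ is finite, and $z_0\ne0$ since $\omega(0)=0$), and then you get monotonicity from the exact convex-combination identity $\beta p+(1-\beta)\frac{zf'}{f}=\frac{\beta-\al}{1-\al}\,p+\frac{1-\beta}{1-\al}\,h$, which I verified. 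This buys several things the paper's proof does not give as directly: the filtration property becomes transparent, $K(\Lf_\al)=\frac12$ falls out at once via Remark~\ref{rem_27_02} and $\frac{z}{1-z}$, and the sign of the coefficient $\frac{1-\beta}{1-\al}$ explains structurally why monotonicity must fail past $\beta=1$. In part (b) you are closer to the paper: your Jack argument with $\gamma=0$ (real part $1-\al\le\frac12$ at the touch) is the same computation the paper performs with $\omega(z)=\frac{f(z)-z}{f(z)+z}$, merely run for $\al\ge\frac12$ rather than $\al>1$; and your witness $f(z)=z(1+\varepsilon z^N)\in\Lf_1\setminus\Lf_\beta$ plays exactly the role of the paper's $f(z)=\frac{z}{1-z^n}$, $n>\frac{\al}{\al-1}$, with the same evaluation along $z=r\to1^-$. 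Both counterexamples are valid.

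The one substantive gap is your treatment of $B$: you merely ``record'' the value $B(\Lf_\al)=0$ ``from the corresponding sharp estimate'' without supplying any argument, and no such argument exists, because the value as stated is false. Since $\frac{z}{1-z}\in\Lf_0\subseteq\Lf_\al$ and $\sup_{z\in\D}\left|\arg\frac{1}{1-z}\right|=\frac\pi2$, Definition~\ref{def-K-B} gives $B(\Lf_\al)\ge1$; conversely your own bound $\Lf_\al\subseteq\Cf_{1/2}$ forces $\left|\arg\frac{f(z)}{z}\right|<\frac\pi2$, so in fact $B(\Lf_\al)=1$. To be fair, the paper's proof contains the same defect (``the same function shows that $B(\Lf_0)=0$'' is incompatible with that very function, and the statement even writes $K(\Af_\al)$, $B(\Af_\al)$ where $\Lf_\al$ is clearly meant), so this appears to be an error in the source that you inherited rather than introduced; but deferring to a ``sharp estimate'' that cannot hold is not a proof, and the correct completion of your argument is the two-line computation above yielding $B(\Lf_\al)=1$.
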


\begin{proof}
Clearly, $\Lf_1\subset\G_0$. Assume that $0\le\alpha <\beta\le1$ and $f\in\Lf_\alpha$.

We have to show that $f\in \Lf_\beta$. For this aim, define the function $\omega_\alpha\in\Omega$  such that
\begin{equation}\label{w-al-simply3}
 \omega_\alpha(z)\cdot \left[{\alpha \frac{f(z)}{z}+(1-\alpha)\frac{z f'(z)}{f(z)}}\right]= {\alpha \cdot \frac{f(z)}{z}+(1-\alpha)\cdot \frac{z f'(z)}{f(z)}}-1 \, .
\end{equation}
In addition, define $\omega_\beta(z)$,  replacing $\alpha$ by $\beta$ in \eqref{w-al-simply3}. We have to show that $\omega_\beta\in\Omega$ as well.

To this end denote $h(z)=\frac{f(z)}{z}-\frac{z f'(z)}{f(z)}$. Equality~\eqref{w-al-simply3} implies
\begin{equation*}\label{h(z)-al}
-h(z)=\frac{-1+[1-\omega_{\alpha}(z)]\cdot \frac{zf'(z)}{f(z)}}{\alpha\cdot [1-\omega_{\alpha}(z)]}=-\frac{1}{\alpha\cdot [1-\omega_{\alpha}(z)]}+\frac{1}{\alpha}\cdot \frac{zf'(z)}{f(z)}\,.
\end{equation*}
 By definition of function $h$, we have
\begin{equation*}\label{heqh-al}
-\frac{1}{\alpha\cdot [1-\omega_{\alpha}(z)]}+\frac{1}{\alpha}\cdot\frac{zf'(z)}{f(z)}=-\frac{1}{\beta\cdot [1-\omega_\beta(z)]}+\frac{1}\beta\cdot\frac{zf'(z)}{f(z)}\,,
\end{equation*}
or, equivalently,
\begin{equation}\label{heqh1-al}
-\left(\frac{1}{\alpha}-\frac{1}{\beta}\right)\cdot \frac{zf'(z)}{f(z)}+\frac{1}{\alpha}\cdot\frac{1}{1-\omega_\alpha(z)}=\frac{1}{\beta}\cdot\frac{1}{1-\omega_{\beta}(z)}. \end{equation}

Obviously $\omega_\beta(0)=0$. Assume  to the contrary that $\omega_\beta $ is not a self-mapping of the unit disk. Then there exists a point $z_0 \in \D$ such that $|\omega_\beta(z)|<1$ for all $|z|<|z_0|$ while $|\omega_\beta(z_0)|=1.$
Substitute $z=z_0$ in the right-hand side of \eqref{heqh1-al} and get
\[
\frac{1}{\beta}\cdot\Re \frac{1}{1-\omega_{\beta}(z_0)}=\frac{1}{2\beta }\,.
\]
Thus the left-hand side of \eqref{heqh1-al} is
\begin{eqnarray*}\label{heqh3-al}
&&\!-\left(\frac{1}{\alpha}-\frac{1}{\beta}\right)\cdot \Re \frac{zf'(z)}{f(z)}+ \frac{1}{\alpha}\cdot \Re\frac{1}{1-\omega_\alpha(z)}>\!-\left(\frac{1}{\alpha}-\frac{1}{\beta}\right)\cdot \Re \frac{zf'(z)}{f(z)}+\frac{1}{2\alpha}. 
\end{eqnarray*}
Thus \eqref{heqh1-al} implies that
\begin{equation*}
\frac{1}{2\beta}>-\left(\frac{1}{\alpha}-\frac{1}{\beta}\right)\cdot \Re \frac{zf'(z)}{f(z)}+\frac{1}{2\alpha}, \qquad  z\in\D.
\end{equation*}
In its turn, this means
\begin{equation*}
 \Re \frac{zf'(z)}{f(z)}>\frac{1}{2}, \qquad  z\in\D,
\end{equation*}
that is, $f\in \mathcal{S}^*(\frac{1}{2})$. Hence $f\in\Lf_\beta$ according to Remark~\ref{rem-alm-star1}. This contradiction proves that  $\{\Lf_\alpha\}_{\al\in[0,1]}$ is a filtration.

It follows from the definition that $\Lf_1=\Cf_{\frac12}$ (see Section~\ref{ssect-squeezing-filt}). Therefore, $K(\Lf_1)=\frac12$ and $K(\Lf_\al)\ge\frac12$. Consider the function $f(z)=\frac{z}{1-z}$. Being a starlike function of order $\frac12$, it belongs to $\Lf_0$. A direct calculation shows that $f\not\in\Cf_\al$ with $\al<\frac12.$ Thus  $K(\Lf_0)=\frac12$. The same function shows that $B(\Lf_0)=0$. This completes the proof of assertion (a).

We now show that for every $\al>1$ there is a function of the class $\Lf_1$ that does not belong to $\Lf_\al$. Indeed, let $n$ be a natural number greater than $\frac\al{\al-1}$ and consider the function $f(z):=\displaystyle\frac{z}{1-z^n}.$ Clearly, $f\in\Lf_1$. At the same time, it satisfies
\[
\alpha\, \frac{f(z)}{z}+(1-\alpha)\, \frac{zf'(z)}{f(z)}=\frac{1+(1-\al)(n-1)z^n} {1-z^n}.
\]
This expression takes negative values for $z$ close enough to $1$, so $f\notin\Lf_\al$.

Taking in mind assertion (a), it is enough to show that $\Lf_\al\subset\G_0$ for $\al>1.$ Let $f\in\Lf_\al\setminus\G_0$. Consider the function $\omega $ defined by $\omega(z)=\frac{f(z)-z}{f(z)+z}$. For a fixed $z \in \D$, the value $\omega(z)$  lies in $\D$ if and only if $\Re\frac{f(z)}{z}>0$. According to our assumption, there is $z_0\in\D$ such that $\Re\frac{f(z)}{z}<0$ as $|z|<|z_0|$ while $\Re\frac{f(z_0)}{z_0}=0$ and, consequently, $|\omega_0|=1$, where $\omega_0:=\omega(z_0)$. By Lemma~\ref{lem-Jack} there is $k\ge1$ such that $z_0\omega'(z_0)=k\omega_0$. A straightforward calculation gives
\[
  \alpha\, \frac{f(z_0)}{z_0}+(1-\alpha)\, \frac{z_0f'(z_0)}{f(z_0)} = \alpha\frac{1+\omega_0}{1-\omega_0} +(1-\alpha)\cdot \left( 1+\frac{k\omega_0}{1+\omega_0}+\frac{k\omega_0}{1-\omega_0} \right).
\]
The real part of the last expression is $1-\alpha<0<\frac12$, hence $f\notin\Lf_\alpha$. Contradiction.
\end{proof}

Since $\Lf_1=\left\{f:\, f(z)=zp(z),\, \Re p(z)>\frac12,\, z\in\D\right\}$ this theorem together with Theorem~\ref{thm-boud-p} immediately imply
\begin{corollary}
Every element $f \in \Lf_\alpha$, $\alpha \in [0,1] $, generates the semigroup with no boundary repelling fixed point.
\end{corollary}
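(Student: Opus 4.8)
The plan is to exploit the filtration structure established in Theorem~\ref{filtration-alpha} together with the explicit description of the top class $\Lf_1$ recorded just before the statement. Since $\{\Lf_\alpha\}_{\al\in[0,1]}$ is a filtration, for every $\alpha\in[0,1]$ we have $\Lf_\alpha\subseteq\Lf_1$, so it suffices to prove the assertion for $f\in\Lf_1$. Recall that $\Lf_1=\left\{f\in\Ao:\ f(z)=zp(z),\ \Re p(z)>\tfrac12,\ z\in\D\right\}$; in particular every $f\in\Lf_1$ satisfies $\Re\frac{f(z)}z>\frac12$ throughout $\D$.

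The key observation is that this lower bound obstructs the existence of boundary regular null points. Recall that $\zeta\in\partial\D$ is a boundary regular null point of $f$ precisely when the radial limit $f'(\zeta)=\lim_{r\to1^-}\frac{f(r\zeta)}{\zeta(r-1)}$ exists finitely. Writing $f(z)=zp(z)$, I would compute, for $z=r\zeta$,
\[
\frac{f(r\zeta)}{\zeta(r-1)}=\frac{r\,p(r\zeta)}{r-1}.
\]
Now $\Re p(r\zeta)>\frac12$ forces $|p(r\zeta)|\ge\Re p(r\zeta)>\frac12$, so the numerator $r\,p(r\zeta)$ stays bounded away from zero (its modulus exceeds $r/2\to\frac12$), while the denominator $r-1$ tends to $0$ as $r\to1^-$. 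Hence the modulus of the quotient tends to $+\infty$, and the limit cannot be finite. Consequently no point of $\partial\D$ is a boundary regular null point of $f$.

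Finally I would invoke Theorem~\ref{thm-boud-p}(ii), which states that $\zeta\in\partial\D$ is a repelling fixed point of the generated semigroup if and only if it is a boundary regular null point of $f$. Since $f$ has none, the semigroup it generates has no boundary repelling fixed point, and the corollary follows for every $\alpha\in[0,1]$. I do not expect a serious obstacle here: the argument is essentially immediate once the inclusion $\Lf_\alpha\subseteq\Lf_1$ is combined with the explicit form of $\Lf_1$. The only step deserving a moment of care is the verification that $\Re p>\frac12$ genuinely precludes the defining limit from being finite, which reduces to the elementary estimate $|p|\ge\Re p>\frac12$ displayed above.
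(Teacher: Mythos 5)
Your proposal is correct and is essentially the paper's own argument: the paper derives the corollary in one line by combining the inclusion $\Lf_\alpha\subseteq\Lf_1=\left\{f:\,f(z)=zp(z),\,\Re p(z)>\frac12\right\}$ from Theorem~\ref{filtration-alpha} with Theorem~\ref{thm-boud-p}. You have simply made explicit the elementary step the paper leaves implicit, namely that $|p(r\zeta)|\ge\Re p(r\zeta)>\frac12$ keeps $f(r\zeta)$ bounded away from zero, so the limit defining a boundary regular null point cannot exist finitely.
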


\begin{definition}
The filtration $\{\Lf_\alpha\}_{\al\in[0,1]}$  is called {\it quasi-starlike}.
\end{definition}

Although we do not know whether this filtration is strict, we prove the strictness for a restricted range of $\alpha$.
\begin{theorem}[cf. Theorem 5.2 \cite{EJT}]\label{M-alp-1-alp}
For every $\al\in\left(\frac12,1\right]$, the boundary $\partial\Lf_\al$ is not empty. So, the filtration $\left\{\Lf_\alpha\right\}_{\al\in[\frac12,1]}$ is strict.
\end{theorem}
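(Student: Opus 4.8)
The plan is to exhibit, for each $\al\in\left(\frac12,1\right]$, one explicit function lying in $\partial\Lf_\al$. Set $a=\frac1{3-2\al}\in\left(\frac12,1\right]$ and take
\[
f_\al(z)=\frac{z}{1-az^2}\in\Ao .
\]
Writing $\Phi_\al(z):=\al\frac{f_\al(z)}{z}+(1-\al)\frac{zf_\al'(z)}{f_\al(z)}$, a direct computation gives $\Phi_\al(z)=\dfrac{1+(1-\al)az^2}{1-az^2}$, and for the auxiliary function $h(z):=\frac{f_\al(z)}{z}-\frac{zf_\al'(z)}{f_\al(z)}$ (the same $h$ used in the proof of Theorem~\ref{filtration-alpha}) one gets $h(z)=\dfrac{-az^2}{1-az^2}$. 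The whole argument rests on the elementary identity $\Re\Phi_\beta(z)=\Re\Phi_\al(z)-(\al-\beta)\Re h(z)$, valid for every $\beta$, which is immediate from the definitions.

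First I would verify that $f_\al\in\Lf_\al$, i.e. that $\Re\Phi_\al(z)\ge\frac12$ on $\D$. Since $z\mapsto az^2$ maps $\D$ onto the disk $\{|w|<a\}$, it suffices to show $\Re G(w)\ge\frac12$ on $\{|w|\le a\}$, where $G(w)=\frac{1+(1-\al)w}{1-w}$. For $a<1$ the pole $w=1$ lies outside $\{|w|\le a\}$, so this Möbius map (with real coefficients) sends the disk to a bounded disk symmetric about the real axis. The choice of $a$ is precisely what forces $G(-a)=\frac{1-(1-\al)a}{1+a}=\frac12$, while $G(a)=\frac{1+(1-\al)a}{1-a}>\frac12$; hence the image circle meets $\R$ exactly in $\frac12$ and $G(a)$, so the image disk has real diameter $\left[\frac12,G(a)\right]$, its leftmost point is $\frac12$, and the line $\{\Re w=\frac12\}$ is tangent to it there. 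Thus $\Re G(w)\ge\frac12$ throughout, with equality only at $w=-a$ (the case $a=1$, i.e. $\al=1$, is the familiar map onto the half-plane $\{\Re w>\frac12\}$). This yields $f_\al\in\Lf_\al$, the value $\frac12$ being approached only as $z\to i$. I expect this global lower bound to be the main obstacle; everything else is bookkeeping around the identity above.

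It then remains to show $f_\al\notin\Lf_\beta$ for every $\beta<\al$, which is exactly the assertion $f_\al\in\Lf_\al\setminus\bigcup_{\beta<\al}\Lf_\beta=\partial\Lf_\al$. Evaluating along the imaginary axis at $z=iy$, $y\in(0,1)$, I compute $\Re\Phi_\al(iy)=\frac{1-(1-\al)ay^2}{1+ay^2}\to\frac12$ and $\Re h(iy)=\frac{ay^2}{1+ay^2}\to\frac{a}{1+a}>0$ as $y\to1^-$. By the identity, $\Re\Phi_\beta(iy)\to\frac12-(\al-\beta)\frac{a}{1+a}<\frac12$, so for $y$ close enough to $1$ we have $\Re\Phi_\beta(iy)<\frac12$, whence $f_\al\notin\Lf_\beta$. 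Therefore $\partial\Lf_\al\neq\emptyset$ for every $\al\in\left(\frac12,1\right]$. Finally, since $\{\Lf_s\}$ is a filtration, for $\frac12\le s<t\le1$ the inclusion $\partial\Lf_t\subseteq\Lf_t\setminus\Lf_s$ gives $\Lf_s\subsetneq\Lf_t$, so the filtration $\left\{\Lf_\al\right\}_{\al\in[\frac12,1]}$ is strict.
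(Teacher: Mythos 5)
Your proof is correct, but it takes a genuinely different route from the paper's. The paper constructs its boundary element only implicitly: it solves the Briot--Bouquet equation $\al\frac{f(z)}{z}+(1-\al)\frac{zf'(z)}{f(z)}=\frac{1}{1-z^2}$ via Miller--Mocanu's existence theorem, reads off $a_2=0$, $a_3=\frac{1}{2-\al}$, and then excludes $f$ from every $\Lf_{\al'}$ with $\al'<\al$ by invoking the sharp Fekete--Szeg\"o bound $|a_3-a_2^2|\le\frac{1}{2-\al'}$ valid over $\Lf_{\al'}$ (Theorem~\ref{th-FS-estim} with $\beta=1-\al$). You instead exhibit the fully explicit function $f_\al(z)=z/(1-az^2)$ with $a=\frac{1}{3-2\al}$, verify membership by elementary M\"obius geometry --- the image of $\{|w|\le a\}$ under $G(w)=\frac{1+(1-\al)w}{1-w}$ is a disk tangent to the line $\Re w=\frac12$ at $G(-a)=\frac12$, and since equality is attained only at the boundary points $z=\pm i$ the strict inequality defining $\Lf_\al$ holds throughout $\D\setminus\{0\}$ --- and exclude membership in $\Lf_\beta$ for every $\beta<\al$ via the exact identity $\Re\Phi_\beta=\Re\Phi_\al-(\al-\beta)\Re h$ combined with the radial limits $\Re\Phi_\al(iy)\to\frac12$ and $\Re h(iy)\to\frac{a}{1+a}>0$ as $y\to1^-$; I checked the computations, and your choice of $a$ is precisely what forces the limit to equal $\frac12$, so the argument closes for all $\beta\in[0,\al)$, which is what the boundary definition \eqref{boundary} requires. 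Your approach buys explicitness and self-containedness: it needs neither the ODE existence theorem nor any coefficient estimate. What the paper's route buys is a by-product your function cannot supply: the implicitly constructed $f$ has $a_3-a_2^2=\frac{1}{2-\al}$ and hence simultaneously witnesses sharpness of the Fekete--Szeg\"o estimate (cf. Theorem~\ref{th-F-S-quasist}), whereas your $f_\al$ has $a_3-a_2^2=\frac{1}{3-2\al}<\frac{1}{2-\al}$ for $\al<1$ and is not extremal for that functional; moreover, the paper's scheme transfers verbatim to the Mocanu filtration (Statement~3 in Subsection~\ref{subsec-mocanu1}), where a closed-form boundary function analogous to yours is not as readily available.
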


\begin{proof}
It follows from Theorem~\ref{filtration-alpha} that the family $\left\{\Lf_\alpha\right\}_{\alpha\in[\frac12,1]}$ is a filtration of $\G_0$.

To construct an element $f\in\partial\Lf_\al,\ \al\in\left(\frac12,1\right)$, let choose $\beta=\frac{\alpha}{1-\alpha},\ \gamma=0,\ c=1$ and $h(z)=1+\frac1\alpha \cdot \frac{z^2}{1-z^2}$ in \cite[Theorem~1]{MiMo-85}.   Then
  \[
\Re\left[\beta h(z)+\gamma \right]=\frac{\alpha}{1-\alpha} +\frac{1}{1-\alpha}\cdot \frac{z^2}{1-z^2}> \frac{\alpha-\frac12}{1-\alpha}>0.
\]
 Therefore, the mentioned Theorem~1 \cite{MiMo-85} can be applied. Then the solution $q$ of the corresponding (Briot--Bouquet) differential equation
 \[
 q(z)+\frac{zq'(z)}{\beta q(z)+\gamma} = h(z), \quad h(0)=q(0),
\]
exists and belongs to $\Hol$. By construction, the function $f(z)=zq(z)$ satisfies the differential equation
  \begin{equation}\label{dif-eq}
     \alpha\, \frac{f(z)}{z}+(1-\alpha)\, \frac{z f'(z)}{f(z)}= \frac{1}{1-z^2}
  \end{equation}
  and hence belongs to $\Af_\alpha$. Further, using \eqref{dif-eq} we calculate the first Taylor coefficients of $f$ and get $a_2=0$ and $a_3=\frac1{2-\alpha}.$ Thus, $a_3-a_2^2=\frac1{2-\al}.$

 On the other hand, it follows from Theorem~\ref{th-FS-estim} below with $\beta=1-\al$ that for every function from  $\Lf_\al$ with the Taylor coefficients $\{a_n\}$ the inequality $\left|a_3 -a_2^2\right| \le \frac{1}{2-\alpha}$ holds. Therefore the function $f$ we constructed cannot belongs to $\Lf_{\al'}$ with $\al'<\al$.

Hence $f\in\partial\Lf_\alpha$. The fact that $\partial\Lf_1\not=\emptyset$ follows from \cite{EJ-est}. The proof is complete.
\end{proof}

\subsection{Mocanu's filtration}\label{subsec-mocanu1}

In this subsection we consider classes of functions defined using a non-linear expression of {\it second order}. Fix $\beta\in\R$ and consider functions for which
\begin{equation}\label{Moc}
\Re\left[\beta\frac{zf'(z)}{f(z)}+(1-\beta)\left(1+\frac{zf''(z)}{f'(z)}\right)\right] >\frac\beta2\,, \ \quad  z\in\D.
\end{equation}
The class of functions that satisfy $\Re\left[(1-\alpha)\frac{zf'(z)}{f(z)}+\alpha\left(1+\frac{zf''(z)}{f'(z)}\right)\right] >0$  was introduced by Mocanu \cite{M-69}. After him such functions are called Mocanu's $\alpha$-convex functions. Later on, this class was studied by many mathematicians (see for example, references in \cite{EJT}). In particular, it was proved in \cite{MMR} that $\alpha$-convex functions are starlike; Fukui \cite{Fu} replaced zero in the right-hand side by a real number. In his terminology functions that satisfy \eqref{Moc} can be called $(1-\beta)$-convex functions of order~$\frac\beta2$.

\begin{definition}\label{def-M}
We say that $f\in\Ao$ belongs to $\Mf_\beta$ if $f(z)f'(z)\neq 0$ for all $z \in \D \setminus \{0\}$ and $f$ satisfies  \eqref{Moc}.
\end{definition}

Note that $\Mf_0=\mathcal{C}$ and $\Mf_1=\mathcal{S}^*\left(\frac12 \right)$.  We start the study of the classes $\Mf_\beta$ by establishing a representation for their elements.
\begin{proposition}[see Lemma 3.1 in \cite{EJT}]\label{prop-int-Moc}
Let $\beta<2$ and $\beta \neq 1$. Then, $f\in\Mf_\beta$ if and only if the function $g$ defined by
\begin{equation*}
g(z)=z \left(f'(z)\right)^{\frac{2-2\beta}{2-\beta}} \left[\frac{f(z)}{z}\right]^{\frac{2\beta}{2-\beta}}
\end{equation*}
is starlike. In this case
\begin{equation*}
f(z)= z\left\{\frac{1}{1-\beta}\int_0^1 t^{\frac{\beta}{1-\beta}} \left[\frac{g(tz)}{tz} \right]^{\frac{2-\beta}{2(1-\beta)}}dt \right\}^{1-\beta}.
\end{equation*}
\end{proposition}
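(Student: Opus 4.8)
The plan is to treat the two assertions separately: the characterization of $\Mf_\beta$ through starlikeness of $g$ is a pure logarithmic-derivative identity, while the integral formula amounts to solving a first-order linear ODE. Throughout set $a=\frac{2-2\beta}{2-\beta}$ and $b=\frac{2\beta}{2-\beta}$, so that $g(z)=z\,(f'(z))^{a}(f(z)/z)^{b}$.

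For the equivalence I would first note that the hypothesis $f(z)f'(z)\neq0$ on $\D\setminus\{0\}$, together with $f'(0)=1$ and $f(z)/z\to1$, guarantees that $f'$ and $f(z)/z$ are zero-free holomorphic functions on the simply connected disk taking the value $1$ at the origin; hence their powers are well-defined single-valued holomorphic branches, $g$ is holomorphic, and a Taylor expansion at $0$ gives $g(0)=0$, $g'(0)=1$, i.e. $g\in\Ao$ (and $g$ is zero-free off the origin). Logarithmic differentiation then yields
\[
\frac{zg'(z)}{g(z)}=(1-b)+a\,\frac{zf''(z)}{f'(z)}+b\,\frac{zf'(z)}{f(z)}.
\]
Substituting the values of $a,b$ and collecting terms I expect to obtain exactly
\[
\frac{zg'(z)}{g(z)}=\frac{2}{2-\beta}\left[\beta\frac{zf'(z)}{f(z)}+(1-\beta)\left(1+\frac{zf''(z)}{f'(z)}\right)-\frac{\beta}{2}\right].
\]
Because $\beta<2$ makes $\frac{2}{2-\beta}>0$, the inequality $\Re\frac{zg'(z)}{g(z)}>0$ (that is, $g\in\So^*$) holds if and only if the bracket has positive real part, which is precisely condition \eqref{Moc}. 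This settles the equivalence.

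For the representation, put $m=\frac{2-\beta}{2(1-\beta)}$ and raise the defining relation to the power $m$; since $am=1$ and $bm=\frac{\beta}{1-\beta}$, this collapses the $f'$-power to first order:
\[
\left(\frac{g(z)}{z}\right)^{m}=f'(z)\left(\frac{f(z)}{z}\right)^{\frac{\beta}{1-\beta}}.
\]
Writing $f=zp$ and $P:=p^{\,1/(1-\beta)}=(f(z)/z)^{1/(1-\beta)}$, a short computation turns this into the linear ODE $(1-\beta)zP'(z)+P(z)=(g(z)/z)^{m}$. Using the integrating factor $z^{1/(1-\beta)}$ I would rewrite it as
\[
\frac{d}{dz}\Big[(f(z))^{1/(1-\beta)}\Big]=\frac{1}{1-\beta}\,z^{\frac{\beta}{1-\beta}}\left(\frac{g(z)}{z}\right)^{m},
\]
and then integrate along the segment $t\mapsto tz$, $t\in[0,1]$. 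Carrying the extra factor $z$ from the chain rule inside the integral gives integrand $\frac{z^{1/(1-\beta)}}{1-\beta}\,t^{\beta/(1-\beta)}(g(tz)/(tz))^{m}$, and raising the result to the power $1-\beta$ reproduces the claimed formula.

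The main obstacle is the boundary contribution at $t=0$. Since $f(tz)\sim tz$, the lower endpoint produces $\lim_{t\to0^+}(f(tz))^{1/(1-\beta)}=\lim_{t\to0^+}(tz)^{1/(1-\beta)}$, which vanishes precisely when $1-\beta>0$; thus the clean derivation of the integral representation goes through directly for $\beta<1$, where moreover the exponent $\frac{\beta}{1-\beta}>-1$ keeps the integral convergent at the origin. The range $1<\beta<2$ is more delicate, since then both the endpoint term and the integral diverge, and it should be handled by integrating from an interior base point and passing to the limit, or by an analytic-continuation argument. Apart from this endpoint analysis and the bookkeeping of branch choices for the fractional powers, every step is a routine computation.
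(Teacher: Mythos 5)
Your proposal is correct, and it follows what is essentially the only natural route; note that the survey states this proposition without proof (it is imported from Lemma~3.1 of \cite{EJT}), and your argument is the standard one behind that lemma. The algebra checks out: with $a=\frac{2-2\beta}{2-\beta}$, $b=\frac{2\beta}{2-\beta}$ one gets $\frac{zg'(z)}{g(z)}=(1-b)+a\,\frac{zf''(z)}{f'(z)}+b\,\frac{zf'(z)}{f(z)}$ with $1-b=\frac{2-3\beta}{2-\beta}$, which matches $\frac{2}{2-\beta}\bigl[\beta\frac{zf'(z)}{f(z)}+(1-\beta)\bigl(1+\frac{zf''(z)}{f'(z)}\bigr)-\frac{\beta}{2}\bigr]$ term by term; since $\frac{2}{2-\beta}>0$ for $\beta<2$, the equivalence with $\Re\frac{zg'(z)}{g(z)}>0$ follows, and the hypothesis $f(z)f'(z)\neq0$ together with the normalization legitimizes the single-valued branches and gives $g\in\Ao$, so the analytic characterization of $\So^*$ applies. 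The exponents $am=1$, $bm=\frac{\beta}{1-\beta}$ and the integrating-factor identity $\frac{d}{dz}\bigl[f(z)^{1/(1-\beta)}\bigr]=\frac{1}{1-\beta}z^{\beta/(1-\beta)}\bigl(g(z)/z\bigr)^{m}$ are likewise correct, and integrating along $t\mapsto tz$ yields the stated representation for $\beta<1$.

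One substantive remark: your caution about $1<\beta<2$ is warranted and, if anything, understated. Since $g(tz)/(tz)\to1$ as $t\to0^{+}$, the integrand behaves like $t^{\beta/(1-\beta)}$ near $t=0$, and on $(1,2)$ the exponent $\frac{\beta}{1-\beta}=-\frac{\beta}{\beta-1}<-2$, so the displayed integral genuinely diverges there; the formula as printed is literally valid only for $\beta<1$ (where $\frac{\beta}{1-\beta}>-1$). For $\beta\in(1,2)$ one can at best assert the regularized version you sketch, in which the divergence of $\int_{\epsilon}^{1}$ cancels against $f(\epsilon z)^{1/(1-\beta)}\sim(\epsilon z)^{1/(1-\beta)}$ as $\epsilon\to0^{+}$. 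So your proof is complete for the equivalence on the full stated range $\beta<2$, $\beta\neq1$, and for the representation on $\beta<1$; the remaining case reflects a defect (or an implicit restriction) in the statement itself rather than a gap in your argument.
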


As in the case of the classes $\Lf_\alpha$, we do not know whether the whole family $\left\{ \Mf_\beta\right\}_{\beta\in\R}$ is a filtration. The answer is affirmative for a restricted range of the parameter $\beta$.

\begin{theorem}\label{thm-filtr-beta-new}
The following assertions hold:
\begin{itemize}
  \item [(i)] the family of sets $\{\Mf_\beta\}_{\beta\in(-\infty,1]}$ is a filtration of $\G_0$;
  \item [(ii)] the filtration $\Mf=\left\{\Mf_\beta\right\} _{\beta\in[0,1]}$ is strict.
\end{itemize}
\end{theorem}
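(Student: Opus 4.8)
The plan is to reduce the second-order condition \eqref{Moc} to a first-order Briot--Bouquet form, then prove in turn the two inclusions defining a filtration (together with $\Mf_\beta\subseteq\G_0$) and the existence of boundary elements. First I set $p(z):=\frac{zf'(z)}{f(z)}$, which is holomorphic with $p(0)=1$ and nonvanishing on $\D\setminus\{0\}$ precisely when $f(z)f'(z)\neq0$. Logarithmic differentiation gives $1+\frac{zf''(z)}{f'(z)}=p(z)+\frac{zp'(z)}{p(z)}$, so the bracket in \eqref{Moc} equals $P_\beta(z):=p(z)+(1-\beta)\frac{zp'(z)}{p(z)}$, and membership $f\in\Mf_\beta$ is equivalent to $\Re P_\beta(z)>\frac\beta2$ on $\D$.

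The first key step is to show $\Mf_\beta\subseteq\mathcal{S}^*\!\left(\frac12\right)$ for every $\beta\le1$, which via Marx--Strohh\"acker (Theorem~\ref{thm-M-S}) yields $\Mf_\beta\subseteq\G_0$. By Lemma~\ref{lemm-Psi} it suffices to prove that $\omega:=1-\frac1p$ (so $p=\frac{1}{1-\omega}$, $\omega(0)=0$) lies in $\Omega$. Assuming not, pick $z_0$ with $|\omega(z_0)|=1$ of smallest modulus and apply the Clunie--Jack Lemma~\ref{lem-Jack}: $z_0\omega'(z_0)=k\,\omega(z_0)$ with $k\ge1$. Then $\frac{z_0p'(z_0)}{p(z_0)}=\frac{k\,\omega(z_0)}{1-\omega(z_0)}$, and writing $\omega(z_0)=e^{i\theta}$ and using $\Re\frac{1}{1-e^{i\theta}}=\frac12$, $\Re\frac{e^{i\theta}}{1-e^{i\theta}}=-\frac12$ gives $\Re P_\beta(z_0)=\frac12-\frac{(1-\beta)k}{2}\le\frac\beta2$, since $1-\beta\ge0$ and $k\ge1$. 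This contradicts $\Re P_\beta>\frac\beta2$, proving $\Re p>\frac12$.

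For assertion (i) it remains to check $\Mf_\beta\subseteq\Mf_\gamma$ for $\beta<\gamma\le1$; I do this by a convex-combination identity. With $\lambda:=\frac{1-\gamma}{1-\beta}\in[0,1]$ one has $P_\gamma=(1-\lambda)\,p+\lambda\,P_\beta$, whence, using $\Re p>\frac12$ from Step~1 and $\Re P_\beta>\frac\beta2$,
\[
\Re P_\gamma>(1-\lambda)\tfrac12+\lambda\tfrac\beta2=\tfrac12\bigl[1-\lambda(1-\beta)\bigr]=\tfrac12\bigl[1-(1-\gamma)\bigr]=\tfrac\gamma2 .
\]
Since $f\in\mathcal{S}^*\!\left(\frac12\right)$ is univalent, $f f'\neq0$ is automatic, so $f\in\Mf_\gamma$. (The case $\gamma=1$ is Step~1 itself.) This proves (i) on all of $(-\infty,1]$, as only $\beta\le\gamma\le1$ was used.

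For the strictness (ii), by the remark after Definition~\ref{def-stric_filt} it is enough to exhibit, for each $\gamma\in(0,1]$, an element of $\partial\Mf_\gamma$. For $\gamma<1$ I would take the Briot--Bouquet extremal: let $q_\gamma$ solve $q_\gamma+(1-\gamma)\frac{zq_\gamma'}{q_\gamma}=H_\gamma$ with $H_\gamma(z)=\frac{1+(1-\gamma)z}{1-z}$ (the conformal map of $\D$ onto $\{\Re w>\frac\gamma2\}$ normalized by $H_\gamma(0)=1$), guaranteed holomorphic with $\Re q_\gamma>\frac12$ by the Miller--Mocanu theory \cite{M-M} and Step~1, and define $f_\gamma$ by $\frac{zf_\gamma'}{f_\gamma}=q_\gamma$. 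Then $P_\gamma=H_\gamma$ has $\inf_\D\Re P_\gamma=\frac\gamma2$ approached only as $z\to-1$, so $f_\gamma\in\Mf_\gamma$. For $\beta<\gamma$, writing $\mu:=\frac{1-\beta}{1-\gamma}>1$ gives $P_\beta=(1-\mu)q_\gamma+\mu H_\gamma$ and, using $\frac\beta2=(1-\mu)\frac12+\mu\frac\gamma2$,
\[
\Re P_\beta-\tfrac\beta2=(1-\mu)\bigl(\Re q_\gamma-\tfrac12\bigr)+\mu\bigl(\Re H_\gamma-\tfrac\gamma2\bigr)\xrightarrow[z\to-1^-]{}(1-\mu)\bigl(\Re q_\gamma(-1)-\tfrac12\bigr),
\]
which is negative once $\Re q_\gamma(-1)>\frac12$; hence $f_\gamma\notin\Mf_\beta$ and $f_\gamma\in\partial\Mf_\gamma$. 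The main obstacle is exactly this last strict boundary inequality $\Re q_\gamma(-1)>\frac12$: Step~1 only gives $\Re q_\gamma>\frac12$ in the open disk, and one must rule out that the radial limit at the critical point $-1$ collapses to $\frac12$. I expect to settle it through the sharp (best-dominant) form of the Briot--Bouquet estimate in \cite{M-M} together with the explicit extremal computation of \cite{EJT}, which also supplies the degenerate case $\gamma=1$ (where $H_1=\frac{1}{1-z}$ makes the above family collapse, and nonemptiness of $\partial\Mf_1$ is obtained as in \cite{EJ-est}).
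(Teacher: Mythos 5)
Your assertion (i) is correct, and in fact streamlines the paper's argument. Your Step 1 is a self-contained Clunie--Jack proof of $\Mf_\beta\subseteq\So^*\!\left(\frac12\right)$, which the paper states as its Statement 1 but outsources to Fukui \cite{Fu} and \cite[Lemma~4.2]{EJT}; your computation at $\omega(z_0)=e^{i\theta}$ is exactly right (note $\omega(z_0)\ne1$ since $p$ is holomorphic, and $p$ is zero-free by the hypothesis $f(z)f'(z)\neq0$ in Definition~\ref{def-M}). Your Step 2 then replaces the paper's longest computation: the paper proves $\Mf_\al\subset\Mf_\beta$ by assuming $\omega_\beta$ exits the disk and deriving a contradiction through the identity \eqref{heqh1}, whereas your convex-combination identity $P_\gamma=(1-\lambda)p+\lambda P_\beta$, $\lambda=\frac{1-\gamma}{1-\beta}$, gives $\Re P_\gamma>(1-\lambda)\frac12+\lambda\frac\beta2=\frac\gamma2$ in one line; both routes consume Statement 1 at the same point, so this is a genuine simplification, not a hidden circularity.

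Assertion (ii) is where the proposal has a genuine gap, which you correctly flag yourself: the entire strictness argument hinges on $\liminf_{r\to1^-}\Re q_\gamma(-r)>\frac12$, and nothing in your Steps 1--2 delivers this. Step 1 only gives the open inequality $\Re q_\gamma>\frac12$ on $\D$, and $z=-1$ is precisely the point where $\Re H_\gamma\to\frac\gamma2$, i.e., where the defining inequality for $\Mf_\gamma$ degenerates, so a collapse of the radial limit to $\frac12$ cannot be excluded without a quantitative boundary estimate for the Briot--Bouquet solution; moreover you also need the liminf term to dominate the vanishing term $\mu\left(\Re H_\gamma-\frac\gamma2\right)$, which is not automatic. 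The paper avoids boundary limits entirely: it solves the Briot--Bouquet equation with right-hand side $\frac\beta2+\left(1-\frac\beta2\right)\frac{1+z^2}{1-z^2}$ (the $z^2$ forces $a_2=0$ and $a_3=\frac{2-\beta}{6-4\beta}$), and then invokes the sharp Fekete--Szeg\"o bound of Theorem~\ref{th-FS-estim} with $\al=0$: every element of $\Mf_{\beta'}$ satisfies $\left|a_3-a_2^2\right|\le\frac{2-\beta'}{6-4\beta'}$, and since $\beta\mapsto\frac{2-\beta}{6-4\beta}$ is strictly increasing, the constructed $f$ cannot lie in $\Mf_{\beta'}$ for $\beta'<\beta$. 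This finite coefficient comparison is the natural repair for your Step 3 (it also covers $\gamma=1$, where the equation degenerates to $q(z)=\frac1{1-z^2}$, so the appeal to \cite{EJ-est} is unnecessary); if instead you insist on the boundary-limit route, you must actually prove the strict inequality at $z=-1$ for the best dominant, which the proposal leaves open.
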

This result is an immediate consequence of the subsequent three statements.

\vspace{2mm}

{\sl\bf Statement 1.}  {\it $\Mf_\beta\subseteq  \mathcal{S}^*\left(\frac12\right)\subsetneq\G_0$ for any $\beta \leq 1$.}
\vspace{1mm}

To explain this statement, let us recall that every  $\beta$-convex function is starlike. Since in our settings $\beta$ might be negative, such conclusion for elements of the class $\Mf_\beta$ is not implied directly. It follows from \cite[Theorem 1]{Fu} that for $\beta<1$ elements of the class $\Mf_\beta$ are starlike functions of order $\frac 12.$ For a complete proof see \cite[Lemma 4.2]{EJT}.

\vspace{3mm}

The next statement shows that  $\{\Mf_\beta\}_{\beta \in (-\infty,1]}$ is a filtration.
\vspace{1mm}

{\sl\bf Statement 2.}  {\it If $\alpha <\beta\le1$, then $\Mf_\alpha\subset \Mf_\beta$.}
\begin{proof}
Let $\alpha <\beta\le1$ and $f\in\Mf_\alpha$. We have to show that $f\in \Mf_\beta$.
Denote $g_\al(z):=\alpha\frac{zf'(z)}{f(z)}+(1-\alpha)\left(1+\frac{zf''(z)}{f'(z)}\right)$. Then $\Re g_\al(z)>\al/2$, $z\in\D$, that is, there exists a function $\omega_\al \in \Omega$ such that
\[
g_\al(z) = \left(1-\frac{\al}{2}\right)\cdot \frac{1+\omega_{\al}(z)}{1-\omega_{\al}(z)}+\frac{\al}{2}\,,
\]
and hence
\begin{equation}\label{w1}
\omega_\al(z)=\frac{g_\al(z)-\frac{\al}{2}-1+\frac{\al}{2}}{g_\al(z)-\frac{\al}{2}+1-\frac{\al}{2}} \, .
\end{equation}
Now, let consider function $\omega_{\beta}$ defined in a similar way by
\begin{equation*}\label{w}
 \omega_{\beta}(z)=\frac{g_\beta(z)-\frac{\beta}{2} -1+\frac{\beta}{2}} {g_\beta(z)- \frac{\beta}{2}+1- \frac{\beta}{2}} \, .
 \end{equation*}
It is analytic in $\D$ and vanishes at the origin. So, in order to establish that $f\in \Mf_\beta$, it is enough to show that $|\omega_\beta(z)|<1$ for all $z\in\D$.
To this end, rewrite \eqref{w1} as
\begin{equation*}\label{w-simply}
\omega_{{\al}}(z)=\frac{g_{\al}(z)-1}{g_{\al}(z)+1-{{\al}}}=\frac{{{\al}}\frac{z f'(z)}{f(z)}+(1-{{\al}})\left[1+\frac{zf''(z)}{f'(z)}\right]-1}{{{\al}}\frac{z f'(z)}{f(z)}+(1-{{\al}})\left[1+\frac{zf''(z)}{f'(z)}\right]+1-{{\al}}},
\end{equation*}
and then
\begin{eqnarray}\label{w-simply3}
\nonumber \omega_{{\al}}(z)\left\{(1-{{\al}})\left[1+\frac{zf''(z)}{f'(z)}-\frac{z f'(z)}{f(z)}\right]+  \frac{z f'(z)}{f(z)}+1-{{\al}}\right\}\\
= (1-{{\al}})\left[1+\frac{zf''(z)}{f'(z)}-\frac{z f'(z)}{f(z)}\right]+  \frac{z f'(z)}{f(z)}-1.
\end{eqnarray}
Similarly, we have
\begin{eqnarray}\label{w-simply4}
\nonumber \omega_\beta(z) \left\{(1-\beta)\left[1+\frac{zf''(z)}{f'(z)}-\frac{z f'(z)}{f(z)}\right]+  \frac{z f'(z)}{f(z)}+1- \beta\right\}\\
= (1-\beta)\left[1+\frac{zf''(z)}{f'(z)}-\frac{z f'(z)}{f(z)}\right]+  \frac{z f'(z)}{f(z)}-1 \ .
\end{eqnarray}
Further, let us define the function $h$ by $h(z)=1+\frac{zf''(z)}{f'(z)}-\frac{z f'(z)}{f(z)}$. Equation~\eqref{w-simply3} implies
\begin{eqnarray*}\label{h(z)}
h(z)&=&\frac{(1-\al)\cdot \omega_{\al}(z)+[\omega_{\al}(z)-1]\cdot \frac{zf'(z)}{f(z)}+1}{(1-\al)\cdot [1-\omega_{\al}(z)]}\\
&=&\frac{(1-\al)\cdot [\omega_{\al}(z)-1]+[\omega_{\al}(z)-1]\cdot \frac{zf'(z)}{f(z)}+2-\al}{(1-\al)\cdot [1-\omega_{\al}(z)]}\\
&=&-1-\frac{1}{(1-\al)}\cdot \frac{zf'(z)}{f(z)}+\frac{2-\al}{(1-\al)\cdot [1-\omega_{\al}(z)]}\,,
\end{eqnarray*}
while \eqref{w-simply4} leads  to
\[
h(z)=-1-\frac{1}{(1- \beta)}\cdot \frac{zf'(z)}{f(z)}+\frac{2- \beta}{(1-\beta)\cdot [1-\omega_\beta(z)]}\,.
\]
Therefore, 
\begin{equation*}\label{heqh}
\begin{split}
&\quad \!-\frac{1}{1-\al}\cdot\frac{zf'(z)}{f(z)}+\frac{2-\al}{(1-\al)\cdot [1-\omega_{\al}(z)]}\\
&=-\frac{1}{1-\beta}\cdot\frac{zf'(z)}{f(z)}+\frac{2-\beta}{(1-\beta)\cdot [1-\omega_{\beta}(z)]},
\end{split}
\end{equation*}
or, equivalently,
\begin{equation}\label{heqh1}
\left(\frac{1}{1-\beta}-\frac{1}{1-\al}\right) \cdot \frac{zf'(z)}{f(z)}+ \frac{2-\al}{1-\al}\cdot \frac{1}{1-\omega_\al(z)}= \frac{2-\beta}{1-\beta}\cdot\frac{1}{1-\omega_\beta(z)} \,.
\end{equation}

Now, assume on contrary that $\omega_\beta$ is not a self-mapping of the unit disk.  Then there exists a point $z_0 \in \D$ such that $|\omega_\beta(z)|<1$ for all $|z|<|z_0|$ and $|\omega_\beta(z_0)|=1.$ Let substitute $z=z_0$ in the right-hand side of \eqref{heqh1}, then we get
\[
\begin{split}
&\quad \left(\frac{1}{1-\beta}-\frac{1}{1-\al}\right)\cdot \Re\frac{z_0f'(z_0)}{f(z_0)}+\frac{2-\al}{1-\al}\cdot \Re\frac{1}{1-\omega_\al(z_0)} \\
&=\Re \left[\frac{2-\beta}{(1-\beta)}\cdot \frac{1}{1-\omega_\beta(z_0)}\right]=\frac{1}{2}\cdot\frac{2-\beta}{1-\beta}.
\end{split}
\]
At the same time, by the Statement~1, the expression in the left-hand side of \eqref{heqh1} is
\begin{eqnarray*}\label{heqh3}
&&\left(\frac{1}{1-\beta}-\frac{1}{1-\al}\right)\cdot \Re\frac{z_0f'(z_0)}{f(z_0)}+ \frac{2-\al}{1-\al}\cdot \Re\frac{1}{1-\omega_\al(z_0)}>\\
&&\frac{1}{2}\left(\frac{1}{1-\beta}-\frac{1}{1-\al} +\frac{2-\al}{1-\al}\right)=\frac{1}{2}\cdot \frac{2-\beta}{1-\beta},
\end{eqnarray*}
which contradicts our assumption. So, $\{\Mf_\beta\}_{\al\in(-\infty,1]}$ is a filtration.
\end{proof}

\vspace{1mm}

We do not know whether the filtration $\{\Mf_\beta\}_{\beta\in(-\infty,1]}$ is strict. We prove strictness for $\beta$ restricted to interval $(0,1]$.

\vspace{2mm}
{\sl\bf Statement 3.}   For every $\beta\in(0,1]$,  the boundary $\partial\Mf_\beta$ is not empty.
\begin{proof}
Similarly to the proof of Theorem~\ref{M-alp-1-alp}, let consider the (Briot--Bouquet) differential equation
\begin{equation}\label{dif-eq2}
  q(z)+(1-\beta)\cdot \frac{zq'(z)}{q(z)} = \frac{\beta}{2} +\left( 1- \frac{\beta}{2} \right) \cdot \frac{1+z^2}{1-z^2} \,.
\end{equation}
 It follows from \cite[Theorem~1]{MiMo-85} that this equation has a holomorphic in $\D$ solution $q$.

 We now restore the function $f\in\Ao$ solving $\frac{zf'(z)}{f(z)} =q(z)$. By construction, it belongs to the class $\Mf_\beta$.  Equality \eqref{dif-eq2} enables us to calculate the Taylor coefficients of $f$: $a_2=0$ and $a_3=\frac{2-\beta}{6-4\beta}.$ Thus $a_3-a_2^2=\frac{2-\beta}{6-4\beta}.$

At the same time, according to Theorem~\ref{th-FS-estim} applied to the case $\al=0$ we have that for every function from  $\Mf_\beta$ with the Taylor coefficients $\{a_n\}$ the inequality $|a_3-a_2^2|\le \frac{2-\beta}{6-4\beta}$ holds. Therefore the above constructed function $f$ cannot belong to $\Mf_{\beta'}$ as $\beta'<\beta$.  Hence $f\in\partial\Mf_\beta$. This completes the proof.
\end{proof}

As we have already mentioned, for $\beta<1$ every $(1-\beta)$-convex function is starlike (see \cite{MMR}). Since the classes $\Mf_\beta$ for $\beta<0$ are wider than the class of $(1-\beta)$-convex functions, the result in~\cite{MMR} does not imply that elements of $\Mf_\beta$ are starlike. On the other hand, $\Mf_0=\mathcal{C}$. Hence Theorem~\ref{thm-filtr-beta-new} implies
\begin{corollary}
  For any $\beta<0$ the class $\Mf_\beta$ consists of convex functions. Furthermore, the set $\bigcap_{\beta<0}\Mf_\beta=\left\{ f\in\Ao: \,\Re\left[\frac{zf'(z)}{f(z)} - \left(1+\frac{zf''(z)}{f'(z)}  \right)  \right]<\frac12 \right\}$ is not a singleton.
\end{corollary}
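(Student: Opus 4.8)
The plan is to obtain both assertions directly from Theorem~\ref{thm-filtr-beta-new}, by treating the inequality that defines $\Mf_\beta$ as an \emph{affine} function of the parameter $\beta$. For the first assertion there is essentially nothing to prove beyond the filtration property already established: by Statement~2 above one has $\Mf_\beta\subset\Mf_0=\mathcal C$ for every $\beta<0$, hence every $f\in\Mf_\beta$ is convex. For the whole intersection I would write, for $f\in\Ao$ with $ff'\neq0$, the shorthand $A(z)=\frac{zf'(z)}{f(z)}$ and $B(z)=1+\frac{zf''(z)}{f'(z)}$, and record that
\[
\Re\!\left[\beta A(z)+(1-\beta)B(z)\right]>\frac\beta2
\quad\Longleftrightarrow\quad
\Re B(z)+\beta\left(\Re\bigl(A(z)-B(z)\bigr)-\frac12\right)>0 .
\]

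For fixed $z$ the right-hand side is affine in $\beta$ with slope $\Re(A(z)-B(z))-\frac12$. Since the first part gives $\bigcap_{\beta<0}\Mf_\beta\subset\mathcal C$, any $f$ in the intersection satisfies $\Re B(z)>0$; therefore the displayed inequality holds for \emph{all} $\beta<0$ exactly when the slope is nonpositive, that is $\Re(A(z)-B(z))\le\frac12$ on $\D$ (if the slope were positive the expression would tend to $-\infty$ as $\beta\to-\infty$). Because $\Re(A-B)$ is harmonic on $\D$ and vanishes at the origin (as $A(0)=B(0)=1$), the maximum principle forbids the value $\frac12$ from being attained inside $\D$, so the nonstrict bound is automatically strict, matching the set in the statement. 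The one delicate point — which I expect to be the \textbf{main obstacle} — is the reverse inclusion: one must check that the condition $\Re(A-B)<\frac12$ already forces $\Re B>0$ (convexity), so that no spurious non-convex functions are admitted. Here the identity $B-A=\frac{zA'}{A}$ is helpful, since it recasts the defining condition as $\Re\frac{zA'}{A}>-\frac12$ with $A(0)=1$, from which the convexity-type conclusion should be extracted (cf.\ \cite{EJT}).

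Finally, for the assertion that the intersection is not a singleton I would simply exhibit two of its members. The identity $\Id$ lies in every $\Mf_\beta$, since there $A\equiv B\equiv1$ and $1>\frac\beta2$. For a second element take $f_\varepsilon(z)=z+\varepsilon z^2$ with $\varepsilon>0$ small: then $f_\varepsilon'(z)=1+2\varepsilon z$ is zero-free on $\oD$ once $\varepsilon<\frac12$, so $A$ and $B$ extend continuously to $\oD$ and depend continuously on $\varepsilon$; as $\sup_{\oD}\Re(A-B)=0<\frac12$ at $\varepsilon=0$, the strict bound $\Re(A-B)<\frac12$ and convexity both persist for all sufficiently small $\varepsilon$. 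By the previous paragraph $f_\varepsilon\in\bigcap_{\beta<0}\Mf_\beta$, and $f_\varepsilon\neq\Id$, so the intersection contains at least two (in fact a whole one-parameter family of) functions and is not a singleton.
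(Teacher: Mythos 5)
Your forward inclusion is correct and in fact more careful than the paper's own treatment: the paper derives this corollary in a single line from Theorem~\ref{thm-filtr-beta-new} (convexity because $\Mf_\beta\subset\Mf_0=\Co$ by Statement~2, and the set identity by the affine dependence on $\beta$ of the defining inequality), whereas you make the quantifier analysis explicit and add the nice maximum-principle observation that $\Re(A-B)\le\frac12$ with $(A-B)(0)=0$ forces strict inequality. However, the step you yourself flag as the ``main obstacle'' is a genuine gap, and it is precisely the nontrivial content of the set identity: without proving that $\Re(A-B)<\frac12$ already forces $\Re B\ge 0$, you have only established $\bigcap_{\beta<0}\Mf_\beta\subseteq\left\{f:\Re(A-B)<\frac12\right\}$, while the corollary asserts equality. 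Gesturing at the identity $B-A=\frac{zA'}{A}$ and citing \cite{EJT} does not close it; as it stands, a hypothetical non-convex $f$ with $\Re(A-B)<\frac12$ would falsify your claimed equality, and nothing in your text rules it out.

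The good news is that the missing implication is true and can be closed with tools already in the paper. The condition $\Re(A-B)<\frac12$ means $q:=1+\frac{2zA'(z)}{A(z)}\in\Pp$ (here $A$ is zero-free with $A(0)=1$ because $f'\neq0$, so $\log A$ is well defined). Writing $\frac{A'}{A}=\frac{q-1}{2z}$ and integrating the Riesz--Herglotz representation of $q$ (Theorem~\ref{thm-RH}) termwise gives
\begin{equation*}
\log A(z)=\oint\limits_{\partial\D}\log\frac{1}{1-z\overline{\zeta}}\,d\mu(\zeta).
\end{equation*}
Each value $\log\frac{1}{1-z\overline{\zeta}}$ lies in the logarithmic image of the half-plane $\{\Re w>\frac12\}$, which is a convex domain (its description $x>-\log(2\cos y)$, $|y|<\frac\pi2$, has convex right-hand side), so the barycenter stays inside it; hence $\Re A>\frac12$, i.e.\ $f\in\So^*\!\left(\frac12\right)$ --- consistent with Statement~1 --- and then $\Re B=\Re A+\Re\frac{zA'}{A}>\frac12-\frac12=0$. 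With this, the sufficiency half of your own affine analysis completes the reverse inclusion. Note finally that your non-singleton argument survives independently of the gap: since you verify both $\Re(A-B)<\frac12$ and convexity of $f_\varepsilon$ directly on $\oD$, membership of $f_\varepsilon$ in every $\Mf_\beta$, $\beta<0$, follows from the sufficiency direction alone, so that part of the corollary is fully proved as written.
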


Note that $\Mf_1=\So^*(\frac12),$ and hence Marx--Strohh\"acker's Theorem~\ref{thm-M-S} implies that  $K(\Mf_1)=\frac12$. In addition, it follows from formula \eqref{Moc} (or Proposition~\ref{prop-int-Moc}) that $f\in\Mf_\beta,\ \beta<2,$ where $f(z)=\frac{z}{1-z}$.
Therefore we conclude
\begin{corollary}
\begin{itemize}
  \item [(i)] $K(\Mf_\beta)=\frac12$ and $B(\Mf_\beta)=1$ for every $\beta\le1$;
  \item [(ii)] Every element $f \in \Mf_\beta$, $\beta<1$, generates the semigroup with no boundary repelling fixed point.
\end{itemize}
\end{corollary}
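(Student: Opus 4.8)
The plan is to deduce both parts from the single inclusion $\Mf_\beta\subseteq\So^*(\frac12)$ established in Statement~1, supplemented by the monotonicity of the functionals $K$ and $B$ recorded in Remark~\ref{rem_27_02} and by one explicit test function. No genuinely new estimate is required: all the analytic content already resides in Statement~1, in Theorem~\ref{thm-st-filtr}, and in Theorem~\ref{th-squee}.

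For assertion~(i) I would first obtain one pair of inequalities ``for free''. Applying Theorem~\ref{thm-st-filtr}(d) with $\alpha=\frac12$ gives $K(\So^*(\frac12))=\frac12$ and $B(\So^*(\frac12))=1$; since $\Mf_\beta\subseteq\So^*(\frac12)$ for every $\beta\le1$, the monotonicity in Remark~\ref{rem_27_02} yields $K(\Mf_\beta)\ge\frac12$ and $B(\Mf_\beta)\le1$. To obtain the reverse inequalities I would exhibit the function $f_0(z)=\frac{z}{1-z}$. A short computation shows that the bracketed expression in \eqref{Moc} equals $\frac{1+(1-\beta)z}{1-z}$, whose image is the half-plane $\{\Re w>\beta/2\}$, so that $f_0\in\Mf_\beta$ for every $\beta<2$, in particular for every $\beta\le1$. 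Since $f_0(z)/z=\frac{1}{1-z}$ maps $\D$ conformally onto $\{\Re w>\frac12\}$, one has $\inf_{z\in\D}\Re\frac{f_0(z)}{z}=\frac12$ and $\sup_{z\in\D}|\arg\frac{f_0(z)}{z}|=\frac\pi2$, whence by Definition~\ref{def-K-B} we get $K(\Mf_\beta)\le\frac12$ and $B(\Mf_\beta)\ge1$. Combining the two pairs proves $K(\Mf_\beta)=\frac12$ and $B(\Mf_\beta)=1$.

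For assertion~(ii) I would again invoke Statement~1: for $\beta<1$ every $f\in\Mf_\beta$ lies in $\So^*(\frac12)$, so Marx--Strohh\"acker's Theorem~\ref{thm-M-S} gives $2\frac{f(z)}{z}-1\in\Pp$, i.e.\ $\Re\frac{f(z)}{z}>\frac12>0$ on $\D\setminus\{0\}$. Thus $f\in\Cf_{1/2}$, and by Theorem~\ref{th-squee}(d) the generated semigroup is exponentially squeezing; as recorded in the discussion following Theorem~\ref{th-squee}, such a semigroup has no boundary repelling fixed point. Equivalently, via Theorem~\ref{thm-boud-p} a repelling fixed point would force a finite value of $\lim_{r\to1^-}\frac{f(r\zeta)}{\zeta(r-1)}$, whereas the modulus of $\frac{f(r\zeta)}{\zeta(r-1)}$ is at least $\frac{r}{1-r}\cdot\frac12\to\infty$, a contradiction.

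The statement is essentially bookkeeping. The only point requiring care is the verification that $f_0=\frac{z}{1-z}$ lies in $\Mf_\beta$, which hinges on recognizing that the M\"obius image of $\D$ under $\frac{1+(1-\beta)z}{1-z}$ is exactly the half-plane bounded by the line $\Re w=\beta/2$; this one confirms by evaluating at $z=-1$ and $z=i$ (both of which map to points of real part $\beta/2$) together with $z=0$ (which maps to $1>\beta/2$). Beyond this routine check I do not anticipate any substantial obstacle.
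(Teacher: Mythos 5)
Your proof is correct and takes essentially the same route as the paper: there, too, the inclusion $\Mf_\beta\subseteq\So^*\left(\frac12\right)$ (via $\Mf_1=\So^*\left(\frac12\right)$ and Marx--Strohh\"acker, equivalently Theorem~\ref{thm-st-filtr}(d)) supplies $K(\Mf_\beta)\ge\frac12$, $B(\Mf_\beta)\le1$, the test function $\frac{z}{1-z}\in\Mf_\beta$ for $\beta<2$ gives the reverse bounds, and assertion (ii) follows from $\Re\frac{f(z)}{z}>\frac12$ together with Theorem~\ref{thm-boud-p}. Your explicit M\"obius computation showing that \eqref{Moc} evaluates to $\frac{1+(1-\beta)z}{1-z}$ for $\frac{z}{1-z}$ merely fills in a step the paper leaves implicit.
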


\bigskip

\subsection{Open questions }\label{ssect-quest2}

\setcounter{question}{0}

We start with a question raised in \cite{ESS} including the explanation presented there.

As we have already mentioned, the Noshiro--Warschawski class $\RA$ is a subclass of $\G_0$, hence for any $z\in\D$ both the points $w_0=f'(z)$ and $w_1=\frac{f(z)}z$ lie in the right half-plane. In the definition of analytic filtration $\Af,$ we considered the Euclidean convex combination $\alpha w_1+(1-\alpha)w_0$, see \eqref{N-2}. Since the values $w_0$ and $w_1$ are usually non-vanishing, one may apply the quasihyperbolic geometry on the punctured plane $\C^*=\C\setminus\{0\}.$ Then the quasihyperbolic geodesic of $w_0$ and $w_1$ can be expressed by
$$
w_0^{1-\alpha}w_1^\alpha=w_0\left(\frac{w_1}{w_0}\right)^\alpha
=f'(z)\left(\frac{f(z)}{zf'(z)}\right)^\alpha
$$
(see \cite{MO86} for example). Note that one can choose a single-valued analytic branch of $h(z)=[zf'(z)/f(z)]^{-\alpha}$ on $\D$ with $h(0)=1$ whenever $f$ is non-vanishing except for $z=0$ and locally univalent; namely, $f(z)\ne0$ and $f'(z)\ne0$ for $0<|z|<1.$ Consider the class $\mathfrak{F}_\alpha$ consisting of such functions $f\in\G_0$ that satisfy
$$
\Re\left[f'(z)\left(\frac{f(z)}{zf'(z)}\right)^\alpha\right]>0,\quad z\in\D.
$$
We have now the following fundamental problems.

\begin{question}
Is the family $\{\mathfrak{F}_\alpha\}_{\alpha\in J}$ a filtration of $\G_0$? If the answer is affirmative, is this filtration strict? \end{question}

\begin{question}
What are common dynamical properties of semigroups generated by elements of $\mathfrak{F}_\alpha$, $\alpha\in J$?
\end{question}

\vspace{2mm}

We now turn to filtrations formed by starlike and prestarlike functions as they are presented in Subsection~\ref{subsec-prestar}.
The first question is of general interest from the point of view of geometric function theory.

\begin{question}
What is the topological structure of the boundaries  $\partial \So^*(1-\alpha)$, $\alpha \in [0,1]$?
\end{question}

Recall that for two continuous functions $a$ and $b$ on $J\in\R$, such that $a$ increasing and $b$ decreasing one, the family $\left\{\So^*(a(\al),b(\al))\right\}_{\al\in J}$ is a strict filtration of $\G_0$.

\begin{question}
Does the family $\left\{z(1+b(\alpha)z)^{\frac{a(\alpha)-b(\alpha)}{b(\alpha)}} \right\}_{\alpha \in J}$ form a net of totally extremal functions for this filtration?
\end{question}

\begin{question}
  Find $B(\So^*_M(\lambda))$ and $K(\So^*_M(\lambda))$.
\end{question}
Recall that in Theorem~\ref{c2-b} we establish some estimates for these quantities.

\vspace{2mm}

Regarding the filtration $\Rf=\left\{\Rf_\al \right\}_{\al\in[0,1]}$, we notice that the `starting' set $\Rf_0=\mathcal{C}$ is not a singleton, $\Rf_{\frac12}=\So^*\left(\frac{1}{2}\right)$ consists of univalent functions and $\Rf_1\subsetneq\G_0$. Therefore the following questions naturally arise.

\begin{question}
   How to complete `naturally' the filtration $\Rf$ by negative values of the parameter, say $\alpha\ge-a,\ a>0,$ to get $\Rf_{-a}=\{\Id\}$?
\end{question}

\begin{question}
  Does there exist $\beta>\frac{1}{2}$ such that $\Rf_\beta$ consists of univalent functions? Find supremum of such $\beta$.
\end{question}

\begin{question}
Do there exist $a<0$ and/or $b> 1$ such that the family  $\{\Rf_\al\}_{\al \in [a, b]}$ is a filtration of $\G_0$? In the case of affirmative answer, is this filtration strict?
\end{question}
\vspace{2mm}

Concerning filtration $\Af^1$, it is known from Lemma~\ref{exa-4.1} with $n=1$ that semigroups generated by $f \in\{\Af^1 _\al\}_{\al \leq 1}$ can have one repelling fixed point.
\begin{question}
Can the semigroup generated by some $f \in\{\Af^1 _\al\}_{\al \leq 1}$ have more than one repelling fixed point?
\end{question}

\vspace{2mm}

Next questions concern the classes $\Lf_\al$ defined by \eqref{alm-star}. Recall that by Theorem~\ref{filtration-alpha} the family $\{\Lf_\al\}_{\al\in [0,1]}$ is a filtration of $\G_0$, while for any $a>1$ the family $\{\Lf_\al\}_{\al\in [0,a]}$ is not.
\begin{question}
Does there exist $a<0$ such that the family $\{\Lf_\al\}_{\al\in(a,1]}$ is a filtration of $\G_0$?
\end{question}

We know that  $\Lf_0=\mathcal{S}^*(\frac 12)\subset \Lf_\alpha$ for all $\alpha\in [0,1]$ (see Remark~\ref{rem-alm-star1}). One asks
\begin{question}
For which $\al>0$ does the class $\Lf_\al$ consist of univalent (starlike) functions?
\end{question}

We have proved in Theorem~\ref{M-alp-1-alp} that the boundaries $\partial\Lf_\al$ are not empty as $\al>\frac12$.
\begin{conjecture}
  The filtration $\{\Lf_\al\}_{\al\in[0,1]}$ is strict.
\end{conjecture}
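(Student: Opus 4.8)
The plan is to prove strictness by exhibiting a nonempty boundary $\partial\Lf_\al$ for every $\al\in(0,1]$; recall $\{\Lf_\al\}_{\al\in[0,1]}$ is already a filtration by Theorem~\ref{filtration-alpha}, and by the remark after Definition~\ref{def-stric_filt} nonempty boundaries suffice, since for $s<t$ any $f\in\partial\Lf_t$ lies in $\Lf_t\setminus\Lf_s$. The interval $\left(\frac12,1\right]$ is already settled by Theorem~\ref{M-alp-1-alp}, whose construction fails for $\al\le\frac12$ only because the positivity hypothesis $\Re[\beta h+\gamma]>0$ required to invoke \cite{MiMo-85} breaks down there. I would instead solve the relevant extremal differential equation explicitly, which works uniformly for all $\al\in(0,1)$.

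Writing $q=f/z$ and $p=\al\frac{f(z)}{z}+(1-\al)\frac{zf'(z)}{f(z)}=1+p_1z+p_2z^2+\cdots$, a short computation gives $a_3-a_2^2=\frac{p_2-p_1^2}{2-\al}$; since the sharp bound $|a_3-a_2^2|\le\frac1{2-\al}$ on $\Lf_\al$ (Theorem~\ref{th-FS-estim}) singles out the extremal choice $p(z)=\frac1{1-z^2}$ (for which $p_1=0,\ p_2=1$), I seek $f\in\Ao$ with
\begin{equation*}
\al\,\frac{f(z)}{z}+(1-\al)\,\frac{zf'(z)}{f(z)}=\frac1{1-z^2}.
\end{equation*}
In terms of $q$ this is the Bernoulli equation $zq'=\psi(z)q-\frac{\al}{1-\al}q^2$ with $\psi(z)=\frac1{(1-\al)(1-z^2)}-1$, which the substitution $v=1/q$ turns into the linear equation $zv'+\psi(z)v=\frac{\al}{1-\al}$, $v(0)=1$. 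Integrating by the appropriate factor and simplifying the resulting Euler integral via a hypergeometric transformation yields the closed form
\begin{equation*}
v(z)=(1-z^2)\,\Gauss\!\left(\tfrac12,\,1;\,\tfrac{2-\al}{2(1-\al)};\,z^2\right),
\end{equation*}
holomorphic on $\D$ with $v(0)=1$; the parameter $c:=\frac{2-\al}{2(1-\al)}$ satisfies $c>1$ exactly when $\al\in(0,1)$.

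The decisive step is that $v$ is zero-free on $\D$, so that $f=z/v$ is a bona fide element of $\Ao$ with $f(z)\ne0$ for $z\ne0$. As $1-z^2\ne0$ on $\D$, this reduces to $\Gauss(\frac12,1;c;w)\ne0$ for $w\in\D$. I would read this off the Euler representation (valid for $c>1$)
\begin{equation*}
\Gauss\!\left(\tfrac12,1;c;w\right)=(c-1)\int_0^1(1-t)^{c-2}(1-wt)^{-1/2}\,dt .
\end{equation*}
Indeed, for $w\in\D$ and $t\in[0,1]$ one has $\Re(1-wt)\ge1-|w|>0$, so $(1-wt)^{-1/2}$ lies in the sector $\{|\arg\zeta|<\frac\pi4\}$, while $(1-t)^{c-2}>0$; hence every value of the integrand lies in that sector and the integral has strictly positive real part, so it cannot vanish. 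With $v$ zero-free, $f=z/v$ solves the displayed equation, whence $\Re\left[\al\frac{f}{z}+(1-\al)\frac{zf'}{f}\right]=\Re\frac1{1-z^2}>\frac12$ and $f\in\Lf_\al$, while $p_1=0,\ p_2=1$ give $a_3-a_2^2=\frac1{2-\al}$.

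Finally, for $\al'<\al$ every $g\in\Lf_{\al'}$ satisfies $|a_3-a_2^2|\le\frac1{2-\al'}<\frac1{2-\al}$ by Theorem~\ref{th-FS-estim}, so the function $f$ constructed above cannot belong to $\Lf_{\al'}$; hence $f\in\partial\Lf_\al$ for every $\al\in(0,1)$, and combined with $\partial\Lf_1\ne\emptyset$ (Theorem~\ref{M-alp-1-alp}) this yields strictness of $\{\Lf_\al\}_{\al\in[0,1]}$. I expect the crux to be the zero-freeness of the hypergeometric factor of $v$: the linearizing substitution $v=1/q$ together with the sector estimate for the Euler integral is precisely what replaces the positivity hypothesis of \cite{MiMo-85} that was unavailable in the range $\al\le\frac12$.
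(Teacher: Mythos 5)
There is no proof of this statement in the paper to compare against: the paper poses it as an open conjecture, having proved strictness only on $\left[\frac12,1\right]$ (Theorem~\ref{M-alp-1-alp}), and your diagnosis of why the paper stops there is exact --- in that proof $\Re\left[\beta h(z)+\gamma\right]=\frac{\al}{1-\al}+\frac{1}{1-\al}\Re\frac{z^2}{1-z^2}$ has infimum $\frac{\al-\frac12}{1-\al}$ over $\D$, so the positivity hypothesis of \cite{MiMo-85} fails precisely for $\al\le\frac12$. Your proposal keeps the paper's skeleton (solve the extremal equation $\al\frac{f(z)}{z}+(1-\al)\frac{zf'(z)}{f(z)}=\frac1{1-z^2}$, get $a_3-a_2^2=\frac1{2-\al}$, and separate from $\Lf_{\al'}$, $\al'<\al$, via the sharp bound $\frac1{2-\al'}$ of Theorem~\ref{th-FS-estim} with $\lambda=1$) but replaces the abstract Briot--Bouquet solvability by an explicit solution, and as far as I can verify it is correct and settles the conjecture. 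The checks: the Bernoulli reduction $zv'+\psi(z)v=\frac{\al}{1-\al}$ with $\psi(z)=\frac{1}{(1-\al)(1-z^2)}-1$ is right; the integrating factor $z^{\al/(1-\al)}(1-z^2)^{-1/(2(1-\al))}$ gives $v(z)=(1-z^2)^{\frac{1}{2(1-\al)}}\Gauss\!\left(\frac{1}{2(1-\al)},\frac{\al}{2(1-\al)};c;z^2\right)$ with $c=\frac{2-\al}{2(1-\al)}$, and the Euler transformation $\Gauss(a,b;c;w)=(1-w)^{c-a-b}\Gauss(c-a,c-b;c;w)$ converts this exactly into your stated form $v(z)=(1-z^2)\Gauss\!\left(\frac12,1;c;z^2\right)$ (spot check at $\al=\frac12$: $v(z)=\frac{1-z^2}{2z}\log\frac{1+z}{1-z}$ indeed solves $zv'+\frac{1+z^2}{1-z^2}\,v=1$); $c>1$ on $(0,1)$, so the Euler integral converges at $t=1$ (exponent $c-2>-1$) and your sector estimate is sound, since $1-wt$ stays in the right half-plane and the integrand therefore lies in $\{\left|\arg\zeta\right|<\frac\pi4\}$, forcing the integral to have positive real part; and the coefficient identity $a_3-a_2^2=\frac{p_2-p_1^2}{2-\al}$ checks out ($p_1=a_2$, $p_2=(2-\al)a_3-(1-\al)a_2^2$), consistent with the paper's own computation $a_2=0$, $a_3=\frac1{2-\al}$.

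Two small points worth writing out explicitly in a final version. First, any solution of the linear equation holomorphic at $0$ automatically satisfies $v(0)=1$ (set $z=0$), and the homogeneous solutions $Cz^{-\al/(1-\al)}(1-z^2)^{1/(2(1-\al))}$ are not holomorphic at the origin, so your closed form is \emph{the} holomorphic solution --- this justifies that $f=z/v$ really solves the displayed equation with the correct normalization $f\in\Ao$. Second, when invoking Theorem~\ref{th-FS-estim} for $g\in\Lf_{\al'}$ you are using the identification $\Lf_{\al'}=M_{\al',1-\al'}$; since the coefficient $1-\al'-\beta$ of the second-order term vanishes there, the extra hypothesis $f'(z)\neq0$ in \eqref{class-M} plays no role, and the paper itself applies the theorem to $\Lf_\al$ in exactly this way, so this is consistent. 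What your route buys is clear: the zero-freeness of $v$ via the Euler-integral sector argument is the unconditional substitute for the Miller--Mocanu positivity hypothesis, producing boundary points $f\in\partial\Lf_\al$ for all $\al\in(0,1)$, which combined with $\partial\Lf_1\neq\emptyset$ yields strictness on all of $[0,1]$.
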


As for the classes $\Mf_\beta$, we proved that the family  $\{\Mf_\beta\}_{\beta \in (-\infty, 1]}$ is a filtration of $\G_0$ and that this filtration is strict for $\beta$ restricted to interval $(0,1]$, see Theorem~\ref{thm-filtr-beta-new}. Therefore the next questions remain open.
\begin{question}
Does there exist $b>1$ such that the family  $\{\Mf_\beta\}_{\beta \in (-\infty, b]}$ is a filtration?
Do there exist $a<0$ and $b\geq 1$ such that the family  $\{\Mf_\beta\}_{\beta \in [a, b]}$ is a strict filtration?
\end{question}

\begin{question}
Is the filtration $\{\Mf_\beta\}_{\beta\in(-\infty,1]}$ strict?
\end{question}

\bigskip

\section{Applications to estimates of coefficient functionals}\label{sect-appl}
\setcounter{equation}{0}

This section is devoted to one of the classical problems in geometric function theory, namely, to estimation of different functionals over various classes of analytic functions. Let $f\in \Hol$ have Taylor expansion $f(z)=\sum\limits_{n=0}^{\infty}a_nz^n$. Estimates of the Taylor's coefficients $a_n$ including the Bieberbach conjecture (see for example, \cite{Duren}) have a long history. Among other significant functionals we emphasize the generalized Zalcman functional $\Phi_{m,n}(f,\lambda) =a_1a_{m+n-1} -\lambda a_ma_n$ and its particular case the Fekete--Szeg\"o functional $\Phi(f,\lambda):=\Phi_{2,2}(f,\lambda) =a_1a_3 -\lambda a_2^2.$ The Fekete--Szeg\"o problem for a class of analytic functions is to find the sharp estimate on $|\Phi(f,\lambda)|$ over this class.

\subsection{Estimates for the analytic, prestarlike and pseudo-analytic filtrations}\label{subsect-an-pre-pse}
We begin with the estimates of some coefficient functionals over the classes $\Af_\al$,  $\al\leq 1$, defined by the inequality
\begin{equation*}
\Re \left[\alpha \frac{f(z)}{z} +(1-\alpha) f'(z) \right]\geq 0
\end{equation*}
and studied in Section~\ref{subsec-analytic}.
\begin{theorem}\label{thm-coef-anal}
  Let $f\in\Af_\alpha$ have the Taylor expansion $f(z)\!=\!z+\sum\limits_{n=2}^\infty a_nz^n.$ For ${2\!\le\!k\!\le\!m\!-\!1}$ denote
 \begin{equation}\label{Lambda}
 \Lambda_\alpha(m,k):=\frac{\left(\alpha +(1-\al)k\right) \left(\alpha + (1-\al)(m-k+1)\right)} {2 \left(\alpha +(1-\al)m\right)}.
 \end{equation}
Then
  \begin{equation}\label{estim1}
    \left| a_{m} - \Lambda_\alpha(m,k)a_k a_{m-k+1} \right|  +  \Lambda_\alpha(m,k) \left|a_k a_{m-k+1} \right|  \le\frac2{\alpha +(1-\al)m}.
  \end{equation}
  Consequently, $\displaystyle\left|a_m\right|\le \frac2{\alpha +(1-\al)m}$ and for any $\nu\in\C,$
  \begin{equation}\label{estim2}
    \left| a_{m} - \nu a_k a_{m-k+1} \right|\le  \frac2{\alpha +(1-\al)m} \, \max\left\{1,\left| 1-\frac{\nu } {\Lambda_\alpha(m,k)} \right|\right\}.\
  \end{equation}
  Inequalities \eqref{estim1}--\eqref{estim2} are sharp.
 \end{theorem}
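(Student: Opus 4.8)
The plan is to reduce the whole statement to a single coefficient inequality for the Carath\'eodory class $\Pp$ and then to prove that inequality via the Riesz--Herglotz representation. First I would invoke condition (2) of Theorem~\ref{so}: a function $f\in\Af_\alpha$ precisely when $\alpha\frac{f(z)}{z}+(1-\alpha)f'(z)=q(z)$ for some $q\in\Pp$. Writing $q(z)=1+\sum_{n\ge1}q_nz^n$ and matching Taylor coefficients yields the clean relation $a_m=\dfrac{q_{m-1}}{\alpha+(1-\alpha)m}$ for $m\ge2$. Substituting this into the functional and using the explicit form \eqref{Lambda} of $\Lambda_\alpha(m,k)$, all the denominators cancel, and with $D:=\alpha+(1-\alpha)m>0$ one gets
\[
a_m-\Lambda_\alpha(m,k)\,a_k a_{m-k+1}=\frac1D\Bigl(q_{m-1}-\tfrac12\,q_{k-1}q_{m-k}\Bigr),\qquad \Lambda_\alpha(m,k)\,a_k a_{m-k+1}=\frac{1}{2D}\,q_{k-1}q_{m-k}.
\]
Thus \eqref{estim1} is equivalent to the purely Carath\'eodory statement $\bigl|q_{m-1}-\tfrac12 q_{k-1}q_{m-k}\bigr|+\tfrac12|q_{k-1}q_{m-k}|\le2$.

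The heart of the proof is the following lemma for $q\in\Pp$: for all $j,l\ge1$,
\[
\Bigl|q_{j+l}-\tfrac12 q_j q_l\Bigr|\le 2-\tfrac12|q_jq_l|.
\]
To prove it I would use Theorem~\ref{thm-RH}: writing $q_n=2c_n$ with $c_n=\oint_{\partial\D}\overline\zeta^{\,n}\,d\mu(\zeta)$ for a probability measure $\mu$, the claim becomes $|c_{j+l}-c_jc_l|\le1-|c_j||c_l|$. The key observation is the symmetric identity
\[
c_{j+l}-c_jc_l=\oint_{\partial\D}\bigl(\overline\zeta^{\,j}-c_j\bigr)\bigl(\overline\zeta^{\,l}-c_l\bigr)\,d\mu(\zeta),
\]
obtained by expanding the product and using $\oint\overline\zeta^{\,n}\,d\mu=c_n$ and $\oint d\mu=1$. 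Since $\oint|\overline\zeta^{\,n}-c_n|^2\,d\mu=1-|c_n|^2$, Cauchy--Schwarz gives $|c_{j+l}-c_jc_l|\le\sqrt{(1-|c_j|^2)(1-|c_l|^2)}$, and the elementary bound $\sqrt{(1-a^2)(1-b^2)}\le1-ab$ for $a,b\in[0,1]$ (equivalent, after squaring, to $(a-b)^2\ge0$) closes the gap.

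I would then assemble the consequences. Dividing the lemma by $D$ yields \eqref{estim1}; the triangle inequality gives $|a_m|\le2/D$; and \eqref{estim2} follows from the splitting $a_m-\nu a_ka_{m-k+1}=(a_m-\Lambda_\alpha(m,k) a_ka_{m-k+1})+\Lambda_\alpha(m,k)(1-\nu/\Lambda_\alpha(m,k))a_ka_{m-k+1}$ combined with a two-case estimate (according as $|1-\nu/\Lambda_\alpha(m,k)|\le1$ or $>1$) against $X+Y\le2/D$, where $X=|a_m-\Lambda_\alpha(m,k) a_ka_{m-k+1}|$ and $Y=\Lambda_\alpha(m,k)|a_ka_{m-k+1}|$. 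For sharpness, the choice $q(z)=\frac{1+z}{1-z}$ (that is, $f=zF_\alpha$) gives $q_n\equiv2$, which turns the lemma into an equality and makes \eqref{estim1} and the case $|1-\nu/\Lambda_\alpha(m,k)|\ge1$ of \eqref{estim2} sharp; in the complementary case the function $q(z)=\frac{1+z^{m-1}}{1-z^{m-1}}$ kills $q_{k-1}$ and $q_{m-k}$ (since $1\le k-1,\,m-k\le m-2$) while keeping $|q_{m-1}|=2$, realizing the bound $2/D$.

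I expect the main obstacle to be the lemma, and specifically the discovery of the symmetric factorization of $c_{j+l}-c_jc_l$. The naive estimate obtained by extracting only one deviation factor, namely $|c_{j+l}-c_jc_l|\le\sqrt{1-|c_j|^2}$, is too weak to reach $1-|c_j||c_l|$; one must instead keep both deviations $\overline\zeta^{\,j}-c_j$ and $\overline\zeta^{\,l}-c_l$ inside the integral and apply Cauchy--Schwarz symmetrically before invoking $(a-b)^2\ge0$. Everything downstream of this lemma is routine bookkeeping.
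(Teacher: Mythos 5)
Your proposal is correct, and its skeleton coincides with the paper's: the paper likewise sets $q(z)=\alpha\frac{f(z)}{z}+(1-\alpha)f'(z)\in\Pp$, notes $q_{n}=\left[\alpha+(1-\alpha)(n+1)\right]a_{n+1}$, and deduces \eqref{estim1} from the Livingston-type inequality $\left|p_n-\frac12 p_kp_{n-k}\right|+\frac12\left|p_kp_{n-k}\right|\le 2$, then obtains \eqref{estim2} via the equivalence ``$|a|+|b|\le D$ iff $|a+\mu b|\le D\max\{1,|\mu|\}$ for all $\mu$'' (your two-case splitting is exactly one direction of this). The genuine difference is in what is proved versus cited: the paper outsources both the key inequality and sharpness to Efraimidis--Vukoti\'c (Proposition~2.2 and Lemma~2.1 of \cite{E-V}), whereas you prove the inequality from scratch — the symmetric factorization $c_{j+l}-c_jc_l=\oint_{\partial\D}\bigl(\overline{\zeta}^{\,j}-c_j\bigr)\bigl(\overline{\zeta}^{\,l}-c_l\bigr)\,d\mu$ under the Riesz--Herglotz representation, Cauchy--Schwarz giving $\sqrt{(1-|c_j|^2)(1-|c_l|^2)}$, and then $(a-b)^2\ge0$ — and you exhibit explicit extremals $q(z)=\frac{1+z}{1-z}$ and $q(z)=\frac{1+z^{m-1}}{1-z^{m-1}}$, which do lie in $\Pp$ and produce admissible $f\in\Af_\alpha$ through condition (2) of Theorem~\ref{so}. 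I checked the details: the identity, the norm computation $\oint|\overline{\zeta}^{\,n}-c_n|^2\,d\mu=1-|c_n|^2$, the index bookkeeping ($j=k-1,\ l=m-k\ge1$ exactly matches $2\le k\le m-1$), the cancellation giving $\Lambda_\alpha(m,k)a_ka_{m-k+1}=\frac{q_{k-1}q_{m-k}}{2D}$ with $D=\alpha+(1-\alpha)m\ge1>0$, and both sharpness cases (in the second, $1\le k-1,\,m-k\le m-2$ guarantees $q_{k-1}=q_{m-k}=0$ while $q_{m-1}=2$) all go through. What your route buys is self-containment and an explicit picture of the extremal functions; what the paper's buys is brevity and, via the citation, sharpness for the full range of the functional without case analysis.
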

\begin{proof}
  Since $f\in\Af_\al$, the function $q$ defined by $q(z)=\alpha \frac{f(z)}{z} +(1-\alpha) f'(z) $ has positive real part. Its Taylor coefficients are $p_n=\left[\alpha +(1-\al)(n+1)\right]a_{n+1},\ n\ge1.$
 According to \cite[Proposition~2.2]{E-V}, we have $\left| p_n - \frac12 p_kp_{n-k} \right| + \frac12  \left| p_kp_{n-k} \right|\le2$,
 $1\le k\le n- 1$, or,  which is the same,
  \begin{align*}
& \left| a_{n+1} - \frac{\left(\alpha +(1-\al)(k+1)\right) \left(\alpha + (1-\al)(n-k+1)\right)} {2 \left(\alpha +(1-\al)(n+1)\right)}a_{k+1} a_{n-k+1} \right|  \\
 & +   \frac{\left(\alpha +(1-\al)(k+1)\right) \left(\alpha + (1-\al)(n-k+1)\right)} {2 \left(\alpha +(1-\al)(n+1)\right)}
  \left|a_{k+1} a_{n-k+1} \right| \\
  & \le\frac2{\left(\alpha +(1-\al)(n+1)\right)} .
 \end{align*}
 This inequality is equivalent to \eqref{estim1} by \eqref{Lambda}. In its turn, \eqref{estim1} implies $\left|a_m\right|\le \frac2{\alpha +(1-\al)m}$ by the triangle inequality.

To proceed, we note that Lemma~2.1 in \cite{E-V} can be reformulated as follows:

{\it Let $a,b\in\C$ and $D>0$. Then $|a|+|b|\le D$ if and only if $|a+\mu b|\le D\max\left\{1,|\mu|\right\}$ for all $\mu\in\C$. }

Applying this conclusion to inequality \eqref{estim1} we get
\begin{equation*}
  \left| a_{m} - \Lambda_\alpha(m,k)(1-\mu) a_k a_{m-k+1} \right| \le \frac2{\alpha +(1-\al)m} \,\max\left\{1,|\mu|\right\}.
\end{equation*}

Denote $\nu:= \Lambda_\alpha(m,k)(1-\mu)$. Then the last displayed formula is equivalent to \eqref{estim2}.

The sharpness follows from the sharpness of the used estimates from  \cite[Proposition~2.2]{E-V}.
 \end{proof}
\begin{corollary}\label{corol-estim1}
 Let $f\in\Af_\alpha$ have the Taylor expansion $f(z)=z+\sum\limits_{n=2}^\infty a_nz^n$. Then  for any $\nu\in\C,$
  \[
     \left|\Phi(f,\nu)\right|=\left| a_3 - \nu a_2^2 \right|  \le  \frac2{3-2\alpha} \, \max\left\{1,\left| 1-\frac{2\nu (3-2\alpha)} {\left(2-\alpha\right)^2 } \right|\right\},
  \]
and this estimate is sharp for every $\alpha \leq 1$.
\end{corollary}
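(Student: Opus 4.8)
The plan is to obtain this corollary as the single admissible special case $m=3$, $k=2$ of Theorem~\ref{thm-coef-anal}. First I would check admissibility: the theorem requires $2\le k\le m-1$, and for $m=3$, $k=2$ this reads $2\le 2\le 2$, so it is exactly the (unique) permissible boundary case. With these values $m-k+1=2$, hence $a_k a_{m-k+1}=a_2\cdot a_2=a_2^2$, which is precisely the product appearing in the Fekete--Szeg\"o functional $\Phi(f,\nu)=a_3-\nu a_2^2$. Thus inequality \eqref{estim2} of the theorem, specialized to $m=3$, $k=2$, already has the correct left-hand side $|a_3-\nu a_2^2|$.

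Next I would compute the two constants entering \eqref{estim2}. The prefactor is $\dfrac{2}{\alpha+(1-\alpha)m}=\dfrac{2}{3-2\alpha}$, since $\alpha+3(1-\alpha)=3-2\alpha$. For $\Lambda_\alpha(3,2)$, both factors in the numerator of \eqref{Lambda} equal $\alpha+(1-\alpha)\cdot 2=2-\alpha$ (the first because $k=2$, the second because $m-k+1=2$), so that $\Lambda_\alpha(3,2)=\dfrac{(2-\alpha)^2}{2(3-2\alpha)}$. Consequently $\dfrac{\nu}{\Lambda_\alpha(3,2)}=\dfrac{2\nu(3-2\alpha)}{(2-\alpha)^2}$, and substituting these into \eqref{estim2} yields exactly
\[
\left|a_3-\nu a_2^2\right|\le\frac{2}{3-2\alpha}\,\max\left\{1,\left|1-\frac{2\nu(3-2\alpha)}{(2-\alpha)^2}\right|\right\},
\]
which is the asserted bound.

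Finally, the sharpness is inherited with no additional argument: Theorem~\ref{thm-coef-anal} states that \eqref{estim1}--\eqref{estim2} are sharp, so the extremal function realizing \eqref{estim2} at $m=3$, $k=2$ simultaneously realizes the corollary's estimate for every $\alpha\le 1$. Since the whole derivation is a pure specialization, there is no genuine obstacle; the only point deserving care is that $m=3$, $k=2$ sits on the boundary of the admissible range $2\le k\le m-1$, so I would explicitly confirm that the theorem's hypotheses are satisfied (rather than requiring the strict inequality $k<m-1$) before invoking it.
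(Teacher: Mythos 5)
Your proposal is correct and is exactly the paper's intended derivation: the corollary carries no separate proof and follows from Theorem~\ref{thm-coef-anal} by the specialization $m=3$, $k=2$, with the same computations $\alpha+(1-\alpha)\cdot 3=3-2\alpha$ and $\Lambda_\alpha(3,2)=\frac{(2-\alpha)^2}{2(3-2\alpha)}$ that you carried out. Your explicit check that $m=3$, $k=2$ sits (admissibly) on the boundary of the range $2\le k\le m-1$ is a nice touch of care, though nothing more is needed.
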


Note that estimates to the generalized Zalcman and Fekete--Szeg\"o functionals over generalized analytic filtrations can be treated similarly.

\bigskip

Now we turn to the prestarlike filtration and present the solution of the Fekete--Szeg\"o problem for classes~$\Rf_\al$.

\begin{theorem}\label{thm-prest-FS}
For every $\al\in[0,1]$, we have $\displaystyle \max_{f\in\Rf_\al}\left| \Phi(f,\lambda) \right| = \max \left\{ \frac1{3-2\al}, \left| \lambda -1 \right|   \right\}$.
\end{theorem}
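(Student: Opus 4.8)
The plan is to reduce the problem over $\Rf_\al$ to the classical Fekete--Szeg\"o problem over the starlike class $\So^*(\al)$ by means of the Hadamard product. Assume first $\al<1$ and set $k(z)=\frac{z}{(1-z)^{2-2\al}}$. Since $k$ has all its Taylor coefficients nonzero, the map $f\mapsto g:=f*k$ is a coefficientwise bijection of $\Rf_\al$ onto $\So^*(\al)$, with inverse given by the Hadamard product with the coefficientwise reciprocal of $k$. Writing $f(z)=z+a_2z^2+a_3z^3+\cdots$ and $g(z)=z+b_2z^2+b_3z^3+\cdots$, the relevant coefficients of $k$ are $2-2\al$ and $(1-\al)(3-2\al)$, so $b_2=2(1-\al)a_2$ and $b_3=(1-\al)(3-2\al)a_3$. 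A direct substitution then gives
\begin{equation*}
\Phi(f,\lambda)=a_3-\lambda a_2^2=\frac{1}{(1-\al)(3-2\al)}\bigl(b_3-\mu b_2^2\bigr),\qquad \mu:=\frac{(3-2\al)\lambda}{4(1-\al)},
\end{equation*}
so that $\max_{f\in\Rf_\al}|\Phi(f,\lambda)|=\frac{1}{(1-\al)(3-2\al)}\max_{g\in\So^*(\al)}|b_3-\mu b_2^2|$ and the whole problem becomes the Fekete--Szeg\"o problem for $\So^*(\al)$.

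Next I would parametrize $\So^*(\al)$ by writing $\frac{zg'(z)}{g(z)}=\al+(1-\al)q(z)$ with $q\in\Pp$, $q(z)=1+q_1z+q_2z^2+\cdots$. Comparing coefficients yields $b_2=(1-\al)q_1$ and $b_3=\frac{1-\al}{2}q_2+\frac{(1-\al)^2}{2}q_1^2$, whence
\begin{equation*}
b_3-\mu b_2^2=\frac{1-\al}{2}\bigl(q_2-v q_1^2\bigr),\qquad v:=(1-\al)(2\mu-1).
\end{equation*}
The key analytic input is the standard Carath\'eodory coefficient estimate $|q_2-vq_1^2|\le 2\max\{1,|2v-1|\}$, valid and sharp for every $q\in\Pp$ and $v\in\C$. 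A short computation shows $2v-1=(3-2\al)(\lambda-1)$, so $|b_3-\mu b_2^2|\le(1-\al)\max\{1,(3-2\al)|\lambda-1|\}$, and dividing by $(1-\al)(3-2\al)$ gives exactly $\max\{\frac{1}{3-2\al},|\lambda-1|\}$.

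For sharpness I would use the Carath\'eodory extremals, which correspond to genuine members of $\So^*(\al)$ and hence, via the bijection above, to genuine members of $\Rf_\al$. When $(3-2\al)|\lambda-1|\le1$ the bound $\frac{1}{3-2\al}$ is attained by $q(z)=\frac{1+z^2}{1-z^2}$, i.e.\ $g(z)=\frac{z}{(1-z^2)^{1-\al}}$, whose preimage $f$ has $a_2=0$ and $a_3=\frac{1}{3-2\al}$; when $(3-2\al)|\lambda-1|\ge1$ the bound $|\lambda-1|$ is attained already by $f(z)=\frac{z}{1-z}\in\Rf_\al$ (shown to lie in $\Rf_\al$ in the proof of Theorem~\ref{thm-filtr-prestar}), for which $\Phi(f,\lambda)=1-\lambda$. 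Finally the degenerate case $\al=1$ must be treated directly: here $f\in\Rf_1$ means $\frac{f(z)}{z}=\frac{1+Q(z)}{2}$ with $Q\in\Pp$, so $a_2=\frac{Q_1}2$, $a_3=\frac{Q_2}2$, and the same Carath\'eodory estimate gives $|\Phi(f,\lambda)|=\frac12\bigl|Q_2-\frac{\lambda}2 Q_1^2\bigr|\le\max\{1,|\lambda-1|\}$, matching the formula at $\al=1$. I expect the main obstacle to be purely bookkeeping: keeping the successive reparametrizations $\lambda\mapsto\mu\mapsto v$ straight and verifying the clean identity $2v-1=(3-2\al)(\lambda-1)$, together with checking that the Carath\'eodory extremals really do pull back to honest prestarlike functions, so that the estimate is sharp rather than merely an upper bound.
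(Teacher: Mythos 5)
Your proposal is correct and follows essentially the same route as the paper: convolve with $\frac{z}{(1-z)^{2-2\al}}$ to transfer the problem to $\So^*(\al)$ (noting $b_2=2(1-\al)a_2$, $b_3=(1-\al)(3-2\al)a_3$), apply the sharp starlike-of-order-$\al$ Fekete--Szeg\"o bound, and exhibit sharpness with exactly the paper's two extremals, $\frac{z}{1-z}$ and the preimage of $\frac{z}{(1-z^2)^{1-\al}}$. The only (harmless) differences are that you re-derive the Keogh--Merkes inequality from the Carath\'eodory estimate $|q_2-vq_1^2|\le 2\max\{1,|2v-1|\}$ where the paper simply cites \cite{Ke-Me}, and you treat the degenerate case $\al=1$ directly rather than by citing \cite{EJ-est}, which makes your argument slightly more self-contained.
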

\begin{proof}
  Let $f\in\Rf_\al$ have the Taylor expansion $f(z)=z+\sum\limits_{n=2}^\infty a_nz^n.$ Then
\[
\So^*(\al)\ni\frac{z}{(1-z)^{2(1-\al)}} *f(z) =z+\sum\limits_{n=2}^\infty c_nz^n,
\]
where
\[
c_2=2(1-\al)a_2\qquad\mbox{and}\qquad c_3=(1-\al)(3-2\al)a_3.
\]
It follows from \cite[Theorem~1]{Ke-Me} that $\left|c_3-\mu c_2^2 \right|\le(1-\al)\max\left\{1, \left| 2(1-\al)(2\mu-1)-1 \right| \right\}$ for any $\mu\in\C$ or, equivalently,
\[
\left|(3-2\al)a_3 -4\mu (1-\al)a_2^2  \right| \le \max\left\{ 1, \left| 4\mu(1-\al) - (3-2\al) \right|   \right\}.
\]
Denote $\lambda=\displaystyle\frac{4\mu(1-\al)}{3-2\al}$. The last displayed inequality means that
\[
\left| \Phi(f, \lambda ) \right| \le \max\left\{ \frac1{3-2\al}, \left| \lambda -1 \right|   \right\}.
\]

It remains to show that this estimate is sharp, more precisely, that for every $\lambda$ there is a function $f\in\Rf_\al$ for which it becomes equality.

First assume that $|\lambda-1|\ge  \frac1{3-2\al}$. As we already saw $\frac{1}{1-z}\in\Rf_\al$ for every $\al$. Since the Taylor coefficients of this functions are $a_2=a_3=1$, we have $a_3-\lambda a_2^2=1-\lambda$, so the equality holds.

Otherwise, in tne case where $|\lambda-1|< \frac1{3-2\al}$, let consider the function $f_\al\in\Ao$ defined by its Taylor expansion:
\[
f_\al(z)=z+ \sum_{k=1}^\infty \frac{(1-\al)_k(2k)!}{(2-2\al)_{2k}k!}\, z^{2k+1}.
\]
It follows from \eqref{hadamar} that
\[
f_\al(z)* \frac{z}{(1-z)^{2-2\al}} =z+\sum_{k=1}^{\infty} \frac{(1-\al)_k}{k!}\, z^{2k+1} =\frac{z}{(1-z^2)^{1-\al}}.
\]
Since the last function is starlike of order $\al$, we conclude that $f_\al\in\Rf_\al$. In this case $a_2=0$ and $a_3=\frac1{3-2\al}$. Thus $a_3-\lambda a_2^2=\frac1{3-2\al}$ which completes the proof.
\end{proof}

Note that for $\alpha=1$ this result is known (see, for example, Theorem~2.2 in \cite{EJ-est}).

\bigskip

Finally, we deal with estimates on the coefficient functionals over the classes $\Af^1_\al,\ \al\leq 1,$ defined by the inequality
\begin{equation*}
\left|\alpha\frac{f(z)}{z}+(1-\alpha)f'(z)-1\right|\leq 2-\alpha,
\end{equation*}
see Section~\ref{subsect-pseu-anal}.

\begin{theorem}\label{th-A1-coeff-estim}
Let $f \in \Af^1_\alpha$ have the Taylor expansion $f(z)=z+ \sum\limits_{k=2}^{\infty}a_kz^k$. The following assertions hold:
\begin{itemize}
\item [(i)] $\displaystyle|a_{k+1}|\leq \frac{2-\alpha}{k(1-\alpha)+1}$ for all $k \in \N$. Moreover, $\displaystyle|a_3|\leq \frac{2-\alpha}{3-2\alpha}\cdot(1-|a_2|^2)$ and $\displaystyle|a_4|\leq \frac{2-\alpha}{4-3\alpha}\cdot(1-|a_2|^2)$.
\item [(ii)] For every $\lambda \in \C$ we have $\displaystyle\left|\Phi(f,\lambda)\right| \leq  \max \left(\frac{2-\alpha}{3-2\alpha}\,, \,\, |\lambda|\right)$.
\end{itemize}
\end{theorem}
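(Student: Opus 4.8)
The plan is to transfer the problem to the Schwarz function $\omega\in\Omega$ associated with $f$ in Proposition~\ref{th-A1-prop} and then invoke the classical coefficient inequalities for such functions. First I would record the coefficient dictionary. By \eqref{diff-eq-ps} there is $\omega\in\Omega$ with
\[
\alpha\frac{f(z)}{z}+(1-\alpha)f'(z)-1=(2-\alpha)\omega(z),
\]
an identity that remains valid at $\alpha=1$, where it degenerates to $f(z)/z-1=\omega(z)$. Writing $f(z)=z+\sum_{k\ge 2}a_kz^k$ and $\omega(z)=\sum_{n\ge 1}c_nz^n$ and comparing the coefficients of $z^n$ for $n\ge1$ yields
\[
a_{n+1}=\frac{(2-\alpha)\,c_n}{n(1-\alpha)+1},\qquad n\in\N .
\]
In particular $a_2=c_1$, $\ a_3=\frac{2-\alpha}{3-2\alpha}\,c_2$ and $a_4=\frac{2-\alpha}{4-3\alpha}\,c_3$.

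For assertion (i) I would use three standard estimates for a Schwarz function: $|c_n|\le1$ for every $n$ (Cauchy estimate together with the Schwarz lemma $|\omega(z)|\le|z|$), the sharpened bound $|c_2|\le1-|c_1|^2$, and $|c_3|\le 1-|c_1|^2$. The first immediately gives $|a_{k+1}|\le\frac{2-\alpha}{k(1-\alpha)+1}$. Substituting $a_2=c_1$ into the second and third, together with the displayed formulas for $a_3$ and $a_4$, produces $|a_3|\le\frac{2-\alpha}{3-2\alpha}(1-|a_2|^2)$ and $|a_4|\le\frac{2-\alpha}{4-3\alpha}(1-|a_2|^2)$. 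The bound $|c_3|\le1-|c_1|^2$ is not completely trivial; I would obtain it from the Schur reduction $\phi(z)=\omega(z)/z$, applying the Schwarz--Pick inequality to the transformed map $\frac1z\cdot\frac{\phi-c_1}{1-\overline{c_1}\phi}$ to arrive at the sharp inequality $\bigl|c_3\bigr|\le 1-|c_1|^2-\frac{|c_2|^2}{1+|c_1|}$, from which $|c_3|\le1-|c_1|^2$ follows by discarding the last term (the case $|c_1|=1$ being trivial, since then $\omega(z)=c_1z$).

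For assertion (ii) I would write the Fekete--Szeg\"o functional directly in terms of the Schwarz coefficients, $\Phi(f,\lambda)=a_3-\lambda a_2^2=\frac{2-\alpha}{3-2\alpha}\,c_2-\lambda c_1^2$, and estimate by the triangle inequality together with $|c_2|\le1-|c_1|^2$:
\[
|\Phi(f,\lambda)|\le\frac{2-\alpha}{3-2\alpha}\bigl(1-|c_1|^2\bigr)+|\lambda|\,|c_1|^2 .
\]
Setting $x=|c_1|^2\in[0,1]$, the right-hand side is affine in $x$, hence attains its maximum at an endpoint: at $x=0$ it equals $\frac{2-\alpha}{3-2\alpha}$ and at $x=1$ it equals $|\lambda|$, which gives exactly $\max\bigl(\frac{2-\alpha}{3-2\alpha},|\lambda|\bigr)$. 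The main obstacle in the whole argument is precisely the refined Schwarz estimate $|c_3|\le1-|c_1|^2$; once the coefficient dictionary and these Schwarz bounds are in hand, both assertions reduce to short computations.
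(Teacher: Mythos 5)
Your proposal is correct and follows essentially the same route as the paper: both pass to the Schwarz function $\omega\in\Omega$ from Proposition~\ref{th-A1-prop} (equivalently \eqref{diff-eq-ps}), read off the dictionary $a_{n+1}=\frac{(2-\alpha)c_n}{n(1-\alpha)+1}$, and finish with $|c_n|\le 1$, $|c_2|\le 1-|c_1|^2$, $|c_3|\le 1-|c_1|^2$ together with the same convex-combination endpoint argument for assertion (ii). The only cosmetic difference is that the paper obtains $|\beta_2|,|\beta_3|\le 1-|\beta_1|^2$ by quoting Schur's recurrence for the Schur parameters, whereas you derive the third-coefficient bound directly by one Schur step and Schwarz--Pick; your sharp inequality $|c_3|\le 1-|c_1|^2-\frac{|c_2|^2}{1+|c_1|}$ is correct and yields exactly what is needed (and you also handle the degenerate case $\alpha=1$ explicitly, which the paper leaves implicit).
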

\begin{proof}
Let $f \in \Af^1_\alpha$, then by Proposition~\ref{th-A1-prop} there exists a unique function $\omega \in \Omega$, $\omega(z)=\sum\limits_{k=1}^{\infty}\beta_kz^k,$ such that \eqref{sol-diff-eq} holds. Then for any fixed $z \in \D$,
\begin{eqnarray}\label{f-coeff-A1-calc}
\nonumber f(z)&=&z\left(1+\frac{2-\alpha}{1-\alpha}\cdot \int_{0}^{1} s^{\frac{\alpha}{1-\alpha}}\left(\sum_{k=1}^{\infty}\beta_kz^ks^k \right)ds\right)\\
\nonumber &=&z+(2-\alpha)\cdot\sum_{k=1}^{\infty}\frac{\beta_k}{k+1-k\alpha}z^{k+1} \!.
\end{eqnarray}
Thus
 \begin{equation}\label{a_k-calc}
a_{k+1}= \frac{(2-\alpha)\beta_k}{k+1-k\alpha}, \qquad k\in \N.
 \end{equation}

Since $|\beta_k|\leq 1$ for all $k \in \N$, the estimates on $|a_{k+1}|$ follow.
Assume that $\gamma_k$, $k \in \N$, with $|\gamma_k|\leq 1$ are the Schur parameters of $\omega$, then by Schur’s recurrence relation (see for details  \cite{Sim}, see also \cite{EJ-coeff20}) we have
\begin{eqnarray}\label{beta_k}
\left\{ \begin{array}{ll}
          |\beta_1|  =&|\gamma_1|;\vspace{2mm}\\
          |\beta_2| =&(1-|\gamma_1|^2)|\gamma_2|\leq (1-|\gamma_1|^2);\vspace{2mm}\\
          |\beta_3| =&(1-|\gamma_1|^2)\left|(1-|\gamma_2|^2)\gamma_3-\gamma_1\gamma_2^2\right| \\
        \hspace{7mm}  \leq & (1-|\gamma_1|^2)\max\left(|\gamma_3|,|\gamma_1|\right)\leq (1-|\gamma_1|^2).
        \end{array}\right.
\end{eqnarray}
For $k=1$ formula~\eqref{a_k-calc} implies $a_2=\gamma_1$. For $k=2$ it follows from \eqref{a_k-calc}--\eqref{beta_k} that
\begin{equation*}
\displaystyle|a_3|\leq\frac{2-\alpha}{3-2\alpha}\cdot(1-|a_2|^2).
\end{equation*}
For $k=3$ formula~\eqref{beta_k} implies $|\beta_3|\leq (1-|a_2|^2)$.
Similarly to the above,
\begin{equation*}
\displaystyle|a_4|\leq\frac{2-\alpha}{4-3\alpha}\cdot(1-|a_2|^2).
\end{equation*}
Thus assertion (i) is proven.

Next we use formulas \eqref{a_k-calc} and \eqref{beta_k} to estimate the Fekete--Szeg\"o functional over $\Af^1_\alpha$ and get
\begin{eqnarray}\label{F-S-A1}
\nonumber|a_3-\lambda a_2^2|&=&\left|\frac{(2-\alpha)\beta_2}{3-2\alpha}-\lambda\frac{(2-\alpha)^2\beta_1^2}{(2-\alpha)^2}\right|
\leq\frac{2-\alpha}{3-2\alpha}\left|(1-|\gamma_1|^2)\gamma_2\right|+|\lambda|\cdot|\gamma_1|^2\\
\nonumber&\leq & (1-|\gamma_1|^2)\cdot \frac{2-\alpha}{3-2\alpha}+|\gamma_1|^2\cdot|\lambda|\leq \max \left(\frac{2-\alpha}{3-2\alpha},|\lambda|\right)\!.
\end{eqnarray}
This proves assertion (ii).
\end{proof}

\bigskip

\subsection{Fekete--Szeg\"o functional over quasi-starlike and Mocanu's filtrations}\label{subsec-FS}
 A~close look at the quasi-starlike and Mocanu's filtrations suggests the possibility of studying their properties together considering a two-parameter family  of functions containing both filtrations. For this aim assume that $\alpha,\beta \in \R$ and that $f\in\Ao$ satisfies $f(z)f'(z)\neq 0$ for all $z \in \D \setminus \{0\}$. We use the range of the expression (see \cite{EJT})
\begin{equation}\label{g}
g_{\alpha,\beta}(z)=\alpha\frac{f(z)}{z}+\beta\frac{zf'(z)}{f(z)}+(1-\alpha-\beta)\left(1+\frac{zf''(z)}{f'(z)}\right)
\end{equation}
and define the classes
\begin{equation}\label{class-M}
M_{\alpha,\beta}=\left\{f \in \Ao:\ f(z)f'(z)\neq0\ \mbox{and}\ \Re \left[g_{\alpha,\beta}(z)\right] > \frac{\alpha+\beta}{2},\  z\in\D\setminus\{0\} \right\}.
\end{equation}
Note that
\begin{itemize}
   \item $M_{\alpha,1-\alpha}=\Lf_\al$ and \vspace{1mm}
  \item $M_{0,\beta}=\Mf_\beta$. \vspace{1mm}
\end{itemize}
 In particular, $M_{0,1}=\mathcal{S}^*\left(\frac{1}{2}\right)$ and $M_{0,0}=\mathcal{C}$. In addition, $M_{\alpha,\beta}=\emptyset$ when $\alpha+\beta\geq 2$.  \vspace{2mm}

The results in this subsection can be found in \cite{EJT}.
The first theorem stratifies the range of $(\alpha,\beta)$ according to the values of the Fekete--Szeg\"{o} functional $\Phi(\cdot, \lambda)$.
\begin{theorem}\label{th-FS-estim}
Let $\alpha, \beta \in\R $ satisfy $\alpha+\beta<2$ and $5\alpha+4\beta<6$. Denote
\begin{equation*}
\mu:=\frac{2-\alpha-\beta}{6-5\alpha-4\beta}>0.
\end{equation*}
Then $|\Phi(f, \lambda)| \leq \max \{\mu,|1-\lambda|\}$,  $\lambda \in \C,$ over the class $M_{\alpha,\beta}$.
\end{theorem}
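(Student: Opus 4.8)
The plan is to reduce the estimate to a classical coefficient inequality for the Carath\'eodory class $\Pp$, in the same spirit as the proof of Theorem~\ref{thm-coef-anal}. First I would note that $g_{\alpha,\beta}(0)=1$, since each of $\frac{f(z)}z$, $\frac{zf'(z)}{f(z)}$ and $1+\frac{zf''(z)}{f'(z)}$ tends to $1$ as $z\to0$, and that $1>\frac{\alpha+\beta}2$ because $\alpha+\beta<2$. The defining condition $\Re g_{\alpha,\beta}(z)>\frac{\alpha+\beta}2$ then lets me write
\[
g_{\alpha,\beta}(z)=\frac{\alpha+\beta}2+\frac{2-\alpha-\beta}2\,p(z)
\]
for a uniquely determined $p\in\Pp$, say $p(z)=1+p_1z+p_2z^2+\cdots$; the requirement $f(z)f'(z)\neq0$ in the definition of $M_{\alpha,\beta}$ guarantees that $g_{\alpha,\beta}$ is holomorphic on $\D$.

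The computational core is to expand both sides to second order. Writing $f(z)=z+a_2z^2+a_3z^3+\cdots$ and Taylor expanding the nonlinear quotients gives
\[
\frac{zf'(z)}{f(z)}=1+a_2z+(2a_3-a_2^2)z^2+\cdots,\qquad 1+\frac{zf''(z)}{f'(z)}=1+2a_2z+(6a_3-4a_2^2)z^2+\cdots,
\]
so that, collecting terms,
\[
g_{\alpha,\beta}(z)=1+(2-\alpha-\beta)a_2\,z+\bigl[(6-5\alpha-4\beta)a_3+(4\alpha+3\beta-4)a_2^2\bigr]z^2+\cdots.
\]
Setting $A:=6-5\alpha-4\beta>0$ and $B:=2-\alpha-\beta>0$ and comparing with the expansion $1+\frac{B}2p_1z+\frac{B}2p_2z^2+\cdots$ of the right-hand side yields first $a_2=\frac{p_1}2$, and then
\[
a_3=\frac1{A}\Bigl[\frac{B}2\,p_2-(4\alpha+3\beta-4)\frac{p_1^2}4\Bigr].
\]

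It then remains to assemble $\Phi(f,\lambda)=a_3-\lambda a_2^2$, substitute $a_2^2=\frac{p_1^2}4$, and factor out $\frac{B}{2A}=\frac\mu2$. A short computation gives
\[
\Phi(f,\lambda)=\frac{\mu}2\bigl(p_2-v\,p_1^2\bigr),\qquad v:=\frac{4\alpha+3\beta-4+\lambda A}{2B},
\]
and the identity $4\alpha+3\beta-4-B=5\alpha+4\beta-6=-A$ collapses the crucial quantity to $2v-1=\frac{A(\lambda-1)}{B}=\frac{\lambda-1}\mu$. Finally I would invoke the bound $|p_2-v p_1^2|\le2\max\{1,|2v-1|\}$ valid for every $p\in\Pp$ and $v\in\C$; this follows from \cite[Proposition~2.2]{E-V} (with $n=2$, $k=1$) combined with the reformulation of \cite[Lemma~2.1]{E-V} already used in the proof of Theorem~\ref{thm-coef-anal}. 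Hence
\[
|\Phi(f,\lambda)|\le\frac\mu2\cdot2\max\Bigl\{1,\tfrac{|\lambda-1|}\mu\Bigr\}=\max\{\mu,|\lambda-1|\},
\]
which is the asserted estimate. The only genuinely delicate point is the second-order bookkeeping in the expansion of $g_{\alpha,\beta}$; once the coefficient of $z^2$ is verified, the algebraic simplification $2v-1=(\lambda-1)/\mu$ and the appeal to the Carath\'eodory inequality are routine.
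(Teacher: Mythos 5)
Your proposal is correct: the coefficient bookkeeping checks out ($g_{\alpha,\beta}(z)=1+(2-\alpha-\beta)a_2z+[(6-5\alpha-4\beta)a_3+(4\alpha+3\beta-4)a_2^2]z^2+\cdots$ is right, and the identity $4\alpha+3\beta-4-B=-A$ indeed collapses $2v-1$ to $(\lambda-1)/\mu$), and the inequality $|p_2-vp_1^2|\le 2\max\{1,|2v-1|\}$ for $p\in\Pp$ follows from \cite[Proposition~2.2]{E-V} with \cite[Lemma~2.1]{E-V} exactly as you say. Your route differs mildly but genuinely from the paper's: the paper normalizes $g_{\alpha,\beta}$ to a \emph{Schwarz} function $\omega\in\Omega$ via the M\"obius transformation of Lemma~\ref{lemm-Psi}, extracts $b_1=a_2$ and $b_2=(a_3-a_2^2)/\mu$, and applies the elementary bound $|b_2-sb_1^2|\le\max\{1,|s|\}$ coming from $|b_2|\le 1-|b_1|^2$ \cite{Ke-Me}; you instead normalize to a \emph{Carath\'eodory} function $p\in\Pp$ and invoke the Efraimidis--Vukoti\'c machinery already used in the proof of Theorem~\ref{thm-coef-anal}. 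The two are equivalent under the correspondence $p=(1+\omega)/(1-\omega)$, so neither is stronger; the paper's version rests on a more elementary coefficient inequality, while yours makes Section~5 uniform (one lemma serves both Theorem~\ref{thm-coef-anal} and this result) and keeps all the algebra in explicit closed form. A side benefit of your explicit computation: the paper's displayed expansion of $\omega$ contains a typo (the $z^2$-coefficient should be $(a_3-a_2^2)/\mu$, not $\mu(a_3-a_2^2)$, as otherwise the subsequent substitution $\lambda=1+s\mu$ and the factor $\mu$ in $|a_3-\lambda a_2^2|=\mu|b_2-sb_1^2|$ would be inconsistent), and your independent derivation confirms that the stated estimate $\max\{\mu,|1-\lambda|\}$ is nevertheless the correct one.
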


\begin{proof}
Let $f \in M_{\alpha,\beta}$ has  the Taylor expansion $f(z)=z+\sum\limits_{k=2}^\infty a_k z^k$.
Then
$$f'(z)=1+2a_2z+3a_3z^2+\ldots$$
and
$$f''(z)=2a_2+6a_3z+12a_4z^2+\ldots.$$
Let $g_{\alpha,\beta}$ be defined by formula \eqref{g}. We construct function $\omega$ substituting $g:=g_{\alpha,\beta}$ in formula \eqref{Psi}. One can see that the Taylor expansion of $\omega$ is
\begin{equation*}
\omega(z)=a_2z+(a_3-a_2^2)\mu z^2+ z^3\sum_{k=3}^{\infty}b_k z^{k-1}.
\end{equation*}
Now, for every $s \in \C$ we have
\begin{eqnarray*}
\left|b_2-sb_1^2 \right|= \left|\mu( a_3-a_2^2)-s a_2^2\right| = \mu \cdot\left|a_3-\left(1+\frac{s}{\mu}\right)a_2^2\right|.
\end{eqnarray*}
Denoting
\begin{equation}\label{mu-lambda}
 \lambda:= 1+s\mu,
\end{equation}
we get
\begin{equation*}
\left|a_3-\lambda a_2^2\right|=|\mu| \cdot \left|b_2-sb_1^2\right|
\end{equation*}
It is well known (see, for example, \cite{Ke-Me}) that if $\omega\in\Omega, \ \omega(z)=\sum\limits_{k=1}^{\infty} b_k z^k$ then $|b_2| \leq 1-|b_1|^2$.
Consequently, $|b_2-sb_1^2|\leq \max \{1,|s|\}$ for every  $s \in \C$. So
\begin{equation*}
\left|a_3-\lambda a_2^2\right|\leq \max \{\mu,\mu |s|\}.
\end{equation*}
 Finally, by \eqref{mu-lambda}, the  result follows.
\end{proof}

In connection with this theorem, we note that the level sets of the function \linebreak ${\mu(\alpha,\beta)=\displaystyle\frac{2-\alpha-\beta}{6-5\alpha-4\beta}}$ are rays starting at the point $(-2,4)$ and lying under the lines $\alpha+\beta=2$ and $5\alpha+4\beta=6.$ See Fig.~\ref{fig-mu-level}, where level sets of the function $\mu$ are represented by dot lines.
Assume that estimate in Theorem~\ref{th-FS-estim} is sharp and a curve in the plane $(\alpha,\beta)$ represents a filtration of $\G_0$. Then if it intersects each such ray at most at one point.
 In particular, the bold segments at the bottom of Fig.~\ref{fig-mu-level} represent the quasi-starlike filtration $\Lf$ and Mocanu's filtration $\Mf$.

\begin{figure}
\begin{center}
\includegraphics[angle=0,width=5cm,totalheight=5cm]{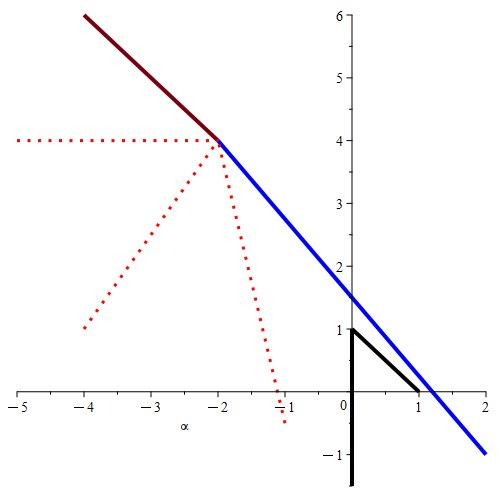}
\caption{Level sets of $\mu(\alpha, \beta)$}\label{fig-mu-level}
\end{center}
\end{figure}

\vspace{2mm}

To prove subsequent results we use Theorem~1 and 2 from \cite{MiMo-85} that can be combined as follows.
\begin{lemma}\label{lem-mm-2}
Let $\beta,\gamma \in \C$,  $\beta\neq0$. Let $h\in\Hol(\D,\C)$ with $h'(0)\neq0$ and $P(z)=\beta h(z)+\gamma$. Consider the Briot--Bouquet differential equation
\[ q(z)+\frac{zq'(z)}{\beta q(z)+\gamma} = h(z), \quad h(0)=q(0).
\]
\begin{itemize}
  \item[(i)] If  $\Re P(z)>0$ for $z\in\D$ then its solution $q$ is analytic in $\D.$
  \item[(ii)] If, in addition, functions $Q(z):=\log P(z)$ and $R(z):= 1/P(z)$ are convex in $\D$, then $q$ is univalent in $\D$.
\end{itemize}
\end{lemma}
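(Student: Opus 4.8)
The plan is to prove the two assertions separately, following the strategy of Miller and Mocanu; since the statement merely assembles Theorem~1 and Theorem~2 of \cite{MiMo-85}, the essential work is to recall the two mechanisms and see that their hypotheses are exactly the ones listed. Throughout I write $q_0:=h(0)=q(0)$ and $P_0:=P(0)=\beta q_0+\gamma$, and note that the standing hypothesis $\Re P(z)>0$ forces $\Re P_0>0$, so in particular $P_0\neq0$. For the existence and analyticity in part~(i) I would first clear the denominator in the Briot--Bouquet equation to obtain
\[
zq'(z)=\bigl(h(z)-q(z)\bigr)\bigl(\beta q(z)+\gamma\bigr),
\]
and then compare Taylor coefficients. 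Writing $q(z)=q_0+\sum_{n\ge1}q_nz^n$ and $h(z)=q_0+\sum_{n\ge1}h_nz^n$ (so that the constant terms cancel in $h-q$), the coefficient of $z^n$ yields the recursion
\[
(n+P_0)\,q_n=h_nP_0+\beta\sum_{k=1}^{n-1}(h_k-q_k)\,q_{n-k}.
\]
Because $\Re(n+P_0)=n+\Re P_0>0$, the factor $n+P_0$ never vanishes, so the $q_n$ are uniquely determined and a formal power-series solution exists; moreover the growth $|n+P_0|\ge n$ in the denominator damps the recursion, which is what secures convergence near the origin.

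The real work of part~(i) is to extend analyticity from a neighbourhood of $0$ to all of $\D$, i.e. to prevent the solution from reaching the singular locus $\{\beta q+\gamma=0\}$. Here I would invoke the first-order differential subordination method. Apply Lemma~\ref{lemm-Psi} to $g=P$ with threshold $0$, so that $\Re P>0$ is recorded as a Schwarz function $\omega\in\Omega$; assuming toward a contradiction that the maximal disc of analyticity has radius $r_0<1$, Jack's lemma (Lemma~\ref{lem-Jack}) supplies a boundary point $z_0$ with $z_0\omega'(z_0)=k\,\omega(z_0)$, $k\ge1$, and substituting the relation $zq'=(h-q)(\beta q+\gamma)$ into the value of $\beta q(z_0)+\gamma$ contradicts $|\omega(z_0)|=1$. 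This shows $\beta q+\gamma$ stays in the right half-plane throughout $\D$, the equation remains regular, and $q$ continues analytically to the whole disc; the same argument simultaneously yields the subordination $q\prec h$ via the fundamental lemma of the theory (see \cite{M-M}).

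Finally, for the univalence in part~(ii) I would run the subordination-chain (Loewner) argument of \cite{MiMo-85}. The extra hypotheses that $Q=\log P$ and $R=1/P$ are convex are precisely the admissibility conditions needed to construct a chain $L(z,t)$ with $L(\cdot,0)=q$ that is subordinate and increasing in $t$: convexity of $Q$ and $R$ provides the requisite starlikeness and positivity estimates on $\partial L/\partial t$ and $z\,\partial L/\partial z$, after which Pommerenke's criterion for subordination chains gives univalence of $q=L(\cdot,0)$. I expect this last step to be the main obstacle. Producing the explicit chain and verifying the admissibility inequalities is markedly more delicate than the analyticity argument, and it is exactly here that the two convexity assumptions on $Q$ and $R$ are genuinely consumed; by contrast, parts~(i) reduces to the recursion and the Jack-type continuation above, which are routine once the positivity of $\Re P$ is exploited.
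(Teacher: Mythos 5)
The paper offers no proof of this lemma at all: it is presented verbatim as the combination of Theorems~1 and~2 of \cite{MiMo-85}, so your proposal has to be measured against the source argument. Your local analysis is fine: clearing the denominator to $zq'(z)=(h(z)-q(z))(\beta q(z)+\gamma)$ and matching coefficients does give $(n+P_0)q_n=h_nP_0+\beta\sum_{k=1}^{n-1}(h_k-q_k)q_{n-k}$ with $\Re(n+P_0)=n+\Re P(0)>0$, so a unique formal solution exists (convergence near $0$ still needs a majorant estimate, which you only gesture at). The genuine gap is in your continuation step. First, a set-up error: you apply Lemma~\ref{lemm-Psi} to $g=P=\beta h+\gamma$, but $\Re P>0$ is the \emph{hypothesis}; the function whose half-plane condition must be propagated is the unknown $\beta q+\gamma$. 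Second, and fatally, even with the correct Schwarz function the Jack's-lemma argument cannot produce the contradiction you claim at the putative maximal radius $r_0<1$. Jack's lemma applies at an \emph{interior} point of the domain of analyticity where $|\omega|$ attains the value $1$, and nothing forces $|\omega|$ to reach $1$ before the singularity forms: a positive-real-part function can blow up on the boundary of its disc of analyticity while its Schwarz function stays strictly inside $\D$ (take $F(z)=\frac{1+z/r_0}{1-z/r_0}$, for which $\omega(z)=z/r_0$ and $|\omega|<1$ on $|z|<r_0$, yet $F$ has a pole at $z=r_0$). The cleared equation is of Riccati type, whose solutions have movable poles compatible with the half-plane constraint, so your argument proves only that $\Re(\beta q+\gamma)>0$ persists \emph{wherever} $q$ is analytic — not that $q$ is analytic on all of $\D$, which is the actual content of part (i).

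Miller and Mocanu close exactly this gap by linearizing rather than continuing: with $F=\beta q+\gamma$ the equation becomes $zF'=(P-F)F$, and the substitution $F=zv'/v$ yields $zv''/v'=P-1$, whence
\[
\beta q(z)+\gamma=\frac{z^{P(0)}E(z)}{\displaystyle\int_0^z t^{P(0)-1}E(t)\,dt}\,,\qquad E(z)=\exp\int_0^z\frac{P(t)-P(0)}{t}\,dt,
\]
and the hypothesis $\Re P>0$ is used to show (via the starlikeness-type condition $\Re\frac{z(zv')'}{zv'}=\Re P>0$ and their integral-operator lemma) that the denominator never vanishes on $\D$; analyticity of $q$ on the whole disc then comes in one stroke, with no continuation argument needed. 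As for part (ii), you do not actually prove anything: the construction of the subordination chain and the verification of the admissibility inequalities — the only place where the convexity of $Q=\log P$ and $R=1/P$ is consumed — are deferred wholesale to \cite{MiMo-85}. That makes your treatment of (ii) a citation rather than a proof, which, to be fair, is precisely what the paper itself does; but then the added value of your proposal was supposed to be the reconstruction of (i), and that reconstruction does not work as written.
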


Now we are ready to present the solution of the Fekete--Szeg\"o problem for the classes $\Lf_\alpha$  of the quasi-starlike filtration.
\begin{theorem}\label{th-F-S-quasist}
 Let $\alpha<2$ and  $f \in \Lf_\alpha$. Then
\begin{equation}\label{F-S-quasist}
|\Phi(f, \lambda)| \leq \max \left\{\frac{1}{2-\alpha},|1-\lambda|\right\},\quad \lambda \in \C.
\end{equation}
 Moreover, if $\alpha \in \left[\frac{1}{2},1\right]$, then this estimate is sharp.
\end{theorem}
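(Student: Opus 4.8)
The plan is to obtain the inequality \eqref{F-S-quasist} as a direct specialization of the general two-parameter estimate of Theorem~\ref{th-FS-estim}, and then to verify sharpness by producing explicit extremal functions. For the inequality, I would use the identity $M_{\alpha,1-\alpha}=\Lf_\alpha$: a function $f\in\Lf_\alpha$ is precisely a member of $M_{\alpha,\beta}$ with $\beta=1-\alpha$, since then $g_{\alpha,1-\alpha}(z)=\alpha\frac{f(z)}{z}+(1-\alpha)\frac{zf'(z)}{f(z)}$ and the threshold $\frac{\alpha+\beta}{2}$ becomes $\frac12$, recovering \eqref{alm-star}. Substituting $\beta=1-\alpha$ into the hypotheses of Theorem~\ref{th-FS-estim}, the condition $\alpha+\beta<2$ reads $1<2$, and $5\alpha+4\beta<6$ reads $\alpha<2$, which is exactly the standing assumption. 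The constant there computes to $\mu=\frac{2-\alpha-\beta}{6-5\alpha-4\beta}=\frac{1}{2-\alpha}$, so Theorem~\ref{th-FS-estim} yields $|\Phi(f,\lambda)|\le\max\{\tfrac1{2-\alpha},|1-\lambda|\}$ immediately.

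For sharpness on $\alpha\in[\frac12,1]$ I would split according to which term realizes the maximum. When $|1-\lambda|\ge\frac1{2-\alpha}$, the function $f(z)=\frac{z}{1-z}$ suffices: a short computation gives $\frac{f(z)}{z}=\frac{zf'(z)}{f(z)}=\frac1{1-z}$, whence $g_{\alpha,1-\alpha}(z)=\frac1{1-z}$ has real part exceeding $\frac12$, so $f\in\Lf_\alpha$; as $a_2=a_3=1$ one gets $\Phi(f,\lambda)=1-\lambda$, attaining the bound. When $|1-\lambda|<\frac1{2-\alpha}$, the value $\frac1{2-\alpha}$ should be delivered by a function with $a_2=0$ and $a_3=\frac1{2-\alpha}$, for which $\Phi(f,\lambda)=\frac1{2-\alpha}$ independently of $\lambda$.

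To produce that function for $\alpha\in[\frac12,1)$ I would reuse the Briot--Bouquet construction from the proof of Theorem~\ref{M-alp-1-alp}: solving $\alpha\frac{f(z)}{z}+(1-\alpha)\frac{zf'(z)}{f(z)}=\frac1{1-z^2}$ and matching Taylor coefficients yields exactly $a_2=0$, $a_3=\frac1{2-\alpha}$, while $\Re\frac1{1-z^2}>\frac12$ forces the solution into $\Lf_\alpha$. The only delicate point at the endpoint $\alpha=\frac12$ is the existence hypothesis of Lemma~\ref{lem-mm-2}(i): with $\beta=\frac{\alpha}{1-\alpha}$ and $\gamma=0$ the relevant function $P(z)=\frac{\alpha}{1-\alpha}+\frac1{1-\alpha}\frac{z^2}{1-z^2}$ satisfies $\Re P(z)>\frac{2\alpha-1}{2(1-\alpha)}$ because $\Re\frac{z^2}{1-z^2}>-\frac12$; at $\alpha=\frac12$ this reduces to $P(z)=\frac{1+z^2}{1-z^2}$, still of strictly positive real part, so the construction survives the endpoint. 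Finally the case $\alpha=1$, where the defining relation loses its second-order term and the construction degenerates, I would settle directly with $f(z)=\frac{z}{1-z^2}$: here $\frac{f(z)}{z}=\frac1{1-z^2}$ has real part above $\frac12$, so $f\in\Lf_1$, and $a_2=0$, $a_3=1=\frac1{2-1}$.

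The genuinely nontrivial part is the sharpness in the regime $|1-\lambda|<\frac1{2-\alpha}$, and within it the endpoint analysis: confirming that the Briot--Bouquet solution remains analytic and lies in $\Lf_\alpha$ up to and including $\alpha=\frac12$, where the crude positivity estimate is only non-strict, and supplying a separate extremal at $\alpha=1$. By contrast, the inequality itself is essentially automatic once the identification $\beta=1-\alpha$ reduces it to Theorem~\ref{th-FS-estim}.
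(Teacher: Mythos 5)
Your proposal is correct and follows essentially the same route as the paper: the inequality is obtained by specializing Theorem~\ref{th-FS-estim} to $\beta=1-\alpha$ (with $\mu=\frac{1}{2-\alpha}$), and sharpness comes from two extremals, one with $a_2=a_3=1$ and one with $a_2=0$, $a_3=\frac1{2-\alpha}$ produced via the Briot--Bouquet machinery of Lemma~\ref{lem-mm-2}.

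The differences are minor but to your credit. First, for the branch $|1-\lambda|\ge\frac1{2-\alpha}$ you observe that $f(z)=\frac{z}{1-z}$ satisfies $\frac{f(z)}{z}=\frac{zf'(z)}{f(z)}=\frac1{1-z}$, so $g_{\alpha,1-\alpha}(z)=\frac1{1-z}$ for every $\alpha$; this replaces the paper's abstract $k=1$ Briot--Bouquet solution $f^{(1)}$ by its explicit closed form, which is cleaner. Second, you treat the endpoints explicitly, and this actually fills a small gap: the paper's construction is stated only for $\alpha\in\left(\frac12,1\right)$, since its verification chain $\Re\left[\beta h(z)\right]>\frac{\alpha-\frac12}{1-\alpha}>0$ degenerates at $\alpha=\frac12$, and the endpoint $\alpha=1$ is settled by citation to \cite{EJ-est, E-J-21a} (the case $\alpha=0$ mentioned there lies outside the claimed range), so $\alpha=\frac12$ is not covered by the paper's argument as written. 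Your observation that the strict Harnack-type bound $\Re\frac{z^2}{1-z^2}>-\frac12$ still gives $\Re P(z)>0$ pointwise at $\alpha=\frac12$ (where $P(z)=\frac{1+z^2}{1-z^2}$), so that Lemma~\ref{lem-mm-2}(i) applies, closes that endpoint; and your direct extremal $f(z)=\frac{z}{1-z^2}\in\Lf_1$ with $a_2=0$, $a_3=1$ settles $\alpha=1$ without the external reference. One caveat you inherit from the paper rather than introduce: Lemma~\ref{lem-mm-2} as stated hypothesizes $h'(0)\neq0$, which fails for $h(z)=1+\frac1\alpha\cdot\frac{z^2}{1-z^2}$; the paper uses the lemma the same way, and the underlying Miller--Mocanu existence result does not need that hypothesis for analyticity, but a careful write-up should note it.
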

\begin{proof}
  Theorem \ref{th-FS-estim} applied to the case $\beta=1-\alpha$  gives
  \[
   \sup_{f\in M_{\alpha,1-\alpha}}|\Phi(f, \lambda)| \le \max \left\{\frac{1}{2-\alpha},|1-\lambda|\right\},\quad \lambda \in \C.
   \]
   For $\alpha=0$ it follows from a classical result of Koegh and Merkes \cite{Ke-Me} that this supremum attains at the value in the right-hand side, while for $\alpha=1$ it follows from \cite{EJ-est, E-J-21a}.
To prove sharpness of estimate \eqref{F-S-quasist} for every $\alpha\in\left(\frac{1}{2},1\right)$, we show that there are two functions $f^{(1)},f^{(2)}\in \Lf_\alpha$  such that the functions $g_{\alpha,1-\alpha}^{(1)},g_{\alpha,1-\alpha}^{(2)}$ constructed for them by \eqref{g} are
  \begin{eqnarray*}\label{g-1-n}
    g_{\alpha,1-\alpha}^{(k)}(z) = \frac{1}{1-z^k},\quad k=1,2.
  \end{eqnarray*}

Choose $\beta=\frac{\alpha}{1-\alpha},\ \gamma=0,\ c=1$ and $h^{(k)}(z)=1+\frac1\alpha \cdot \frac{z^k}{1-z^k}\,,\ k=1,2,$ to use Lemma~\ref{lem-mm-2}. Then
\[
\Re\left[\beta h^{(k)}(z)+\gamma \right]=\frac{\alpha}{1-\alpha} +\frac{1}{1-\alpha}\cdot \frac{z^k}{1-z^k}>\frac{\alpha-\frac12}{1-\alpha}>0 \, \text{ for } \, \frac{1}{2}<\alpha<1 .
\]
Therefore, this lemma can be applied, that is, the solutions $q^{(k)}$ of the corresponding (Briot--Bouquet) differential equation belong to $\Hol$. By construction, the functions $f^{(k)}(z)=zq^{(k)}(z)$ belong to $\Lf_\alpha$.

  Further, using the fact that $g_{\alpha,1-\alpha}(z)=\alpha\cdot \frac{f(z)}{z}+(1-\alpha)\cdot \frac{zf'(z)}{f(z)}$, one calculates first Taylor coefficients of $f^{(k)}$. In particular, for $f^{(1)}$ we can see that $a_2=a_3=1$. Hence, $\Phi(f^{(1)}, \lambda) = 1-\lambda$. Repeating such calculation for $f^{(2)}$ we get $a_2=0, \ a_3=\frac1{2-\alpha}.$ So, $\Phi(f^{(2)}, \lambda) = \frac1{2-\alpha}.$  The proof is complete.
\end{proof}

\vspace{2mm}

Next we present the solution of the Fekete--Szeg\"o problem for classes $\Mf_\alpha$ of Mocanu's filtration.
 \begin{theorem}\label{th-F-S-mocanu}
   Let $\beta<\frac{3}{2}$ and  $f \in \Mf_\beta$. Then
   \begin{equation*}
   |\Phi(f, \lambda)| \leq \max \left\{\frac{2-\beta}{6-4\beta},|1-\lambda|\right\},\quad \lambda \in \C.
   \end{equation*}
 Moreover, if $\beta \in [0,1]$, then this estimate is sharp.
 \end{theorem}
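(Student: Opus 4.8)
The plan is to obtain the inequality as a direct specialization of the two-parameter estimate of Theorem~\ref{th-FS-estim}, and then to settle sharpness on $[0,1]$ by exhibiting explicit extremal functions along the lines of the proof of Theorem~\ref{th-F-S-quasist}.

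For the bound itself I would use the identification $\Mf_\beta=M_{0,\beta}$ recorded after \eqref{class-M} and set $\alpha=0$ in Theorem~\ref{th-FS-estim}. The two admissibility constraints $\alpha+\beta<2$ and $5\alpha+4\beta<6$ then collapse to $\beta<2$ and $\beta<\tfrac32$, so the single hypothesis $\beta<\tfrac32$ is precisely what the theorem requires, and the quantity $\mu=\frac{2-\alpha-\beta}{6-5\alpha-4\beta}$ specializes to $\frac{2-\beta}{6-4\beta}$. Theorem~\ref{th-FS-estim} thus yields $|\Phi(f,\lambda)|\le\max\{\frac{2-\beta}{6-4\beta},|1-\lambda|\}$ for every $f\in\Mf_\beta$ and all $\lambda\in\C$, which is the asserted estimate.

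For sharpness when $\beta\in[0,1)$ the idea is to realize each of the two competing quantities by a separate function, reusing the Briot--Bouquet device of Statement~3 in the proof of Theorem~\ref{thm-filtr-beta-new}. Writing $q(z)=\frac{zf'(z)}{f(z)}$, membership $f\in\Mf_\beta$ is equivalent to $\Re[q(z)+(1-\beta)\frac{zq'(z)}{q(z)}]>\frac\beta2$, so for $k=1,2$ I would solve
$$
q(z)+(1-\beta)\frac{zq'(z)}{q(z)}=\frac\beta2+\Bigl(1-\frac\beta2\Bigr)\frac{1+z^{k}}{1-z^{k}}.
$$
Matching this with the form of Lemma~\ref{lem-mm-2} (its parameter equal to $\frac1{1-\beta}$, and $\gamma=0$) makes the relevant function $P$ equal to $\frac1{1-\beta}$ times the right-hand side; since $1-\frac\beta2>0$ and $\Re\frac{1+z^{k}}{1-z^{k}}>0$, one has $\Re P>0$ on $\D$ for $0\le\beta<1$, so the lemma supplies a holomorphic $q^{(k)}$ and the functions $f^{(k)}$ recovered from $\frac{zf'}{f}=q^{(k)}$ lie in $\Mf_\beta$. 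For $k=2$ this is the boundary function already analyzed in Statement~3, with $a_2=0$ and $a_3=\frac{2-\beta}{6-4\beta}$, whence $\Phi(f^{(2)},\lambda)=\frac{2-\beta}{6-4\beta}$; for $k=1$ the analogous computation of the first Taylor coefficients gives $a_2=a_3=1$, whence $\Phi(f^{(1)},\lambda)=1-\lambda$. Consequently, whichever of $\frac{2-\beta}{6-4\beta}$ and $|1-\lambda|$ is larger is attained, so the bound is sharp for $\beta\in[0,1)$. The remaining endpoint is the classical class $\Mf_1=\So^*(\frac12)$, for which the sharp value $\max\{\frac12,|1-\lambda|\}$ is known, see \cite{EJ-est}; this is exactly the value of $\frac{2-\beta}{6-4\beta}$ at $\beta=1$, so the sharpness claim holds on all of $[0,1]$.

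I expect the main obstacle to be the two verifications that cannot be imported verbatim. First, one must confirm that $\Re P>0$ genuinely holds throughout $\D$ on the open range $\beta\in[0,1)$ while the factor $\frac1{1-\beta}$ blows up at $\beta=1$; the change of sign of this factor across $\beta=1$ is exactly what prevents the construction, and hence the sharpness statement, from extending past $\beta=1$. Second, one must carry out the low-order coefficient expansion of $q^{(1)}$ through $\frac{zf'}{f}=q^{(1)}$ to certify $a_2=a_3=1$ for the $k=1$ function, the one case not already tabulated in Statement~3.
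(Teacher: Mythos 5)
Your proposal is correct and matches the paper's own proof essentially step for step: the bound comes from Theorem~\ref{th-FS-estim} with $\alpha=0$ (where $\mu$ specializes to $\frac{2-\beta}{6-4\beta}$), and sharpness is obtained from the same two Briot--Bouquet constructions with right-hand sides $\frac{\beta}{2}+\bigl(1-\frac{\beta}{2}\bigr)\frac{1+z^k}{1-z^k}$, $k=1,2$, yielding $a_2=a_3=1$ and $a_2=0,\ a_3=\frac{2-\beta}{6-4\beta}$ respectively. The only (immaterial) deviation is bookkeeping at the endpoints: the paper cites Keogh--Merkes for both $\beta=0$ and $\beta=1$, while you cover $[0,1)$ by the construction and settle $\beta=1$ via $\Mf_1=\So^*\!\left(\frac12\right)$.
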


 \begin{proof}
Note that Theorem \ref{th-FS-estim} applied to the case $\alpha=0$  gives
  \[
   \sup_{f\in M_{0,\beta}}|\Phi(f, \lambda)| \le \max \left\{\frac{2-\beta}{6-4\beta},|1-\lambda|\right\},\quad \lambda \in \C.
   \]
According to the result of Keogh and Merkes in \cite{Ke-Me}, this supremum is attained at the right-hand side value for $\beta=0$ and $\beta=1$.  Thus we have to prove that the estimate is sharp whenever $\beta\in(0,1)$.

   For this purpose, we show that there are two functions $f^{(1)},f^{(2)}\in M_{0,\beta}$  such that the functions $g_{0,\beta}^{(1)},g_{0,\beta}^{(2)}$ constructed for them by \eqref{g} are
  \begin{eqnarray}\label{g-1}
    g_{0,\beta}^{(1)}(z) &=& \frac{\beta}{2} +\left( 1- \frac{\beta}{2} \right) \cdot \frac{1+z}{1-z} \,,\\
    g_{0,\beta}^{(2)}(z) &=& \frac{\beta}{2} +\left( 1- \frac{\beta}{2} \right) \cdot \frac{1+z^2}{1-z^2} \,,\nonumber
  \end{eqnarray}
  respectively. Denote $q(z):=\frac{z {f^{(1)}}'(z)}{f^{(1)}(z)}$. Then equality \eqref{g-1} coincides with
  \[
  q(z)+(1-\beta)\cdot \frac{zq'(z)}{q(z)} = \frac{\beta}{2} +\left( 1- \frac{\beta}{2} \right) \cdot \frac{1+z}{1-z} \,.
  \]
  It follows from Lemma~\ref{lem-mm-2} that the solution $q$ of this (Briot--Bouquet) differential equation is analytic in the unit disk $\D$. Using this solution we conclude that $f^{(1)}$ is analytic in $\D$ too. By construction, it belongs to the class $M_{0,\beta}$. Similarly, one considers the case of $f^{(2)}$.

  Further, equality \eqref{g-1} enables to calculate early Taylor coefficients of $f^{(1)}$, in particular, we can see that $a_2=a_3=1$. Hence, $\Phi(f^{(1)}, \lambda) = 1-\lambda$. Repeating such calculation for $f^{(2)}$, we get $a_2=0, \ a_3=\frac{2-\beta}{6-4\beta}.$ Thus $\Phi(f^{(2)}, \lambda) = \frac{2-\beta}{6-4\beta}.$
  This completes the proof.
   \end{proof}

\bigskip

\subsection{Coefficient estimates for non-linear resolvents}\label{subsec-resolv}

One of main properties of generators is the following result of Reich and Shoikhet (\cite{R-S-96}, see also \cite{E-R-S-19, R-S1, SD}):

\begin{theorem}\label{thm-RS-resolv}
  Let $f\in\Hol$. Then $f\in\G$ if and only if for every $z\in\D$ and every $r>0$ the functional equation $w+rf(w)=z$ has the unique solution  $w=G_r(z)$ such that $G_r\in\HolD$.
\end{theorem}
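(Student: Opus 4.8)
The plan is to read the statement as the equivalence between membership in $\G$ and the \emph{range condition}: the map $g_r := \Id + rf$ restricts to a biholomorphism of $g_r^{-1}(\D)$ onto $\D$ whose image lies inside $\D$, so that $G_r = g_r^{-1}$ is a well-defined element of $\HolD$. I would treat the two implications separately, using Abate's characterization \eqref{abate} in Theorem~\ref{th-gen-prop}(b) as the bridge between the infinitesimal inequality and the global solvability. The Berkson--Porta form \eqref{b-p} would only be invoked if one prefers to first normalize the Denjoy--Wolff point.

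For the easier implication $(\Leftarrow)$, I would assume the resolvents $G_r\in\HolD$ exist for all $r>0$ and recover \eqref{abate} by letting $r\to0^+$. Differentiating the defining relation $G_r(z)+rf(G_r(z))=z$ gives $G_r'(z)=1/(1+rf'(G_r(z)))$, and since $G_r(z)\to z$ on compact subsets one obtains the first-order expansions $G_r(z)=z-rf(z)+o(r)$ and $G_r'(z)=1-rf'(z)+o(r)$. Because each $G_r$ is a holomorphic self-map of $\D$, the Schwarz--Pick inequality $(1-|z|^2)|G_r'(z)|\le 1-|G_r(z)|^2$ holds. Substituting the expansions, cancelling the common term $1-|z|^2$, dividing by $r>0$ and passing to the limit yields
\[
2\,\Re\!\left[\overline{z}\,f(z)\right]+(1-|z|^2)\,\Re f'(z)\ge 0,
\]
which is precisely \eqref{abate}; hence $f\in\G$ by Theorem~\ref{th-gen-prop}.

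For $(\Rightarrow)$ I would assume \eqref{abate} and construct $G_r$ by a continuity (homotopy-in-$r$) argument. At $r=0$ the equation $w+rf(w)=z$ is solved by $w=z\in\D$, and the holomorphic implicit function theorem propagates a local holomorphic solution as $r$ increases, as long as no solution escapes to $\partial\D$ and the derivative $1+rf'(w)$ stays nonzero. The non-escape is the mechanism forced by \eqref{abate}: computing $|w+rf(w)|^2=|w|^2+2r\,\Re[\overline{w}f(w)]+r^2|f(w)|^2$ and using that the boundary form of \eqref{abate}, in which the weight $(1-|w|^2)$ suppresses the $f'$-term, drives $\Re[\overline{w}f(w)]$ toward a nonnegative value as $|w|\to1$, one expects $g_r$ to push points near $\partial\D$ out of $\D$; thus $g_r^{-1}(\D)$ is compactly contained in $\D$ and $g_r$ is proper onto $\D$. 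A proper holomorphic map whose degree equals $1$ at $r=0$ keeps degree $1$ by continuity, so it is a biholomorphism, giving existence and uniqueness of $w=G_r(z)\in\D$ and the holomorphy of $G_r$ simultaneously.

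The hard part will be exactly this a~priori estimate in the forward direction. Abate's inequality is an interior, infinitesimal statement, whereas the non-escape requires control of $\Re[\overline{w}f(w)]$ as $|w|\to1$ without assuming that $f$ extends to $\partial\D$. I would handle it by exhausting $\D$ with subdisks $\D_\rho$, on which suitably rescaled restrictions of $f$ inherit \eqref{abate}, solving the resolvent equation on each $\D_\rho$ first, and then passing to the limit $\rho\to1$ using the normality of the uniformly bounded family $\{G_r\}$. Keeping the degree computation valid and the solutions uniformly inside $\D$ throughout this exhaustion is the delicate technical step on which the whole direction hinges.
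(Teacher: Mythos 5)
First, note that the paper does not prove Theorem~\ref{thm-RS-resolv} at all: it quotes it from Reich--Shoikhet \cite{R-S-96} (see also \cite{E-R-S-19, R-S1, SD}), so your proposal must be measured against the standard proof in those sources rather than against anything in the text.

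Your backward direction is essentially sound. The one step you assert without proof --- $G_r\to\Id$ locally uniformly as $r\to0^+$ --- is genuinely needed for the expansions, but it follows quickly from the hypothesis: for fixed $z$ choose $\varepsilon$ with $\overline{D(z,\varepsilon)}\subset\D$; since $|rf(w)|<\varepsilon=|w-z|$ on $|w-z|=\varepsilon$ for small $r$, Rouch\'e gives a zero of $w+rf(w)-z$ in $D(z,\varepsilon)$, and the \emph{assumed uniqueness} identifies it with $G_r(z)$. With that patched, the Schwarz--Pick computation correctly recovers \eqref{abate}, hence $f\in\G$ by Theorem~\ref{th-gen-prop}; this is a legitimate route for this implication.

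The forward direction, however, has a genuine gap, and it sits exactly where you place your hopes. Abate's inequality only gives $2\Re[\overline{w}f(w)]\ge-(1-|w|^2)\Re f'(w)$, and since $f'$ is uncontrolled near $\partial\D$ this does not yield the non-escape estimate $\liminf_{|w|\to1^-}|w+rf(w)|\ge1$ on which your properness/degree argument rests (indeed $\Re[\overline{w}f(w)]$ can be negative arbitrarily close to $\partial\D$: for $f(w)=w^2-1$ one has $\Re[\overline{w}f(w)]=-\rho(1-\rho^2)\cos\theta$ at $w=\rho e^{i\theta}$). Worse, your proposed repair by exhaustion is structurally broken: dilations preserve $\G_0$ but \emph{not} $\G$. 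For the generator $f(w)=w^2-1$ (Berkson--Porta form \eqref{b-p} with $\tau=1$, $q(w)=\frac{1+w}{1-w}$), the rescaled function $f_\rho(w)=f(\rho w)/\rho$ satisfies $\Re[f_\rho(w)\overline{w}]=\frac{(\rho^2-1)\cos\theta}{\rho}<0$ at $w=e^{i\theta}$ with $\cos\theta>0$, so \eqref{abate} fails and $f_\rho\notin\G$; equivalently, subdisks $\D_\rho$ are not flow-invariant when the Denjoy--Wolff point lies on $\partial\D$, so there is no resolvent problem on $\D_\rho$ to solve and then pass to the limit. You also never verify the nondegeneracy $1+rf'(w)\ne0$ along the continuation branch. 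The standard (Reich--Shoikhet) argument avoids all boundary estimates by exploiting a feature you never use: \eqref{abate} is linear in $f$, so $\G$ is a real convex cone, and $w\mapsto w-z$ belongs to $\G$ for each $z\in\D$ (its flow is $e^{-t}w+(1-e^{-t})z\in\HolD$). Hence $f_1:=\Id-z+rf\in\G$, and $G_r(z)$ is precisely the null point of $f_1$. The affine summand squeezes the disk: the semigroup generated by $f_1$ maps $\D$, for $t>0$, into a compactly contained subdisk (product-formula or direct comparison with the flow of $\Id-z$), so it is a strict contraction of the hyperbolic metric and has a unique attracting fixed point $w_*\in\D$ with $f_1(w_*)=0$; uniqueness of the solution of $w+rf(w)=z$ is then immediate, and holomorphy of $z\mapsto G_r(z)$ follows from the implicit function theorem because, writing $f_1(w)=(w-w_*)(1-w\overline{w_*})p(w)$ as in \eqref{b-p}, one has $f_1'(w_*)=(1-|w_*|^2)p(w_*)\ne0$ (otherwise $p$, having nonnegative real part and an interior zero, would vanish identically, forcing $f(w)=(z-w)/r$, which is not a generator). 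Your degree-theoretic scheme could in principle be made to work, but only after proving the boundary estimate it presupposes --- which is essentially equivalent to the theorem itself --- whereas the cone-plus-hyperbolic-contraction route delivers it directly.
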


The functions $G_r:=\left(\Id +rf \right)^{-1},\ r>0,$ are called the {\it non-linear resolvents} of $f\in\G$. They play essential role in dynamical systems (see, for example, \cite{E-R-S-19, R-S1, SD}). Coefficient inequalities over the classes of nonlinear resolvents were recently studied in \cite{EJ-est, EJ-coeff20}. Here we complete the previous results by more qualified estimates.

\begin{theorem}\label{thm-estim-resol}
  Let $\lambda\in\C,$ $f\in\G_0$ and let $G_r$ be its non-linear resolvent with $r>0$.  Then
  \begin{equation}\label{ineq-nu}
   \left|\Phi(G_r,\lambda)\right| = \frac{r}{(1+r)^5}  \left| \Phi(f, \nu) \right|,\qquad \nu= (2-\lambda)\frac{r}{1+r}.
  \end{equation}
  Consequently,
  \begin{itemize}
    \item [(i)] If $f\in\Af_\al$ with $\al \leq 1$, then
    \[
  \left|\Phi(G_r,\lambda)\right| \le  \frac{r}{(1+r)^5} \, \max\left\{\frac{2}{3-2\alpha},\left| \frac{2}{3-2\alpha}- \frac{4 } {\left(2-\alpha\right)^2 }\cdot\frac{r}{1+r}(2-\lambda) \right|\right\}.
  \]
  \item [(ii)] If $f\in\Af^1_\al$ with $\al \leq 1$, then
   \[
   \displaystyle\left|\Phi(G_r,\lambda)\right| \leq \frac{r}{(1+r)^5}\max \left(\frac{2-\alpha}{3-2\alpha}\,,\,\, \frac{r}{1+r}|2-\lambda|\right).
   \]
 \item [(iii)] If $f\in\Rf_\al$ with $\al \in[0,1]$, then
      \[
  \left|\Phi(G_r,\lambda)\right| \le  \frac{r}{(1+r)^5} \max\left\{ \frac{1}{3-2\al}, \left|\frac{1+r(\lambda-1)}{1+r} \right| \right\} .
  \]
  \item [(iv)] If $f\in\Lf_\al$ with $\al <2$, then
      \[
  \left|\Phi(G_r,\lambda)\right| \le \frac{r}{(1+r)^5}  \max \left\{\frac{1}{2-\alpha},\left|\frac{1+r(\lambda-1)}{1+r}\right|\right\}  .
  \]
    \item [(v)] If $f\in\Mf_\beta$ with $\beta <\frac32$, then
      \[
  \left|\Phi(G_r,\lambda)\right| \le \frac{r}{(1+r)^5} \max \left\{\frac{2-\beta}{6-4\beta},\left| \frac{1+r(\lambda-1)}{1+r} \right|\right\}  .
  \]

  \end{itemize}
 \end{theorem}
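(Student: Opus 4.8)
The plan is to reduce the whole theorem to the single exact identity \eqref{ineq-nu}; once that is in hand, the five bounds (i)--(v) follow by inserting the sharp Fekete--Szeg\"o estimates already established for the respective classes. The starting point is the defining relation of the resolvent from Theorem~\ref{thm-RS-resolv}, namely $G_r(z)+rf(G_r(z))=z$. Writing $f(z)=z+a_2z^2+a_3z^3+\cdots$ (recall $f\in\G_0\subset\Ao$, so $a_1=1$) and $G_r(z)=b_1z+b_2z^2+b_3z^3+\cdots$, I would substitute the second series into the functional equation and match the coefficients of $z$, $z^2$, $z^3$. This produces, successively,
\[
b_1=\frac{1}{1+r},\qquad b_2=-\frac{ra_2}{(1+r)^3},\qquad b_3=\frac{2r^2a_2^2}{(1+r)^5}-\frac{ra_3}{(1+r)^4}.
\]

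With these three coefficients I would compute $\Phi(G_r,\lambda)=b_1b_3-\lambda b_2^2$ directly. Collecting terms and factoring out $-\dfrac{r}{(1+r)^5}$, the remaining bracket simplifies to $a_3-(2-\lambda)\dfrac{r}{1+r}\,a_2^2$, which (again using $a_1=1$) is exactly $\Phi(f,\nu)$ with $\nu=(2-\lambda)\dfrac{r}{1+r}$. Hence
\[
\Phi(G_r,\lambda)=-\frac{r}{(1+r)^5}\,\Phi(f,\nu),
\]
and taking absolute values yields \eqref{ineq-nu}. I would stress that this is an honest equality, not merely an estimate: the Fekete--Szeg\"o functional of the resolvent is a fixed scalar multiple of that of the generator evaluated at the shifted parameter $\nu$, so no information is lost and sharpness is transferred verbatim.

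The consequences are then immediate substitutions. For each class I plug $\nu=(2-\lambda)\frac{r}{1+r}$ into the bound available for that class and simplify the resulting maximum, using the two elementary reductions $|\nu|=\frac{r}{1+r}\,|2-\lambda|$ and $|\nu-1|=\left|\frac{1+r(\lambda-1)}{1+r}\right|$. Concretely, (i) uses Corollary~\ref{corol-estim1}, where the factor $\frac{4\nu}{(2-\alpha)^2}$ becomes $\frac{4}{(2-\alpha)^2}\frac{r}{1+r}(2-\lambda)$; (ii) uses Theorem~\ref{th-A1-coeff-estim}(ii) with $|\nu|$ as above; and (iii), (iv), (v) use Theorems~\ref{thm-prest-FS}, \ref{th-F-S-quasist} and~\ref{th-F-S-mocanu}, each of which contains a term $|\nu-1|$ that collapses to $\left|\frac{1+r(\lambda-1)}{1+r}\right|$. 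In every case the overall prefactor $\frac{r}{(1+r)^5}$ is carried through unchanged.

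The computation is entirely elementary, so there is no genuine obstacle; the only point requiring care is the coefficient bookkeeping for $b_2$ and $b_3$ (it is easy to misplace a power of $1+r$ or a sign when inverting the functional equation), together with recognizing that the combination of $b_1b_3$ and $\lambda b_2^2$ conspires to reproduce $\Phi(f,\nu)$ exactly. Once the identity \eqref{ineq-nu} is verified, items (i)--(v) are routine.
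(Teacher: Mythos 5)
Your proposal is correct and coincides with the paper's own proof: the paper likewise derives $b_1=\frac{1}{1+r}$, $b_2=-\frac{ra_2}{(1+r)^3}$, $b_3=\frac{2r^2a_2^2}{(1+r)^5}-\frac{ra_3}{(1+r)^4}$ from the resolvent equation $G_r(z)+rf(G_r(z))=z$ (by differentiating at $z=0$, which is the same as your coefficient matching), obtains the exact identity $b_1b_3-\lambda b_2^2=-\frac{r}{(1+r)^5}\bigl[a_3-(2-\lambda)\frac{r}{1+r}a_2^2\bigr]$, and then deduces (i)--(v) by substituting $\nu$ into the class-specific estimates using $1-\nu=\frac{1+r(\lambda-1)}{1+r}$. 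No gaps; your remark that \eqref{ineq-nu} is an equality transferring sharpness is consistent with the paper's treatment.
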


\begin{proof}
  Let $f$ have the Taylor expansion $f(z)=z+\sum\limits_{n=2}^\infty a_nz^n$. Assume that for some $r>0$ its resolvent has the Taylor expansion $G_r(z)=\sum\limits_{n=1}^\infty b_nz^n$. Differentiating the equality $G_r(z)+rf(G_r(z))=z$  at $z=0$, we find $b_1=\frac{1}{1+r},\ b_2=-\frac{r a_2}{(1+r)^3}$ and $b_3=\frac{2r^2a_2^2}{(1+r)^5} - \frac{r a_3}{(1+r)^4}$ (see also \cite{EJ-coeff20, EJ-est}). Therefore
  \[
  b_1b_3-\lambda b_2^2 = - \frac{r}{(1+r)^5} \left[ a_3 - (2-\lambda)\frac{r}{1+r}\,a_2^2  \right]
  \]
which proves \eqref{ineq-nu}.

To verify assertion (i), let $f\in\Af_\al$. Then  $  \left|\Phi(f,\nu)\right| \le  \frac2{3-2\alpha} \, \max\left\{1,\left| 1-\frac{2\nu (3-2\alpha)} {\left(2-\alpha\right)^2 } \right|\right\}$  by Corollary~\ref{corol-estim1}. Thus formula~\eqref{ineq-nu} implies
  \[
      \left|\Phi(G_r,\lambda)\right|  \le  \frac{2r}{(3-2\alpha)(1+r)^5} \, \max\left\{1,\left| 1-\frac{2\nu (3-2\alpha)} {\left(2-\alpha\right)^2 } \right|\right\}   .
  \]
This is equivalent to the required inequality in assertion (i).

Assertion (ii) follows directly from assertion (ii) in Theorem~\ref{th-A1-coeff-estim} by formula~\eqref{ineq-nu}.

To proceed, we notice that $1-\nu =\frac{1+r(\lambda-1)}{1+r}\,.$

Assume that $f \in \Lf_\alpha,\ \alpha<2$. Then $|\Phi(f, \lambda)| \leq \max \left\{\frac{1}{2-\alpha},|1-\lambda|\right\}$ by Theorem~\ref{th-F-S-quasist}. Therefore relation~\eqref{ineq-nu} implies
    \begin{align*}
   \left|\Phi(G_r,\lambda)\right| \le \frac{r}{(1+r)^5}  \max \left\{\frac{1}{2-\alpha},|1-\nu|\right\} & \\
    =  \frac{r}{(1+r)^5}  \max \left\{\frac{1}{2-\alpha},\left|\frac{1+r(\lambda-1)}{1+r}\right|\right\}&,
    \end{align*}
   so, assertion (iii) is proven.  The proof of assertions (iv) and (v) is similar.
\end{proof}

\bigskip

\subsection{Open questions and further discussion}\label{ssect-quest3}

\setcounter{question}{0}

Since the results of this section (excepting Theorem~\ref{thm-coef-anal} and Corollary~\ref{corol-estim1}) relate to non-linear filtrations, the following problems are actual:

\begin{question}
  Find estimates on coefficient functionals over the hyperbolic filtration $\Hf$. Find estimates on the Fekete--Szeg\"o functional over {\it resolvents} of generators $f\in\Hf_\al,\, \al\in\left[0,\frac23\right)$.
\end{question}

\begin{question}
  Find estimates to the generalized Zalcman and Fekete--Szeg\"o functionals over the sets $\Gf_\alpha^\lambda$.
\end{question}
\vspace{2mm}

Now we turn to the classes $M_{\alpha,\beta}$ defined by \eqref{class-M}.  The study of properties of these classes and their elements includes answers to the next questions:

\begin{question}
What values $(\alpha,\beta)$ provide the existence of a totally extremal function for the class $M_{\alpha,\beta}$?
\end{question}

\begin{question}
What values $(\alpha,\beta)$ provide the class $M_{\alpha,\beta}$ consists of univalent functions?
\end{question}
The only case we know the affirmative answer is $\alpha=0,\ \beta\leq 1$, see Statement~1 in Subsection~\ref{subsec-mocanu1}.

Since our special interest is to dynamical systems, let recall  that $M_{\alpha,\beta} \subset \mathcal{G}_0$ if either  $\beta=1-\alpha, \ \alpha>0$ (Theorem~\ref{filtration-alpha}), or $\alpha=0,\ \beta \leq 1$ (Theorem~\ref{thm-filtr-beta-new}). In general, the next question remains open.
\begin{question}
What values $(\alpha,\beta)$ provide $M_{\alpha,\beta} \subseteq \G_0$?
\end{question}
If it is the case for some pair $(\alpha,\beta)$ (including the cases mentioned here), it is important to answer
\begin{question}
  \begin{itemize}
    \item [(a)] Can the semigroup generated by some $f\in M_{\al,\beta}$ have repelling fixed points?
    \item [(b)] Are all semigroups generated by $f\in M_{\al,\beta}$ exponentially squeezing? If yes, find $K(M_{\al,\beta})$.
    \item [(c)]Do all semigroups generated by $f\in M_{\al,\beta}$ admit analytic extension with respect to the semigroup parameter? If yes, find $B(M_{\al,\beta})$.
  \end{itemize}
\end{question}
\noindent Questions (b) and (c) are open even for the classes $\Lf_\alpha$ and $\Mf_\beta$ studied above.

\vspace{2mm}

To proceed we recall that by Theorem~\ref{th-FS-estim} we have
\[
|\Phi(\cdot, \lambda)| \leq \max \{\mu,|1-\lambda|\}, \text{ where } \mu=\left\{ \begin{array}{ll}
                                               \frac{1}{2-\alpha} & \mbox{ for }\ f\in\Lf_\al,\vspace{2mm} \\
                                               \frac{2-\beta}{6-4\beta} &  \mbox{ for }\  f\in \Mf_\beta\vspace{2mm}, \end{array}                       \right.
\]
whenever $\alpha\leq 2$, $\beta<\frac{3}{2}$.
We also know that this estimate is sharp if either  $\frac12\leq \alpha \leq1$, or  $0\leq \beta \leq 1$ (see Theorems~\ref{th-F-S-quasist} and \ref{th-F-S-mocanu}).
\begin{question}
Is this estimate sharp for all $\alpha\leq 2$ and $\beta<\frac{3}{2}$?
\end{question}
If the answer is negative,  one asks
\begin{question}
Find \,  $\sup\limits_{f \in\Lf_\al} |\Phi(f, \lambda)|$ for $\alpha\leq 2$ and $\sup\limits_{f \in \Mf_\beta} |\Phi(f, \lambda)|$ for $\beta<\frac{3}{2}$.
\end{question}

\vspace{2mm}

Analysing solutions of the Fekete--Szeg\"{o} problem for different classes of analytic functions, one notice some similarity of estimates for different classes of functions. For instance,
  \[
    \left| \Phi( f,\lambda)\right|\le \max(\mu,|1-\lambda|) \quad \text{with} \quad \mu=\left\{ \begin{array}{ll}
                                               \frac13 & \mbox{ for }\ f\in\mathcal{C},\vspace{2mm} \\
                                               \frac12 &  \mbox{ for }\  f\in \mathcal{S}^*\left(\frac12\right),\vspace{2mm}\\
                                               1 &  \mbox{ for }\  f\in \left\{f\in\Ao:\ \Re\frac{f(z)}{z} > \frac 12\right\}. \end{array}                       \right.
  \]
Keogh and Merkes proved the results for the classes $\mathcal{C}$  and  $\mathcal{S}^*\left(\frac12\right)$ in \cite{Ke-Me}. As for the last class, see, for example,  \cite[Theorem~2.2]{EJ-est}.

The structure of the bound on $\left| \Phi( f,\lambda)\right|$ in the above inequality prompts to study the questions:

\begin{question}
Let $\Psi$ be a positive function defined on the set $\{(\lambda,\al):\, \lambda \in \C, \,\al \in J\}$.  \linebreak Under what conditions on $\Psi$ there exists a filtration $\left\{\Ff_\al\right\}_{\al\in J}$ of $\G_0$ such that \linebreak $\sup\limits_{f \in \Ff_\al} |\Phi(f,\lambda)|=\Psi(\lambda,\al)$? Is this filtration unique? How to construct it?
\end{question}

A necessary condition is clear: for any fixed $\lambda\in\C$, the function $\Psi(\lambda,\cdot)$ should be non-decreasing. Sufficient conditions are unknown in general.
Nevertheless, the mentioned estimate involves $\Psi(\lambda,\al) =  \max(\mu(\al),|1-\lambda|),$ where $\mu$ is an increasing function from the set $J$ onto $\left[\frac13,1\right]$. The above results present different solutions. Namely, Theorems~\ref{th-F-S-mocanu} and \ref{th-F-S-quasist} give us filtrations of generators that cover the cases $\mu\in\left[\frac13,\frac12\right] $ and $\mu\in\left[\frac23,1\right]$, while in Theorem~\ref{thm-prest-FS} we establish that filtration $\Rf$ covers the whole range $\mu\in\left[\frac13,1\right]$.

Moreover, the concrete solutions presented in the above theorems give particular answers to another question that is of certain interest. Observe that each function of the class $\Ao$ is locally invertible around the origin. Moreover, it can be easily verified that $|\Phi(f^{-1},\lambda)| = |\Phi(f,2-\lambda)|$ (see, for example, \cite{E-J-21a}). So, we ask:
\begin{question}
 Describe  all classes $\Ff \subset \Ao$ that satisfy $\sup\limits_{f \in \Ff} |\Phi(f^{-1},\lambda)|=\sup\limits_{f \in \Ff} |\Phi(f,\lambda)|$.
 \end{question}

To explain the importance of the above questions, notice that it can be interpreted as an inverse Fekete--Szeg\"{o} problem. In principle, inverse problems are necessary for the integrity of any theory. Although the Fekete--Szeg\"{o} problems have been studied for almost one hundred years, inverse problems for it have been posed only recently in \cite{EJT}.

\vspace{2mm}

\bigskip

\end{document}